   \newtheorem{thm}{Theorem}
   \newtheorem{prop}{Proposition}
   \newtheorem{lem}{Lemma}[section]
   \newtheorem{cor}[lem]{Corollary}
   \newtheorem{rem}{Remark}[section]
\newcommand{\N}{\mathbb{N}}
\newcommand{\Z}{\mathbb{Z}}
\newcommand{\R}{\mathbb{R}}
\newcommand{\C}{\mathbb{C}}
\newcommand{\E}{\mathbb{E}}
\newcommand{\prob}{\mathbb{P}}
\newcommand{\re}{\mathop{\mathrm{Re}}\nolimits}
\newcommand{\im}{\mathop{\mathrm{Im}}\nolimits}
\newcommand{\supp}{\mathop{\mathrm{supp}}\nolimits}
\newcommand{\dd}{\mathrm{d}}
\newcommand{\bracknabla}{\langle\nabla\rangle}
\newcommand{\brackxi}
  \newcommand{\subsubsubsection}{\@startsection{paragraph}{4}{\z@}%
    {1.0\Cvs \@plus.5\Cdp \@minus.2\Cdp}%
    {.1\Cvs \@plus.3\Cdp}%
    {\reset@font\sffamily\normalsize}
  }
\let\@cleartopmattertags\relax
\newcommand\articleend
  \let\authors\@empty
  \let\contribs\@empty
  \let\xcontribs\@empty
  \let\toccontribs\@empty
  \let\addresses\@empty
  \let\thankses\@empty
\let\@wraptoccontribs\wraptoccontribs
\begin{document}

\title[Non relativistic and ultra relativistic limits]{Non relativistic and ultra relativistic limits in 2d stochastic nonlinear damped Klein-Gordon equation}
\author[Reika Fukuizumi]{Reika Fukuizumi$^{\scriptsize 1}$}
\author[Masato Hoshino]{Masato Hoshino$^{\scriptsize 2}$}
\author[Takahisa Inui]{Takahisa Inui$^{\scriptsize 3}$} 

\keywords{damped nonlinear Klein-Gordon equation, stochastic partial differential
equations, white noise, non relativistic limit, ultra relativistic limit}

\subjclass[2010]{
35L71, 35A35, 60H15 
}

\maketitle

\begin{center} 
$^1$ Research Center for Pure and Applied Mathematics, \\
Graduate School of Information Sciences, Tohoku University,\\
Sendai 980-8579, Japan; \\
\email{fukuizumi@math.is.tohoku.ac.jp}
\end{center}

\begin{center} 
$^2$ Graduate School of Engineering Science, Osaka University,\\
Toyonaka, Osaka 560-8531,Japan;\\
\email{hoshino@sigmath.es.osaka-u.ac.jp}
\end{center}

\begin{center}
$^3$ Department of Mathematics, Graduate School of Science, \\
Osaka University, Toyonaka, Osaka 560-0043, Japan \& \\
Department of Mathematics, University of British Columbia, \\
1984 Mathematics Rd., Vancouver, Canada V6T1Z2;\\
\email{inui@math.sci.osaka-u.ac.jp}
\end{center}

\vskip 0.1 in
\noindent
{\bf Abstract}. We study the non relativistic and ultra relativistic limits in the two-dimensional nonlinear damped Klein-Gordon equation 
driven by a space-time white noise on the torus. 
In order to take the limits, it is crucial to clarify the parameter dependence in the estimates of solution. 
In this paper we present two methods to confirm this parameter dependence. One is the classical, simple 
energy method. Another is the method via Strichartz estimates.   

\vskip 0.1 in

\section{Introduction} 

A stochastic force combined with a dissipation is used to model 
a temperature effect in the dynamics of  differential equations. 
Such so-called Langevin/Over damped Langevin dynamics are versatile in biology, chemistry, engneering, physics, and computer sciences. Traditionally, one assumes that the forces are given as the gradient of a potential, and that a fluctuation-dissipation relation holds between stochastic and dissipative forces. We are interested in the model proposed by the paper \cite{CK}, where the authors, adding noise and dissipation, 
study the $U(1)$-invariant relativistic complex field model in three dimensions,  
which serves to describe, in various limits, 
properties at finite temperatures of superfluid systems, superconductors of type II, nematic liquid crystals, 
as well as relativistic bosons at finite chemical potential. The paper \cite{CK} begins with 
a revisit on the well-known expansion and numerical methods to observe the equilibrium behavior  
of the three-dimensional $U(1)$ complex field, identifying the above mentioned various dynamic regimes 
in the thermal equilibrium. Namely, the authors found a statistical universality among those models of different physical backgrounds. 
The paper is then, as more important and main subject,  
devoted to the study of the vortex tangle in and out of  (but near) equilibrium. 
In this paper we are motivated to justify rigorously the numerical results obtained in the former part of the paper \cite{CK}. 
Remark that the restriction to the dimension three is essential only for the latter part by the reason that a phase transition 
should occur in the dimension larger than two.

More precisely, the under damped Langevin equation derived from the Lagrangian density for relativistic bosons 
with finite chemical potential in \cite{CK} reads the following damped nonlinear wave equation 
on $\mathbb{T}^d:=(\mathbb{R}/2\pi\mathbb{Z})^d$ driven by a complex-valued space-time white noise $\xi$. 
\begin{eqnarray} \label{eq:phys}
\frac{1}{c^2} \partial_t^2 \psi +(\mu-i\gamma) \partial_t \psi  -\Delta \psi  -(\nu- |\psi|^2) \psi  =\sqrt{\mu \mathcal{T}}\xi, \\ \nonumber 
\langle \xi(t,x) \rangle =0, \quad \langle \xi^{\ast}(t,x)\xi(s,y) \rangle =2\delta(t-s) \delta(x-y),
\end{eqnarray}
with $\mu, \gamma>0$, $\nu\ge 0$ and $c>0$. Here, $\mathcal{T}>0$ is the temperature.
In the ultra relativistic and non relativistic limits, i.e. $\gamma\to 0$ and $c \to +\infty$ respectively, 
this Langevin equation approaches to 

\begin{equation*}
\frac{1}{c^2} \partial_t^2 \psi + \mu \partial_t \psi  -\Delta \psi  -(\nu- |\psi|^2) \psi  =\sqrt{\mu \mathcal{T} }\xi
\end{equation*}
and 
\begin{equation*} 
(\mu-i\gamma) \partial_t \psi  -\Delta \psi  -(\nu- |\psi|^2) \psi  =\sqrt{\mu \mathcal{T}}\xi,
\end{equation*}
which are
known as the Goldstone and the Gross-Pitaevskii 
models, respectively.
The latter describes the dynamics of gaseous Bose-Einstein condensates (\cite{griffin}). 
On the other hand, the former describes the dynamics of the Mott insulator phase with integer fillings (\cite{aa}). 
 
By the numerical simulations, it is observed that 
all these models have the same statistical quantities around the equilibrium.
What they call in \cite{CK} the equilibrium, can be described 
by the Gibbs measure $\rho$, which is written formally as: 
\begin{align}\label{eq:GibbsdependsonT}
\rho(d\psi d\phi) = \Gamma^{-1} e^{-\frac{H(\psi, \phi)}{ \mathcal{T} } } d\psi d\phi,
\end{align}
with
$$H(\psi, \phi)=\frac{1}{2 c^2} \int |\phi|^2 dx + V(\psi),$$
and
$$V(\psi) =\frac{1}{2} \int |\nabla \psi|^2 dx - \frac{\nu}{2} \int |\psi|^2 dx + \frac{1}{4} \int |\psi|^{4} dx,$$
where $\Gamma$ is a normalizing constant. This $\rho$ depends on the parameter $c$, but in fact we will see in Proposition \ref{mainprop:GWP} that 
this dependence can be removed by the change of variables $\phi\mapsto c\phi$,
which, we believe, may explain the result of the numerical simulation.   

We consider this equilibrium in the case of $d=2$, setting $\mathcal{T}=2$, in this paper. 
Unfortunately, we have to modify (\ref{eq:phys}) and replace the term $-\Delta$ by $-\Delta+1$ 
and set $\nu=0$ in order to keep the positive definiteness of the linear part to define the corresponding Gaussian measure. 
If not, we have to remove the Fourier zero mode to take into account the degenerate direction, which makes the arguments more complex.
Moreover, the $\rho$ is a priori not well-defined, since $L^4$ is not in the support of $\rho$, 
but at this point we may give a sense using a renormalization technique as has been widely used by now. 

Existence of solutions of all those models and the construction of the Gibbs measure have been 
established in \cite{GKO,GKOT, ORT,H2,HIN,Tr,M}. 
In the present paper, our interest is mainly in justifying the both limits $\gamma \to 0$ and $c \to +\infty$. 
The convergence as $c \to +\infty$ and the independence of Gaussian measure on $c$ were already justified in \cite{FC} 
under the name of Smoluchowski-Kramers approximation in the case of $d=1$ with a space-time white noise, 
and for $d \ge 2$ with a colored noise under the Dirichlet boundary condition. See for example \cite{CX} and references therein 
for generalizations of this Smoluchowski-Kramers approximation issue.
In order to show the convergence, 
it is required to know the parameter dependence in the energy or Strichartz estimates. 
In this paper we present both methods to confirm this parameter dependence. One is the classical, simple energy method. Another is the method via Strichartz estimates. In particular, 
to our best knowledge, uniform Strichartz estimates with respect to the non relativistic limit parameter on the torus have not been known.
We thus believe that its proof  itself is of interest.
Finally we remark that the uniqueness of the invariant measures was investigated 
in \cite{T} for the case of $d=1$ with space-time white noise, 
and in \cite{FT} in the case $d=2$ for a slightly more regular noise than space-time white noise.   
The case $d=2$ with space-time white noise is open, but we expect the uniqueness of the Gibbs measure. 

\section{Main results} 

In this section, we precisely mention our mathematical results on 
the equation explained in the previous section, setting $\varepsilon=\frac{1}{c} \in (0,1]$ in (\ref{eq:phys}). 
As a damping coefficient we consider more generally $\alpha\in\mathbb{C}$ such that $\re(\alpha)>0$, $\im(\alpha) \ne 0$. 
For each $\varepsilon\in(0,1]$, we consider the damped Klein-Gordon equation with an exterior force $f(t,x)$:
\begin{align}\label{eq:dampedwave_f}
\left\{
\begin{aligned}
&\varepsilon^2\partial_t^2u_{\varepsilon,\alpha}+2\alpha\partial_tu_{\varepsilon,\alpha} +(1-\Delta) u_{\varepsilon,\alpha}=f, &t>0,~ x\in\mathbb{T}^d,\\
&(u_{\varepsilon,\alpha},\varepsilon\partial_tu_{\varepsilon,\alpha})|_{t=0}=(\phi_0,\phi_1), &x\in\mathbb{T}^d.
\end{aligned}
\right.
\end{align}
The spatial dimension $d$ can be any $d\ge 1$ for the moment.   
\vspace{5mm}

We begin with the notation.
\begin{itemize} 
\setlength{\parskip}{1mm}
\setlength{\itemsep}{1mm}
\item To clarify the dependence on the parameter $\alpha$ in the estimates we will encounter, we use the following notation:
$$
\mathbb{C}_+=\{\alpha\in\mathbb{C}\, ;\, \re(\alpha)>0\},\qquad
\mathbb{C}_+\setminus(0,\infty)=\{\alpha\in\mathbb{C}_+\, ;\, \im(\alpha)\neq0\}.
$$
\item We denote by $L^2(\mathbb{T}^d)$ the complex-valued Lebesgue space on the torus 
with the inner product $\langle f,g\rangle:=\int_{\mathbb{T}^d}f(x)\overline{g(x)}dx$.
$C^\infty(\mathbb{T}^d)$ is the space of complex-valued smooth functions and denote by $\mathcal{S}'(\mathbb{T}^d)$ its dual.
We extend the inner product $\langle\cdot,\cdot\rangle$ to the paring of $\mathcal{S}'(\mathbb{T}^d)$ and $C^\infty(\mathbb{T}^d)$.
\item For $k\in\mathbb{Z}^d$ and $x\in\mathbb{T}^d$, we define $e_k(x)=\frac1{(2\pi)^{d/2}}e^{ik\cdot x}$.
Fourier transform $\mathcal{F}$ of $f\in\mathcal{S}'(\mathbb{T}^d)$, $\mathcal{F}f(k)$, which is sometimes denoted by $\hat{f}(k)$, 
and the inverse transform of a function $g:\mathbb{Z}^d\to\mathbb{C}$ are defined by
$$
\mathcal{F}f(k)=\hat{f}(k):=\langle f,e_k \rangle,\qquad
\mathcal{F}^{-1}g(x):=\sum_{k\in\mathbb{Z}^d}g(k)e_k(x).
$$
For some appropriate function $\varphi:\mathbb{Z}^d\to\mathbb{C}$, the Fourier multiplier is defined by
$$
\varphi(\nabla)f(x):=\mathcal{F}^{-1}(\varphi\mathcal{F}f)(x)=\sum_{k\in \Z^d} \varphi(k) \hat{f}(k) e_k(x).
$$
Thus, denoting  by ${\bf 1}_A$ the indicator function on $A$, 
${\bf 1}_{\{|\nabla| \le c\}}$ means that 
$${\bf 1}_{ \{|\nabla| \le c\}} f(x) = \sum_{k\in \Z^d} {\bf 1}_{\{ |k| \le c \}} \hat{f} (k) e_k(x) =\sum_{k\in \Z^d: |k| \le c} \hat{f} (k) e_k(x).$$
We frequently use the projector $\Pi_N  ={\bf 1}_{\{|\nabla|\le N\}}  : L^2 \to L^2$ in this paper. 
\item $\{\chi_j\}_{j\in\mathbb{Z}}$ is an inhomogeneous dyadic decomposition of unity, that is,
\begin{itemize}
\setlength{\parskip}{1mm}
\setlength{\itemsep}{1mm}
\item $\chi_{-1}$ and $\chi_0$ are radial and smooth functions on $\mathbb{R}^d$. $\chi_{-1}$ is supported in a ball, and $\chi_0$ is supported in an annulus.
\item $\chi_j=\chi_0(2^{-j}\cdot)$ for $j\ge0$. $\chi_i$ and $\chi_j$ are disjointly supported if $|i-j|\ge2$.
\item $\sum_{j\ge-1}\chi_j=1$.
\end{itemize}
Define $\Delta_j:=\chi_j(\nabla)$.
\item For any $s\in\mathbb{R}$ and $q,r \in [1, \infty]$, we denote the inhomogeneous Sobolev norms by 
\begin{align*}
\|f\|_{W^{s,q}}:=\big\| (1-\Delta)^sf \big\|_{L^q}
\end{align*}
and the inhomogeneous Besov norms by 
\begin{align*}
\|f\|_{B_{q,r}^s}:=\Big\|\Big(2^{js}\|\Delta_jf\|_{L^q}\Big)_{j\ge-1}\Big\|_{\ell^r}. 
\end{align*}
We write $W^{s,2}=H^s$ and define $\mathcal{H}^{s}=H^{s} \times H^{s-1}$. 

\item If $E$ is a Banach space with the norm $|\cdot|_{E}$, then for $T>0$, and $p\in[1,\infty]$, 
$L^p(0,T;E)$ is the space of strongly Lebesgue measurable functions $v$ from $[0,T]$ into $E$ such that 
$t \mapsto |v(t)|_{E}$ is in $L^p(0,T)$. We sometimes abbreviate this space as $L_T^p E$. 
Similarly, we define the space $C([0,T];E)$ and abbreviate as $C_TE$.
When $E$ is a function space on $\mathbb{T}^d$ such as Sobolev or Besov spaces, we sometimes emphasize the spatial variable $x$ and write $L_T^pE_x$ or $C_TE_x$.
\item Finally, 
the conjugate exponent of $p>1$ is denoted by $p'$, i.e. $\frac{1}{p'}+\frac{1}{p}=1$, and 
$\langle x\rangle:=\sqrt{1+|x|^2}$ for $x\in\mathbb{R}^d$ or $\mathbb{Z}^d$.
\end{itemize}
\vspace{5mm}

We will mainly focus on the non relativistic limit problem presenting the detailed analysis, 
and then we will mention our results briefly on the ultra relativistic limit which are similarly obtained.        
Hence, the first step is to derive the uniform energy (more generally Strichartz) estimate in $\varepsilon$ for (\ref{eq:dampedwave_f}). 
In our case, the uniform energy estimate can be proved simply  
using Fourier series expression of the solution and it is enough for our aim (see Theorem \ref{mainthm_energy}).   
However, as a by-product during this study we also obtained an $\varepsilon$-uniform Strichartz estimate on $\mathbb{T}^d$, which 
generalizes the energy estimates in
Theorem \ref{mainthm_energy}, thus we will address the statement of this uniform Strichartz estimate below (see Theorem \ref{mainthm}) 
and a brief proof of it in Section \ref{section7}.   

Note that once the Strichartz estimate is available for nonlinear wave equations on $\R^d$, 
the same estimate on $\mathbb{T}^{d}$ follows from the finite propagation property.
We may then prove easily the local well posedness for (\ref{eq:dampedwave_f}) as in \cite{GKO}. However, our purpose is 
to show the $\varepsilon$-limit in the equation, thus it requires to make precise the $\varepsilon$ dependence of all the constants, in particular of the time interval. 
  
We write the solution of  (\ref{eq:dampedwave_f}) in the mild form.
\begin{align} \label{mildform}
u_{\varepsilon, \alpha}(t)=e^{t\lambda_{\varepsilon, \alpha}^+(\nabla)}\phi_
{\varepsilon, \alpha}^+
+e^{t\lambda_{\varepsilon, \alpha}^-(\nabla)}\phi_{\varepsilon, \alpha}^-
+\int_0^t\left(e^{(t-t')\lambda_{\varepsilon, \alpha}^+(\nabla)}f_{\varepsilon, \alpha}^+(t')
+e^{(t-t')\lambda_{\varepsilon, \alpha}^-(\nabla)}f_{\varepsilon, \alpha}^-(t')\right)dt',
\end{align}
where
$$
\lambda_{\varepsilon, \alpha}^\pm(\nabla)=\frac{-\alpha\pm\sqrt{\alpha^2-\varepsilon^2\langle\nabla\rangle^2}}{\varepsilon^2},
$$
with the square root $\sqrt{\cdot}$ defined by
$$
\sqrt{e^{i\theta}}=e^{i\frac\theta2},\quad\theta\in(-\pi,\pi],
$$
 and 
$$
\phi_{\varepsilon, \alpha}^\pm
=\frac{\mp\varepsilon^2\lambda_{\varepsilon, \alpha}^\mp(\nabla)\phi_0 \pm\varepsilon\phi_1}{2\sqrt{\alpha^2-\varepsilon^2 \langle\nabla\rangle^2}},\quad
f_{\varepsilon, \alpha}^\pm(t)=\pm\frac1{2\sqrt{\alpha^2-\varepsilon^2 \langle\nabla\rangle^2}}f(t).
$$  
\vspace{3mm}  

\begin{thm}\label{mainthm_energy} Let $d\ge 1$. For any $\sigma\in\mathbb{R}$,
\begin{align*}
\|(u_{\varepsilon, \alpha}, \varepsilon \partial_t u_{\varepsilon, \alpha})\|_{L_T^\infty \mathcal{H}_x^\sigma(\mathbb{T}^d)}
\lesssim_{\alpha} \|\phi_0\|_{H^\sigma(\mathbb{T}^d)}+\|\phi_1\|_{H^{\sigma-1}(\mathbb{T}^d)}+\|f\|_{L_T^2 H_x^{\sigma-1}(\mathbb{T}^d)},
\end{align*}
where the implicit proportional constants are locally bounded function of $\alpha\in\mathbb{C}_+\setminus(0,\infty)$.
\end{thm}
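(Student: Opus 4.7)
The plan is to work entirely on the Fourier side using the mild form \eqref{mildform} and bound each of the four pieces (two homogeneous, two Duhamel) as a scalar Fourier multiplier, with constants controlled only through a few symbolic bounds on $\lambda^\pm_{\varepsilon,\alpha}(k)$ and $z(k):=\alpha^2-\varepsilon^2\langle k\rangle^2$.

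First I would collect, uniformly in $\varepsilon\in(0,1]$ and $k\in\mathbb{Z}^d$, the following four facts about the symbols. Writing $\alpha=p+iq$ with $p>0$, $q\neq0$, one has $\im z(k)=2pq\neq 0$, so
$$|z(k)|\ge 2p|q|,\qquad |\sqrt{z(k)}|\gtrsim_\alpha \max(1,\varepsilon\langle k\rangle),$$
the second bound following by splitting the cases $\varepsilon^2\langle k\rangle^2\le 2|\alpha|^2$ and $\varepsilon^2\langle k\rangle^2\ge 2|\alpha|^2$. With the branch convention $\sqrt{e^{i\theta}}=e^{i\theta/2}$, $\theta\in(-\pi,\pi]$, we have $\re\sqrt{z(k)}\ge 0$, and a direct computation of $a:=\re\sqrt{z(k)}$ from the algebraic system $a^2-b^2=\re z$, $2ab=\im z$ yields the closed form
$$\re(\alpha)-\re\sqrt{z(k)}=\frac{2p\,\varepsilon^2\langle k\rangle^2}{(p^2+q^2+\varepsilon^2\langle k\rangle^2)+\sqrt{(p^2-q^2-\varepsilon^2\langle k\rangle^2)^2+4p^2q^2}}\gtrsim_\alpha\frac{\varepsilon^2\langle k\rangle^2}{1+\varepsilon^2\langle k\rangle^2}.$$
This gives the key symbolic inequalities $\re\lambda^-\le -p/\varepsilon^2$ and $\re\lambda^+\lesssim_\alpha-\langle k\rangle^2/(1+\varepsilon^2\langle k\rangle^2)$, hence $|e^{t\lambda^\pm(k)}|\le 1$ for all $t\ge 0$.

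For the homogeneous part I would use Plancherel and bound the Fourier symbols of $\phi^\pm_{\varepsilon,\alpha}$. Since $\varepsilon^2\lambda^\mp=-\alpha\mp\sqrt z$, the symbolic multipliers are $|\varepsilon^2\lambda^\mp/\sqrt z|\le 1+|\alpha|/|\sqrt z|\lesssim_\alpha 1$ and $|\varepsilon\langle k\rangle/\sqrt z|\lesssim_\alpha 1$, yielding $\|e^{t\lambda^\pm}\phi^\pm\|_{L^\infty_TH^\sigma}\lesssim_\alpha\|\phi_0\|_{H^\sigma}+\|\phi_1\|_{H^{\sigma-1}}$ via $\sum_k\langle k\rangle^{2\sigma}|\cdot|^2$. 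For the Duhamel piece, Cauchy--Schwarz in $t'$ at fixed $k$ gives
$$\left|\int_0^t e^{(t-t')\lambda^\pm(k)}\frac{\hat f(t',k)}{2\sqrt{z(k)}}\,dt'\right|\le\frac{\|\hat f(\cdot,k)\|_{L^2_T}}{2|\sqrt{z(k)}|\sqrt{2|\re\lambda^\pm(k)|}},$$
so the whole estimate reduces to checking $\langle k\rangle^2/(|z(k)||\re\lambda^\pm(k)|)\lesssim_\alpha 1$: for $\lambda^-$ this is $\varepsilon^2\langle k\rangle^2/|z|\lesssim_\alpha 1$, which is immediate from step one, and for $\lambda^+$ it is $\langle k\rangle^2(1+\varepsilon^2\langle k\rangle^2)/(|z|\langle k\rangle^2)\lesssim_\alpha 1$, which again follows from $|z|\gtrsim_\alpha 1+\varepsilon^2\langle k\rangle^2$.

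To handle the second component $\varepsilon\partial_t u_{\varepsilon,\alpha}$ I would differentiate the mild form and use the algebraic identity $\varepsilon^3\lambda^+\lambda^-=\varepsilon\langle k\rangle^2$ to rewrite the homogeneous contribution as a linear combination of $\hat\phi_0$ and $\hat\phi_1$ with symbols bounded by $|\varepsilon\langle k\rangle/\sqrt z|\cdot\langle k\rangle\lesssim_\alpha\langle k\rangle$ and $|(\alpha\pm\sqrt z)/\sqrt z|\lesssim_\alpha 1$; the extra $\langle k\rangle$ is absorbed by measuring in $H^{\sigma-1}$. For the Duhamel part the same Cauchy--Schwarz argument now requires $|\varepsilon\lambda^\pm|^2/(|z||\re\lambda^\pm|)\lesssim_\alpha 1$, which I would verify separately in $\varepsilon\langle k\rangle\le 1$ and $\varepsilon\langle k\rangle\ge 1$ using the symbolic bounds of step one. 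Taking $\sup_{t\in[0,T]}$ in $k$-summed form and combining all four contributions gives the stated inequality, with the implicit constant a rational expression in $p$, $|q|$, $|\alpha|$, $1/p$, $1/|q|$, hence locally bounded on $\mathbb{C}_+\setminus(0,\infty)$.

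The only nontrivial obstacle is the sharp dissipation estimate $\re\lambda^+\lesssim_\alpha-\langle k\rangle^2/(1+\varepsilon^2\langle k\rangle^2)$ in the transitional regime $\varepsilon\langle k\rangle\sim|\alpha|$: neither the small-$\varepsilon\langle k\rangle$ expansion $\lambda^+\approx-\langle k\rangle^2/(2\alpha)$ nor the large-$\varepsilon\langle k\rangle$ expansion $\lambda^+\approx(-\alpha+i\varepsilon\langle k\rangle)/\varepsilon^2$ is valid there, so this rate must come from the closed-form denominator above, where the term $4p^2q^2$, nonzero precisely because $\im\alpha\neq 0$, prevents $z$ from touching the positive real axis and keeps the "gap" $p-\re\sqrt z$ uniformly positive.
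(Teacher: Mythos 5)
Your proposal is correct and follows essentially the same route as the paper's proof: Fourier-side analysis of the mild form \eqref{mildform}, the trivial bound $|e^{t\lambda_{\varepsilon,\alpha}^{\pm}(k)}|\le1$ for the homogeneous terms, Cauchy--Schwarz in $t'$ together with the dissipation bounds $\re\lambda^{-}\le-\re(\alpha)/\varepsilon^{2}$ and $\re\lambda^{+}\le-c_\alpha\langle k\rangle^{2}/(1+\varepsilon^{2}\langle k\rangle^{2})$ (equivalent to Proposition \ref{prop:base}-\eqref{base:item3}) for the Duhamel terms, the bounds on $1/\sqrt{\alpha^{2}-\varepsilon^{2}\langle k\rangle^{2}}$ corresponding to Proposition \ref{prop:base}-\eqref{base:item2} and \eqref{base:item2'}, and the identity $\varepsilon^{2}\lambda^{+}\lambda^{-}=\langle k\rangle^{2}$ for the $\varepsilon\partial_t u_{\varepsilon,\alpha}$ component, i.e.\ the content of Proposition \ref{prop L2EST}, Lemma \ref{lem phipm and fpm} and the paper's case analysis, merely reorganized. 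One small remark: your displayed ``closed form'' for $\re(\alpha)-\re\sqrt{\alpha^{2}-\varepsilon^{2}\langle k\rangle^{2}}$ is exact only up to the bounded factor $\bigl(\re\alpha+\re\sqrt{\alpha^{2}-\varepsilon^{2}\langle k\rangle^{2}}\bigr)/\re\alpha\in[1,2]$ (the exact denominator contains $\re\alpha+\re\sqrt{\cdot}$ rather than $2\re\alpha$), which is harmless since you only use the resulting lower bound, and note that this gap is positive for every $\alpha\in\mathbb{C}_+$; the hypothesis $\im(\alpha)\neq0$ is really needed for the lower bound on $|\sqrt{\alpha^{2}-\varepsilon^{2}\langle k\rangle^{2}}|$ controlling the factors $1/\sqrt{\alpha^{2}-\varepsilon^{2}\langle\nabla\rangle^{2}}$ in $\phi_{\varepsilon,\alpha}^{\pm}$ and $f_{\varepsilon,\alpha}^{\pm}$, not for the dissipation rate itself.
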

\vspace{3mm}

The proof of Theorem \ref{mainthm_energy} can be found in Section \ref{section:LWP}. The non relativistic limit $\varepsilon \to 0$ in $\R^d$ can be proved for $d=1,2$ by energy arguments as in Theorem \ref{mainthm_energy} (see \cite{Tsu, N, N1} for the results in the case of $d=1,2$) and, however, to extend these results to the case $d \ge 3$, it takes some ingenuity since the nonlinearity may not be treated simply by Sobolev embeddings (see \cite{N1, MACHI} for $d=3$ under restrictions on the nonlinearity). 
In \cite{MNO, MNO1} the authors solve this problem under no restriction on the nonlinearity by use of the Strichartz estimate and decomposing the solution into the low frequency part and the high frequency part  (see also \cite{IM}). More precisely it should be noticed
that by the kernel expression above the damped wave equation behaves as a heat equation for the low frequency part and a wave equation for the high frequency part (\cite{I}).  

We thus decompose the solution $u_{\varepsilon, \alpha}$ into low frequency part and high frequency part:
fix a radial smooth function $I:\mathbb{R}^d\to[0,1]$ such that
\begin{equation} \label{def:cutoff}
I(\xi)=
\begin{cases}
1&|\xi|\le1,\\
0&|\xi|\ge2,
\end{cases}
\end{equation}
and we decompose $u_{\varepsilon, \alpha}=u_{\varepsilon, \alpha}^<+u_{\varepsilon, \alpha}^>$, where
$$
u_{\varepsilon, \alpha}^<=I(\varepsilon\nabla)u_{\varepsilon, \alpha},\quad
u_{\varepsilon, \alpha}^>=(1-I(\varepsilon\nabla))u_{\varepsilon, \alpha}.
$$
\vspace{3mm}

Remark that we need this smooth cut-off $I$ to derive the Strichartz estimate since we are required  
to treat some $L^p$ ($p \ge 1$) based estimates, while for the energy estimates only the $L^2$ based estimates are used, 
thus the non smooth cut-off ${\bf 1}$ is enough. 
Moreover, we can choose the cut-off of the form $I(\varepsilon\nabla)$, not $I(\varepsilon\langle\nabla\rangle)$, by the transform of equation as in the beginning of Section \ref{section7}.

 The problem here is that the scaling argument frequently used in $\R^d$ (e.g. \cite{N, MNO, IM}) to have such $\varepsilon$-uniform Strichartz estimates does not work immediately  
if we consider $\mathbb{T}^d$ since after such scaling the period also exhibits an $\varepsilon$-dependence.    
Therefore, we are obliged to repeat the proof of classical Strichartz estimates to verify the dependence on $\varepsilon$ 
regarding a function in $C^\infty(\mathbb{T}^d)$ as a periodic function on $\R^d$. 
\vspace{3mm}

The following result is the first $\varepsilon$ uniform Strichartz estimates on $\mathbb{T}^d$ as far as we know.  
In this paper only the case $d\le 2$ of this estimate is useful since we are interested in the stochatic eqution (\ref{eq:dampedwave_W}) below which has a meaning for the moment only in $d \le 2$, but we expect that the following Theorem 2 will be helpful to consider 
the non relativistic limit in the high dimensional case on $\mathbb{T}^d$ in the future when  (\ref{eq:dampedwave_W}) will be made sense. 

\begin{thm}\label{mainthm} Let $d\ge 1$. 
For any $r\in[1,\infty]$, $q_1\ge q_2\in[1,\infty]$, $\sigma\in\mathbb{R}$, and $s\in[0,2(1+\frac1{q_1}-\frac1{q_2})]\cap[0,2)$,
\begin{align}\label{thm:u<}
\|u_{\varepsilon, \alpha}^<\|_{L_T^{q_1}B_{r,2}^{\sigma+s}(\mathbb{T}^d)}&
\lesssim_{\alpha} \|\phi_0\|_{B_{r,2}^{\sigma+s}(\mathbb{T}^d)}+\|\phi_1\|_{B_{r,2}^{\sigma+s-1}(\mathbb{T}^d)}
+\|f\|_{L_T^{q_2}B_{r,2}^{\sigma}(\mathbb{T}^d)}.
\end{align}
Assume that $(q_k,r_k)\in[2,\infty]^2$ ($k=1,2$) satisfies
\begin{align*}
\frac{1}{m q_k}=\frac12-\frac{1}{r_k}
\end{align*}
for $ \frac{d-1}{2}< m \leq \infty$ 
and define $s_k=\frac{d+1}2\left(\frac12-\frac1{r_k}\right)$.
For any $\sigma \in\mathbb{R}$ and $ s \in[0,1]$, it holds that
\begin{align}\label{thm:u>}
\begin{aligned}
\|u_{\varepsilon,\alpha}^>\|_{L_T^{q_1}B_{r_1,2}^{\sigma} (\mathbb{T}^d)}
&
\lesssim_{\alpha,T} \varepsilon^{\delta_1}\|\phi_0\|_{H^ {\sigma +s_1}(\mathbb{T}^d)}
+\varepsilon^{\delta_1}\|\phi_1\|_{H^{\sigma +s_1-1}(\mathbb{T}^d)}\\
&\quad+\varepsilon^{\delta_1+\delta_2- s }\|f\|_{L_T^{q_2'}B_{r_2',2}^{ \sigma +s_1+s_2-  s }(\mathbb{T}^d)},
\end{aligned}
\end{align}
where $\delta_k =\delta(q_k,r_k)=\frac2{q_k}-\frac{d-1}2\left(\frac12-\frac1{r_k}\right)$.
\end{thm}
\vspace{3mm}

Theorem \ref{mainthm_energy} is a particular case of Theorem \ref{mainthm}: $(q_1,q_2,r,r_1,r_2,s)=(\infty,2,2,2,2,1)$.
This estimate can be extended more generally to, for example, the endpoint case or 
the case of $ m< \frac{d-1}{2}$, but this is not our objective and we omit it  (refer to \cite{I, IM}).  The proof of Theorem \ref{mainthm} can be found in Section \ref{section7}.
\vspace{3mm}

From now we restrict ourselves to the two dimensional case, and we pay attention to the equation in purpose, namely 
\begin{align}\label{eq:dampedwave_W}
\left\{
\begin{aligned}
&\varepsilon^2\partial_t^2 \Psi_{\varepsilon,\alpha}+ 2\alpha \partial_t \Psi_{\varepsilon,\alpha}+(1-\Delta) \Psi_{\varepsilon,\alpha}
+|\Psi_{\varepsilon,\alpha}|^{2n} \Psi_{\varepsilon,\alpha} =2\sqrt{\re(\alpha)} \partial_t W, &t>0,~ x\in \mathbb{T}^2,\\
&(\Psi_{\varepsilon,\alpha},\varepsilon\partial_t \Psi_{\varepsilon,\alpha})|_{t=0}=(\psi,\phi), &x\in\mathbb{T}^2,
\end{aligned}
\right.
\end{align}
where $n\in\mathbb{N}$ and $W$ is the cylindrical Wiener process as  
\begin{equation*} 
W (t, x)= \sum_{k\in \Z^2} (\beta_{k,R} (t) + i\beta_{k,I}(t)) e_k (x).
\end{equation*}
Here, $(\beta_{k,R} (t))_{t \ge 0}$ and $(\beta_{k,I} (t))_{t \ge 0}$ are sequences of independent real-valued 
Brownian motions on the stochastic basis $(\Omega, \mathcal{F}, \prob, (\mathcal{F}_t)_{t\ge 0}).$ 
The coefficient balance between the dissipation term $2\alpha \partial_t \Psi_{\varepsilon,\alpha}$ and 
the noise term  $2\sqrt{\re(\alpha)} \partial_t W$ makes the corresponding Gibbs measure be independent of $\alpha$. 
If we consider general coefficients $2\sqrt{\re(\alpha)\mathcal{T}}$ with $\mathcal{T}>0$, then the corresponding Gibbs measure depends on $\mathcal{T}$ as in \eqref{eq:GibbsdependsonT}.
In all what follows, the notation $\E$ stands for the expectation with respect to $\prob$. 
For a probability measure $\mu$ defined on $H^{s}$, integration with respect to $\mu \otimes \mathbb{P}$ denoted by $\E_{\mu}$.
\vspace{3mm}

As was already proved in \cite{GKO, ORT}, due to the space-time white noise, 
the solution of (\ref{eq:dampedwave_W}) have negative space regularity, 
and thus the nonlinear term $|\Psi|^{2n} \Psi$ is ill-defined.  
In order to make sense of this term, 
we will proceed as in \cite{DPD, GKO, ORT}, use the Wick product and renormalize the nonlinear term. 
\begin{align}\label{eq:dampedwave_Wick}
\left\{
\begin{aligned}
&\varepsilon^2\partial_t^2 \Psi_{\varepsilon,\alpha}+2\alpha \partial_t \Psi_{\varepsilon,\alpha}+(1-\Delta) \Psi_{\varepsilon,\alpha}
+:\Psi_{\varepsilon,\alpha}^{n+1}\overline{\Psi_{\varepsilon,\alpha}}^n: \ =2\sqrt{\re(\alpha)} \partial_t W, &t>0,~ x\in \mathbb{T}^2,\\
&(\Psi_{\varepsilon,\alpha},\varepsilon\partial_t \Psi_{\varepsilon,\alpha})|_{t=0}=(\psi,\phi), &x\in\mathbb{T}^2. 
\end{aligned}
\right.
\end{align}
The notation 
$:u^{n+1}\bar{u}^n:$ is the complex Wick product 
defined below (see Appendix \ref{app:Wick}). 

Writing the solution $\Psi_{\varepsilon,\alpha}=U_{\varepsilon,\alpha}+Z_{\varepsilon,\alpha}$ with the stationary solution $Z_{\varepsilon,\alpha}$
for the linear stochastic equation
\begin{equation} \label{eq:Z}
\varepsilon^2\partial_t^2 Z_{\varepsilon,\alpha}+ 2\alpha \partial_t Z_{\varepsilon,\alpha}+(1-\Delta) Z_{\varepsilon,\alpha}
=2\sqrt{\re(\alpha)} \partial_t W,  
\end{equation}
we find out the following random partial differential equation for $U_{\varepsilon,\alpha}$:
\begin{equation}\label{eq:u}
\left\{
\begin{aligned}
&\varepsilon^2 \partial_t^2 U_{\varepsilon,\alpha} +2 \alpha \partial_t U_{\varepsilon,\alpha} +(1-\Delta) U_{\varepsilon,\alpha}
+ :(U_{\varepsilon,\alpha}+Z_{\varepsilon,\alpha})^{n+1}(\overline{U_{\varepsilon,\alpha}+Z_{\varepsilon,\alpha}})^n:\ =0,\\
&(U_{\varepsilon,\alpha},\varepsilon\partial_tU_{\varepsilon,\alpha})|_{t=0}=(u,v)
\end{aligned}
\right.
\end{equation}
with $(u,v)=(\psi,\phi)- (Z_{\varepsilon,\alpha},\varepsilon \partial_t Z_{\varepsilon,\alpha})|_{t=0}$.
\vspace{3mm}

Applying Theorem \ref{mainthm_energy} to (\ref{eq:u}), 
we have the local existence of solution. 
The proof of the following result can be found in Section \ref{section:LWP}.

\begin{cor}\label{cor2 of mainthm}
Fix any $T>0$. Let $\sigma<1$ be sufficiently close to $1$ and let $\delta=1-\sigma$.
For any compact subset $K$ of $\mathbb{C}_+\setminus(0,\infty)$, the equations (\ref{eq:u}) parametrized 
by $\varepsilon>0$ and $\alpha\in K$ are uniformly well-posed:
 there exist a random time $T^*(\omega) >0$, and a unique solution   
$$
U_{\varepsilon,\alpha} \in  C([0, T^*); H^\sigma) \cap C^1([0, T^*); H^{\sigma-1}), \quad \mbox{a.s.}
$$
Here, $T^*$ depends only on $\|(u,v)\|_{\mathcal{H}^{\sigma}}$ and $\sum_{k+\ell\le 2n+1}
\|:Z_{\varepsilon, \alpha}^k \overline{Z_{\varepsilon, \alpha}}^\ell:\|_{L^{\infty}_T W_x^{-\delta, \infty}}$. 
\end{cor}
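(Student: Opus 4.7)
The plan is to run a Banach fixed-point argument in the $\varepsilon$-weighted energy space
$$X_T^\sigma := \Big\{V\in C([0,T];H^\sigma(\mathbb{T}^2))\,:\,\varepsilon\partial_t V\in C([0,T];H^{\sigma-1}(\mathbb{T}^2))\Big\},$$
equipped with $\|V\|_{X_T^\sigma} := \|(V,\varepsilon\partial_t V)\|_{L^\infty_T\mathcal{H}^\sigma}$. Define $\Phi_{\varepsilon,\alpha}(U)$ to be the mild solution of the linear damped Klein-Gordon equation with initial data $(u,v)$ and source $f = -:(U+Z_{\varepsilon,\alpha})^{n+1}(\overline{U+Z_{\varepsilon,\alpha}})^n:$. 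Applying Theorem~\ref{mainthm_energy} yields
$$\|\Phi_{\varepsilon,\alpha}(U)\|_{X_T^\sigma}\le C(K)\Big(\|(u,v)\|_{\mathcal{H}^\sigma} + \|f\|_{L^2_T H^{\sigma-1}}\Big),$$
with $C(K)$ finite, independent of $\varepsilon\in(0,1]$, and depending only on the compact set $K$ through the locally bounded dependence on $\alpha$ in Theorem~\ref{mainthm_energy}. All $(\varepsilon,\alpha)$-uniformity in what follows will be funnelled through this single constant.

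The heart of the argument is then a deterministic nonlinear estimate. First I would expand the Wick product binomially,
$$:(U+Z_{\varepsilon,\alpha})^{n+1}(\overline{U+Z_{\varepsilon,\alpha}})^n: = \sum_{\substack{0\le j\le n+1 \\ 0\le m\le n}} \binom{n+1}{j}\binom{n}{m}\,U^j\overline{U}^m\,:Z_{\varepsilon,\alpha}^{n+1-j}\overline{Z_{\varepsilon,\alpha}}^{n-m}:,$$
reducing the problem to controlling each product $U^j\overline{U}^m\cdot\Theta$ in $H^{\sigma-1}=H^{-\delta}(\mathbb{T}^2)$ with $\Theta\in W^{-\delta,\infty}$. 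Bony's paraproduct estimate in dimension two gives, for any $\delta'\in(\delta,\sigma)$,
$$\|U^j\overline{U}^m \cdot \Theta\|_{H^{-\delta}}\lesssim \|U^j\overline{U}^m\|_{H^{\delta'}}\,\|\Theta\|_{W^{-\delta,\infty}},$$
and a Moser-type fractional Leibniz rule combined with the Sobolev embedding $H^\sigma(\mathbb{T}^2)\hookrightarrow L^p(\mathbb{T}^2)$ for every $p<\infty$ produces $\|U^j\overline{U}^m\|_{H^{\delta'}}\lesssim \|U\|_{H^\sigma}^{j+m}$ (possible because $\sigma$ is taken close enough to $1$ so that $\delta$ is small).

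Inserting these bounds into the linear estimate and using Cauchy-Schwarz to pass from $L^\infty_T$ to $L^2_T$ at the cost of a factor $T^{1/2}$, one obtains, on a ball $B_R\subset X_T^\sigma$,
$$\|\Phi_{\varepsilon,\alpha}(U)\|_{X_T^\sigma}\le C(K)\|(u,v)\|_{\mathcal{H}^\sigma} + C(K)\,T^{1/2}\,P(R,M_Z),$$
together with the analogous Lipschitz bound with constant $C(K)T^{1/2}P'(R,M_Z)$, where
$$M_Z := \sum_{k+\ell\le 2n+1}\|:Z_{\varepsilon,\alpha}^k\overline{Z_{\varepsilon,\alpha}}^\ell:\|_{L^\infty_T W^{-\delta,\infty}}$$
and $P,P'$ are universal polynomials. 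Fixing $R = 2C(K)\|(u,v)\|_{\mathcal{H}^\sigma}$ and then choosing $T^*=T^*(\|(u,v)\|_{\mathcal{H}^\sigma},M_Z)$ small enough, uniformly in $(\varepsilon,\alpha)\in(0,1]\times K$, makes $\Phi_{\varepsilon,\alpha}$ a strict contraction on $B_R$; the Banach fixed-point theorem then supplies the unique mild solution $U_{\varepsilon,\alpha}\in C([0,T^*);H^\sigma)$, and the $C^1([0,T^*);H^{\sigma-1})$ regularity follows by differentiating the mild formulation (\ref{mildform}). The main obstacle is the nonlinear product estimate: because $H^\sigma$ is not an algebra for $\sigma<1$ on $\mathbb{T}^2$, the powers $U^j\overline{U}^m$ cannot be handled by a naive Banach-algebra iteration and must be treated via paraproducts plus the 2D Sobolev embedding, taking care that the polynomial bounds pick up no hidden $\varepsilon$- or $\alpha$-dependence -- which is the case precisely because the product estimate is purely spatial and deterministic, all parameter dependence being absorbed into the single constant $C(K)$ supplied by Theorem~\ref{mainthm_energy}.
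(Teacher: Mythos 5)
Your proposal is correct and follows essentially the same route as the paper: a contraction argument for the mild formulation driven by the $(\varepsilon,\alpha)$-uniform energy estimate of Theorem~\ref{mainthm_energy}, the binomial/Wick expansion of the nonlinearity, a spatial product estimate placing $U^j\overline{U}^m\,\Xi$ in $H^{-\delta}$ via $\|U^j\overline{U}^m\|_{H^{\delta'}}\|\Xi\|_{W^{-\delta,\infty}}$, a factor $T^{1/2}$ from Cauchy--Schwarz in time, and a ball radius/time chosen in terms of $\|(u,v)\|_{\mathcal{H}^\sigma}$ and the Wick norms. The only cosmetic differences are that you contract in the weighted space containing $\varepsilon\partial_t V$ (the paper contracts in $C_{T_0}H^\sigma$ and recovers $\varepsilon\partial_t U$ afterwards) and you bound the polynomial by paraproduct plus Moser/Sobolev embedding where the paper uses the Besov embedding $H^{2\delta+(1-1/k)}\subset B_{2k,2k}^{2\delta}$.
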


\begin{rem} \label{rem:cor21} As we will see just below, we will consider the limit $\varepsilon\to 0$ (non relativistic limit) 
or $\mathrm{Im} (\alpha) \to 0$ (ultra relativistic limit). Therefore, 
the statement of  Corollary \ref{cor2 of mainthm} is precisely as follows: 
for each $\varepsilon>0$  and  $\alpha \in K$, there exist a random maximal existence time 
$T^*_{\varepsilon, \alpha}(\omega) >0$, and a unique solution   
$$
U_{\varepsilon,\alpha} \in  C([0, T^*_{\varepsilon, \alpha}); H^\sigma) \cap C^1([0, T^*_{\varepsilon, \alpha}); H^{\sigma-1}), \quad \mbox{a.s.},
$$
and if we set  $T^*:=\mathrm{liminf}_{\varepsilon \to 0} T^*_{\varepsilon, \alpha},$ 
(or $T^{**}:=\mathrm{liminf}_{\mathrm{Im}(\alpha) \to 0} T^*_{\varepsilon, \alpha},$ )
then $T^*(\omega)>0$ ($T^{**}(\omega)>0$) a.s. and  
$$
U_{\varepsilon,\alpha} \in  C([0, T^* (T^{**})); H^\sigma) \cap C^1([0, T^* (T^{**})); H^{\sigma-1}), \quad \mbox{a.s.}
$$
\end{rem}

The Gibbs measure has been constructed in \cite{ORT}, and can be made sense, again with the help of the renormalization:
$$ \rho_{2n+2}(d\psi d\phi) = \Gamma^{-1} e^{-H(\psi,\phi)} d\psi d\phi$$  
where
$$H(\psi, \phi)=\frac{1}{2} \int |\phi|^2 dx + V(\psi),$$
and
$$V(\psi) =\frac{1}{2} \int |\nabla \psi|^2 dx + \frac12 \int |\psi|^2 dx + \frac{1}{2n+2} \int :|\psi|^{2n+2} : dx,$$
and $\Gamma$ is the normalizing constant. 
Using the Gaussian measures $\mu_0 = \mathcal{N}(0, (1-\Delta)^{-1})$ and $\mu_1=\mathcal{N}(0,I)$ we may write 
\begin{eqnarray*}
&& \rho_{2n+2}(d\psi d\phi) = \Gamma^{-1} e^{-H(\psi,\phi)} d\psi d\phi
= \Gamma^{-1} e^{-\frac{1}{2n+2} \int :|\psi|^{2n+2} : dx }  \mu(d\psi d\phi) \\
&& \hspace{5cm}= \Gamma^{-1} e^{-\frac{1}{2n+2} \int :|\psi|^{2n+2} : dx }  \mu_0(d\psi)\otimes\mu_1(d\phi).
\end{eqnarray*}
It is known that 
$\mu_0$ is a stationary measure of $Z_{0,\alpha}$
and $\supp\rho_{2n+2}=\supp\mu\subset\mathcal{H}^{-\delta}$ for any $\delta>0$. Remark that the Gibbs measure does not depend on 
$\varepsilon>0$, nor on $\alpha \in \C$. 
\vspace{3mm}

We make use of the Gibbs measure to globalize the solution obtained above. 

\begin{prop}\label{mainprop:GWP}
There exists a measurable set $\mathcal{O}_{\varepsilon, \alpha} \subset \mathcal{H}^{-\delta}$ such that $\rho_{2n+2}(\mathcal{O}_{\varepsilon, \alpha})=1$ 
and for $(\psi, \phi) \in \mathcal{O}_{\varepsilon, \alpha}$ the solution of \eqref{eq:dampedwave_Wick} exists globally a.s..
Moreover, the measure $\rho_{2n+2}$ is invariant for this solution.
\end{prop}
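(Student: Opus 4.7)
The plan is to follow the classical invariant-measure globalisation scheme of Bourgain, adapted to the stochastic setting as in \cite{DPD, ORT}. It rests on three ingredients: a Galerkin approximation of (\ref{eq:dampedwave_Wick}), invariance of a truncated Gibbs measure under the Galerkin flow, and the quantitative local theory of Corollary \ref{cor2 of mainthm}, in which $T^\ast$ depends only on $\|(u,v)\|_{\mathcal{H}^\sigma}$ and on the $L^\infty_tW^{-\delta,\infty}_x$ norms of the Wick powers of $Z_{\varepsilon,\alpha}$.

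First I would introduce the Galerkin system obtained by projecting the nonlinearity and the noise through $\Pi_N$. Its low modes form a finite-dimensional SDE with polynomial drift and additive noise, hence admit a unique global strong solution for every initial datum, while the complementary modes satisfy an uncoupled damped linear SPDE with stationary law $(I-\Pi_N)\mu$. I would then establish invariance of
\begin{equation*}
\rho^N_{2n+2}(d\psi\,d\phi)=(\Gamma^N)^{-1}e^{-\frac{1}{2n+2}\int:|\Pi_N\psi|^{2n+2}:dx}\mu_0(d\psi)\otimes\mu_1(d\phi)
\end{equation*}
under this Markov semigroup. Recasting the equation as a first-order system for $(\Psi,\varepsilon\partial_t\Psi)$, the drift splits into a Hamiltonian vector field generated by $H$ (preserved by Liouville on the finite-dimensional phase space), a $U(1)$ rotation generated by $\im(\alpha)$ (under which both $H$ and the Gaussian reference measure are invariant), and an Ornstein--Uhlenbeck piece whose fluctuation--dissipation balance matches $\mu_1$; inserting these observations into the Kolmogorov forward equation yields the desired invariance.

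With invariance in hand, I would run the Bourgain iteration on $[0,T]$. Given $\eta,T>0$, Corollary \ref{cor2 of mainthm} together with Gaussian hypercontractivity on the stationary Wick powers of $Z^N_{\varepsilon,\alpha}$ allows me to choose $\lambda$ large enough that the set of ``bad'' data, on which $\|(u,v)\|_{\mathcal{H}^\sigma}+\sum_{k+\ell\le 2n+1}\|:Z_{\varepsilon,\alpha}^k\overline{Z_{\varepsilon,\alpha}}^\ell:\|_{L^\infty_{\tau(\lambda)}W^{-\delta,\infty}}>\lambda$, has $\rho^N_{2n+2}\otimes\prob$-probability below $\eta\tau(\lambda)/T$. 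Invariance of $\rho^N_{2n+2}$ and the Markov property transport this tail estimate to each of the $T/\tau(\lambda)$ mesh points, and a union bound supplies a set $\mathcal{O}^{N,T,\eta}_{\varepsilon,\alpha}$ of $\rho^N_{2n+2}$-measure at least $1-\eta$ on which the Galerkin solution extends to $[0,T]$. Choosing $\eta_n\to 0$ summable and $T_n\to\infty$ and applying Borel--Cantelli yields a set $\mathcal{O}^N_{\varepsilon,\alpha}$ of full $\rho^N_{2n+2}$-measure on which the Galerkin solution is global.

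Finally I would pass to the limit $N\to\infty$: the measures $\rho^N_{2n+2}$ converge strongly to $\rho_{2n+2}$ by Nelson's estimate, the Wick powers of $Z^N_{\varepsilon,\alpha}$ converge to those of $Z_{\varepsilon,\alpha}$ in every $L^p(\Omega;L^\infty_tW^{-\delta,\infty}_x)$ by hypercontractivity, and a stability argument using Corollary \ref{cor2 of mainthm} applied to the Da Prato--Debussche remainder $U^N-U^M$ (which lives in $H^\sigma$) shows that $\Psi^N_{\varepsilon,\alpha}\to\Psi_{\varepsilon,\alpha}$ on the intersection of the typical sets. Setting $\mathcal{O}_{\varepsilon,\alpha}:=\limsup_N\mathcal{O}^N_{\varepsilon,\alpha}$ then yields a set of full $\rho_{2n+2}$-measure. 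The principal obstacle is this last step: since $\supp\rho_{2n+2}\subset\mathcal{H}^{-\delta}$, the convergence of the renormalised nonlinearity has to proceed through the complex Wick calculus developed in the appendix, and the uniform-in-$N$ stability of the fixed-point argument underlying Corollary \ref{cor2 of mainthm} is where the $\varepsilon$-uniform energy estimate of Theorem \ref{mainthm_energy} is really used.
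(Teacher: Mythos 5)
Your proposal follows essentially the same route as the paper: a finite-dimensional truncation of the (Wick-renormalized) nonlinearity, invariance of the truncated Gibbs measure $\rho_N$ established through the generator/complex It\^o computation, a Bourgain-type iteration exploiting the uniform local theory of Corollary \ref{cor2 of mainthm} together with tail bounds on the Wick powers, and passage to the limit $N\to\infty$ via Nelson's estimate (Lemma \ref{lem:G_N}) and the convergence of the approximate solutions. The only differences are cosmetic: the paper keeps the noise unprojected and records the Bourgain step as a uniform-in-$N$ bound on $\E\bigl[\sup_{0\le t\le T}\|(\Psi^N,\Phi^N)\|_{\mathcal{H}^{-\delta}}\bigr]$ integrated against $\rho_N$, which is transferred to $\rho_{2n+2}$ by Fatou's lemma rather than by your Borel--Cantelli/$\limsup$-of-full-measure-sets bookkeeping.
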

\vspace{3mm}

The proof of Proposition \ref{mainprop:GWP} can be found in Section \ref{section:GE}.
Finally we justify the non-relativistic limit. We may formally expect that when $\varepsilon$ goes to $0$, $\Psi_{\varepsilon,\alpha}$ converges 
to $\Psi_\alpha$, which is the solution of the stochastic complex Ginzburg-Landau equation: 
\begin{equation} \label{eq:CGL}
\left\{
\begin{aligned}
&2\alpha \partial_t \Psi_\alpha +(1-\Delta)\Psi_\alpha + : \Psi_\alpha^{n+1}\overline{\Psi_\alpha}^n:\ = 2\sqrt{\re(\alpha)} \partial_tW, \\ 
&\Psi_\alpha |_{t=0}=\psi.
\end{aligned}
\right.
\end{equation}

\begin{thm}\label{mainthm:NRL} Let $\delta>0$. 
Consider the solutions $\{\Psi_{\varepsilon(j),\alpha}\}_{j\in\mathbb{N}}$ of \eqref{eq:dampedwave_Wick} 
according to the sequence $\varepsilon(j)=j^{-1}$. 
There exists a measurable set $\mathcal{A}\subset\mathcal{H}^{-\delta}$ such that $\mu(\mathcal{A})=1$ 
and for $(\psi,\phi)\in\mathcal{A}$, $\{\Psi_{\varepsilon(j),\alpha}\}_{j\in\mathbb{N}}$ converges to the solution of \eqref{eq:CGL} 
in $L^{\infty}_{\tau} H^{-\delta}(\mathbb{T}^2)$ for any $\tau<T_{\mathrm{hmax}}$, almost surely, 
where $T_{\mathrm{hmax}}$ is the maximal existence time of the solution of (\ref{eq:CGL}). Moreover, 
this convergence holds globally in time for any $(\psi,\phi)\in\mathcal{A}\cap\bigcap_{j\in\mathbb{N}}\mathcal{O}_{\varepsilon(j),\alpha}$. 
\end{thm}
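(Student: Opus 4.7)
The strategy is the Da Prato--Debussche decomposition: write
$\Psi_{\varepsilon(j),\alpha} = U_{\varepsilon(j),\alpha} + Z_{\varepsilon(j),\alpha}$
and $\Psi_\alpha = U_\alpha + Z_\alpha$, with $Z_\alpha$ the stationary Gaussian solution of the linear Ginzburg--Landau equation driven by the same $W$. A convenient coupling is
$(Z_{\varepsilon(j),\alpha},\varepsilon(j)\partial_t Z_{\varepsilon(j),\alpha})|_{t=0}=(\psi,\phi)$ and $Z_\alpha(0)=\psi$;
since $(\psi,\phi)\sim\mu_0\otimes\mu_1$ for $(\psi,\phi)\in\mathcal{A}$, both Gaussian processes remain stationary, and both nonlinear remainders start from zero, eliminating the usual wave--parabolic initial-layer mismatch at the level of the regular parts.

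The set $\mathcal{A}$ is built so that the stochastic enhancements converge almost surely. A direct covariance computation in Fourier variables gives
$\E\|(Z_{\varepsilon,\alpha}-Z_\alpha)(t)\|_{H^{-\delta}}^2\to 0$
uniformly on $[0,T]$, with the analogous convergence in each fixed Wiener chaos for the Wick monomials
$:Z_{\varepsilon,\alpha}^k\overline{Z_{\varepsilon,\alpha}}^\ell:$
of total degree at most $2n+1$, aiming at $W^{-\delta,\infty}(\mathbb{T}^2)$. Nelson's hypercontractivity lifts each $L^2(\Omega)$ bound to any $L^p(\Omega)$, Kolmogorov's continuity criterion adds the $L^\infty_T$ control, and a Borel--Cantelli argument along $\varepsilon(j)=1/j$ produces $\mathcal{A}$ with $\mu(\mathcal{A})=1$.

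Fix $(\psi,\phi)\in\mathcal{A}$ and set $D_j := U_{\varepsilon(j),\alpha} - U_\alpha$. Subtracting the Ginzburg--Landau equation for $U_\alpha$ from (\ref{eq:u}) yields
\begin{equation*}
\varepsilon(j)^2\partial_t^2 D_j + 2\alpha \partial_t D_j + (1-\Delta) D_j = \mathcal{R}_j - \varepsilon(j)^2\partial_t^2 U_\alpha,
\end{equation*}
with $D_j(0)=0$ and $\varepsilon(j)\partial_t D_j(0)=-\varepsilon(j)\partial_t U_\alpha(0)$, the latter being of order $\varepsilon(j)$ in $H^{-\delta-1}$ since $\partial_t U_\alpha(0)=-(2\alpha)^{-1}[(1-\Delta)\psi + \,:\!\psi^{n+1}\overline{\psi}^n\!:]$ lies in a fixed negative Sobolev space; here $\mathcal{R}_j$ denotes the difference of the two Wick polynomials. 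Applying Theorem \ref{mainthm_energy} with $\sigma$ slightly less than $1$ as in Corollary \ref{cor2 of mainthm}, whose constant is uniform in $\varepsilon(j)$, bounds $\|D_j\|_{L^\infty_\tau H^\sigma}$ by the initial data plus $\|\mathcal{R}_j\|_{L^2_\tau H^{\sigma-1}}+\|\varepsilon(j)^2\partial_t^2 U_\alpha\|_{L^2_\tau H^{\sigma-1}}$. Expanding $\mathcal{R}_j$ as monomials in $D_j$ with coefficients built from $U_\alpha$, $Z_{\varepsilon(j),\alpha}$, $Z_\alpha$ and their Wick powers, one isolates a part Lipschitz in $D_j$ (coefficients uniformly bounded on $[0,\tau]$ by Corollary \ref{cor2 of mainthm} together with Step 2) plus a remainder depending only on differences of Wick powers of $Z$, which vanishes as $j\to\infty$. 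A Gronwall argument then closes the estimate on $[0,\tau]$ for any $\tau<T_{\mathrm{hmax}}$, and adding the already established convergence $Z_{\varepsilon(j),\alpha}\to Z_\alpha$ yields the stated $L^\infty_\tau H^{-\delta}$ convergence.

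The main obstacle is the control of $\varepsilon(j)^2\partial_t^2 U_\alpha$: differentiating (\ref{eq:CGL}) twice costs up to four spatial derivatives, whereas $U_\alpha(t)\in H^\sigma$ with $\sigma<1$, so the naive bound may not even be finite. I would handle it by integrating by parts in time inside the mild formulation (\ref{mildform}): the integral
$\int_0^t e^{(t-s)\lambda^{\pm}_{\varepsilon(j),\alpha}(\nabla)}\varepsilon(j)^2\partial_s^2 U_\alpha(s)\,ds$
splits into boundary terms at $s=0,t$ (carrying a full $\varepsilon(j)^2$ factor on $\partial_s U_\alpha$, which loses only two spatial derivatives) plus an integral where $\lambda^{\pm}_{\varepsilon(j),\alpha}(\nabla)$ acts on the semigroup and the parabolic smoothing estimate $\|\partial_s U_\alpha(s)\|_{H^\sigma}\lesssim s^{-\beta}$ for some $\beta<1/2$ absorbs the singularity at $s=0$. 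Finally, for $(\psi,\phi)\in \mathcal{A}\cap\bigcap_j \mathcal{O}_{\varepsilon(j)}$, Proposition \ref{mainprop:GWP} provides global existence of $\Psi_{\varepsilon(j),\alpha}$ for every $j$, and combined with global existence of the Ginzburg--Landau solution under the Gibbs measure, the local-in-time convergence iterates to any finite $\tau$.
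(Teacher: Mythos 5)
Your overall skeleton --- the Da Prato--Debussche splitting $\Psi=U+Z$, almost sure convergence of the Wick powers via an $L^2(\Omega)$ covariance computation, hypercontractivity, Kolmogorov and Borel--Cantelli along $\varepsilon(j)=j^{-1}$, then a deterministic comparison of the remainders, then Proposition \ref{mainprop:GWP} for the global statement --- is exactly the paper's, and your Step 2 is essentially Proposition \ref{prop:probability part of NRL}. The gap is in the deterministic step. By subtracting the two equations for $U_{\varepsilon(j),\alpha}$ and $U_\alpha$ you force yourself to treat $\varepsilon(j)^2\partial_t^2U_\alpha$ as a source term, and your repair hinges on the smoothing bound $\|\partial_sU_\alpha(s)\|_{H^\sigma}\lesssim s^{-\beta}$ with $\sigma$ close to $1$. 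That bound is false: from the limit equation, $\partial_sU_\alpha=-(2\alpha)^{-1}\big[(1-\Delta)U_\alpha+\sum_{k,\ell}\binom{n+1}{k}\binom{n}{\ell}U_\alpha^{n+1-k}\overline{U_\alpha}^{n-\ell}:Z^k\overline{Z}^\ell:\big]$, and the Wick factors have only $W^{-\delta,\infty}$ spatial regularity at every time, so $\partial_sU_\alpha(s)$ lives only in negative-order Sobolev spaces; parabolic smoothing improves $U_\alpha$ itself, not the rough coefficients. (Relatedly, your formula $\partial_tU_\alpha(0)=-(2\alpha)^{-1}[(1-\Delta)\psi+:\psi^{n+1}\overline{\psi}^n:]$ is inconsistent with your own choice $U_\alpha(0)=0$; the correct value $-(2\alpha)^{-1}:\psi^{n+1}\overline{\psi}^n:$ is what makes that boundary term harmless in $H^{\sigma-1}$.) After your integration by parts you therefore still face a Duhamel integral against a source that is only of regularity $H^{-\delta-}$ in space, and to close the Gronwall argument at the level $H^\sigma$, $\sigma$ near $1$ (which you need for the product estimates against the Wick powers), you must trade the symbols $\varepsilon^2\lambda^\pm_{\varepsilon,\alpha}$ and the heat-type decay of $e^{(t-s)\lambda^+_{\varepsilon,\alpha}(\nabla)}$ against roughly one full derivative while retaining a positive power of $\varepsilon$; none of that bookkeeping is carried out, so as written the key step does not go through.

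The paper avoids this entirely: it never differentiates $U_\alpha$ twice in time. Instead it proves the linear comparison Theorem \ref{mainthm:simple}, $\|u_\varepsilon-v\|_{L^\infty_TH^\sigma}\lesssim_\alpha\varepsilon^\theta(\|\phi_0\|_{H^{\sigma+\theta}}+\|\phi_1\|_{H^{\sigma-1+\theta}}+\|f\|_{L^2_TH^{\sigma-1+\theta}})$, and then compares the two mild (fixed-point) formulations, writing $U_{\varepsilon}-U_\alpha$ as a uniformly contractive part (Theorem \ref{mainthm_energy}) plus the difference of the two Duhamel maps applied to the same function $U_\alpha$, which is $O(\varepsilon^\theta)$ by Theorem \ref{mainthm:simple} because the nonlinearity evaluated at $U_\alpha$ is only needed in $L^2_TH^{\sigma-1+\theta}$ with $\sigma-1+\theta<0$, together with the convergence of the Wick inputs. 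If you wish to salvage your formulation, the clean fix is to perform the subtraction at the level of the mild forms and invoke Theorem \ref{mainthm:simple} with $f$ the fixed nonlinearity of the limit equation --- which is precisely the paper's route --- rather than to manufacture an estimate for $\varepsilon^2\partial_t^2U_\alpha$; you would also need the uniform-in-$j$ continuation argument of Remark \ref{rem:cor21} to reach every $\tau<T_{\mathrm{hmax}}$.
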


The proof of Theorem \ref{mainthm:NRL} can be found in Section \ref{section:NRL}.

\begin{rem}
In \cite{Tr}, it was shown that $T_{\mathrm{hmax}}=\infty$ a.s. for any $\psi\in B_{\infty,\infty}^s$ with $s>-\frac2{2n+1}$.
The same result was shown in \cite{M} when $n=1$.
\end{rem}

\begin{rem}
More generally, as a sequence $\varepsilon(j)$ in Theorem \ref{mainthm:NRL}, we can take $\varepsilon(j)$ such that 
$\{\varepsilon(j)\}_j \in \ell^p$ for some large $p<\infty$ (see the proof of Proposition 8 where we use the Borel-Cantelli lemma.) The same remark is applied for $\alpha_2(j)$ of Corollary \ref{maincor:URL} below.
\end{rem}

\begin{rem}
This limit is justified by proving that
$U_{\varepsilon, \alpha}\to U_{0,\alpha}$ and
$Z_{\varepsilon,\alpha}\to Z_{0,\alpha}$ as $\varepsilon \to 0$.
The convergence of the former deterministic solutions can be obtained
once the $\varepsilon$-uniform estimates
is derived, using the fact that the eigenvalue of the damped wave
operator
$\lambda_{\varepsilon,\alpha}^+(\nabla)$ converges to the eigenvalue of the heat operator $-\frac{\langle \nabla\rangle^2}{2\alpha}$
as $\varepsilon \to 0$, while $\lambda_{\varepsilon,\alpha}^-(\nabla)$ diverges to $-\infty$ (see the proof of Theorem \ref{mainthm:simple} in Section \ref{section:NRL}).
The convergence of the latter stochastic solutions is shown, similarly
to the proof
that the finite dimensional Wick products converge, and here the
important point is
that the Gaussian measure $\mu$ is independent of $\varepsilon.$
The $\varepsilon$-independence of the Gibbs measure $\rho_{2n+2}$,
and the fact $\supp \mu = \supp \rho_{2n+2}$ (typically in the case $d\le 2$)
are quite important and they allow us to prove the global-in-time
convergence
for $\rho$-almost all initial data.
\end{rem}

In a similar way, we may prove the ultra relativistic limit. 
We fix $\varepsilon=1$ and $\re(\alpha)=\alpha_1>0$ and let $\im(\alpha)$ go to $0$.
For the sake of simplicity, we write $\Psi_{1,\alpha_1+\alpha_2i}=\Psi_{\alpha_2}$.
\begin{cor}\label{maincor:URL} 
Consider the solutions $\{\Psi_{{\alpha_2}(j)}\}_{j\in\mathbb{N}}$ of \eqref{eq:dampedwave_Wick} according to the sequence ${\alpha_2}(j)=j^{-1}$.
There exists a measurable set $\mathcal{B}\subset\mathcal{H}^{-\delta}$ such that $\mu(\mathcal{B})=1$ 
and for $(\psi,\phi)\in\mathcal{B}$, $\{\Psi_{{\alpha_2}(j)}\}_{j\in\mathbb{N}}$ converges to the solution of \eqref{eq:dampedwave_W} with $\alpha$ replaced by $\alpha_1$
in $L^{\infty}_{\tau} H^{-\delta}(\mathbb{T}^2)$ for any $\tau< T_{\mathrm{wmax}} $, almost surely, 
where $T_{\mathrm{wmax}}$ is the maximal existence time 
of the solution of  (\ref{eq:dampedwave_W}) with $\alpha$ replaced by $\alpha_1$. Moreover,
this convergence holds globally in time 
for any $(\psi,\phi)\in\mathcal{B}\cap\bigcap_{j\in\mathbb{N}}\mathcal{O}_{{\alpha_2}(j)}$.
\end{cor}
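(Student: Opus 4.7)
The argument parallels Theorem \ref{mainthm:NRL} with $\varepsilon=1$ frozen and $\alpha=\alpha_1+i\alpha_2$ replacing the moving parameter. I would write $\Psi_{\alpha_2}=U_{\alpha_2}+Z_{\alpha_2}$ with $Z_{\alpha_2}$ the stationary solution of \eqref{eq:Z}, and compare to the decomposition $\Psi=U+Z$ of the solution of \eqref{eq:dampedwave_W} with $\alpha=\alpha_1$. Since the noise coefficient $2\sqrt{\re(\alpha)}=2\sqrt{\alpha_1}$ is independent of $\alpha_2$, all stochastic objects are built from the same Wiener process $W$. The first step is to prove that along the sequence $\alpha_2(j)=j^{-1}$ the stationary stochastic convolution $Z_{\alpha_2(j)}$ and each complex Wick power $:Z_{\alpha_2(j)}^k\overline{Z_{\alpha_2(j)}}^\ell:$, $k+\ell\le 2n+1$, converges almost surely in $L^\infty_T W^{-\delta,\infty}(\mathbb{T}^2)$ to the corresponding object built from $Z$. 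In Fourier the spectral densities of $Z_{\alpha_2}(t,k)$ depend continuously on $\alpha_2$ down to $0$ and are uniformly bounded on any compact parameter set, so dominated convergence gives $L^2(\Omega)$ mode-by-mode convergence, Wiener-chaos hypercontractivity upgrades this to $L^p(\Omega;L^\infty_T W^{-\delta,\infty})$, and Borel--Cantelli yields the a.s.\ convergence along the subsequence. Then $\mathcal{B}$ is defined as the full-$\mu$ set of initial data on which this convergence and the conclusions of Proposition \ref{mainprop:GWP} hold simultaneously.

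Local-in-time convergence of the smooth remainder is handled by studying the difference $V_j=U_{\alpha_2(j)}-U$, which on any $[0,\tau]$ with $\tau<T_{\mathrm{wmax}}$ solves
\begin{equation*}
\partial_t^2 V_j+2\alpha_1\partial_t V_j+(1-\Delta)V_j
=-2i\alpha_2(j)\,\partial_t U_{\alpha_2(j)}-G_j,
\end{equation*}
where $G_j$ is the difference of the Wick polynomials evaluated at $U_{\alpha_2(j)}+Z_{\alpha_2(j)}$ and at $U+Z$. Applying Theorem \ref{mainthm_energy} with $\varepsilon=1$, $\alpha=\alpha_1$ at regularity $\sigma$ slightly below $1$ bounds $V_j$ in $C_\tau\mathcal{H}^\sigma$ by the $L^2_\tau H^{\sigma-1}$ norm of the right-hand side. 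The first source carries an explicit factor $\alpha_2(j)\to 0$ and is absorbed via a uniform-in-$j$ bound on $\partial_t U_{\alpha_2(j)}$ delivered by the same energy estimate; $G_j$ is controlled by the multilinear Wick estimates already used in the proof of Corollary \ref{cor2 of mainthm}, combined with the convergences from the first step, and the loop is closed by a Gronwall bootstrap. To pass to all $\tau<T_{\mathrm{wmax}}$ for $(\psi,\phi)\in\mathcal{B}\cap\bigcap_j\mathcal{O}_{\alpha_2(j)}$ the local argument is iterated on consecutive time windows, using Proposition \ref{mainprop:GWP} and the $\alpha_2$-independence of $\rho_{2n+2}$ exactly as in the non-relativistic case.

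The delicate point, and the main obstacle I expect, is the uniform-in-$\alpha_2$ energy control near $\alpha_2=0$: Theorem \ref{mainthm_energy} is stated with constants only locally bounded on $\mathbb{C}_+\setminus(0,\infty)$, yet the limit $\alpha=\alpha_1$ sits on the excluded line $\im(\alpha)=0$. I would therefore revisit the Fourier-multiplier proof for $\varepsilon=1$ with $\re(\alpha)=\alpha_1>0$ fixed and verify directly that the propagators $e^{t\lambda_{1,\alpha}^{\pm}(\nabla)}$ appearing in \eqref{mildform} admit bounds depending only on $\alpha_1$ as $\im(\alpha)\to 0$: at high frequencies $\langle k\rangle^2\ge\alpha_1^2$ the roots are complex conjugates with real part $-\alpha_1$, producing uniform exponential decay; at low frequencies $\langle k\rangle^2<\alpha_1^2$ both roots are real and negative with a uniform gap, and the apparent singularity of the factor $1/\sqrt{\alpha^2-\langle k\rangle^2}$ at $\im(\alpha)=0$ is cancelled by the coefficient structure of $\phi_{\varepsilon,\alpha}^{\pm}$ and $f_{\varepsilon,\alpha}^{\pm}$. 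This uniformity is the quantitative input required both to define the limit solution $U$ and to bound $V_j$ above, and closes the proof of the corollary.
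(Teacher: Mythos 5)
Your overall architecture is the same as the paper's: split off the stationary solution, prove a.s.\ convergence of the Wick powers along $\alpha_2(j)=j^{-1}$, and combine this with an $\alpha_2$-uniform deterministic estimate, exactly as in Section \ref{section:NRL}. The problem is that the one ingredient the paper actually has to prove in Section 7 --- the uniform energy estimate up to the real axis, Theorem \ref{thm:ultra_uniform} --- is the step you only sketch, and your sketch of it is not correct as stated. You invoke Theorem \ref{mainthm_energy} with $\alpha=\alpha_1$ for the difference equation, but that theorem excludes real $\alpha$ precisely because its constants involve $1/|\re(\alpha)\im(\alpha)|$ through Proposition \ref{prop:base}\,(\ref{base:item2}), (\ref{base:item2'}); and your proposed repair asserts that for $\langle k\rangle^2<\alpha_1^2$ ``both roots are real and negative with a uniform gap''. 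There is no uniform gap: for $\alpha=\alpha_1+i\alpha_2$ with small $\alpha_2$ and $\langle k\rangle^2$ near $|\alpha|^2$, one has $|\sqrt{\alpha^2-\langle k\rangle^2}|\sim\sqrt{\alpha_1|\alpha_2|}\to0$ (and the degeneracy survives at $\alpha_2=0$ whenever $\alpha_1^2=1+|k|^2$ for some lattice point), so the factors $1/\sqrt{\alpha^2-\langle k\rangle^2}$ in $\phi_\alpha^\pm$, $f_\alpha^\pm$ blow up mode by mode. The cancellation you allude to is real, but it has to be exhibited: the paper does this by isolating the band $|\alpha|^2/9\le\langle k\rangle^2\le12|\alpha|^2$ and rewriting the propagator there as $e^{-\alpha t}\cosh(t\sqrt{\alpha^2-\langle\nabla\rangle^2})$ and $e^{-\alpha t}\sinh(t\sqrt{\alpha^2-\langle\nabla\rangle^2})/\sqrt{\alpha^2-\langle\nabla\rangle^2}$, whose analyticity (the $\sinh z/z$ bound) removes the singularity at the price of a $T$-dependent constant --- which is why Theorem \ref{thm:ultra_uniform} and the corollary carry constants $\lesssim_{\alpha_1,T}$ rather than $\lesssim_{\alpha_1}$. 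Without some version of this argument your estimate for $V_j$, and indeed the existence of the limiting solution map at $\alpha=\alpha_1$, is not justified.

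Two further points. First, your probabilistic step as written does not give a.s.\ convergence along the prescribed sequence: ``dominated convergence'' yields mode-by-mode $L^2(\Omega)$ convergence with no rate, and Borel--Cantelli needs summability of $\prob(\|\cdot\|>k^{-1})$ over $j$, i.e.\ a quantitative bound such as $\E_\mu\|{:}Z_{\alpha_2(j)}^m\overline{Z}_{\alpha_2(j)}^n{:}-{:}Z^m\overline{Z}^n{:}\|^p\lesssim j^{-cp}$. This is exactly what the paper's scheme supplies: the deterministic convergence theorem of Section 7 (proved by differentiating $u_\alpha$ in $\alpha_2$, observing that $\bar u_\alpha=\partial_{\alpha_2}u_\alpha$ solves the same equation with forcing $-2i\partial_t u_\alpha$, and applying the uniform estimate) gives the rate $|\alpha_2|$, which feeds into the covariance bounds as in Proposition \ref{prop:probability part of NRL}. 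Your Gronwall bootstrap on the difference equation is a workable alternative to that differentiation trick for the remainder $U_{\alpha_2(j)}-U$, but it too rests on the uniform estimate at real $\alpha_1$ and should be supplemented by the rate needed for Borel--Cantelli. Second, once these are in place, the gluing over time windows via Proposition \ref{mainprop:GWP} and the $\alpha_2$-independence of $\rho_{2n+2}$ is fine and matches the paper.
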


The proof of Corollary \ref{maincor:URL} can be found in Section \ref{section:URL}.

\begin{rem} Recall that we have modified the equation (\ref{eq:phys}) by a replacement of $-\Delta$ to $-\Delta+1$.  
Local-in-time arguments can work for the regular initial data as in \cite{GKO}, thus the statements 
until Corollary  \ref{cor2 of mainthm} can be shown without such modification since we do not need to use 
the Gibbs invariant measure. By the same reason, we can prove the non relativistic and ultra relativistic convergence  
in (\ref{eq:phys}) if,  only local-in-time convergence and regular initial data are considered.
\end{rem}
\vspace{3mm}

We remark that in \cite {OOR} the authors consider the damped nonlinear wave equation  
with a regularized noise (without renormalization), and study possible limiting behavior of solutions as they remove the regularization. 
Such a triviality result is known for stochastic nonlinear heat and wave equations (see \cite{AHR,HRW,OOR}).
\vspace{3mm}

This paper is organized as follows. Section \ref{section3} is devoted to study the linear stochastic equation, including Proposition \ref{prop:base} 
which gives useful technical estimates for all over the paper.  
We prove in Section \ref{section:LWP} the local existence of solution, showing $\varepsilon$-uniform energy estimates.  
We globalize the local-in-time solution obtained in Section \ref{section:LWP} using the Gibbs measure in Section \ref{section:GE}. We give a proof of non relativistic limit in Section \ref{section:NRL}, 
and of ultra relativistic limit briefly in Section \ref{section:URL}. 
A uniform Strichartz estimate is shown in Section \ref{section7} . 
One can check the whole, direct proof of Strichartz estimates in case of torus in Appendices \ref{app:A} and \ref{app:W}, and a review on the complex Wick products in Appendix \ref{app:Wick}.  

\section{Linear equation}\label{section3}

The following technical estimates are used throughout the paper.

\begin{prop} \label{prop:base} Let $\alpha\in\mathbb{C}_+\setminus(0,\infty)$.
\begin{enumerate}
\item\label{base:item1} $0 < \re\sqrt{\alpha^2-s}\le\re(\alpha)$ for any $s\ge0$, and the function $[0,\infty)\ni s\mapsto\re\sqrt{\alpha^2-s}$ is strictly decreasing.
\item\label{base:item2} $|\sqrt{\alpha^2-s}| \ge \sqrt{2 |\re(\alpha) \im(\alpha)|} $ for any $s\ge 0$.
\item\label{base:item2'} 
For any $s\ge0$,
$$
\left|\frac{\sqrt{s}}{\sqrt{\alpha^2-s}}\right|\le\sqrt{1+\frac{|\alpha|^2}{2|\re(\alpha)\im(\alpha)|}}.
$$
\item\label{base:item3} 
For any $s_0>0$, there exists a constant $C_{\alpha,s_0}>0$ such that 
$$
\re(-\alpha +\sqrt{\alpha^2-s}) \le -C_{\alpha,s_0} (s\wedge s_0)
$$
holds for any $s\ge0$. 
Moreover, $C_{\alpha,s_0}$ is locally bounded from above and below in the region $(\alpha,s_0)\in\mathbb{C}_+\times(0,\infty)$.
\item\label{base:item4} For any $s\ge0$ we have
\begin{align*}
\left\{
\begin{aligned}
\sqrt{\alpha^2-s}= &\alpha-\frac{s}{2\alpha}+h(s,\alpha), \quad |h(s,\alpha)| \le 8 s^2/|\alpha|^3 
 &~  \mathrm{if}~ s < |\alpha|^2/2,\\
\sqrt{\alpha^2-s}= &i\sqrt{s}+ g(s, \alpha), \quad |g(s, \alpha)| \le 6 |\alpha|^2/\sqrt{s}, &~ \mathrm{if}~ s > 2|\alpha|^2.
\end{aligned}
\right.
\end{align*}
%
\end{enumerate}
\end{prop}

\begin{proof} The former statement of \eqref{base:item1} is trivial according to the definition of $\sqrt{\cdot}$ mentioned in the introduction. The latter 
may be seen from the computation 
$$ \frac{d}{ds} \left(\re{\sqrt{\alpha^2-s}} \right) =-\frac{1}{2} \frac{\re{\sqrt{\alpha^2-s}}}{|\sqrt{\alpha^2-s}|^2} <0$$ 
for any $s \ge 0$. The item \eqref{base:item2} follows from   
$$ |\sqrt{\alpha^2-s}|^4=\left\{ (\re \alpha)^2 -(\im \alpha)^2 -s \right\}^2 +4 (\re \alpha)^2 (\im \alpha)^2
\ge 4 (\re \alpha)^2 (\im \alpha)^2.$$
The item \eqref{base:item2'} follows from
\begin{align*}
\left|\frac{\sqrt{s}}{\sqrt{\alpha^2-s}}\right|^2
&=\frac{s}{|\alpha^2-s|}
\le1+\frac{|\alpha^2|}{|\alpha^2-s|}, 
\end{align*}
and \eqref{base:item2}. 
For \eqref{base:item3}, since $\re(-\alpha +\sqrt{\alpha^2-s})$ is decreasing with respect to $s$,
it is sufficient to consider the case $s\le s_0$.
Writing 
 $\displaystyle{\re(-\alpha +\sqrt{\alpha^2-s}) =-\re \frac{s}{\alpha+\sqrt{\alpha^2-s}}}$,  we see the bound since by \eqref{base:item1} we have 
 \begin{align*}
 \re \left(\frac{1}{\alpha+\sqrt{\alpha^2-s}} \right) 
 &=\frac{\re(\alpha)+\re(\sqrt{\alpha^2-s})}{|\alpha+\sqrt{\alpha^2-s}|^2}\\
 &\ge\frac{\re(\alpha)}{|\alpha|^2+|\alpha^2-s|}
 \ge \frac{\re \alpha}{2|\alpha|^2+s_0^2}:=C_{\alpha,s_0}.
 \end{align*}
Finally we show \eqref{base:item4}. First we consider the complex function $f(z)=(1+z)^{1/2}=e^{\frac{1}{2} \mathrm{Log}_{\C}(1+z)}$ with $z \ne -1$.
Fix any $N \in \N$. For $|z| < 1/2$, this function can be described by Cauchy's integral formula as 
\begin{eqnarray*}
f(z)
= \sum_{n=0}^{N-1} \frac{f^{(n)}(0)}{n !} z^n + R_N(z),~\mbox{with}~ R_N(z)=\left[\frac{1}{2\pi i} \oint_{|\xi|=\frac{3}{4}} \frac{f(\xi)}{\xi^{N}(\xi-z)} d\xi \right] z^N,
\end{eqnarray*} 
where 
$|R_N(z)| \le 3\sqrt{2} (\frac{4}{3})^N |z|^N$. 
Now, we write $(\alpha^2-s)^{\frac12} = \alpha(1-\frac{s}{\alpha^2})^{\frac12}$, we apply the above formula to $z=-\frac{s}{\alpha^2}$ and we obtain 
$$ (\alpha^2-s)^{\frac12} =\alpha \left(1-\frac{1}{2}\frac{s}{\alpha^2}\right) +\alpha R_2(\alpha,s), \quad |R_2(\alpha,s)| \le 8 \left( \frac{s}{|\alpha|^2}\right)^2,$$
if $s<\frac{1}{2} |\alpha|^2$, which implies the first case. It then suffices to remark $(\alpha^2-s)^{\frac12} = i\sqrt{s}(1-\frac{\alpha^2}{s})^{\frac12}$ for the second case.   
\end{proof}

\subsection{Stationary solution}

We consider the linear equation \eqref{eq:Z}.
Since the results in this section are independent of $\varepsilon$ and $\alpha$, we write $Z=Z_{\varepsilon,\alpha}$ for simplicity.
Setting $Y=\varepsilon\partial_tZ$, we have the system
\begin{align}\label{eq:YZ}
\left\{
\begin{aligned}
dZ(t)&=\varepsilon^{-1}Y(t)dt,\\
dY(t)&=\varepsilon^{-1}\left\{-2\alpha \varepsilon^{-1}Y(t)-(1-\Delta)Z(t)\right\}dt 
+2\varepsilon^{-1}\sqrt{\re(\alpha)}dW(t).
\end{aligned}
\right.
\end{align}
We define the Gaussian measure on $\mathcal{S}'(\mathbb{T}^2)^2$ by
$$
\mu(dzdy)=\frac1{\Gamma_0}e^{-V_0(z,y)}dzdy,
$$
with $V_0(z,y)=\frac12\int_{\mathbb{T}^2}|z(x)|^2dx+\frac12\int_{\mathbb{T}^2}|\nabla z(x)|^2dx+\frac12\int_{\mathbb{T}^2}|y(x)|^2dx$
and with a normalizing constant $\Gamma_0$. Precisely, by identifying an element $\xi\in\mathcal{S}'(\mathbb{T}^2)$ with the sequence $(\hat{\xi}(k))_{k\in\mathbb{Z}^2}$, the measure $\mu$ is defined by the product
$$
\mu=\bigotimes_{k\in\mathbb{Z}^2}\mathcal{N}_c(0,2(1+|k|^2)^{-1})\otimes\bigotimes_{ \ell\in\mathbb{Z}^2 }\mathcal{N}_c(0,2),
$$
where $\mathcal{N}_c(0,r)$ denotes the complex normal distribution with mean zero and covariance $r>0$, see Appendix \ref{app:Wick}.
It is straightforward to see that $\mu$ is supported in $\mathcal{H}^{-\delta}$ for any $\delta>0$.

\begin{prop}\label{prop:invariance of YZ}
The Gaussian measure $\mu$ is invariant under the Markov process $(Z,Y)$.
\end{prop}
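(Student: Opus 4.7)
The plan is to reduce the infinite-dimensional invariance problem to a family of independent two-dimensional complex linear SDEs, one per Fourier mode, and then to verify the matching Lyapunov identity by a direct $2\times 2$ computation.

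\medskip
\noindent\textbf{Step 1 (Fourier decomposition).} First I would expand $W(t)=\sum_{k\in\Z^2}W_k(t)e_k$ with $W_k(t):=\beta_{k,R}(t)+i\beta_{k,I}(t)$, so that $(W_k)_{k\in\Z^2}$ are independent complex Brownian motions satisfying $\E|W_k(t)|^2=2t$. Pairing (\ref{eq:YZ}) with $e_k$ decouples the system into independent complex two-dimensional linear SDEs
$$
d\hat{Z}(k)=\varepsilon^{-1}\hat{Y}(k)\,dt,\qquad d\hat{Y}(k)=-\varepsilon^{-2}\bigl[2\alpha\hat{Y}(k)+(1+|k|^2)\hat{Z}(k)\bigr]\,dt+2\varepsilon^{-1}\sqrt{\re(\alpha)}\,dW_k.
$$
Since by construction $\mu=\bigotimes_{k}\mu_k$ with $\mu_k=\mathcal{N}_c(0,2(1+|k|^2)^{-1})\otimes\mathcal{N}_c(0,2)$, it suffices to prove invariance of each $\mu_k$.

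\medskip
\noindent\textbf{Step 2 (Gaussian ansatz and Lyapunov equation).} Introducing $X_k=(\hat{Z}(k),\hat{Y}(k))^{\top}\in\C^2$ together with
$$
A_k=\varepsilon^{-1}\begin{pmatrix} 0 & 1 \\ -(1+|k|^2) & -2\alpha\varepsilon^{-1}\end{pmatrix},\qquad B_k=2\varepsilon^{-1}\sqrt{\re(\alpha)}\begin{pmatrix}0\\1\end{pmatrix},\qquad C_k=\begin{pmatrix}2(1+|k|^2)^{-1} & 0 \\ 0 & 2\end{pmatrix},
$$
the mode SDE reads $dX_k=A_kX_k\,dt+B_k\,dW_k$. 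Because this is linear and $\mu_k$ is centred Gaussian, the law of $X_k(t)$ stays centred Gaussian for every $t$, so invariance of $\mathcal{N}_c(0,C_k)$ reduces to invariance of the covariance, that is to the Lyapunov identity
$$
A_kC_k+C_kA_k^{*}+2B_kB_k^{*}=0,
$$
the factor $2$ on the noise term encoding the complex normalisation $\E|dW_k|^2=2\,dt$.

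\medskip
\noindent\textbf{Step 3 (direct verification and conclusion).} A routine $2\times 2$ multiplication gives $A_kC_k+C_kA_k^{*}=-8\varepsilon^{-2}\re(\alpha)\,E_{22}$, where $E_{22}$ is the matrix with a single $1$ in the $(2,2)$ slot, while $2B_kB_k^{*}=8\varepsilon^{-2}\re(\alpha)\,E_{22}$, and the two cancel. Combined with the stability of $A_k$ (its eigenvalues coincide with $\lambda_{\varepsilon,\alpha}^{\pm}(k)$ and have strictly negative real parts by Proposition~\ref{prop:base}(1) and (3), so the Lyapunov equation admits a unique solution), this forces $X_k(t)\sim\mathcal{N}_c(0,C_k)$ for all $t\ge 0$, and hence $\mu_k$ is invariant. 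Reassembling over $k$ yields invariance of $\mu=\bigotimes_k\mu_k$ under the decoupled dynamics.

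\medskip
\noindent\textbf{Main obstacle.} The computation itself is elementary; the only point that demands care is bookkeeping the factor $2$ arising from the complex Brownian normalisation $\E|dW_k|^2=2\,dt$, which must be matched to the factors $2$ sitting in the variances of the two marginals of $\mu_k$. An off-by-one slip here would produce the wrong invariant covariance and break the identity. Promoting mode-wise invariance to invariance of the product measure on $\mathcal{H}^{-\delta}$ is automatic, since both $\mu$ and the dynamics factorise over independent modes and $\mu$ is by definition supported in $\mathcal{H}^{-\delta}$.
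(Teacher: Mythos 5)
Your argument is correct in substance but takes a genuinely different route from the paper. The paper proves infinitesimal invariance: it applies the complex It\^o formula to cylinder functions $F$ of finitely many Fourier modes, splits the generator into a transport part $\mathcal{L}^{(1)}$ and an Ornstein--Uhlenbeck part $\mathcal{L}^{(2)}$, shows $\int\mathcal{L}^{(i)}F\,d\mu=0$ by integration by parts against the Gaussian density, and then extends from $\mathcal{D}$ to $C_b(\mathcal{H}^{-\delta})$ by approximation. You instead exploit the linearity of \eqref{eq:YZ} head-on: after the same Fourier decoupling, each mode is an explicit two-dimensional complex Ornstein--Uhlenbeck process, and invariance of the centred complex Gaussian $\mu_k$ reduces to stationarity of its second moments, verified through the Lyapunov identity $A_kC_k+C_kA_k^*+2B_kB_k^*=0$ with the correct factor $2$ from $\E|dW_k|^2=2\,dt$. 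For this linear equation your computation is arguably more elementary (no generator, no approximation step, and it exhibits the invariant covariance by inspection); the paper's generator calculation has the advantage that it is exactly the template reused for the truncated nonlinear system \eqref{eq:dampedwave_approx}, where no explicit Gaussian solution exists. Three small repairs: (i) the mode SDE displayed in your Step 1 carries the wrong power of $\varepsilon$ on the $(1+|k|^2)\hat{Z}(k)$ term --- by \eqref{eq:YZ} the drift of $\hat{Y}(k)$ is $-2\alpha\varepsilon^{-2}\hat{Y}(k)-\varepsilon^{-1}(1+|k|^2)\hat{Z}(k)$, which is what your matrix $A_k$ in Step 2 encodes; with the Step 1 coefficient the Lyapunov identity would actually fail for $\varepsilon\neq1$, so the two must be made consistent (your verified identity is for the correct $A_k$, whose eigenvalues are indeed $\lambda_{\varepsilon,\alpha}^\pm(k)$); (ii) a complex Gaussian law is determined by the covariance together with the pseudo-covariance $\E[X_kX_k^{\top}]$, so you should add the one-line observation that $\E[(dW_k)^2]=0$ and the zero initial pseudo-covariance under $\mathcal{N}_c$ keep $\E[X_k(t)X_k(t)^{\top}]=0$ for all $t$, which completes the reduction to the covariance ODE; (iii) stability of $A_k$ is not needed for invariance itself --- once $C_k$ solves the Lyapunov equation, uniqueness for the covariance ODE gives the constant-in-time solution --- it would only be relevant for uniqueness of the invariant measure or convergence to it.
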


\begin{proof}
Denote by $\mathcal{D}$ the set of all functionals $\mathcal{H}^{-\delta}\to\mathbb{R}$ of the form
$$
F(z,y)=f\big((\hat{z}(k))_{|k|\le N},(\hat{y}(\ell))_{|\ell|\le N}\big)
$$
for some $N\in\mathbb{N}$ and some $C_b^2$ function $f$.
Moreover, for any semimartingale $X:[0,\infty)\to\mathcal{S}'(\mathbb{T}^2)$, we denote $\hat{X}(t;k):=\mathcal{F}(X(t))(k)$.
For any $F\in\mathcal{D}$, we have by the complex version of It\^o formula (see Proposition \ref{app:complexIto})
\begin{align*}
&dF(Z(t),Y(t))\\
&=\sum_{k}\partial_{\hat{z}(k)}F(Z(t),Y(t))d\hat{Z}(t;k)
+\sum_{k}\partial_{\overline{\hat{z}(k)}}F(Z(t),Y(t))d\overline{\hat{Z}(t;k)}\\
&\quad
+\sum_{k}\partial_{\hat{y}(k)}F(Z(t),Y(t))d\hat{Y}(t;k)
+\sum_{k}\partial_{\overline{\hat{y}(k)}}F(Z(t),Y(t))d\overline{\hat{Y}(t;k)}\\
&\quad
+4\varepsilon^{-2}\re(\alpha)\sum_{k}\partial_{\hat{y}(k)\overline{\hat{y}(k)}}F(Z(t),Y(t))d\hat{W}(t;k)d\overline{\hat{W}(t;k)}\\
&=:\varepsilon^{-1}\mathcal{L}^{(1)}F(Z(t),Y(t))dt
+\varepsilon^{-2}\mathcal{L}^{(2)}F(Z(t),Y(t))dt+(\text{martingale}),
\end{align*}
where
\begin{align*}
\mathcal{L}^{(1)}F(z,y)
&=2\re\left\{\sum_k\partial_{\hat{z}(k)}F(z,y)\hat{y}(k)
-\sum_k\partial_{\hat{y}(k)}F(z,y)(1+|k|^2)\hat{z}(k)\right\}
\end{align*}
and
\begin{align*}
\mathcal{L}^{(2)}F(z,y)
&=-4\re\left(\alpha\sum_k\partial_{\hat{y}(k)}F(z,y)\hat{y}(k)\right)
+8\re(\alpha)\sum_k\partial_{\hat{y}(k)\overline{\hat{y}(k)}}F(z,y).
\end{align*}
Then by using the elementary formula $\partial_{z}e^{-\frac12a|z|^2}=-\frac{a}2\bar{z} e^{-\frac12a|z|^2} $ ($a\in\mathbb{R}$),
it is straightforward to see that
\begin{align*}
\int_{\mathcal{H}^{-\delta}}\mathcal{L}^{(i)}F(z,y)\mu(dzdy)=0,\qquad i=1,2.
\end{align*}
This implies $\int P_tFd\mu=\int Fd\mu$ for any $t\ge0$ and $F\in\mathcal{D}$, where $P_t$ is the Markov semigroup associated with the process $(Z,Y)$.
By an approximation argument, the same equality holds for all $F\in C_b(\mathcal{H}^{-\delta})$. Hence $\mu$ is an invariant measure of the process $(Z,Y)$.
\end{proof}
\vspace{3mm}

\subsection{Wick polynomials of the stationary solution}

Let $(Z,Y)$ be the stationary solution of \eqref{eq:YZ} with the initial law $(Z,Y)|_{t=0}\sim\mu$.
We consider the Wick polynomials of $Z$.
See Appendix \ref{app:Wick} for the complex Wick polynomials. Note that
by the stationarity,
$$
C_N:=\mathbb{E}[|\Pi_NZ(t,x)|^2]
=\int |\Pi_Nz|^2\mu(dzdy)
=\sum_{|k|\le N}\frac2{1+|k|^2}.
$$

\begin{prop} \label{prop:wick_N}
Let $m,n\in\mathbb{N}$, $T>0$, $p\in[1,\infty)$, and $\delta>0$.
The sequence $\{H_{m,n}(\Pi_NZ;C_N)\}_{N\in\mathbb{N}}$ is Cauchy in $L^p(\Omega; C([0,T];W^{-\delta,\infty}))$
and converges $\mathbb{P}_\mu$-almost surely. 
\end{prop}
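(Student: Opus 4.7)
The plan is to exploit Gaussian hypercontractivity together with the Wick isometry on the Wiener chaos decomposition. For each fixed $(t,x)$ and $N\in\N$, the random variable $H_{m,n}(\Pi_NZ(t,x);C_N)$ belongs to the inhomogeneous Wiener chaos of order $m+n$ generated by the Brownian family building $W$ (together with the initial law $\mu$). Nelson's hypercontractivity inequality then yields, for every $p\ge 2$,
\begin{equation*}
\|F_{N,M}(t,x)\|_{L^p(\Omega)}\lesssim_{m,n,p}\|F_{N,M}(t,x)\|_{L^2(\Omega)},\qquad F_{N,M}:=H_{m,n}(\Pi_NZ;C_N)-H_{m,n}(\Pi_MZ;C_M),
\end{equation*}
so the entire proof reduces to second-moment control of $F_{N,M}$ in an appropriate ambient Banach space.

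For the spatial regularity I would work in a Besov space $B^{-\delta_1}_{r,r}(\mathbb{T}^2)$ with $r$ large enough and $\delta_1<\delta$ small enough that $B^{-\delta_1}_{r,r}\hookrightarrow W^{-\delta,\infty}$ by Besov embedding. Taking $p\ge r$ and combining Minkowski's inequality with hypercontractivity yields
\begin{equation*}
\E\bigl[\|F_{N,M}(t)\|_{B^{-\delta_1}_{r,r}}^p\bigr]\lesssim\sum_{j\ge-1}2^{-j\delta_1 p}\int_{\mathbb{T}^2}\E\bigl[|\Delta_jF_{N,M}(t,x)|^2\bigr]^{p/2}dx.
\end{equation*}
The pointwise variance is computed via the complex Wick isometry of Appendix \ref{app:Wick} and reduces to a sum over $m+n$ frequencies weighted by the covariance kernel $K_N(x,y):=\E[\Pi_NZ(t,x)\overline{\Pi_NZ(t,y)}]=\sum_{|k|\le N}\frac{2}{1+|k|^2}e_k(x-y)$ raised to mixed powers. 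This produces a bound $\E|\Delta_jF_{N,M}(t,x)|^2\lesssim 2^{2\kappa j}\rho(N\wedge M)$ with some $\kappa<\delta_1$ and some $\rho(\cdot)\to 0$, where the decay in $N\wedge M$ comes from the spectral gap of $\Pi_M-\Pi_N$ appearing in the factors.

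Time continuity is handled by the same recipe applied to the increment $H_{m,n}(\Pi_NZ(t);C_N)-H_{m,n}(\Pi_NZ(s);C_N)$ together with Kolmogorov's continuity criterion in $B^{-\delta_1}_{r,r}$. The input is the time covariance of $Z$: in Fourier, $\hat Z(t,k)$ is essentially an Ornstein--Uhlenbeck-type process with mixing rate controlled by $\re\lambda_{\varepsilon,\alpha}^{\pm}(k)$, and the uniform spectral gap of item \eqref{base:item3} of Proposition \ref{prop:base} produces a H\"older bound $\E[\|Z(t)-Z(s)\|_{H^{-\delta}}^2]\lesssim|t-s|^{2\beta}$ for some $\beta>0$. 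Propagating this through the Wick isometry yields the corresponding H\"older estimate for $F_{N,M}$ in $B^{-\delta_1}_{r,r}$ required by Kolmogorov; combined with the spatial estimate it gives $L^p(\Omega;C_TB^{-\delta_1}_{r,r})$-Cauchyness, and hence the claimed $L^p(\Omega;C_TW^{-\delta,\infty})$-Cauchyness by embedding. Almost sure convergence of the full sequence then follows by Borel--Cantelli from the summability of $\sum_N\E\|H_{m,n}(\Pi_NZ;C_N)-H_{m,n}(\Pi_{N+1}Z;C_{N+1})\|_{C_TW^{-\delta,\infty}}^p$, which holds as soon as $\rho(N)$ decays at any positive power rate.

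The main obstacle, as I see it, is the combinatorial/Fourier bookkeeping in the pointwise variance estimate: on $\mathbb{T}^2$ the elementary sum $\sum_k(1+|k|^2)^{-1}$ is only logarithmically convergent, so the positive Besov index $\delta_1$ must be spent at the right moment when summing over the $m+n$ momentum variables constrained by having their total momentum of order $2^j$. Once this accounting is executed cleanly, the rest of the scheme is classical for Wick powers of a log-correlated Gaussian field.
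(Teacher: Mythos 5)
Your proposal follows essentially the same route as the paper, which itself defers to the computations of \cite[Proposition 2.1]{GKO}, the covariance/Hermite-orthogonality estimates carried out in Section \ref{section:NRL} (proof of Proposition \ref{prop:probability part of NRL}), and the Borel--Cantelli argument of \cite[Proposition 3.2]{OPT}: hypercontractivity reduces everything to second moments, the Wick isometry with the kernel $\sum_{|k|\le N}\frac{2}{1+|k|^2}e_k(x-y)$ gives the spatial bound with decay in $N$ by sacrificing a small amount of negative regularity, Kolmogorov's criterion handles time continuity, and Borel--Cantelli gives almost sure convergence. Your use of Littlewood--Paley blocks and the Besov embedding $B^{-\delta_1}_{r,r}\hookrightarrow W^{-\delta,\infty}$ in place of the Bessel-potential smoothing and Sobolev embedding is only a cosmetic variation, so the proof is correct and not genuinely different.
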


\begin{proof}
Some modifications of \cite[Proposition 2.1]{GKO} implies the result, since
\begin{align*}
\mathbb{E}[\Pi_NZ(t,x)\overline{\Pi_NZ(t,y)}]
=\sum_{|k|\le N} e^{ik\cdot(x-y) } \frac2{1+|k|^2}.
\end{align*}
We will in fact present very similar computations below in Section \ref{subsection6.2}, we thus omit the proof.  
The latter part (almost sure convergence) is obtained by a similar way to \cite[Proposition 3.2]{OPT}, or the proof of Proposition \ref{prop:probability part of NRL}. 
\end{proof}

\section{local existence}\label{section:LWP}

We solve (\ref{eq:u})  
in the mild form, in the space 
$$ C([0,T], H^\sigma(\mathbb{T}^2)) \cap C^1([0,T],H^{\sigma-1}(\mathbb{T}^2)).$$
The proof of the local existence is quite similar to \cite{ORT}, but thanks to Theorem \ref{mainthm_energy} 
we have the local existence uniformly in 
$$
\varepsilon \in (0,1],\qquad \alpha\in K,
$$
for any fixed compact $K\subset\mathbb{C}_+\setminus(0,\infty)$.
First, we show Theorem \ref{mainthm_energy}. 

\begin{prop}\label{prop L2EST}
For any $\sigma\in\mathbb{R}$, we have
\begin{align}
\label{L2EST1}
&\|e^{t\lambda_{\varepsilon,\alpha}^\pm(\nabla)}u\|_{L_T^{\infty}H_x^\sigma}
\le \|u\|_{H_x^\sigma},\\
\label{L2EST2}
&\left\|\int_0^te^{(t-s)\lambda_{\varepsilon,\alpha}^-(\nabla)}f(s)ds\right\|_{L_T^{\infty}H_x^\sigma}
+\left\|\int_0^te^{(t-s)\lambda_{\varepsilon,\alpha}^+(\nabla)} {\bf 1}_{\{\varepsilon\langle\nabla\rangle>\frac{|\alpha|}{\sqrt{2}}\}}f(s)ds\right\|_{L_T^{\infty}H_x^\sigma}
\lesssim_\alpha\varepsilon\|f\|_{L_T^2H_x^\sigma},\\
\label{L2EST3}
&\left\|\int_0^te^{(t-s)\lambda_{\varepsilon,\alpha}^+(\nabla)} {\bf 1}_{\{\varepsilon\langle\nabla\rangle\le\frac{|\alpha|}{\sqrt{2}}\}}f(s)ds\right\|_{L_T^{\infty}H_x^\sigma}
\lesssim_\alpha\|f\|_{L_T^2H_x^{\sigma-1}}.
\end{align}
In the second and third inequalities, the implicit proportional constants are locally bounded functions of $\alpha\in\mathbb{C}_+$.
\end{prop}

\begin{proof}
\eqref{L2EST1} follows from $|e^{t\lambda_{\varepsilon,\alpha}^\pm(k)}|^2=e^{2t\re\lambda_{\varepsilon,\alpha}^\pm(k)}\le1$, since $\re\lambda_{\varepsilon,\alpha}^\pm(k)\le0$.
We show \eqref{L2EST2} for negative sign. Since $\re\lambda_{\varepsilon,\alpha}^-(k)<-\re(\alpha)/\varepsilon^2$,
\begin{align*}
\left\|\int_0^te^{(t-s)\lambda_{\varepsilon,\alpha}^-(\nabla)}f(s)ds\right\|_{H^\sigma}^2
&=\sum_{k\in\mathbb{Z}^d}\langle k\rangle^{2\sigma}\left|\int_0^te^{(t-s)\lambda_{\varepsilon,\alpha}^-(k)}\hat{f}(s;k)ds\right|^2\\
&\le\sum_{k\in\mathbb{Z}^d}\langle k\rangle^{2\sigma} \int_0^t|e^{(t-s)\lambda_{\varepsilon,\alpha}^-(k)}|^2ds \int_0^t|\hat{f}(s;k)|^2ds\\
&\le\sum_{k\in\mathbb{Z}^d}\langle k\rangle^{2\sigma} \frac1{2|\re\lambda_{\varepsilon,\alpha}^-(k)|} \int_0^t|\hat{f}(s;k)|^2ds\\
&\le\frac{\varepsilon^2}{2\re(\alpha)}\|f\|_{L_T^2H_x^\sigma}^2.
\end{align*}
We have \eqref{L2EST2} for positive sign by a similar argument, since $\re\lambda_{\varepsilon,\alpha}^+(k)<-C_\alpha/\varepsilon^2$ if $\varepsilon\langle k\rangle>|\alpha|/\sqrt2$ by Proposition \ref{prop:base}-\eqref{base:item3}.
To show \eqref{L2EST3}, note from Proposition \ref{prop:base}-\eqref{base:item3} that if $\varepsilon \langle k\rangle\le|\alpha|/\sqrt2$ then
$$
\int_0^t|e^{(t-s)\lambda_{\varepsilon,\alpha}^+(k)}|^2ds\le \int_0^te^{-2C_\alpha(t-s)\langle k\rangle^2}ds\lesssim
\frac1{C_\alpha\langle k\rangle^2}.
$$
Thus similarly to the above argument we have
\begin{align*}
\left\| \int_0^te^{(t-s)\lambda_{\varepsilon,\alpha}^+(\nabla)}{\bf 1}_{\{\varepsilon\langle\nabla\rangle\le\frac{|\alpha|}{\sqrt{2}}\}}f(s)ds\right\|_{H^\sigma}^2
\lesssim\frac1{C_\alpha}\|f\|_{L_T^2H_x^{\sigma-1}}^2.
\end{align*}
\end{proof}

Combining Proposition \ref{prop L2EST} with the following lemma, we can prove Theorem \ref{mainthm_energy}.

\begin{lem}\label{lem phipm and fpm}
For any $\sigma\in\mathbb{R}$,
\begin{align}
\label{eq1:lem phipm and fpm}\|\phi_{\varepsilon,\alpha}^\pm\|_{H^\sigma}&\lesssim_\alpha\|\phi_0\|_{H^\sigma}+\|\phi_1\|_{H^{\sigma-1}},\\
\label{eq2:lem phipm and fpm}\|\varepsilon\lambda_{\varepsilon,\alpha}^\pm(\nabla)\phi_{\varepsilon,\alpha}^\pm\|_{H^{\sigma-1}}&\lesssim_\alpha\|\phi_0\|_{H^\sigma}+\|\phi_1\|_{H^{\sigma-1}},\\
\label{eq3:lem phipm and fpm}\|f_{\varepsilon,\alpha}^\pm(t)\|_{H^{\sigma}}&\lesssim_\alpha\|f(t)\|_{H^{\sigma}}\wedge(\varepsilon^{-1}\|f(t)\|_{H^{\sigma-1}}),\\
\label{eq4:lem phipm and fpm}\|\varepsilon^2\lambda_{\varepsilon,\alpha}^\pm(\nabla) f_{\varepsilon,\alpha}^\pm(t)\|_{H^{\sigma}}&\lesssim_\alpha\|f(t)\|_{H^\sigma},\\
\label{eq5:lem phipm and fpm}\|\varepsilon\lambda_{\varepsilon,\alpha}^+(\nabla){\bf 1}_{\{\varepsilon\langle\nabla\rangle\le\frac{|\alpha|}{\sqrt2}\}}f_{\varepsilon,\alpha}^+(t)\|_{H^{\sigma}}&\lesssim_\alpha\|f(t)\|_{H^{\sigma+1}},
\end{align}
where the implicit proportional constants are locally bounded function of $\alpha\in\mathbb{C}_+\setminus(0,\infty)$.
\end{lem}

\begin{proof}
By Proposition \ref{prop:base}-\eqref{base:item2}, the operators $\frac1{\sqrt{\alpha^2-\varepsilon^2\langle\nabla\rangle^2}}$ and $\frac{\varepsilon^2\lambda^\pm_{\varepsilon,\alpha}(\nabla)}{\sqrt{\alpha^2-\varepsilon^2\langle\nabla\rangle^2}}=-\frac\alpha{\sqrt{\alpha^2-\varepsilon^2\langle\nabla\rangle^2}}\pm1$ are uniformly bounded in $H^\sigma$.
By Proposition \ref{prop:base}-\eqref{base:item2'}, $\frac1{\sqrt{\alpha^2-\varepsilon^2\langle\nabla\rangle^2}}$ is bounded by $\frac1{\varepsilon\langle\nabla\rangle}$.
They show \eqref{eq1:lem phipm and fpm}, \eqref{eq3:lem phipm and fpm}, and \eqref{eq4:lem phipm and fpm}.
Since
\begin{align*}
\varepsilon\lambda_{\varepsilon,\alpha}^\pm(\nabla)\phi_{\varepsilon,\alpha}^\pm
=\frac{\mp\varepsilon^3\lambda_{\varepsilon,\alpha}^\pm(\nabla)\lambda_{\varepsilon,\alpha}^\mp(\nabla)\phi_0\pm\varepsilon^2\lambda_{\varepsilon,\alpha}^\pm(\nabla)\phi_1}{2\sqrt{\alpha^2-\varepsilon^2\langle\nabla\rangle^2}}
=\frac{\mp\varepsilon\langle\nabla\rangle^2\phi_0\pm\varepsilon^2\lambda_{\varepsilon,\alpha}^\pm(\nabla)\phi_1}{2\sqrt{\alpha^2-\varepsilon^2\langle\nabla\rangle^2}},
\end{align*}
a similar argument to above shows \eqref{eq2:lem phipm and fpm}.
To show \eqref{eq5:lem phipm and fpm}, we have only to compute that $\lambda_{\varepsilon,\alpha}^+(k)=\varepsilon^{-2}O(\varepsilon^2\langle k\rangle^2)=\varepsilon^{-1}O(\langle k\rangle)$ in the region $\varepsilon\langle k\rangle\le\frac{|\alpha|}{\sqrt2}$, by using Proposition \ref{prop:base}-\eqref{base:item4}. 
\end{proof}

\begin{proof}[{\bfseries Proof of Theorem \ref{mainthm_energy}}]
Combining Proposition \ref{prop L2EST} and Lemma \ref{lem phipm and fpm}-\eqref{eq1:lem phipm and fpm} and \eqref{eq3:lem phipm and fpm},
\begin{align*}
\|u_{\varepsilon,\alpha}\|_{L_T^\infty H_x^\sigma}
&\lesssim_\alpha \sum_{\circ=+,-}\|\phi_{\varepsilon,\alpha}^\circ\|_{H^\sigma}
+\varepsilon\sum_{\circ=+,-}\|f_{\varepsilon,\alpha}^\circ\|_{L_T^2H_x^{\sigma}}
+\|f_{\varepsilon,\alpha}^+\|_{L_T^2H_x^{\sigma-1}}\\
&\lesssim_\alpha \|\phi_0\|_{H^\sigma}+\|\phi_1\|_{H^{\sigma-1}}+\|f\|_{L_T^2H_x^{\sigma-1}}.
\end{align*}
For the bound on $\varepsilon \partial_t u_{\varepsilon, \alpha}$,  we see by (\ref{mildform}), 
\begin{eqnarray*} \nonumber  
\partial_t u_{\varepsilon, \alpha}(t)&=& \lambda_{\varepsilon,\alpha}^{+}(\nabla) 
e^{t\lambda_{\varepsilon, \alpha}^+(\nabla)}\phi_{\varepsilon, \alpha}^+
+\lambda_{\varepsilon,\alpha}^{-} (\nabla) e^{t\lambda_{\varepsilon, \alpha}^-(\nabla)}\phi_{\varepsilon, \alpha}^- \\ \nonumber 
&&+ \int_0^t\left(\lambda_{\varepsilon,\alpha}^{+}(\nabla) e^{(t-t')\lambda_{\varepsilon, \alpha}^+(\nabla)}f_{\varepsilon, \alpha}^+(t')
+\lambda_{\varepsilon,\alpha}^{-}(\nabla) e^{(t-t')\lambda_{\varepsilon, \alpha}^-(\nabla)}f_{\varepsilon, \alpha}^-(t')\right)dt'\\ 
&&+f^{+}_{\varepsilon, \alpha}(t) +f^{-}_{\varepsilon, \alpha}(t).
\end{eqnarray*}
Here $f_{\varepsilon,\alpha}^++f_{\varepsilon,\alpha}^-=0$ by definition.
Applying Proposition \ref{prop L2EST} to $u=\lambda_{\varepsilon,\alpha}^\pm(\nabla)\phi_{\varepsilon,\alpha}^\pm$ and $f=\lambda_{\varepsilon,\alpha}^\pm(\nabla)f_{\varepsilon,\alpha}^\pm$, we have
\begin{align*}
\|\partial_t u_{\varepsilon, \alpha}\|_{L_T^\infty H_x^{\sigma-1}}
&\lesssim_\alpha \sum_{\circ=+,-}\|\lambda_{\varepsilon,\alpha}^\circ(\nabla)\phi_{\varepsilon,\alpha}^\circ\|_{H^{\sigma-1}}
+\varepsilon\sum_{\circ=+,-}\|\lambda_{\varepsilon,\alpha}^\circ(\nabla)f_{\varepsilon,\alpha}^\circ\|_{L_T^2H_x^{\sigma-1}}\\
&\quad+\|{\bf 1}_{\{\varepsilon\langle\nabla\rangle\le\frac{|\alpha|}{\sqrt2}\}}\lambda_{\varepsilon,\alpha}^+(\nabla)f_{\varepsilon,\alpha}^+\|_{L_T^2H_x^{\sigma-2}}.
\end{align*}
Then by using Lemma \ref{lem phipm and fpm}-\eqref{eq2:lem phipm and fpm}, \eqref{eq3:lem phipm and fpm}, \eqref{eq4:lem phipm and fpm}, and \eqref{eq5:lem phipm and fpm}, we can conclude that
\begin{align*}
\|\partial_t u_{\varepsilon, \alpha}\|_{L_T^\infty H_x^{\sigma-1}}
&\lesssim_\alpha \varepsilon^{-1}(\|\phi_0\|_{H^\sigma}+\|\phi_1\|_{H^{\sigma-1}}+\|f\|_{L_T^2H_x^{\sigma-1}}).
\end{align*}
\end{proof}


We can get Corollary \ref{cor2 of mainthm} by the standard PDE argument, similarly to \cite[Proposition 3.5]{GKO} and \cite[Proposition 4.1]{ORT}.

\begin{proof}[{\bfseries Proof of Corollary \ref{cor2 of mainthm}}]
By Proposition \ref{app eq:Hermite} in Appendix \ref{app:Wick}, we can expand
\begin{align*}
:(U+Z)^{n+1}(\overline{U}+\overline{Z})^n:\
=\sum_{k=0}^{n+1}\sum_{\ell=0}^n\binom{n+1}{k}\binom{n}{\ell}
U^{n+1-k}\overline{U}^{n-\ell}:Z^k\overline{Z}^\ell:.
\end{align*}
Hence we can write the right hand side by the form
$$
F_n(U)=\sum_{k\le 2n+1} P_k(U)\Xi_k,
$$
where $P_k(U)$ is a $k$-th homogeneous polynomial of $U$ and $\overline{U}$, and $\Xi_k$ is an element of $L_T^\infty W_{\color{blue}x}^{-\delta,\infty}$,
depends on the degree $k$.

Fix any $T>0$. 
Denote the solution map for \eqref{eq:dampedwave_f} by $S_{\varepsilon,\alpha}(\phi_0,\phi_1;f)$.
For (\ref{eq:u}) it is sufficient to show that the nonlinear operator
$$
\Upsilon_{\varepsilon,\alpha}(U):=S_{\varepsilon,\alpha}(u,v;-F_n(U)), 
$$
is a contraction map on the space $C_{T_0} H_x^\sigma$ for small $0<T_0 \le T \wedge 1$.

Fix $\sigma<1$ and set $\delta:=1-\sigma$.
By Theorem \ref{mainthm_energy}, 
\begin{align*}
\|\Upsilon_{\varepsilon,\alpha}(U)\|_{L^{\infty}_{T_0} H_x^{\sigma}}
\lesssim\|u\|_{H^\sigma}+\|v\|_{H^{\sigma-1}}+\|F_n(U)\|_{L_{T_0}^2H_x^{-\delta}}.
\end{align*}
Consider the each term of $F_n(U)$, for any $k \le 2n+1$,
\begin{align*}
\|P_k(U)\Xi_k\|_{L_{T_0}^2 H_x^{-\delta}}
\lesssim \|\Xi_k\|_{L_{T_0}^2W_x^{-\delta,\infty}}\|P_k(U)\|_{L_{T_0}^\infty H_x^{2\delta}}
\lesssim {T_0}^{\frac12}\|\Xi_k\|_{L_{T_0}^\infty W_x^{-\delta,\infty}}
\|u\|_{L_{T_0}^\infty B_{2k,2k}^{2\delta}}^k.
\end{align*}
Hence by the Besov embedding $H^{2\delta+(1-1/k)}\subset B_{2k,2k}^{2\delta}$,
\begin{align*}
\|P_k(U)\Xi_k\|_{L_{T_0}^2H_x^{-\delta}}
\lesssim {T_0}^{\frac12}\|\Xi_k\|_{L_T^\infty W_x^{-\delta,\infty}}
\|U\|_{L_{T_0}^\infty H_x^\sigma}^k
\end{align*}
if $\sigma$ is sufficiently close to $1$.
Therefore we see that 
$\Upsilon_{\varepsilon,\alpha}(U)$ is a contraction in the closed ball 
$\{ \|U\|_{L_{T_0}^\infty H_x^\sigma} \le R \}$ with 
$$ \|(u,v)\|_{\mathcal{H}^{\sigma}} + \sup_{k \le 2n+1}\|\Xi_k\|_{L^\infty_T W_x^{-\delta,\infty}} =R/2, $$
if  $T_0^{\frac12} R^{2n+1} \le \frac12.$  
Note that $R$ is random, but finite a.s.  
The difference $\|\Upsilon_{\varepsilon,\alpha}(U_1)-\Upsilon_{\varepsilon,\alpha}(U_2)\|_{L^{\infty}_{T_0} H_x^{\sigma}}$ 
may be estimated similarly. 
The continuity in time follows from the continuity of $e^{t\lambda_{\varepsilon,\alpha}^\pm(\nabla)}$. 
The bound on $\|\varepsilon \partial_t U\|_{L^{\infty}_{T_0} H_x^{\sigma-1}}$ follows from Theorem \ref{mainthm_energy}. 
The uniformity of the maximal existence time mentioned in Remark \ref{rem:cor21} follows from the similar arguments 
to \cite[Theorem 1.1]{IM}
using the uniform bound of Theorem \ref{mainthm_energy}.
\end{proof}

\section{global existence a.e. $\rho_{2n+2}$}\label{section:GE}

We show Proposition \ref{mainprop:GWP} in this section.
For simplicity, we write $\Psi=\Psi_{\varepsilon,\alpha}$, $\rho=\rho_{2n+2}$ and $T^{\ast}=T^{\ast}_{\varepsilon, \alpha}$ which is given by Remark \ref{rem:cor21}.
We consider a finite dimensional approximation
\begin{align}\label{eq:dampedwave_approx}
\left\{
\begin{aligned}
&\varepsilon^2\partial_t^2 \Psi^N+2\alpha \partial_t \Psi^N+(1-\Delta) \Psi^N
+\Pi_NH_{n+1,n}(\Pi_N\Psi^N;C_N)\\
& =2\sqrt{\re(\alpha)} \partial_t W, &t>0,~ x\in \mathbb{T}^2,\\
&(\Psi^N,\varepsilon\partial_t \Psi^N)|_{t=0}=(\psi,\phi), &x\in\mathbb{T}^2.
\end{aligned}
\right.
\end{align}
Setting $\Phi^N=\varepsilon\partial_t\Psi^N$, we have the system
\begin{align*}
\left\{
\begin{aligned}
d\Psi^N(t)&=\varepsilon^{-1} \Phi^N(t)dt,\\
d\Phi^N(t)&=\varepsilon^{-1}\left\{-2\alpha \varepsilon^{-1}\Phi^N(t)-(1-\Delta)\Psi^N(t)-\Pi_NH_{n+1,n}(\Pi_N\Psi^N;C_N)\right\}dt\\
&\quad+2\varepsilon^{-1}\sqrt{\re(\alpha)}dW(t).
\end{aligned}
\right.
\end{align*}
For any $N\in\mathbb{N}$, we define the truncated measure
$$ \rho_N(d\psi d\phi) = \Gamma_N^{-1} e^{-H_N(\psi,\phi)} d\psi d\phi= \Gamma_N^{-1} e^{-\frac{1}{2n+2} \int_{\mathbb{T}^2} H_{n+1,n+1}(\Pi_N\psi;C_N) dx }  
\mu(d\psi d\phi).
$$  
where,  
$$H_N(\psi, \phi)=\frac{1}{2} \int_{\mathbb{T}^2} |\phi|^2 dx + V_N(\psi),$$
and
$$V_N(\psi) =\frac{1}{2} \int_{\mathbb{T}^2} |\nabla \psi|^2 dx + \frac12 \int_{\mathbb{T}^2} |\psi|^2 dx + \frac{1}{2n+2} \int_{\mathbb{T}^2} H_{n+1,n+1}(\Pi_N\psi;C_N) dx.$$

We can apply Corollary \ref{cor2 of mainthm} to Eq. (\ref{eq:dampedwave_approx}) and prove the following result.
The global well-posedness of $(\Psi^N,\Phi^N)$ follows from the energy estimate as in \cite[Proposition 5.1]{ORT}.

\begin{prop} \label{prop:approx}
Let $\delta>0$ and $T>0$.  
Let $(\psi,\phi) \in \mathcal{H}^{-\delta}$. Then, there exists a unique global 
solution in $C([0,T];\mathcal{H}^{-\delta})$ denoted  by $(\Psi^N,\Phi^N)$ of (\ref{eq:dampedwave_approx}).  Moreover,  
there exists a measurable set $\mathcal{O}\subset\mathcal{H}^{-\delta}$ such that $\mu(\mathcal{O})=1$ 
with the following properties: for any $(\psi,\phi)\in\mathcal{O}$, $(\Psi^N,\Phi^N) \to (\Psi,\Phi)$ in $C([0,T]; \mathcal{H}^{-\delta})$ in probability
as $N \to \infty$ for any $T<T^{\ast}$, where $(\Psi,\Phi) = (U, \varepsilon\partial_t U) +(Z, \varepsilon \partial_t Z)$,  
$U$ being the solution of  
(\ref{eq:u}) with zero initial values, given by Corollary \ref{cor2 of mainthm} and $T^{\ast}$ being its maximal existence time 
given by Corollary \ref{cor2 of mainthm} and Remark \ref{rem:cor21}.  
\end{prop}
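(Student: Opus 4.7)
The claim has two parts: (a) global well-posedness of the Galerkin approximation $(\Psi^N,\Phi^N)$ for \emph{every} initial datum in $\mathcal{H}^{-\delta}$, and (b) convergence $(\Psi^N,\Phi^N)\to(\Psi,\Phi)$ on $[0,T]$ for every $T<T^*$ for $\mu$-almost every initial datum. I will handle them separately.

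For (a), I would split $\Psi^N=\Pi_N\Psi^N+(1-\Pi_N)\Psi^N$. The high modes uncouple from the nonlinearity and solve a linear damped Klein--Gordon equation driven by $(1-\Pi_N)\partial_t W$, which is globally well-posed in $C([0,T];\mathcal{H}^{-\delta})$ by Theorem \ref{mainthm_energy} combined with the stochastic convolution estimate already used to construct $Z$ in Section 3.1. The low modes form a finite-dimensional SDE with polynomial drift, hence locally well-posed; non-explosion follows from an It\^o computation on the truncated energy $E_N(\psi,\phi)=\tfrac12\|\Pi_N\phi\|_{L^2}^2+V_N(\Pi_N\psi)$, whose leading nonlinear part is $\tfrac1{2n+2}\|\Pi_N\psi\|_{L^{2n+2}}^{2n+2}$, so $V_N$ is coercive up to lower-order corrections while the It\^o correction from the noise is finite for fixed $N$, giving an a priori bound that rules out blow-up on any finite time interval.

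For (b), write $\Psi^N=U^N+Z$ with $Z$ the stationary solution of (\ref{eq:Z}) started at $(\psi,\phi)$, so that $U^N|_{t=0}=0$ satisfies
\begin{equation*}
\varepsilon^2\partial_t^2 U^N + 2\alpha\partial_t U^N + (1-\Delta) U^N + \Pi_N H_{n+1,n}\bigl(\Pi_N(U^N+Z);C_N\bigr)=0.
\end{equation*}
The complex Hermite binomial identity of Proposition \ref{app eq:Hermite}, already used in the proof of Corollary \ref{cor2 of mainthm}, expands the nonlinearity as $\Pi_N\sum_{k,\ell}c_{k,\ell}(\Pi_N U^N)^{n+1-k}(\overline{\Pi_N U^N})^{n-\ell}H_{k,\ell}(\Pi_N Z;C_N)$, a polynomial of degree $\le 2n+1$ in $\Pi_N U^N$ with random coefficients. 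By Proposition \ref{prop:wick_N} and Fubini applied to the invariant law $\mu$, there is a measurable set $\mathcal{O}\subset\mathcal{H}^{-\delta}$ with $\mu(\mathcal{O})=1$ such that for each $(\psi,\phi)\in\mathcal{O}$ the coefficients $H_{k,\ell}(\Pi_N Z;C_N)$ converge $\mathbb{P}$-a.s.\ in $C([0,T];W^{-\delta,\infty})$ to the Wick powers $:Z^k\overline{Z}^\ell:$ that drive the limit equation (\ref{eq:u}); this is the $\mathcal{O}$ in the statement. Now fix $(\psi,\phi)\in\mathcal{O}$, and for $K,M>0$ define the stopping time $\tau_{K,M}$ at which either $\|U\|_{H^\sigma}$ reaches $K$ or the Wick-power norms exceed $M$, so that $\tau_{K,M}\nearrow T^*$ as $K,M\to\infty$ by the definition of $T^*$ in Corollary \ref{cor2 of mainthm}. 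Applying Theorem \ref{mainthm_energy} to the difference equation for $U^N-U$ and writing the nonlinear difference as $\Pi_N[F_n^N(U^N)-F_n^N(U)]+[\Pi_N F_n^N(U)-F_n(U)]$ (where $F_n^N,F_n$ denote the Wick polynomials with truncated and genuine Wick factors), the first bracket is Lipschitz in $U^N-U$ with coefficient $\lesssim K^{2n}M$ via the Besov embedding $H^\sigma\subset B^{2\delta}_{2k,2k}$ used in Corollary \ref{cor2 of mainthm}, while the second bracket tends to $0$ in $L^2_t H^{-\delta}$ by the Wick-power convergence on $\mathcal{O}$. Partitioning $[0,\tau_{K,M}]$ into short intervals and iterating a Gronwall/contraction estimate then drives $\|U^N-U\|_{L^\infty_{\tau_{K,M}}H^\sigma}\to 0$ in probability; sending $K,M\to\infty$ and using $H^\sigma\hookrightarrow H^{-\delta}$ together with $\varepsilon\partial_t U^N\to\varepsilon\partial_t U$ in $H^{\sigma-1}$ gives the claimed $\mathcal{H}^{-\delta}$-convergence in probability.

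The main obstacle is the last patching step: the Lipschitz constant for the nonlinear difference depends on $\|U^N\|_{H^\sigma}$ as well as $\|U\|_{H^\sigma}$, so a stopping time for $U$ alone does not close the estimate. The remedy is to introduce a parallel stopping time $\tau^N_{K,M}$ for $U^N$ and to bootstrap, via a continuity-in-$N$ argument, the inequality $\tau^N_{K,M}\ge\tau_{K/2,M}$ with probability tending to one as $N\to\infty$; this is what makes the iteration terminate in a uniform number of sub-intervals and relies crucially on the fact that the constants in Theorem \ref{mainthm_energy} are independent of $N$.
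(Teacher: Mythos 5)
Your proposal is correct and follows essentially the same route as the paper: the paper obtains global well-posedness of the Galerkin system from the energy estimate (citing \cite{ORT}), and proves convergence by the same decomposition $\Psi^N=U^N+Z$, using Proposition \ref{prop:wick_N} (via the invariant measure, i.e.\ a Fubini argument) to get the full-measure set $\mathcal{O}$ and the uniform local theory of Corollary \ref{cor2 of mainthm} to pass to the limit up to $T^*$. Your write-up merely makes explicit the stopping-time/Gronwall patching that the paper compresses into ``combining Proposition \ref{prop:wick_N} and Corollary \ref{cor2 of mainthm}''.
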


\begin{proof}
Proposition \ref{prop:invariance of YZ} and Proposition \ref{prop:wick_N} imply the existence of 
the measurable set $\mathcal{O}\subset\mathcal{H}^{-\delta}$ such that, $\mu(\mathcal{O})=1$ and, for any $(\psi,\phi)\in\mathcal{O}$ 
the equation \eqref{eq:Z} has a unique solution $Z$ with $(Z,\varepsilon\partial_tZ)|_{t=0}=(\psi,\phi)$, 
and the sequence $\{H_{m,n}(\Pi_NZ;C_N)\}_{N\in\mathbb{N}}$ converges in probability in $C([0,T];W^{-\delta,\infty})$.  
Then we can decompose $(\Psi^N,\Phi^N)=(U^N,\varepsilon\partial_tU^N)+(Z,\varepsilon\partial_tZ)$, where
$$
\left\{
\begin{aligned}
&\varepsilon^2\partial_t^2U^N+2\alpha\partial_tU^N+(1-\Delta)U^N+\Pi_NH_{n+1,n}(\Pi_NU^N+\Pi_NZ;C_N)=0,\\
&(U^N,\varepsilon\partial_tU^N)|_{t=0}=(0,0).
\end{aligned}
\right.
$$
Combining Proposition \ref{prop:wick_N} and Corollary \ref{cor2 of mainthm}, we have the convergence of $(U^N,\varepsilon\partial_tU^N)$ up to the maximal existence time.
\end{proof}

$(\Psi^N,\Phi^N)$ is a Markov process for each fixed $N\in\mathbb{N}$.
We define the Feller transition semigroup $P_t^N f(\psi, \phi)=\E (f(\Psi^N, \Phi^N)(t, (\psi, \phi)))$ for $(\psi, \phi) \in \mathcal{H}^{-\delta}$.

\begin{prop}
The measure $\rho_N$ is invariant for $(P_t^N)_{t\ge 0}$. 
\end{prop}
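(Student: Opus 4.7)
The strategy is to mimic Proposition~\ref{prop:invariance of YZ}, now incorporating the Wick-nonlinear drift. Writing $\rho_N = \Gamma_N^{-1}e^{-V_N(\psi)}\mu(d\psi d\phi)$ with $V_N(\psi)=\frac{1}{2n+2}\int_{\mathbb{T}^2}H_{n+1,n+1}(\Pi_N\psi;C_N)\,dx$, it suffices to show $\int \mathcal{L}^N F\,d\rho_N=0$ for every cylinder function $F\in\mathcal{D}$, where $\mathcal{L}^N$ is the Markov generator of $(\Psi^N,\Phi^N)$; the density argument at the end of Proposition~\ref{prop:invariance of YZ} then extends invariance to all $F\in C_b(\mathcal{H}^{-\delta})$.

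Applying the complex It\^o formula exactly as in the proof of Proposition~\ref{prop:invariance of YZ}, the generator decomposes as
\[
\mathcal{L}^N F = \varepsilon^{-1}\mathcal{L}^{(1)}F+\varepsilon^{-2}\mathcal{L}^{(2)}F+\mathcal{L}^{\mathrm{nl}}F,
\]
with the same $\mathcal{L}^{(1)},\mathcal{L}^{(2)}$ as there, and
\[
\mathcal{L}^{\mathrm{nl}}F(\psi,\phi)=-2\varepsilon^{-1}\re\sum_{|k|\le N}\partial_{\hat\phi(k)}F(\psi,\phi)\,\widehat{\Pi_N H_{n+1,n}(\Pi_N\psi;C_N)}(k).
\]
Since $V_N$ depends only on $\psi$, the second-order operator $\mathcal{L}^{(2)}$ produces no cross term in the Leibniz expansion $\mathcal{L}^{(2)}(e^{-V_N}F)=e^{-V_N}\mathcal{L}^{(2)}F$, while the first-order $\mathcal{L}^{(1)}$ produces exactly one cross term $-e^{-V_N}F\,\mathcal{L}^{(1)}V_N$. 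Feeding this into $\int(\varepsilon^{-1}\mathcal{L}^{(1)}+\varepsilon^{-2}\mathcal{L}^{(2)})(e^{-V_N}F)\,d\mu=0$ (the already-established invariance of $\mu$ under the linear flow) yields
\[
\int\bigl(\varepsilon^{-1}\mathcal{L}^{(1)}+\varepsilon^{-2}\mathcal{L}^{(2)}\bigr)F\,d\rho_N=\varepsilon^{-1}\int F\,\bigl(\mathcal{L}^{(1)}V_N\bigr)\,d\rho_N.
\]

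The decisive algebraic input is the Wick-derivative identity
\[
\partial_{\overline{\hat\psi(k)}}V_N(\psi)=\tfrac{1}{2}\,\widehat{\Pi_N H_{n+1,n}(\Pi_N\psi;C_N)}(k)\qquad(|k|\le N),
\]
which follows from $\partial_{\bar z}H_{m,m}(z;c)=mH_{m,m-1}(z;c)$ (see Appendix~\ref{app:Wick}) together with the Fourier convention $\overline{e_k}=e_{-k}$. Combined with the complex Gaussian integration-by-parts against $\mathcal{N}_c(0,2)$, namely $\int\partial_{\hat\phi(k)}F\cdot g\,d\mu=\frac{1}{2}\int F\,\overline{\hat\phi(k)}\,g\,d\mu$ when $g$ is independent of $\hat\phi(k)$, and using $\overline{V_N}=V_N$ to convert $\partial_{\overline{\hat\psi(k)}}V_N$ into $\overline{\partial_{\hat\psi(k)}V_N}$, this yields
\[
\int \mathcal{L}^{\mathrm{nl}}F\,d\rho_N=-\varepsilon^{-1}\int F\,\bigl(\mathcal{L}^{(1)}V_N\bigr)\,d\rho_N,
\]
which exactly cancels the previous contribution and proves $\int \mathcal{L}^N F\,d\rho_N = 0$.

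The main point requiring care will be the Wick-derivative identity itself, where one must carefully distinguish $\partial_{\hat\psi(k)}$ from $\partial_{\overline{\hat\psi(k)}}$ and track the Fourier convention; once that is verified, the algebraic cancellation with $\mathcal{L}^{\mathrm{nl}}$ is exact, as designed by the Wick renormalization, and the remainder of the argument is a mechanical repetition of the $\mathcal{L}^{(i)}$ computations from Proposition~\ref{prop:invariance of YZ}.
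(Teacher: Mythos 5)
Your proposal is correct and rests on exactly the same ingredients as the paper's proof: the complex It\^o formula for the generator of $(\Psi^N,\Phi^N)$, the identity $\partial_{\overline{\hat\psi(k)}}\big[\tfrac1{2n+2}\int_{\mathbb{T}^2}H_{n+1,n+1}(\Pi_N\psi;C_N)dx\big]=\tfrac12\langle\Pi_NH_{n+1,n}(\Pi_N\psi;C_N),e_k\rangle$ coming from \eqref{eq:Hermite2}, and Gaussian (Wirtinger) integration by parts. The only difference is bookkeeping: the paper absorbs the whole drift into $\partial_{\overline{\hat\psi(k)}}V_N$ (with $V_N$ including the quadratic part) and integrates by parts directly against $\rho_N$, observing that the resulting expression is purely imaginary, whereas you reuse the invariance of $\mu$ from Proposition \ref{prop:invariance of YZ} on the test function $e^{-V_N}F$ and cancel the Leibniz cross term against $\mathcal{L}^{\mathrm{nl}}$ --- an equivalent rearrangement of the same computation.
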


\begin{proof}
First we use \eqref{eq:Hermite2} to have
\begin{align*}
\partial_{\overline{\hat\psi(k)}}V_N(\psi)
&=\partial_{\overline{\hat\psi(k)}}
\Big\{\frac12\sum_k(1+|k|^2)|\hat\psi(k)|^2
+\frac1{2n+2}\int_{\mathbb{T}^2}H_{n+1,n+1}(\Pi_N\psi;C_N)dx
\Big\}\\
&=\frac12\Big\{
\sum_k(1+|k|^2)\hat\psi(k)
+\sum_{|k|\le N}\int_{\mathbb{T}^2}H_{n+1,n}(\Pi_N\psi;C_N)e_{-k}(x)dx
\Big\}\\
&=\frac12\Big\{
\sum_k(1+|k|^2)\hat\psi(k)
+\langle \Pi_NH_{n+1,n}(\Pi_N\psi;C_N), e_k\rangle
\Big\},
\end{align*}
which is compared with the drift term of \eqref{eq:dampedwave_approx}.

For any $F\in\mathcal{D}$ (see the proof of Proposition \ref{prop:invariance of YZ}), we have by the complex version of It\^o formula,
\begin{align*}
&dF(\Psi^N(t),\Phi^N(t))\\
&=\sum_{k}\partial_{\hat\psi(k)}F(\Psi^N(t),\Phi^N(t))d\hat\Psi^N(t;k)
+\sum_{k}\partial_{\overline{\hat\psi(k)}}F(\Psi^N(t),\Phi^N(t))d\overline{\hat\Psi^N(t;k)}\\
&\quad
+\sum_{k}\partial_{\hat\phi(k)}F(\Psi^N(t),\Phi^N(t))d\hat\Phi^N(t;k)
+\sum_{k}\partial_{\overline{\hat\phi(k)}}F(\Psi^N(t),\Phi^N(t))d\overline{\hat\Phi^N(t;k)}\\
&\quad
+4\varepsilon^{-2}\re(\alpha)\sum_{k}\partial_{\hat{\phi}(k)\overline{\hat\phi(k)}}F(\Psi^N(t),\Phi^N(t))d\hat{W}(t;k)d\overline{\hat{W}(t;k)}\\
&=:\varepsilon^{-1}\mathcal{L}_V^{(1)}F(\Phi^N(t),\Psi^N(t))dt
+\varepsilon^{-2}\mathcal{L}_V^{(2)}F(\Phi^N(t),\Psi^N(t))dt+(\text{martingale}),
\end{align*}
where
\begin{align*}
\mathcal{L}_V^{(1)}F(\psi,\phi)
&=2\re\Big\{\sum_k\partial_{\hat\psi(k)}F(\psi,\phi)\hat\phi(k)
-2\sum_k\partial_{\hat\phi(k)}F(\psi,\phi)\partial_{\overline{\hat\psi(k)}}V_N(\psi)\Big\}
\end{align*}
and $\mathcal{L}_V^{(2)}$ is the same as $\mathcal{L}^{(2)}$ which appears in the generator of \eqref{eq:YZ}.
For $\mathcal{L}_V^{(1)}$, since
\begin{align*}
&\int_{\mathcal{H}^{-\delta}}
\Big\{\partial_{\hat\psi(k)}F(\psi,\phi)\hat\phi(k)-2\partial_{\hat\phi(k)}F(\psi,\phi)\partial_{\overline{\hat\psi(k)}}V_N(\psi)\hat\psi(k)\Big\}
\rho_N(d\psi d\phi)\\
&=\int_{\mathcal{H}^{-\delta}}
F(\psi,\phi)
\Big\{\hat\phi(k)\partial_{\hat\psi(k)}V_N(\psi)-\overline{\hat\phi(k)}\partial_{\overline{\hat\psi(k)}}V_N(\psi)\Big\}
\rho_N(d\psi d\phi)\\
&=\int_{\mathcal{H}^{-\delta}}
F(\psi,\phi)
\Big\{\hat\phi(k)\partial_{\hat\psi(k)}V_N(\psi)-\overline{\hat\phi(k)\partial_{\hat\psi(k)}V_N(\psi)}\Big\}
\rho_N(d\psi d\phi)
\end{align*}
is a pure imaginary number, we have
$$
\int_{\mathcal{H}^{-\delta}}\mathcal{L}_V^{(1)}F(\psi,\phi)\rho_N(d\psi d\phi)=0.
$$
Since $\mathcal{L}^{(2)}$ does not contain $\partial_{\hat\psi(k)}$ and $\partial_{\overline{\hat\psi(k)}}$, similarly to Proposition \ref{prop:invariance of YZ} we have
$$
\int_{\mathcal{H}^{-\delta}}\mathcal{L}_V^{(2)}F(\psi,\phi)\rho_N(d\psi d\phi)=0.
$$
\end{proof}

The following lemma reflects some important properties of $\rho_N$, which are remarked in \cite[Lemma 3.2]{ORT}.

\begin{lem} \label{lem:G_N}
\begin{itemize}
\item[(1)] The normalizing constant $\Gamma_N$ is bounded below independent of $N$.
\item[(2)] 
The sequence 
$$\left\{e^{-\frac{1}{2n+2}\int_{\mathbb{T}^2} H_{n+1,n+1}(\Pi_N\psi;C_N) dx} \right\}_{N\in \N}$$ 
is  Cauchy in $L^p(d\mu_0)$ for $p \ge 1$, 
and we denote the limit by 
$$ e^{-\frac{1}{2n+2} \int_{\mathbb{T}^2} : |\psi|^{2n+2}:(x) dx}.$$
\end{itemize}
\end{lem}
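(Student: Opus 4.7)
The plan for (1) is to apply Jensen's inequality. Let
$$F_N(\psi) := \frac{1}{2n+2}\int_{\mathbb{T}^2} H_{n+1,n+1}(\Pi_N\psi;C_N)\,dx,$$
so that $\Gamma_N = \mathbb{E}_{\mu_0}[e^{-F_N}]$. Since $F_N$ is a polynomial of degree $2n+2$ in the finitely many complex Gaussian coefficients $\{\hat\psi(k)\}_{|k|\le N}$, it lies in every $L^p(\mu_0)$. Moreover, $H_{n+1,n+1}$ is a Wick polynomial of positive degree, so $\mathbb{E}_{\mu_0}[H_{n+1,n+1}(\Pi_N\psi(x);C_N)]=0$ pointwise in $x$. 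Fubini then gives $\mathbb{E}_{\mu_0}[F_N]=0$, and Jensen applied to the convex function $e^{-(\cdot)}$ yields $\Gamma_N\ge e^{-\mathbb{E}_{\mu_0}[F_N]}=1$ uniformly in $N$.

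For (2) I would follow the standard two-step Nelson strategy. The first step is to establish $L^p(\mu_0)$ convergence of $\{F_N\}$ itself. Because $F_N$ lies in the $(2n+2)$-th complex Wiener chaos, Nelson's hypercontractivity reduces this to showing that $\{F_N\}$ is Cauchy in $L^2(\mu_0)$. I would verify the $L^2$ Cauchy property by the same type of Fourier computation as in Proposition \ref{prop:wick_N}: expanding $\mathbb{E}_{\mu_0}[F_N F_M]$ by Wick's theorem gives pairing sums of the shape $\int\int |K_{N\wedge M}(x-y)|^{2n+2}\,dxdy$, where $K_L(x)=\sum_{|k|\le L}\frac{2}{1+|k|^2}e^{ik\cdot x}$ is the truncated covariance; translating to Fourier side these are dominated by $\sum_\ell\big((\tfrac{1}{1+|\cdot|^2})^{*(n+1)}(\ell)\big)^2$, which is convergent in $d=2$ with tails vanishing as $\min(N,M)\to\infty$. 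The renormalization by $C_N$ is precisely what cancels the otherwise divergent diagonal contribution in $d=2$.

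The second step is the Nelson-type uniform exponential integrability bound
$$\sup_N \mathbb{E}_{\mu_0}[e^{-q F_N}] < \infty \quad\text{for every }q\ge 1.$$
Combined with the first step this finishes (2) by a routine uniform-integrability argument: $L^p$ convergence of $F_N$ gives convergence of $e^{-F_N/(2n+2)}$ in $\mu_0$-probability, and choosing $q>p$ in the Nelson bound yields uniform integrability of $\{|e^{-F_N/(2n+2)}|^p\}_N$; Vitali's theorem then upgrades in-probability convergence to $L^p(\mu_0)$ convergence, and the Cauchy property of the sequence follows immediately.

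The main obstacle is of course the Nelson bound $\sup_N\mathbb{E}_{\mu_0}[e^{-qF_N}]<\infty$, which is the nontrivial content of the complex $\Phi^{2n+2}_2$ Gibbs measure construction. The plan is to decompose $\Pi_N\psi$ into dyadic frequency blocks and track how the Wick counterterm $C_N^{n+1}$ exactly cancels the divergent self-contractions, producing a large deviation estimate of the form $\mu_0\{F_N\le -\lambda\}\le e^{-c\lambda^{1/\alpha}}$ with constants independent of $N$. Since this calculation is carried out explicitly in \cite{ORT} for the very same complex Wick potential on $\mathbb{T}^2$, the cleanest path is to invoke that result directly rather than reproduce the scale-by-scale estimates.
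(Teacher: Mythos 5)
Your proposal is correct and follows essentially the same route as the paper: the paper offers no proof of this lemma at all, deferring entirely to \cite[Lemma 3.2]{ORT}, and your argument (Jensen's inequality with the vanishing mean of the Wick polynomial for (1), the $L^2$ Wick/covariance computation plus hypercontractivity and a Vitali-type uniform integrability step for (2), with the Nelson exponential-integrability bound invoked from \cite{ORT}) is precisely the standard argument carried out in that reference.
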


The invariance of $\rho_N$ yields the following lemma.

\begin{lem}
Let any $T>0$ be fixed. There exists a constant $C_T$ independent of $N$ such that 
\begin{align*} 
\sup_{N\in\mathbb{N}}\int_{\mathcal{H}^{-\delta}}
\mathbb{E}\left[\sup_{0\le t\le T}\|(\Psi^N(t),\Phi^N(t))\|_{\mathcal{H}^{-\delta}}\right]\rho_N(d\psi d\phi) \le C_T.
\end{align*}
\end{lem}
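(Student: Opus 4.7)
The strategy is the Bourgain invariant-measure discretization argument. Since the $\Pi_N$-truncated system \eqref{eq:dampedwave_approx} is globally well-posed and $\rho_N$ is invariant under its Markov semigroup $P_t^N$, partition $[0,T]=\bigcup_{k=0}^{M-1}[k\tau_0,(k+1)\tau_0]$ with $M=\lceil T/\tau_0\rceil$ for some fixed $\tau_0\in(0,1]$. The elementary bound $\sup_{[0,T]}(\cdot)\le\sum_k\sup_{[k\tau_0,(k+1)\tau_0]}(\cdot)$ combined with the Markov property and invariance of $\rho_N$ yields
$$\int\mathbb{E}\Bigl[\sup_{t\in[0,T]}\|(\Psi^N,\Phi^N)(t)\|_{\mathcal{H}^{-\delta}}\Bigr]\,d\rho_N
\le M\int\mathbb{E}\Bigl[\sup_{t\in[0,\tau_0]}\|(\Psi^N,\Phi^N)(t)\|_{\mathcal{H}^{-\delta}}\Bigr]\,d\rho_N,$$
so it suffices to bound the short-time integral uniformly in $N$.

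For this short-time bound, I would use the decomposition $\Psi^N=U^N+Z$ of Proposition \ref{prop:approx}. The OU process $Z$ is independent of $N$, so by stationarity of $\mu$ (Proposition \ref{prop:invariance of YZ}) together with Gaussian moment estimates and Kolmogorov-type continuity, $\mathbb{E}_\mu[\sup_{[0,\tau_0]}\|(Z,\varepsilon\partial_tZ)\|_{\mathcal{H}^{-\delta}}^p]<\infty$ for every $p\ge1$, and Proposition \ref{prop:wick_N} gives the analogous uniform-in-$N$ bound for
$$\Xi(\omega):=\sum_{k+\ell\le 2n+1}\|:Z^k\overline{Z}^\ell:\|_{L^\infty_{\tau_0}W^{-\delta,\infty}}.$$
Since $U^N$ solves a random PDE with zero initial data whose polynomial nonlinearity has coefficients among the truncated Wick products, Theorem \ref{mainthm_energy} and the contraction-mapping scheme of Corollary \ref{cor2 of mainthm} produce a local bound $\sup_{[0,\tau_1]}\|U^N\|_{H^\sigma}\lesssim 1$ on a random interval of length $\tau_1\sim(1+\Xi)^{-2(2n+1)}$. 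Iterating over $\sim(1+\Xi)^{2(2n+1)}$ such sub-intervals covers $[0,\tau_0]$ with a bound $\sup_{[0,\tau_0]}\|U^N\|_{H^\sigma}\le P(\Xi(\omega))$ for some polynomial $P$, independent of $N$. The embedding $H^\sigma\hookrightarrow H^{-\delta}$ then gives
$$\sup_{[0,\tau_0]}\|(\Psi^N,\Phi^N)\|_{\mathcal{H}^{-\delta}}\le P(\Xi(\omega))+\sup_{[0,\tau_0]}\|(Z,\varepsilon\partial_tZ)\|_{\mathcal{H}^{-\delta}}.$$

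The final step is a change of measure from $\rho_N$ to $\mu$. Lemma \ref{lem:G_N} ensures $\Gamma_N^{-1}\le C$ and $\|e^{-\frac1{2n+2}\int H_{n+1,n+1}(\Pi_N\psi;C_N)dx}\|_{L^2(d\mu)}\le C$ uniformly in $N$, so Cauchy--Schwarz yields $\int F\,d\rho_N\lesssim\|F\|_{L^2(d\mu)}$ for any nonnegative $F$, with constant independent of $N$. Applying this to the short-time integrand and combining with the uniform $L^p(d\mu\otimes d\mathbb{P})$-moments of $\Xi$ (via Gaussian hypercontractivity) together with the OU moment bound closes the estimate. The main technical obstacle is the iteration step for $U^N$: because the contraction radius in Corollary \ref{cor2 of mainthm} scales like $1+\Xi$ and the resulting existence interval shrinks accordingly, one must verify that the iterated bound remains polynomial in $\Xi$ rather than exponential, so that the final integrand lies in $L^p(d\mu\otimes d\mathbb{P})$; this is achieved by re-centering the fixed-point at each sub-interval so that the contraction radius stays below a fixed absolute constant throughout.
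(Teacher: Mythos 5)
Your overall framework (exploit invariance of $\rho_N$, reduce to a short-time estimate, change measure from $\rho_N$ to $\mu$ via Lemma \ref{lem:G_N}, and control the Wick data through Proposition \ref{prop:wick_N}) is in the right spirit, and the reduction $\int\mathbb{E}[\sup_{[0,T]}]\,d\rho_N\le M\int\mathbb{E}[\sup_{[0,\tau_0]}]\,d\rho_N$ as well as the Cauchy--Schwarz change of measure are fine. The gap is the claimed pathwise bound $\sup_{[0,\tau_0]}\|U^N\|_{H^\sigma}\le P(\Xi)$ on a \emph{fixed} interval $[0,\tau_0]$. Corollary \ref{cor2 of mainthm} only guarantees, from data of size $R\sim 1+\Xi$, a lifespan $T_1\sim R^{-\eta}$ together with the bound $\|U^N\|\le R$ up to time $T_1$; restarting from data of size at most $R$ (plus the same $\Xi$) the radius at the next step is again of order $R$, at best a fixed multiple larger, so the guaranteed lengths decay at least geometrically and their sum is $O\big((1+\Xi)^{-\eta}\big)$ --- the iteration never reaches a time $\tau_0$ independent of $\Xi$. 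This is just the blow-up alternative: the local theory alone cannot exclude pathwise blow-up of $U^N$ before time $\tau_0$, uniformly in $N$ (a uniform-in-$N$ energy/Lyapunov bound for the Wick-renormalized truncated equation is not available; the global well-posedness of \eqref{eq:dampedwave_approx} cited from \cite{ORT} is $N$-dependent). Your closing remark about ``re-centering the fixed point'' does not address this, since the obstruction is the growth of the data norm at the restart times (equivalently the shrinking lifespan), not the location of the contraction ball.

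The paper resolves exactly this point differently: instead of a fixed $\tau_0$ and a pathwise bound, it fixes a threshold $R$, subdivides $[0,T]$ into $\sim T/T_0$ intervals of the $R$-dependent length $T_0\sim R^{-\eta}$, and estimates the \emph{tail probability}. On each subinterval the local theory keeps the solution below $R$ provided the data at the left endpoint is below $R/2$ and $\sum_k\|\Xi_k\|\le R/2$; the invariance of $\rho_N$ transfers the event at time $\ell T_0$ back to time $0$, where Markov's inequality together with the uniform bounds of Lemma \ref{lem:G_N} and Proposition \ref{prop:wick_N} give probability $\lesssim R^{-m}$ for every $m$. Summing over the $\sim R^{\eta}$ subintervals yields $\mathbb{P}_{\rho_N}(\sup_{[0,T]}\|\cdot\|>R)\lesssim R^{\eta-m}$, and choosing $m$ large and integrating in $R$ gives the expectation bound uniformly in $N$. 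To repair your argument you would need to replace the deterministic iteration by this probabilistic bookkeeping (or supply some other a priori bound valid on a fixed time interval uniformly in $N$), since the invariant measure --- not the local contraction --- is what controls the size of the data at each restart.
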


\begin{proof}
Let $R>0$. According to the proof of Corollary \ref{cor2 of mainthm}, 
there are constants $C>0$ and $\eta>0$ independent of $N$ and $R$, such that 
if $T_0=CR^{-\eta}$, then 
$(\Psi^N,\Phi^N) = (U^N, \varepsilon\partial_t U^N) +(Z, \varepsilon \partial_tZ)$ satisfies 
$$ \|(\Psi^N, \Phi^N)\|_{L^{\infty}_{T_0} \mathcal{H}_x^{-\delta}} \le R,$$
provided 
$\|(\psi,\phi)\|_{\mathcal{H}^{-\delta}}\le R/2$ and
\begin{equation} \label{eq:1}
\sum_{k=1}^{2n+1} \|\Xi_k\|_{L^\infty_T W_x^{-\delta,\infty}} \le R/2.
\end{equation}
($\{\Xi_k\}$ contains $Z$ as an element.)
Note that we may solve equation on any time interval of length $T_0$, so that for $\ell=0,1,2...[\frac{T}{T_0}]$, 
$$ \|(\Psi^N, \Phi^N)\|_{C([\ell T_0, (\ell+1)T_0]; \mathcal{H}^{-\delta})} \le R$$
provided (\ref{eq:1}) holds true and 
$ \|(\Psi^N, \Phi^N)(\ell T_0)\|_{\mathcal{H}^{-\delta}} \le R/2.$
We infer that for any $R>0$ 
\begin{eqnarray*}
&& \prob_{\rho_N} (\sup_{t\in [0,T]} \|(\Psi^N,\Phi^N)(t)\|_{\mathcal{H}^{-\delta}} >R) \\
&& \le 
\sum_{\ell=0}^{[\frac{T}{T_0}]} \prob_{\rho_N} (\sup_{[\ell T_0, (\ell+1)T_0]}\|(\Psi^N,\Phi^N)(t)\|_{\mathcal{H}^{-\delta}} >R ) \\
&& \le \sum_{\ell=0}^{[\frac{T}{T_0}]} \prob_{\rho_N} (\|(\Psi^N,\Phi^N)(\ell T_0)\|_{\mathcal{H}^{-\delta}} >R/2 ) +
([T/T_0]+1)\prob(\sum_{k=1}^{2n+1} \|\Xi_k\|_{L^\infty_T W_x^{-\delta,\infty}} > R/2).
\end{eqnarray*}
By Markov inequality and Proposition \ref{prop:wick_N},
$$
\prob(\sum_{k=1}^{2n+1} \|\Xi_k\|_{L^\infty_T W^{-\delta,\infty}} > R/2) \lesssim R^{-m}
$$
holds for any $m\in\mathbb{N}$.
By the invariance of $\rho_N,$ for any $\ell=0,1,2,\cdots,\left[\frac{T}{T_0}\right],$
\begin{eqnarray*}
\prob_{\rho_N} (\|(\Psi^N,\Phi^N)(\ell T_0)\|_{\mathcal{H}^{-\delta}} >R/2 )
&=&\rho_N (\|(\psi,\phi)\|_{\mathcal{H}^{-\delta}} >R/2) \\
& \lesssim&  R^{-m} \Gamma_N^{-1} \E_\mu(\|(\psi,\phi)\|^m_{\mathcal{H}^{-\delta}} ) \lesssim R^{-m},
\end{eqnarray*}
where the latter bound comes from Lemma \ref{lem:G_N}. 
Finally 
$$
\prob_{\rho_N} (\sup_{t\in [0,T]} \|(\Psi^N(t),\Phi^N(t))\|_{\mathcal{H}^{-\delta}} >R) 
\lesssim R^{{\eta}-m}
$$
thus, choosing $m$ large enough the statement follows.  
\end{proof}

Proposition \ref{mainprop:GWP} follows from the next statement.

\begin{thm} Fix any $T>0$. Then there exists a constant $C_T>0$ such that  
\begin{align*}
\int_{\mathcal{H}^{-\delta}}
\mathbb{E}\left[\sup_{0\le t< T\wedge T^*}\|(\Psi(t),\Phi(t))\|_{\mathcal{H}^{-\delta}}\right]\rho(d\psi d\phi) \le C_T.
\end{align*}
Hence $T^*=\infty$ a.s. for $\rho$-a.e. $(\psi,\phi)\in\mathcal{H}^{-\delta}$. 
\end{thm}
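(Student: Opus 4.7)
The conclusion "$T^* = T$ a.s.\ for $\rho$-a.e.\ $(\psi,\phi)$" is an immediate consequence of the integral bound: writing $\Psi = U + Z$ with $Z \in L^\infty_T \mathcal{H}^{-\delta}$ almost surely (from Section~3), the bound forces $\sup_{t < T\wedge T^*}\|U\|_{\mathcal{H}^{-\delta}}$ to be finite on a full $\rho \otimes \prob$-measure set, which combined with the blow-up alternative from Corollary \ref{cor2 of mainthm} precludes $T^* < T$. So I focus on the integral bound itself.

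The strategy is to transfer the uniform bound of the previous Lemma (stated on $\rho_N$) to $\rho$ by a double Fatou argument together with Proposition \ref{prop:approx}. For $\rho$-almost every $(\psi,\phi)$ (which follows from $\rho \ll \mu_0 \otimes \mu_1$ and $\mu(\mathcal{O})=1$), one extracts a subsequence $(N_k)$ along which $(\Psi^{N_k},\Phi^{N_k}) \to (\Psi,\Phi)$ almost surely in $C([0,\tau];\mathcal{H}^{-\delta})$ for every $\tau < T\wedge T^*$. Fatou in $\omega$ and monotone convergence as $\tau \uparrow T\wedge T^*$ yield
\begin{align*}
\E\Big[\sup_{0\le t < T\wedge T^*}\|(\Psi,\Phi)\|_{\mathcal{H}^{-\delta}}\Big]
\le \liminf_{k\to\infty}\E\Big[\sup_{0\le t \le T}\|(\Psi^{N_k},\Phi^{N_k})\|_{\mathcal{H}^{-\delta}}\Big],
\end{align*}
and a further Fatou in $(\psi,\phi)$ yields $\int \E[\sup_{t<T\wedge T^*}\|(\Psi,\Phi)\|]\,d\rho \le \liminf_k \int \E[\sup_{t\le T}\|(\Psi^{N_k},\Phi^{N_k})\|]\,d\rho$.

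It then remains to bound $\int \E[\sup_{t\le T}\|(\Psi^N,\Phi^N)\|]\,d\rho$ uniformly in $N$, starting from the corresponding $\rho_N$-bound. Using the densities $\rho = \Gamma^{-1}G\,d\mu_0 d\mu_1$ and $\rho_N = \Gamma_N^{-1}G_N\,d\mu_0 d\mu_1$, I split
\begin{align*}
\int h^N\,d\rho = \frac{\Gamma_N}{\Gamma}\int h^N\,d\rho_N + \frac{1}{\Gamma}\int h^N(G - G_N)\,d\mu_0 d\mu_1,
\end{align*}
with $h^N(\psi,\phi) := \E[\sup_{t\le T}\|(\Psi^N,\Phi^N)\|_{\mathcal{H}^{-\delta}}]$. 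By Lemma \ref{lem:G_N} one has $\Gamma_N\to\Gamma$ and $G_N\to G$ in every $L^p(d\mu_0)$, so the first summand is $\lesssim C_T$ by the previous Lemma, while the second vanishes as $N\to\infty$ by H\"older's inequality, provided one secures a uniform bound $\sup_N\|h^N\|_{L^p(d\mu_0\otimes d\mu_1)} < \infty$ for some $p$.

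\textbf{Main obstacle.} This last $L^p(d\mu_0\otimes d\mu_1)$ control on $h^N$ is the delicate point, since the previous Lemma only bounds $h^N$ against the weighted measure $\rho_N$ and the density $G/G_N$ is not uniformly bounded. The cleanest workaround is to work at the level of tails: the Markov--plus--iteration scheme in the proof of the previous Lemma (with $T_0 = CR^{-\eta}$) actually yields $\prob_{\rho_N}(\sup_{t\le T}\|(\Psi^N,\Phi^N)\|_{\mathcal{H}^{-\delta}} > R) \lesssim_m R^{-m}$ for every $m\in\N$. Applying the same Fatou/density argument to the bounded functions $\mathbf{1}_{\{\cdot > R\}}$ (for which $\int f\,d\rho - \int f\,d\rho_N \to 0$ by Lemma \ref{lem:G_N}(2)) transfers this tail bound to $\rho$, and a layer-cake integration with $m$ large enough then delivers the first-moment bound under $\rho$, completing the proof.
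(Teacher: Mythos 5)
Your proposal is correct in outline, but it follows a genuinely different route from the paper's. The paper never integrates the truncated dynamics against the limiting measure $\rho$: it writes the left-hand side as $\Gamma^{-1}\int\E\big[\sup_{t<T\wedge T^*}\|(\Psi,\Phi)\|_{\mathcal{H}^{-\delta}}\big]\,G\,d\mu$ with $G=e^{-\frac{1}{2n+2}\int:|\psi|^{2n+2}:dx}$, and applies a single Fatou argument over $\mu$ to the \emph{paired} integrand $\E\big[\sup_{t\le T}\|(\Psi^N,\Phi^N)\|_{\mathcal{H}^{-\delta}}\big]\,G_N$, using the $\mu$-a.e.\ convergence $G_N\to G$ along a subsequence (Lemma \ref{lem:G_N}) and the convergence in probability of $(\Psi^N,\Phi^N)$ (Proposition \ref{prop:approx}); since $\Gamma_N^{-1}\int\E[\cdots]\,G_N\,d\mu=\int\E[\cdots]\,d\rho_N\le C_T$ by the preceding lemma, the bound follows at once. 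Keeping $G_N$ attached to $\Psi^N$ is precisely what removes your ``main obstacle'': no uniform $L^p(\mu_0\otimes\mu_1)$ bound on your $h^N$ is ever needed, and indeed none is available. Your workaround --- extracting from the \emph{proof} (not the statement) of the preceding lemma the tail bound $(\rho_N\otimes\prob)\big(\sup_{t\le T}\|(\Psi^N,\Phi^N)\|_{\mathcal{H}^{-\delta}}>R\big)\lesssim_m R^{-m}$, transferring it to $\rho$ via bounded indicators and the $L^1(\mu)$ convergence $\Gamma_N^{-1}G_N\to\Gamma^{-1}G$, passing to the limit dynamics by Fatou (note $(R,\infty)$ is open, so no threshold shrinking is actually required), letting $\tau\uparrow T\wedge T^*$, and finally integrating the layer-cake formula with $m\ge2$ --- does work, but it is longer and buys nothing here; its one advantage is that it yields polynomial tail bounds under $\rho$ as a by-product, which the paper's one-line Fatou does not state.

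Two points to tighten. First, the a.s.-convergent subsequence $(N_k)$ obtained from Proposition \ref{prop:approx} may depend on $(\psi,\phi)$ and on $\tau$; you should instead assert the pointwise inequality $\E\big[\sup_{t<T\wedge T^*}\|(\Psi,\Phi)\|\big]\le\liminf_{N}\E\big[\sup_{t\le T}\|(\Psi^N,\Phi^N)\|\big]$ along the full sequence, for each fixed $(\psi,\phi)$, via the standard subsequence-of-a-subsequence argument, and only then apply the outer Fatou (the paper's proof needs the same care). Second, the deduction ``$T^*=T$'' should not be attributed to the blow-up alternative of Corollary \ref{cor2 of mainthm}, which is formulated for $U$ in $H^\sigma$, whereas the theorem only controls the $\mathcal{H}^{-\delta}$ norm of $(\Psi,\Phi)$; the correct mechanism is the restart argument used in the previous lemma, where at each restart the data is re-split into a rough linear part and a smooth nonlinear part with zero data, so that the local existence time depends only on $\|(\Psi,\Phi)\|_{\mathcal{H}^{-\delta}}$ at the restart time and on the Wick-power norms, which are a.s.\ finite on $[0,T]$. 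With that mechanism, finiteness of $\sup_{t<T\wedge T^*}\|(\Psi,\Phi)\|_{\mathcal{H}^{-\delta}}$ on a set of full $\rho\otimes\prob$ measure does give $T^*=T$ there, which is how the paper concludes.
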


\begin{proof}
It follows from Lemma \ref{lem:G_N} that taking a subsequence,
$e^{-\frac{1}{2n+2}\int_{\mathbb{T}^2} H_{n+1,n+1}(\Pi_N\psi;C_N) dx}$ converges 
to $e^{-\frac{1}{2n+2} \int_{\mathbb{T}^2} : |\psi|^{2n+2}:(x) dx},$ 
as $N\to \infty$, $\mu$-a.e. Hence, recalling Proposition \ref{prop:approx},  Fatou's lemma implies 
\begin{eqnarray*}
&&\int_{\mathcal{H}^{-\delta}}
\mathbb{E}\left[\sup_{0\le t< T\wedge T^*}\|(\Psi(t),\Phi(t))\|_{\mathcal{H}^{-\delta}}\right]\rho(d\psi d\phi)\\
&=& 
\Gamma^{-1}\int_{\mathcal{H}^{-\delta}}
\mathbb{E}\left[\sup_{0\le t< T\wedge T^*}\|(\Psi(t),\Phi(t))\|_{\mathcal{H}^{-\delta}}\right]  
e^{-\frac{1}{2n+2} \int_{\mathbb{T}^2} : |\psi|^{2n+2}:(x) dx}\mu(d\psi d\phi) \\
& \le & \liminf_{N \to \infty} \Gamma_N^{-1}\int_{\mathcal{H}^{-\delta}}
\mathbb{E}\left[\sup_{0\le t< T\wedge T^*}\|(\Psi^N(t),\Phi^N(t))\|_{\mathcal{H}^{-\delta}}\right]  
e^{-\frac{1}{2n+2}\int_{\mathbb{T}^2} H_{n+1,n+1}(\Pi_N\psi;C_N) dx} \mu(d\psi d\phi)  \\
&\le& C_T. 
\end{eqnarray*}
This estimate implies that there exists a $\rho$-measurable set $\mathcal{M}_T \subset \mathcal{O}$ (of Proposition \ref{prop:approx}) 
such that $\rho(\mathcal{M}_T)=1$ and that for $(\psi, \phi) \in \mathcal{M}_T$, 
$\sup_{0\le t< T\wedge T^*}\|(\Psi(t),\Phi(t))\|_{\mathcal{H}^{-\delta}}$ is finite a.s.. Since this value should blow up if $T^*<T$, it implies that
the solution $\|(\Psi(t),\Phi(t))\|_{\mathcal{H}^{-\delta}}$ exists up to time 
$T$ a.s.. For each $T_n$ ($n\in \N$), such that $T_n \to \infty$ as $n \to \infty$, consider $\mathcal{M}_{T_n}$ and set $\overline{\mathcal{M}} := \cap_n \mathcal{M}_{T_n}.$ 
Then $\rho(\overline{\mathcal{M}})=1$. 
Namely $\|(\Psi(t),\Phi(t))\|_{\mathcal{H}^{-\delta}}$ exists globally, i.e. $T^*=\infty$ a.s. for any  $(\psi, \phi) \in \overline{\mathcal{M}}$.
\end{proof}
\vspace{3mm}

Thus, we can define the transition semigroup $P_t f (\psi,\phi)=\E (f(\Psi, \Phi)(t, (\psi, \phi)))
$ for  $(\psi,\phi) \in \overline{\mathcal{M}}$ and $t \ge 0$, and the invariance of measure $\rho$ follows straightforward: 

\begin{cor}
The measure $\rho$ is invariant for $(P_t)_{t\ge 0}$. 
\end{cor}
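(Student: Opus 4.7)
The plan is to pass to the limit $N\to\infty$ in the invariance identity
$$
\int_{\mathcal{H}^{-\delta}}P_t^Nf\,d\rho_N=\int_{\mathcal{H}^{-\delta}}f\,d\rho_N,\qquad f\in C_b(\mathcal{H}^{-\delta}),~t\ge 0,
$$
supplied by the preceding proposition, and show that both sides converge to their counterparts with $\rho$ in place of $\rho_N$ and $P_t$ in place of $P_t^N$. For the right-hand side, Lemma \ref{lem:G_N}\,(2) combined with the uniform lower bound on $\Gamma_N$ from Lemma \ref{lem:G_N}\,(1) yields $L^1$-convergence of the Radon-Nikodym densities $d\rho_N/(d\mu_0\otimes d\mu_1)$ to $d\rho/(d\mu_0\otimes d\mu_1)$, and the boundedness of $f$ gives $\int fd\rho_N\to\int fd\rho$ directly.

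For the left-hand side, the first step is to establish the pointwise convergence $P_t^Nf(\psi,\phi)\to P_tf(\psi,\phi)$ for $\rho$-a.e.\,$(\psi,\phi)$. Proposition \ref{prop:approx} gives $(\Psi^N,\Phi^N)\to(\Psi,\Phi)$ in probability in $C([0,T];\mathcal{H}^{-\delta})$ on the $\mu$-full-measure set $\mathcal{O}$ for every $T<T^\ast$, and the preceding theorem upgrades this on a subset $\overline{\mathcal{M}}\subset\mathcal{O}$ of full $\rho$-measure where $T^\ast=+\infty$ a.s.; composing with $f\in C_b$ and applying bounded convergence in $\omega$ then yields $P_t^Nf(\psi,\phi)\to P_tf(\psi,\phi)$ for every $(\psi,\phi)\in\overline{\mathcal{M}}$ and every fixed $t\ge0$. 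I would then split
$$
\int P_t^Nf\,d\rho_N-\int P_tf\,d\rho=\int P_t^Nf\,(d\rho_N-d\rho)+\int(P_t^Nf-P_tf)\,d\rho,
$$
bound the first integral by $\|f\|_\infty$ times the $L^1$-distance of the densities of $\rho_N$ and $\rho$ (which vanishes by the step above), and control the second by dominated convergence against the probability measure $\rho$ using $|P_t^Nf-P_tf|\le 2\|f\|_\infty$ and the $\rho$-a.e.\ pointwise convergence. This gives $\int P_t^Nfd\rho_N\to\int P_tf\,d\rho$, and combined with the previous limit produces $\int P_tfd\rho=\int fd\rho$ for every $f\in C_b(\mathcal{H}^{-\delta})$, i.e.\,the invariance of $\rho$ under $(P_t)_{t\ge0}$.

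The delicate point I anticipate is the bookkeeping of the various full-measure sets: $P_tf$ is only defined on $\overline{\mathcal{M}}$, which has full $\rho$-measure but not necessarily full $\rho_N$-measure; however, because $P_t^Nf$ is defined everywhere on $\mathcal{H}^{-\delta}$ and is bounded by $\|f\|_\infty$, the first piece of the split above makes sense without any restriction on the domain, and the $L^1$-density convergence from Lemma \ref{lem:G_N} absorbs the discrepancy between the reference measures. Apart from verifying this decoupling carefully, no further obstacle is expected, consistent with the authors' remark that the invariance follows \emph{straightforward}.
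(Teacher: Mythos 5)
Your argument is correct and is precisely the ``straightforward'' limiting argument the paper alludes to (it gives no written proof for this corollary): you pass to the limit in $\int P_t^N f\,d\rho_N=\int f\,d\rho_N$ using the total-variation convergence $\rho_N\to\rho$ coming from Lemma \ref{lem:G_N}, the $\rho$-a.e.\ pointwise convergence $P_t^Nf\to P_tf$ from Proposition \ref{prop:approx} together with the global existence on $\overline{\mathcal{M}}$, and dominated convergence. Your handling of the full-measure sets is also fine, since $P_t^Nf$ is defined for all data by the global well-posedness of the truncated system and both $\rho_N$ and $\rho$ are absolutely continuous with respect to $\mu_0\otimes\mu_1$.
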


\section{non relativistic limit}\label{section:NRL}

In this section, we fix $\alpha$ and let $\varepsilon$ go to $0$,
so we omit the label $\alpha$ from underlined objects, e.g. 
we write $\Psi_\varepsilon=\Psi_{\varepsilon,\alpha}$.

\subsection{Deterministic result}

Let $d\ge1$ be arbitrary for the moment.
 We can expect that, the limit of the solution of \eqref{eq:dampedwave_f} as $\varepsilon\to0$ solves the linear heat equation
\begin{align*}
\left\{
\begin{aligned}
&2\alpha\partial_tv+(1-\Delta) v=f,&t>0,~ x\in \mathbb{T}^d,\\
&v|_{t=0}=\phi_0, & x\in\mathbb{T}^d,
\end{aligned}
\right.
\end{align*}
or equivalently,
$$
v(t)=e^{\frac{t}{2\alpha}(\Delta-1)}\phi_0+\frac1{2\alpha}\int_0^te^{\frac{t-t'}{2\alpha}(\Delta-1)}f(t')dt'.
$$

\begin{thm}\label{mainthm:simple}
For any $\sigma\in\mathbb{R}$ and $\theta\in[0,1]$,
\begin{align*}
\|u_\varepsilon-v\|_{L_T^\infty H_x^{\sigma}}
\lesssim_{\alpha} \varepsilon^\theta\big(\|\phi_0\|_{H^{\sigma+\theta}}+\|\phi_1\|_{H^{\sigma-1+\theta}}
+\|f\|_{L_T^2H_x^{\sigma-1+\theta}}\big),
\end{align*}
where $u_\varepsilon=u_{\varepsilon,\alpha}$ is the solution of \eqref{eq:dampedwave_f}, and 
the proportional constants are locally bounded functions of $\alpha \in \mathbb{C}_{+} \setminus (0,\infty).$  
\end{thm}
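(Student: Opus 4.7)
The plan is to compare the mild-form representations of $u_{\varepsilon,\alpha}$ and $v$ mode-by-mode in Fourier. Using the identities $\phi_{\varepsilon,\alpha}^+ + \phi_{\varepsilon,\alpha}^- = \phi_0$ and $f_{\varepsilon,\alpha}^+ + f_{\varepsilon,\alpha}^- = 0$ (both immediate from the defining formulas for $\phi_{\varepsilon,\alpha}^\pm$ and $f_{\varepsilon,\alpha}^\pm$), I rewrite
\begin{align*}
u_{\varepsilon,\alpha}-v
&= \bigl(e^{t\lambda_{\varepsilon,\alpha}^+(\nabla)} - e^{-t\langle\nabla\rangle^2/(2\alpha)}\bigr)\phi_{\varepsilon,\alpha}^+
+ \bigl(e^{t\lambda_{\varepsilon,\alpha}^-(\nabla)} - e^{-t\langle\nabla\rangle^2/(2\alpha)}\bigr)\phi_{\varepsilon,\alpha}^- \\
&\quad + \int_0^t \bigl(e^{(t-t')\lambda_{\varepsilon,\alpha}^+(\nabla)} - e^{-(t-t')\langle\nabla\rangle^2/(2\alpha)}\bigr)f_{\varepsilon,\alpha}^+(t')\,dt' \\
&\quad + \int_0^t e^{-(t-t')\langle\nabla\rangle^2/(2\alpha)}\bigl(f_{\varepsilon,\alpha}^+(t') - \tfrac{1}{2\alpha}f(t')\bigr)\,dt'
+ \int_0^t e^{(t-t')\lambda_{\varepsilon,\alpha}^-(\nabla)}f_{\varepsilon,\alpha}^-(t')\,dt',
\end{align*}
and I estimate each of the five pieces in $L_T^\infty H_x^\sigma$ by splitting Fourier space into the low-frequency region $\varepsilon\langle k\rangle \le 1$ and its complement.

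The crucial Fourier multiplier bound is
\begin{equation*}
\bigl|e^{t\lambda_{\varepsilon,\alpha}^+(k)} - e^{-t\langle k\rangle^2/(2\alpha)}\bigr| \lesssim_\alpha (\varepsilon\langle k\rangle)^{2\theta}, \qquad \theta\in[0,1],\ t\ge 0,
\end{equation*}
on the low-frequency regime $\varepsilon\langle k\rangle \le 1$. I obtain it by combining the Taylor expansion $|\lambda_{\varepsilon,\alpha}^+(k) + \langle k\rangle^2/(2\alpha)| \lesssim_\alpha \varepsilon^2\langle k\rangle^4$ from Proposition \ref{prop:base}\eqref{base:item4} with the decay $\re\lambda_{\varepsilon,\alpha}^+(k) \le -C_\alpha\langle k\rangle^2$ from Proposition \ref{prop:base}\eqref{base:item3}: writing $e^a - e^b = (a-b)\int_0^1 e^{b+\tau(a-b)}d\tau$ along the segment from $-t\langle k\rangle^2/(2\alpha)$ to $t\lambda_{\varepsilon,\alpha}^+(k)$, which stays in the closed left half-plane, yields $t\varepsilon^2\langle k\rangle^4 e^{-ct\langle k\rangle^2}$, whose supremum in $t\ge 0$ is $\lesssim \varepsilon^2\langle k\rangle^2$; interpolating with the trivial bound $\le 2$ gives the claim. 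Since $\varepsilon\langle k\rangle\le 1$ in this regime, the right hand side equals $\varepsilon^\theta\cdot(\varepsilon\langle k\rangle)^\theta\langle k\rangle^\theta \le \varepsilon^\theta\langle k\rangle^\theta$, so after $\ell^2$-summation against $\widehat{\phi_{\varepsilon,\alpha}^+}$ weighted by $\langle k\rangle^{2\sigma}$, I gain $\varepsilon^\theta$ at the cost of $\theta$ derivatives, and Lemma \ref{lem phipm and fpm} bounds $\|\phi_{\varepsilon,\alpha}^+\|_{H^{\sigma+\theta}}$ by the desired initial data norms. On the complementary regime $\varepsilon\langle k\rangle > 1$, the difference has operator norm $\le 2$, but $1 \le (\varepsilon\langle k\rangle)^\theta$ absorbs it into $\varepsilon^\theta$ with $\theta$-derivative loss.

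The remaining four pieces follow the same template. The $\phi_{\varepsilon,\alpha}^-$ contribution is actually easier since on low frequencies the same Taylor expansion gives $|\widehat{\phi_{\varepsilon,\alpha}^-}(k)| \lesssim_\alpha \varepsilon^2\langle k\rangle^2|\widehat\phi_0(k)| + \varepsilon|\widehat\phi_1(k)|$ (with Proposition \ref{prop:base}\eqref{base:item2} controlling the denominator), so an $\varepsilon^\theta$ factor already falls out by the same interpolation trick even against the trivial operator bound $\le 2$. The multiplier $f_{\varepsilon,\alpha}^+ - \tfrac{1}{2\alpha}f = \tfrac{\alpha - \sqrt{\alpha^2-\varepsilon^2\langle\nabla\rangle^2}}{2\alpha\sqrt{\alpha^2-\varepsilon^2\langle\nabla\rangle^2}}f$ is of order $\varepsilon^2\langle\nabla\rangle^2$ on low frequencies by the same expansion, and the $\lambda_{\varepsilon,\alpha}^-$-Duhamel piece is treated as in the proof of \eqref{L2EST2} via $1/(2|\re\lambda_{\varepsilon,\alpha}^-(k)|) \lesssim_\alpha \varepsilon^2$; in both of the latter the time integration is handled by Cauchy--Schwarz in Fourier exactly as in Proposition \ref{prop L2EST}. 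The main obstacle I expect is not any single hard inequality but the interpolation bookkeeping: in every one of the five pieces I must use the defining inequality $\varepsilon\langle k\rangle\le 1$ (or its reverse) of the current frequency regime to interconvert powers of $\varepsilon$ and $\langle k\rangle$ precisely enough that exactly $\varepsilon^\theta$ with $\theta$-derivative loss appears, uniformly for $\theta\in[0,1]$.
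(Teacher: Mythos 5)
Your proposal is correct and follows essentially the same route as the paper's proof: split frequencies at a fixed multiple of $\varepsilon^{-1}$, use the expansion of Proposition \ref{prop:base}-\eqref{base:item4} together with the bound $|e^{t\lambda_\varepsilon^+(k)}-e^{-\frac{t}{2\alpha}\langle k\rangle^2}|\lesssim t\varepsilon^2\langle k\rangle^4e^{-ct\langle k\rangle^2}$ on low frequencies, Cauchy--Schwarz in time for the Duhamel terms as in Proposition \ref{prop L2EST}, and interpolation between the $\varepsilon^2$-bound with two-derivative loss and the $O(1)$ bound, with only a cosmetic regrouping of the mild-form terms (differences paired with $\phi_\varepsilon^\pm$ instead of $\phi_0$). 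One small correction: the expansion in Proposition \ref{prop:base}-\eqref{base:item4} is only valid for $\varepsilon\langle k\rangle<|\alpha|/\sqrt{2}$, so your low/high cutoff should be placed at $|\alpha|/\sqrt{2}$ (as the paper does) rather than at $1$; since the implicit constants are allowed to depend locally on $\alpha$, this changes nothing else in your argument.
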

\vspace{3mm}

\begin{proof}  
We decompose
$$
u_\varepsilon-v
={\bf1}_{\{\varepsilon\langle\nabla\rangle\le|\alpha|/\sqrt{2}\}}(u_\varepsilon-v)
+{\bf1}_{\{\varepsilon\langle\nabla\rangle>|\alpha|/\sqrt{2}\}}(u_\varepsilon-v).
$$
For the latter part, we have the required estimate as a consequence of the $\varepsilon$-uniform estimates (Theorem \ref{mainthm_energy}), since
$$
\|{\bf1}_{\{\varepsilon\langle\nabla\rangle>|\alpha|/\sqrt{2}\}}w\|_{H^\sigma}\lesssim_\alpha\varepsilon^\theta\|w\|_{H^{\sigma+\theta}}
$$
for any $w\in H^{\sigma+\theta}$.

We consider the former part. First let $f=0$ and reorganize the term concerning initial values as follows.
\begin{align*}
u_\varepsilon-v
&=e^{t\lambda_\varepsilon^+(\nabla)}(\phi_\varepsilon^+-\phi_0)
+e^{t\lambda_\varepsilon^-(\nabla)}\phi_\varepsilon^-
+(e^{t\lambda_\varepsilon^+(\nabla)}-e^{\frac{t}{2\alpha}\Delta})\phi_0\\
&=:A_1+A_2+A_3.
\end{align*}
As for $A_1$ and $A_2$, we can ignore $e^{t\lambda_\varepsilon^\pm(\nabla)}$ because of the estimate \eqref{L2EST1}.
We write
\begin{align*}
\phi_\varepsilon^+-\phi_0=\frac{(\alpha-\sqrt{\alpha^2-\varepsilon^2\langle\nabla\rangle^2})\phi_0+\varepsilon\phi_1}{2\sqrt{\alpha^2-\varepsilon^2\langle\nabla\rangle^2}}
=-\phi_\varepsilon^-.
\end{align*} 
By Proposition \ref{prop:base}, we can estimate
$$
\|{\bf1}_{\{\varepsilon\langle\nabla\rangle\le|\alpha|/\sqrt{2}\}}(\phi_\varepsilon^+-\phi_0)\|_{H^\sigma}
+\|{\bf1}_{\{\varepsilon\langle\nabla\rangle\le|\alpha|/\sqrt{2}\}}\phi_\varepsilon^-\|_{H^\sigma}
\lesssim_{\alpha}
 \varepsilon\|\phi_0\|_{H^{\sigma+1}} +\varepsilon\|\phi_1\|_{H^{\sigma}},
$$
where we have used the estimate $\alpha-\sqrt{\alpha^2-\varepsilon^2\langle k\rangle^2}=O(\varepsilon^2\langle k\rangle^2)=O(\varepsilon \langle k\rangle)$ in the region $\varepsilon\langle k\rangle\le\frac{|\alpha|}{\sqrt2}$
from Proposition \ref{prop:base}-\eqref{base:item4} for the term concerning $\phi_0$, and  $\sqrt{\alpha^2-\varepsilon^2\langle k\rangle^2}=O(1)$ for the term concerning $\phi_1$.
We have already proved another estimate
$$
\|{\bf1}_{\{\varepsilon\langle\nabla\rangle\le|\alpha|/\sqrt{2}\}}(\phi_\varepsilon^+-\phi_0)\|_{H^\sigma}
+\|{\bf1}_{\{\varepsilon\langle\nabla\rangle\le|\alpha|/\sqrt{2}\}}\phi_\varepsilon^-\|_{H^\sigma}
\lesssim\|\phi_0\|_{H^{\sigma}}+\|\phi_1\|_{H^{\sigma-1}},
$$
in Lemma \ref{lem phipm and fpm}-\eqref{eq1:lem phipm and fpm}.
By the interpolation we have the required estimate for $A_1$ and $A_2$.
As for $A_3$, we have
$$
\|{\bf1}_{\{\varepsilon\langle\nabla\rangle\le|\alpha|/\sqrt{2}\}}A_3\|_{H^\sigma}
\lesssim\|\phi_0\|_{H^\sigma}
$$
by the $\varepsilon$-uniform estimate \eqref{L2EST1}.
On the other hand,
since in the region $\varepsilon\langle k\rangle\le|\alpha|/\sqrt{2}$ we have
\begin{align*}
|e^{t\lambda_\varepsilon^+(k)}-e^{-\frac{t}{2\alpha}\langle k\rangle^2}|
&\lesssim t\big|\lambda_\varepsilon^+(k)+\frac1{2\alpha}\langle k\rangle^2\big|\,\Big(|e^{t\lambda_\varepsilon^+(k)}|\vee|e^{-\frac{t}{2\alpha}\langle k\rangle^2}|\Big)\\
&\lesssim t\varepsilon^2\langle k\rangle^4e^{-ct\langle k\rangle^2}\\
&\lesssim\varepsilon^2\langle k\rangle^2e^{-c't\langle k\rangle^2}
\end{align*}
for some constants $c>c'>0$ (we used the fact $xe^{-cx}\lesssim e^{-c'x}$ for $x>0$), we have
\begin{align*}
\|{\bf1}_{\{\varepsilon\langle\nabla\rangle\le|\alpha|/\sqrt{2}\}}A_3\|_{H^\sigma}
\lesssim\varepsilon^2\|\phi_0\|_{H^{\sigma+2}}.
\end{align*}
By the interpolation, we have the required estimate for $A_3$.

Next we consider the case $\phi_0=\phi_1=0$.
We decompose
\begin{align*}
u_\varepsilon-v
&=\int_0^t\frac{\alpha-\sqrt{\alpha^2-\varepsilon^2\bracknabla^2}}{2\alpha\sqrt{\alpha^2-\varepsilon^2\bracknabla^2}}e^{(t-s)\lambda_\varepsilon^+(\nabla)}f(s)ds\\
&\quad-\int_0^t\frac1{2\sqrt{\alpha^2-\varepsilon^2\bracknabla^2}}e^{(t-s)\lambda_\varepsilon^-(\nabla)}f(s)ds+\int_0^t\frac1{2\alpha}\big(e^{\frac{t-s}{2\alpha}(\Delta-1)}-e^{(t-s)\lambda_\varepsilon^+(\nabla)}\big)f(s)ds.
\end{align*}
Then we can estimate each term by a similar way to above.
\end{proof}

\subsection{Probabilistic result}\label{subsection6.2}   

We consider the solutions $\{\Psi_{\varepsilon(j)}\}_{j\in\mathbb{N}}$ according to $\varepsilon(j)=j^{-1}$.
Theorem \ref{mainthm:NRL} is a consequence of the following proposition and Proposition \ref{mainprop:GWP}.

\begin{prop}\label{prop:probability part of NRL}
 Let $\delta>0$. 
Let $(Z_{\varepsilon(j)},Y_{\varepsilon(j)})$ be the solution of \eqref{eq:YZ} with deterministic initial condition $(Z_{\varepsilon(j)},Y_{\varepsilon(j)})|_{t=0}=(z,y)\in\mathcal{H}^{-\delta}$, and let $Z$ be the solution of
$$
2\alpha\partial_tZ+(1-\Delta)Z=2\sqrt{\re(\alpha)}\partial_tW
$$
with $Z|_{t=0}=z$. 
Then there exists a measurable set $\mathcal{A}\subset\mathcal{H}^{-\delta}$ such that $\mu(\mathcal{A})=1$ and 
for any $(z,y)\in\mathcal{A}$, $m,n\in\mathbb{N}$, $T>0$, and $p\in[1,\infty)$, the sequence of Wick products $:Z_{\varepsilon(j)}^m\overline{Z}_{\varepsilon(j)}^n:$ converges to $:Z^m\overline{Z}^n:$ in $C([0,T];W^{-\delta,\infty})$ almost surely.
\end{prop}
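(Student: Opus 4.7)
The plan is to first prove convergence of the Wick products in $L^p(\Omega; C([0,T]; W^{-\delta,\infty}))$ for every $p \in [1,\infty)$, and then upgrade to the stated almost sure convergence along the sequence $\varepsilon(j) = j^{-1}$ by a Borel-Cantelli argument. The full-measure set $\mathcal{A}$ is obtained as a countable intersection of the corresponding good sets indexed by rational parameters and by $m,n\in\mathbb{N}$.

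For each $N$, the truncated field $\Pi_N Z_\varepsilon(t,x) = \sum_{|k|\le N} \hat{Z}_\varepsilon(t;k) e_k(x)$ is finite dimensional; by the mild formulation~\eqref{mildform} every Fourier mode $\hat{Z}_\varepsilon(\cdot;k)$ is an explicit complex Gaussian process built from $e^{t\lambda^{\pm}_{\varepsilon,\alpha}(k)}$. For fixed $k$, Proposition~\ref{prop:base}-\eqref{base:item4} gives $\lambda^{+}_{\varepsilon,\alpha}(k) \to -\langle k\rangle^2/(2\alpha)$ as $\varepsilon \to 0$, while $\mathrm{Re}\,\lambda^{-}_{\varepsilon,\alpha}(k) \lesssim -\varepsilon^{-2}$, so the ``fast'' branch contributes only a transient of length $O(\varepsilon^2)$. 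This delivers $\hat{Z}_\varepsilon(\cdot;k) \to \hat{Z}(\cdot;k)$ in $L^p(\Omega; C([0,T];\mathbb{C}))$, where $\hat{Z}(\cdot;k)$ is the corresponding Fourier mode of the limiting heat equation. Since $H_{m,n}(\,\cdot\,;C_N)$ is a fixed polynomial on $\Pi_N L^2$, this lifts, for each fixed $N$, to
\begin{equation*}
H_{m,n}(\Pi_N Z_\varepsilon;C_N) \;\longrightarrow\; H_{m,n}(\Pi_N Z;C_N) \quad \text{in } L^p(\Omega; C([0,T];W^{-\delta,\infty})).
\end{equation*}

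The main obstacle is a uniform-in-$\varepsilon$ Cauchy estimate of the form
\begin{equation*}
\sup_{\varepsilon\in(0,1]} \mathbb{E}\bigl\| H_{m,n}(\Pi_M Z_\varepsilon;C_M) - H_{m,n}(\Pi_N Z_\varepsilon;C_N) \bigr\|_{C([0,T];W^{-\delta,\infty})}^p \le \eta(M,N),
\end{equation*}
with $\eta(M,N)\to 0$ as $M,N\to\infty$. By Nelson's hypercontractivity, available since the Wick polynomials lie in a fixed Wiener chaos of the Gaussian field, it suffices to control second moments; then, by a Besov embedding $B^{s}_{q,q}\hookrightarrow W^{-\delta,\infty}$ (with $q$ large and $s$ slightly above $-\delta$) together with a Kolmogorov/Garsia-Rodemich-Rumsey criterion in $(t,x)$, pointwise-in-$(t,x)$ second moment bounds are enough. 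Isserlis' formula applied to the complex Gaussian field $Z_\varepsilon$ expresses these moments as sums of products of the two-point function
\begin{equation*}
K_\varepsilon(t,s;x-y) := \mathbb{E}[Z_\varepsilon(t,x)\overline{Z_\varepsilon(s,y)}] = \sum_{k\in\mathbb{Z}^2} e^{ik\cdot(x-y)}\, c_\varepsilon(|t-s|,k),
\end{equation*}
for an explicit $c_\varepsilon(\tau,k)$ built from the kernel $e^{\tau\lambda^{\pm}_{\varepsilon,\alpha}(k)}$ and the stationary variances $2/(1+|k|^2)$. Proposition~\ref{prop:base} provides the $\varepsilon$-uniform bound $|c_\varepsilon(\tau,k)|\lesssim \langle k\rangle^{-2}$, at which point the Cauchy property in $N$ follows exactly as in the stationary argument behind Proposition~\ref{prop:wick_N}. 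A parallel computation on $\mathbb{E}|Z_\varepsilon(t,x) - Z_\varepsilon(s,x)|^2$ supplies the $\varepsilon$-uniform temporal H\"older input required for the Kolmogorov step.

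Combining the uniform Cauchy bound with the per-$N$ convergence from the second paragraph, a standard $3\eta$-argument yields convergence of $:Z_\varepsilon^m \overline{Z}_\varepsilon^n:$ to $:Z^m \overline{Z}^n:$ in $L^p(\Omega; C([0,T];W^{-\delta,\infty}))$ for every $p$. Choosing $p$ large enough that Markov's inequality makes the tail probabilities summable along $\varepsilon(j)=j^{-1}$, Borel-Cantelli produces the almost sure convergence; intersecting over rational $T$, rational $\delta$, and $m,n\in\mathbb{N}$ gives the desired set $\mathcal{A}$ with $\mu(\mathcal{A})=1$.
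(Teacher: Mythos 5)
Your overall architecture (per-$N$ convergence in $\varepsilon$, a uniform-in-$\varepsilon$ Cauchy estimate in $N$, a $3\eta$-interchange, then Markov plus Borel--Cantelli) is reasonable, but as written it has a genuine gap at the Borel--Cantelli step. The interchange-of-limits argument you describe is purely qualitative: it yields $\mathbb{E}\|{:}Z_{\varepsilon}^m\overline{Z}_{\varepsilon}^n{:}-{:}Z^m\overline{Z}^n{:}\|_{C_TW^{-\delta,\infty}}^p\to0$ as $\varepsilon\to0$, with no rate. Since the proposition prescribes the sequence $\varepsilon(j)=j^{-1}$ (you are not free to pass to a further subsequence), Markov's inequality then gives tail probabilities that merely tend to zero; ``choosing $p$ large'' does not make them summable in $j$ unless you have a power-type bound such as $\mathbb{E}_\mu\|\cdot\|^p\lesssim\varepsilon^{\theta p/2}$. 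This is exactly what the paper proves: it compares $H_{m,n}(\Pi_NZ_\varepsilon;C_N)$ with $H_{m,n}(\Pi_NZ;C_N)$ directly (rather than through a double limit), expanding the second moment into products of mode covariances in which exactly one factor is a difference covariance bounded by $\varepsilon^\theta\langle k\rangle^{-2-\theta}$ — this quantitative input comes from the deterministic rate of Theorem \ref{mainthm:simple} applied mode by mode — so the bound $\lesssim\varepsilon^\theta$ is uniform in $N$, $t$, $x$; Nelson's estimate, Sobolev embedding and Kolmogorov's criterion then give $\varepsilon^{\theta p/2}$ in $C_TW^{-\delta,\infty}$, and Borel--Cantelli applies with $\theta p>2$. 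Your plan could be repaired by quantifying both ingredients (a rate $\varepsilon^\theta$ with $N$-dependent constants in the per-$N$ step, a rate $\eta(N)\sim N^{-\kappa}$ uniform in $\varepsilon$ in the Cauchy step, then optimizing $N=N(\varepsilon)$), but none of this is carried out in the proposal.

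A second, smaller point: your covariance and hypercontractivity arguments treat $Z_\varepsilon$ as a stationary Gaussian field with mode variances $2/(1+|k|^2)$, but the proposition fixes deterministic initial data $(z,y)$, for which $Z_\varepsilon$ has a nonzero deterministic part and $C_N$ is not its actual variance. The paper handles this by working throughout under $\mathbb{E}_\mu$, i.e.\ averaging the initial data over the invariant measure $\mu$ (which restores Gaussianity, spatial homogeneity and the correctness of the renormalization constants, and is where the $\langle k\rangle^{-2}$ bounds come from), and only at the end extracts the full-measure set $\mathcal{A}$ of initial data via Borel--Cantelli for $\mu\otimes\mathbb{P}$. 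You should make this averaging explicit; without it the Isserlis/Nelson steps are not justified for fixed $(z,y)$.
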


\begin{proof}
We first show the $(N,t,x)$-uniform estimate
$$ 
\int_{\mathcal{H}^{-\delta}}
\E\left[|\langle \nabla \rangle^{-\delta}(H_{m,n}(\Pi_N Z_{\varepsilon}(t,x); C_N)-H_{m,n}(\Pi_N Z(t,x);C_N))|^2\right]
\mu(dzdy)
\lesssim\varepsilon^\theta
$$
for any $\theta\in(0,1]$.
For simplicity, we write $\mathcal{Z}_\varepsilon(x)=H_{m,n}(\Pi_NZ_\varepsilon(t,x);C_N)$ and $\mathcal{Z}(x)=H_{m,n}(\Pi_NZ(t,x);C_N)$.
Following \cite[Proposition 2.1]{GKO}, we decompose
\begin{align*}
&\E\left[|\langle\nabla\rangle^{-\delta}(\mathcal{Z}_\varepsilon-\mathcal{Z})(x)|^2\right]\\
&=\int \langle\nabla\rangle^{-\delta}(x,x_1) \langle \nabla \rangle^{-\delta}(x,x_2)  
\mathbb{E}\left[(\mathcal{Z}_\varepsilon(x_1)-\mathcal{Z}(x_1))(\overline{\mathcal{Z}_\varepsilon(x_2)-\mathcal{Z}(x_2)})\right]dx_1dx_2,
\end{align*}
where $\langle\nabla\rangle^{-\delta}(x,\cdot)$ is the Bessel potential.
We decompose
\begin{align*}
&\mathbb{E}\left[(\mathcal{Z}_\varepsilon(x_1)-\mathcal{Z}(x_1))(\overline{\mathcal{Z}_\varepsilon(x_2)-\mathcal{Z}(x_2)})\right]\\
&=\mathbb{E}\left[\mathcal{Z}_\varepsilon(x_1)\overline{\mathcal{Z}_\varepsilon(x_2)}\right]
-\mathbb{E}\left[\mathcal{Z}(x_1)\overline{\mathcal{Z}_\varepsilon(x_2)}\right]
-\mathbb{E}\left[\mathcal{Z}_\varepsilon(x_1)\overline{\mathcal{Z}(x_2)}\right]
+\mathbb{E}\left[\mathcal{Z}(x_1)\overline{\mathcal{Z}(x_2)}\right]\\
&=:\mathrm{(i)}- \mathrm{(ii)}- \mathrm{(iii)}+ \mathrm{(iv)}.
\end{align*}
By the definition of the complex Hermite polynomials,  
\begin{eqnarray*}
\mathrm{(i)}&=&m! n! \E(\overline{\Pi_N Z_{\varepsilon}(t,x_1)} \Pi_N Z_{\varepsilon}(t,x_2))^m \E(\Pi_N Z_{\varepsilon}(t,x_1)\overline{\Pi_N Z_{\varepsilon}(t,x_2)})^n, \\
\mathrm{(ii)}&=&m! n! \E(\overline{\Pi_N Z(t,x_1)} \Pi_N Z_{\varepsilon}(t,x_2))^m \E(\Pi_N Z (t,x_1) \overline{\Pi_N Z_{\varepsilon}(t,x_2)})^n,\\
\mathrm{(iii)}&=&m! n! \E(\overline{\Pi_N Z_{\varepsilon}(t,x_1)} \Pi_N Z(t,x_2))^m \E(\Pi_N Z_{\varepsilon}(t,x_1)\overline{\Pi_N Z (t,x_2)})^n,\\
\mathrm{(iv)}&=&m! n! \E(\overline{\Pi_N Z(t,x_1)} \Pi_N Z(t,x_2))^m \E(\Pi_N Z(t,x_1)\overline{\Pi_N Z(t,x_2)})^n,
\end{eqnarray*}

which can be reorganized as the finite sum of the form
$$
(\mathrm{(i)}- \mathrm{(ii)})- (\mathrm{(iii)}- \mathrm{(iv)}) 
=\sum_{j} \delta C^{j} (x_1,x_2)\prod_{i=1}^{n+m-1}C_i^{j} (x_1,x_2),
$$
where $\delta C^j (x_1,x_2)$ is of the form $\mathbb{E}[(\overline{\delta A})B]$ or $\mathbb{E}[(\delta A)\overline{B}]$ with
\begin{align*}
[(\delta A)B]&=[(\Pi_NZ_\varepsilon(t,x_1)-\Pi_NZ(t,x_1))(\Pi_NZ_\sigma(t,x_2))],~\mbox{or} \\
&\quad [(\Pi_NZ_\varepsilon(t,x_2)-\Pi_NZ(t,x_2))(\Pi_NZ_\sigma(t,x_1))],
\end{align*}
and $C_i^j (x_1,x_2)$ ($i=1,\dots,n+m-1$) are of the form $\mathbb{E}[A\overline{B}]$ with
\begin{align*}
[AB]&=[(\Pi_NZ_\sigma(t,x_1))(\Pi_NZ_\eta(t,x_2))],~ \mbox{or}~  [(\Pi_NZ_\sigma(t,x_2))(\Pi_NZ_\eta(t,x_1))]
\end{align*}
where $\sigma,\eta$ runs over $\{0,\varepsilon\}$ and set $Z_0:=Z$.
For the terms $C_i^j$, we can decompose
\begin{align*}
\mathbb{E}_\mu[A\overline{B}]
=\sum_{|k|\le N}\mathbb{E}_\mu\left[\hat{Z}_\sigma(t;k)\overline{\hat{Z}_\eta(t;k)}\right]e_k(x_1-x_2).
\end{align*}
Indeed, $\mathbb{E}_\mu\left[\hat{Z}_\sigma(t;k)\overline{\hat{Z}_\eta(t;\ell)}\right]=0$ unless $k=\ell$, since
$\hat{Z}_\sigma(t;k)$ is a linear combination of $\hat{z}(k),\hat{y}(k)$, and a Wiener integral with respect to $\hat{W}(k)$,
and for $k\neq\ell$, $\hat{z}(k),\hat{y}(k),\hat{W}(k)$ and $\overline{\hat{z}(\ell)},\overline{\hat{y}(\ell)},\overline{\hat{W}(\ell)}$ are mutually uncorrelated under the probability $\mathbb{P}_\mu$.
By the Cauchy-Schwarz inequality and the invariance of $\mu$,
we have the $\varepsilon$-independent bound
\begin{align*}
\left|\mathbb{E}_\mu\left[\hat{Z}_\sigma(t;k)\overline{\hat{Z}_\eta(t;k)}\right]\right|
\lesssim
\left[\mathbb{E}_\mu|\hat{Z}_\sigma(t;k)|^2\right]^{\frac12}
\left[\mathbb{E}_\mu|\hat{Z}_\eta(t;k)|^2\right]^{\frac12}
\lesssim\frac1{\langle k\rangle^2}.
\end{align*}
We can repeat a similar computation of $\delta C^j$, except to use the bound
\begin{align*}
\left|\mathbb{E}_\mu\left[(\hat{Z}_\varepsilon-\hat{Z})(t;k)\overline{\hat{Z}_\sigma(t;k)}\right]\right|
\lesssim\varepsilon^\theta\frac1{\langle k\rangle^{2+\theta}},
\end{align*}
where $\theta\in(0,1]$. This bound is similarly obtained by the proof of Theorem \ref{mainthm:simple}.
Consequently, we have the estimate 
\begin{align*}
&\E_\mu\left[|\langle\nabla\rangle^{-\delta}(\mathcal{Z}_\varepsilon-\mathcal{Z})(x)|^2\right]\\
& \lesssim \varepsilon^\theta \sum_{k_1,\dots,k_{n+m}} \int \langle\nabla\rangle^{-\delta}(x,x_1)\langle\nabla\rangle^{-\delta}(x,x_2)
\frac1{\langle k_{n+m} \rangle^{2+\theta}} \prod_{i=1}^{n+m-1} \frac1{\langle k_i\rangle^2}e_{k_1+\cdots+k_{n+m}}(x_1-x_2)
dx_1dx_2\\
&\lesssim
\varepsilon^\theta
\sum_{k_1,\dots,k_{n+m}}
\frac1{\langle k_1+\cdots+k_{n+m}\rangle^{2\delta}}\frac1{\langle k_{n+m} \rangle^{2+\theta}}\prod_{i=1}^{n+m-1}\frac1{\langle k_i\rangle^2}\\
&\lesssim\varepsilon^\theta,
\end{align*}
where we have used the notation such that the frequencies $k_i \in \Z^d$ corresponds to $C_i^j$ ($i=1,..., n+m-1$) 
and $k_{n+m}$ to $\delta C^j$.
Thus, in a similar way to \cite{GKO}, for any $h\in [-1,1]$ and $\alpha_0\in(0,2\delta)$,
\begin{eqnarray*}
\E_\mu\left[|\delta_h \left\{\langle \nabla \rangle^{-\delta} (H_{m,n}(\Pi_N Z_{\varepsilon}(t,x); C_N)-H_{m,n}(\Pi_N Z(t,x);C_N))\right\}|^2\right] 
\lesssim |h|^{\alpha_0} \varepsilon^\theta, \\
\end{eqnarray*}
where $\delta_hf(t):=f(t+h)-f(t)$.
Thus, Nelson estimate, Sobolev embedding, and Kolmogorov criterion imply the convergence 
$$ \E_\mu\left[\|H_{m,n}(\Pi_N Z_{\varepsilon}(t,x); C_N)-H_{m,n}(\Pi_N Z(t,x);C_N))\|_{C_T W_x^{-\delta,p}}^p\right] \lesssim \varepsilon^{\theta p/2}$$
for any $p\in[1,\infty)$.
Moreover, by Proposition \ref{prop:wick_N}, as $N\to \infty$
$$ \E_\mu\Big[ \big\| :Z_{\varepsilon}^m \overline{Z}_{\varepsilon}^n :
- :Z^m\overline{Z}^n: \big\|^p_{C_T W_x^{-\delta,\infty}} \Big]  \lesssim \varepsilon^{\theta p/2}. $$
 As for the almost sure convergence result, we use a similar argument to \cite[Proposition 3.2]{OPT}.
For any $k\in\mathbb{N}$ and $\varepsilon>0$, set
$$
\mathbb{A}_\varepsilon^k
=\Big\{((z,y),\omega)\in\mathcal{H}^{-\delta}\times\Omega\ ;\ \big\| :Z_{\varepsilon}^m \overline{Z}_{\varepsilon}^n :
- :Z^m\overline{Z}^n: \big\|_{C_T W_x^{-\delta,\infty}}<k^{-1}\Big\}.
$$
By Markov inequality,
\begin{align*}
\mu\otimes\mathbb{P}((\mathbb{A}_\varepsilon^k)^c)
&=\mu\otimes\mathbb{P}\Big[ \big\| :Z_{\varepsilon}^m \overline{Z}_{\varepsilon}^n :
- :Z^m\overline{Z}^n: \big\|_{C_T W_x^{-\delta,\infty}}\ge k^{-1}
 \Big]\\
&\lesssim k^p
\E_\mu\Big[ \big\| :Z_{\varepsilon}^m \overline{Z}_{\varepsilon}^n :
- :Z^m\overline{Z}^n: \big\|_{C_T W_x^{-\delta,\infty}}^p
 \Big]\\
&\lesssim k^p\varepsilon^{\theta p/2}.
\end{align*}
We then have, taking a subsequence $\varepsilon(j)=j^{-1}$ and setting $p$ to be $\theta p>2$,
$$
\sum_{j=1}^\infty\mu\otimes\mathbb{P}((\mathbb{A}_{\varepsilon(j)}^k)^c)
\lesssim k^p\sum_{j=1}^\infty j^{-\theta p/2}<\infty.
$$
By Borel-Cantelli lemma, the event $\mathbb{A}^k:=\bigcup_{j=1}^\infty\bigcap_{i\ge j}\mathbb{A}_{\varepsilon(i)}^k$ 
has probability one, and for any $((z,y),\omega)\in\mathbb{A}^k$,
$$
\limsup_{j\to\infty}\big\| :Z_{\varepsilon(j)}^m \overline{Z}_{\varepsilon(j)}^n :
- :Z^m\overline{Z}^n: \big\|_{C_T W_x^{-\delta,\infty}}\le k^{-1}.
$$
Therefore, the event $\mathbb{A}:=\bigcap_{k\in\mathbb{N}}\mathbb{A}^k$ also has probability one, and for any $((z,y),\omega)\in\mathbb{A}$,
$$
\lim_{j\to\infty}\big\| :Z_{\varepsilon(j)}^m \overline{Z}_{\varepsilon(j)}^n :
- :Z^m\overline{Z}^n: \big\|_{C_T W_x^{-\delta,\infty}}=0,
$$
i.e.  $\{:Z_{\varepsilon(j)}^m \overline{Z}_{\varepsilon(j)}^n:\}_j$ converges in $C_TW_x^{-\delta,\infty}$, almost  surely, 
for almost every initial condition $(z,y)$.

\end{proof}

\begin{proof}[{\bfseries Proof of Theorem \ref{mainthm:NRL}}]
The solution $\Psi_{\varepsilon}$ of \eqref{eq:dampedwave_Wick} is decomposed into the sum $Z_{\varepsilon}+U_{\varepsilon}$, where $U_\varepsilon$ solves
$$
\left\{
\begin{aligned}
&\varepsilon^2\partial_t^2U_\varepsilon+2\alpha\partial_tU_\varepsilon+(1-\Delta)U_\varepsilon
+\sum_{k=0}^{n+1}\sum_{\ell=0}^n\binom{n+1}{k}\binom{n}{\ell}U_\varepsilon^{n+1-k}\overline{U_\varepsilon}^{n-\ell}
:Z_\varepsilon^k\overline{Z_\varepsilon}^\ell:\ =0,\\
&(U_\varepsilon,\varepsilon\partial_tU_\varepsilon)|_{t=0}=(0,0).
\end{aligned}
\right.
$$
Denote by $S_\varepsilon:\{:Z_\varepsilon^k\overline{Z_\varepsilon}^\ell:\}_{k,\ell}\mapsto U_\varepsilon$ the solution map.
Similarly, the solution $\Psi$ of \eqref{eq:CGL} is decomposed into the sum $Z+U$, where $U$ solves
$$
\left\{
\begin{aligned}
&2\alpha\partial_tU+(1-\Delta)U
+\sum_{k=0}^{n+1}\sum_{\ell=0}^n\binom{n+1}{k}\binom{n}{\ell}U^{n+1-k}\overline{U}^{n-\ell}
:Z^k\overline{Z}^\ell:\ =0,\\
&U|_{t=0}=0.
\end{aligned}
\right.
$$
Denote by $S:\{:Z^k\overline{Z}^\ell:\}_{k,\ell}\mapsto U$ the solution map.
Proposition \ref{prop:probability part of NRL} means the almost sure convergence of $:Z_\varepsilon^k\overline{Z_\varepsilon}^\ell:$ to $:Z^k\overline{Z}^\ell:$ along the subsequence $\varepsilon(j)=j^{-1}$, and Theorem \ref{mainthm:simple} implies the convergence of the solution map $S_\varepsilon$ to $S$ as $\varepsilon\to0$.
These yield the almost sure convergence of $U_\varepsilon$ to $U$ as $\varepsilon\to0$, by a similar argument to 
\cite[Theorem 4.4]{H1} or \cite[Theorems 1.2 and 1.3]{IM}. 
\end{proof}

\section{ultra-relativistic limit}\label{section:URL}

This section is devoted to the proof of Corollary \ref{maincor:URL}.
We can repeat the same argument as in Section \ref{section:NRL}, but the only nontrivial point is in the deterministic part. Since the implicit proportional constants in Theorem \ref{mainthm_energy} are locally bounded in $\alpha\in\mathbb{C}_+\setminus(0,\infty)$, the convergence $\im \alpha\to0$ is out of the reach.
We will show only the modified deterministic estimates in this section.

We fix $\varepsilon=1$, and we write $\alpha_1= \re \alpha$, $\alpha_2=\im \alpha$.
Let any $T>0$ be fixed.
We can set $d\ge1$ to be arbitrary in the deterministic part.
We can expect that, the limit of the solution of \eqref{eq:dampedwave_f} 
\begin{align*}
\left\{
\begin{aligned}
&\partial_t^2 u_{\alpha} +2 (\alpha_1+i\alpha_2) \partial_t u_{\alpha} +(1-\Delta) u_{\alpha}=f, &t>0,~ x\in \mathbb{T}^d,\\
&(u_{\alpha}, \partial_t u_{\alpha})|_{t=0}=(\phi_0,\phi_1), &x\in\mathbb{T}^d.
\end{aligned}
\right.
\end{align*}
as $\alpha_2\to0$, solves the damped wave equation with real-valued coefficients, i.e.,    
\begin{align*}
\left\{
\begin{aligned}
&\partial_t^2 v +2 \alpha_1 \partial_t v +(1-\Delta) v=f, &t>0,~ x\in \mathbb{T}^d,\\
&(v, \partial_t v)|_{t=0}=(\phi_0,\phi_1), &x\in\mathbb{T}^d.
\end{aligned}
\right.
\end{align*}
This convergence is not directly covered by Theorem \ref{mainthm_energy}.
Nevertheless, we have the following modification.

\begin{thm} \label{thm:ultra_uniform}
Let $d\ge 1$ and $\alpha_2\in(-1,1)$. For any $\sigma\in\mathbb{R}$,
\begin{align*}
\|(u_{\alpha},\partial_t u_{\alpha})\|_{L_T^\infty \mathcal{H}_x^\sigma(\mathbb{T}^d)}
\lesssim_{\alpha_1,T} \|\phi_0\|_{H^\sigma(\mathbb{T}^d)}+\|\phi_1\|_{H^{\sigma-1}(\mathbb{T}^d)}+\|f\|_{L_T^2 H_x^{\sigma-1}(\mathbb{T}^d)},
\end{align*}
where the implicit proportional constants are locally bounded function of $\alpha_1>0$.
\end{thm}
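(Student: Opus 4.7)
The obstacle in simply invoking Theorem \ref{mainthm_energy} is that its implicit constant depends on $\alpha$ through factors like $|\re(\alpha)\im(\alpha)|^{-1/2}$ coming from items \eqref{base:item2}--\eqref{base:item2'} of Proposition \ref{prop:base}, and these blow up as $\alpha_2\to0$. The plan is to bypass the factor $\sqrt{\alpha^2-\langle\nabla\rangle^2}$ altogether and work with the propagators that are entire in $\omega:=\sqrt{\langle k\rangle^2-\alpha^2}$, paying instead the price of a $T$-dependent constant.

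First I would switch to a componentwise Fourier representation. For each $k\in\mathbb{Z}^d$ the ODE $\ddot v+2\alpha\dot v+\langle k\rangle^2 v=0$ has the fundamental solutions
$$
C(t,k)=e^{-\alpha t}\Bigl(\cos(\omega t)+\alpha\,\tfrac{\sin(\omega t)}{\omega}\Bigr),\qquad S(t,k)=e^{-\alpha t}\,\tfrac{\sin(\omega t)}{\omega},
$$
where $\sin(\omega t)/\omega=t\int_0^1\cos(\omega ts)\,ds$ is entire in $\omega$, so the apparent singularity at $\omega=0$ disappears. Accordingly,
$\hat u_\alpha(t,k)=C(t,k)\hat\phi_0(k)+S(t,k)\hat\phi_1(k)+\int_0^t S(t-s,k)\hat f(s,k)\,ds$, and the analogue holds for $\partial_t\hat u_\alpha$ after using $\partial_tS=-\alpha S+e^{-\alpha t}\cos(\omega t)$ and an analogous formula for $\partial_tC$.

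Next I would establish the key pointwise bound
$
|S(t,k)|\lesssim_{\alpha_1,T}\langle k\rangle^{-1},\quad |C(t,k)|\lesssim_{\alpha_1,T}1,\quad |\partial_tC(t,k)|\lesssim_{\alpha_1,T}\langle k\rangle,\quad |\partial_tS(t,k)|\lesssim_{\alpha_1,T}1,
$
uniformly in $k\in\mathbb{Z}^d$, $t\in[0,T]$, and $\alpha_2$. This requires the two elementary estimates $|\cos z|\le e^{|\im z|}$ and $|\sin z/z|\le e^{|\im z|}$, combined with the decay $|e^{-\alpha t}|=e^{-\alpha_1 t}$. Splitting into a high-frequency regime $\langle k\rangle^2>2|\alpha|^2$ and a low-frequency regime $\langle k\rangle^2\le 2|\alpha|^2$: in the former, Proposition \ref{prop:base}\eqref{base:item4} gives $\omega=\langle k\rangle+O(|\alpha|^2/\langle k\rangle)$, so $|\sin(\omega t)/\omega|\lesssim e^{|\im\omega|t}/|\omega|\lesssim\langle k\rangle^{-1}$; in the latter only finitely many $k$ contribute and boundedness is automatic.

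The main obstacle is to control $|\im\omega|$ uniformly in $\alpha_2$ and $k$, because $\omega$ can be very close to zero. Writing $\omega=a+ib$ with $a^2-b^2=\langle k\rangle^2-\alpha_1^2+\alpha_2^2$ and $2ab=-2\alpha_1\alpha_2$ gives
$$
|\im\omega|^2=\frac{|\langle k\rangle^2-\alpha^2|-(\langle k\rangle^2-\alpha_1^2+\alpha_2^2)}{2}=\frac{2\alpha_1^2\alpha_2^2}{\sqrt{r^2+4\alpha_1^2\alpha_2^2}+r},
$$
with $r:=\langle k\rangle^2-\alpha_1^2+\alpha_2^2$. Here the hypothesis $\alpha_1\in(0,1)$ is crucial: since $\langle k\rangle\ge 1>\alpha_1$, we have $r\ge 1-\alpha_1^2>0$, hence $|\im\omega|^2\le 2\alpha_1^2\alpha_2^2/(1-\alpha_1^2)$, which is bounded by a locally bounded function of $\alpha_1$ uniformly in $k$ and in $\alpha_2$. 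This is exactly what allows $e^{|\im\omega|t}$ to be absorbed into a constant $\lesssim_{\alpha_1,T}1$.

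Finally, from these uniform symbol bounds and Cauchy--Schwarz in the Duhamel integral (using $\int_0^T|S(t-s,k)|^2\,ds\lesssim_{\alpha_1,T}\langle k\rangle^{-2}$ and $\int_0^T|\partial_t S(t-s,k)|^2\,ds\lesssim_{\alpha_1,T}1$), one obtains
$
\|u_\alpha(t)\|_{H^\sigma}^2+\|\partial_t u_\alpha(t)\|_{H^{\sigma-1}}^2\lesssim_{\alpha_1,T}\|\phi_0\|_{H^\sigma}^2+\|\phi_1\|_{H^{\sigma-1}}^2+\|f\|_{L^2_TH^{\sigma-1}}^2,
$
summing in $k$ against the weight $\langle k\rangle^{2\sigma}$, which yields the conclusion.
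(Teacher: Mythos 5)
Your proposal is correct in substance, and it organizes the argument differently from the paper. The paper's proof recycles the machinery already built for Theorem \ref{mainthm_energy}: it observes that Proposition \ref{prop L2EST} is $\alpha_2$-independent, re-proves Lemma \ref{lem phipm and fpm} with $\alpha_2$-free constants in the outer regimes $\langle k\rangle^2<|\alpha|^2/9$ and $\langle k\rangle^2>12|\alpha|^2$ via Proposition \ref{prop:base}-\eqref{base:item4}, and only in the intermediate band $|\alpha|^2/9\le\langle k\rangle^2\le12|\alpha|^2$ (where $\sqrt{\alpha^2-\langle k\rangle^2}$ may be small) does it switch to the $\cosh/\sinh$ form of the propagator, splitting according to whether $|T\sqrt{\alpha^2-\langle k\rangle^2}|\le\delta$ and paying a $T$-dependent constant. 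You instead use the trigonometric fundamental system $C(t,k),S(t,k)$ (which is exactly the paper's $\cosh/\sinh$ representation, since $\cosh(i\omega t)=\cos(\omega t)$) for \emph{all} frequencies, and the decisive new input is the uniform control of $|\im\omega|$: your computation $|\im\omega|^2=2\alpha_1^2\alpha_2^2/(\sqrt{r^2+4\alpha_1^2\alpha_2^2}+r)$ with $r\ge1-\alpha_1^2>0$ (this is where $\langle k\rangle\ge1>\alpha_1$ enters) is correct, and in fact gives $|\im\omega|\le\alpha_1$ uniformly in $k$ and $\alpha_2$; together with $|\cos z|\le e^{|\im z|}$, $|\sin z/z|\le e^{|\im z|}$, the identity $\partial_tC=-\langle k\rangle^2S$, and Cauchy--Schwarz in the Duhamel term, this yields exactly the claimed bound with constants locally bounded in $\alpha_1$ and growing in $T$, the same price the paper pays. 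Your version is more self-contained and avoids the three-regime bookkeeping; the paper's is shorter given the lemmas it already has.

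One small quantitative slip: with the cutoff $\langle k\rangle^2>2|\alpha|^2$, the expansion of Proposition \ref{prop:base}-\eqref{base:item4} only gives $|g|\le6|\alpha|^2/\langle k\rangle$, which can exceed $\langle k\rangle/2$ there, so it does not by itself yield $|\omega|\gtrsim\langle k\rangle$ in the band $2|\alpha|^2<\langle k\rangle^2<12|\alpha|^2$. This is trivially repaired: either take the threshold $12|\alpha|^2$ as the paper does (the extra band then consists of finitely many modes, handled like your low-frequency case), or skip the splitting entirely by noting $|\omega|^2=|\langle k\rangle^2-\alpha^2|\ge\langle k\rangle^2-\alpha_1^2+\alpha_2^2\ge(1-\alpha_1^2)\langle k\rangle^2$, which is uniform in $\alpha_2$ and locally bounded below in $\alpha_1\in(0,1)$.
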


\begin{proof}
The proof of Theorem \ref{mainthm_energy} is a combination of Proposition \ref{prop L2EST} and Lemma \ref{lem phipm and fpm}.
Proposition \ref{prop L2EST} is $\alpha_2$-independent, but Lemma \ref{lem phipm and fpm} is $\alpha_2$-dependent.
Note that $\lambda_\alpha^\pm(\nabla)$ is of the form
$$
\lambda_\alpha^\pm(\nabla)=-\alpha\pm\sqrt{\alpha^2-\langle\nabla\rangle^2},
$$
so we can use Proposition \ref{prop:base}-\eqref{base:item4} to replace $\sqrt{\alpha^2-\langle k\rangle^2}$ by $1$ or $\langle k\rangle$, 
module proportional constants depending only on $\alpha_1$, provided 
\begin{itemize}
 \item[(i)] $\langle k \rangle^2 < |\alpha|^2 /9$,
 \item[(ii)] $12 |\alpha|^2 < \langle k \rangle^2$.  
\end{itemize}
Indeed, in the case of (i), making use of Proposition \ref{prop:base}-\eqref{base:item4}, we have 
$$ \left|-\frac{\langle k \rangle^2}{2\alpha} +h(\langle k \rangle^2, \alpha)\right| \le \frac{|\alpha|}{2}.$$
Thus, $$ |\sqrt{\alpha^2-\langle k \rangle^2}| \ge \frac{|\alpha|}{2} \ge \frac{1}{2} \re(\alpha).$$
In the case of (ii), $|g( \langle k \rangle^2, \alpha)| <\frac12 \langle k \rangle$, thus 
again by Proposition \ref{prop:base}-\eqref{base:item4}, 
$$ |\sqrt{\alpha^2-\langle k \rangle^2}| \ge \frac12 \langle k \rangle \ge \frac12.$$

Once, as above, the lower bound $C$ of $\sqrt{\alpha^2-\langle k \rangle^2}$ is shown independent of $\alpha_2$, 
we have 
\begin{align*}
\left|\frac{\langle k \rangle}{\sqrt{\alpha^2-\langle k\rangle^2}}\right|^2
&=\frac{\langle k \rangle^2}{|\alpha^2-\langle k \rangle^2|}
\le 1+\frac{|\alpha^2|}{|\alpha^2-\langle k \rangle^2|} \le 1+\frac{|\alpha^2|}{C^2}. 
\end{align*}
Hence we can obtain the same estimates as in Lemma \ref{lem phipm and fpm}.

The case 
\begin{itemize}
\item[(iii)]  $|\alpha|^2 /9 \le \langle k \rangle^2 \le 12 |\alpha|^2$
\end{itemize}
still remains.
In this case, we go back to the mild form and rewrite it as: 
\begin{eqnarray*}
u_{\alpha}(t)&=& e^{-\alpha t}\cosh(\sqrt{\alpha^2-\langle \nabla \rangle^2}t) \phi_0 
+e^{-\alpha t} \frac{\sinh(\sqrt{\alpha^2-\langle \nabla \rangle^2}t)}{\sqrt{\alpha^2-\langle \nabla \rangle^2}}(\alpha \phi_0+\phi_1) \\
&& +\int_0^t e^{-\alpha(t-t')} \frac{\sinh(\sqrt{\alpha^2-\langle \nabla \rangle^2}(t-t'))}{\sqrt{\alpha^2-\langle \nabla \rangle^2}} f(t') dt'.
\end{eqnarray*}
Remark that there exists a $\delta>0$ such that if $|z| \le \delta$ then $\displaystyle{\left|\frac{\sinh z}{z}\right| \le \frac32}$.
For any $g \in H^{\sigma-1}$,  
\begin{eqnarray} \label{L2EST4}
\left\|e^{-\alpha t} \frac{\sinh(\sqrt{\alpha^2-\langle \nabla \rangle^2}t)}{\sqrt{\alpha^2-\langle \nabla \rangle^2}} g \right\|_{H^{\sigma}} 
&=&  
\sum_{k\in \Z^d} \langle k \rangle^{2\sigma} \left|e^{-\alpha t} \frac{\sinh(\sqrt{\alpha^2-\langle k \rangle^2}t)}{\sqrt{\alpha^2-\langle k \rangle^2}} \hat{g}(k) \right|^2. 
\end{eqnarray}
If $|T\sqrt{\alpha^2-\langle k \rangle^2}| \le \delta$ for any $t \le T$, 
\begin{eqnarray*}
\left|e^{-\alpha t} \frac{\sinh(\sqrt{\alpha^2-\langle k \rangle^2}t)}{\sqrt{\alpha^2-\langle k \rangle^2}} \right|^2
&\le& \frac{9}{4}  e^{-2\re(\alpha) t} t^2 \lesssim \frac{1}{(\re(\alpha))^2},  
\end{eqnarray*}
or, since we are in the case where $\langle k \rangle^2 \le 12 |\alpha|^2$,
\begin{eqnarray*}
\left|e^{-\alpha t} \frac{\sinh(\sqrt{\alpha^2-\langle k \rangle^2}t)}{\sqrt{\alpha^2-\langle k \rangle^2}} \right|^2
&\le& \frac{9}{4}  e^{-2\re(\alpha) t} t^2 \frac{12 |\alpha|^2}{\langle k \rangle^2} 
\lesssim \frac{|\alpha|^2}{(\re(\alpha))^2 \langle k \rangle^2}. 
\end{eqnarray*}
If $|T\sqrt{\alpha^2-\langle k \rangle^2}| > \delta$, 
\begin{eqnarray*}
\left|e^{-\alpha t} \frac{\sinh(\sqrt{\alpha^2-\langle k \rangle^2}t)}{\sqrt{\alpha^2-\langle k \rangle^2}} \right|^2
&\le & \frac{T^2}{2\delta^2} (|e^{(t-t')\lambda_{\alpha}^{+}(k)}|^2 + |e^{(t-t')\lambda_{\alpha}^{-}(k)}|^2) \le \frac{2T^2}{\delta^2}, 
\end{eqnarray*}
or, similarly, using the fact $\langle k \rangle^2 \le 12 |\alpha|^2$, is bounded by 
$\lesssim \frac{T^2}{\delta^2} \frac{|\alpha|^2}{\langle k \rangle^2}$,
from which we see that summing up both case of $k$, the sum (\ref{L2EST4}) may be bounded by $\|g\|_{H^{\sigma-1}}$ 
($\|g\|_{H^{\sigma}}$ if $g \in H^{\sigma}$) independent of small $\alpha_2$, but 
dependent of $\alpha_1$, $\delta$ and $T$. For the inhomogeneous term, we can similarly estimate in $H^{\sigma}$ independent of small $\alpha_2$ 
with the aid of (\ref{L2EST2}). 
\end{proof}

Since we have the $\alpha_2$-uniform estimate, we can prove Corollary \ref{maincor:URL} by a similar way to Section \ref{section:NRL}.

\begin{thm} 
For any $\sigma\in\mathbb{R}$,
\begin{align*}
\|(u_\alpha, \partial_t u_{\alpha}) -(v, \partial_t v) \|_{L_T^\infty \mathcal{H}_x^{\sigma}}
\lesssim_{\alpha_1,T} |\alpha_2|\big(\|\phi_0\|_{H^{\sigma}}+\|\phi_1\|_{H^{\sigma-1}}
+\|f\|_{L_T^2H_x^{\sigma-1}}\big).
\end{align*}
\end{thm}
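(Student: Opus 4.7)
The strategy is a perturbation argument: introduce $w := u_\alpha - v$, which by subtracting \eqref{eq:limit ultra} from \eqref{eq:dampedwave_alpha} satisfies the inhomogeneous damped Klein-Gordon equation with \emph{real} damping coefficient,
$$
\partial_t^2 w + 2\alpha_1 \partial_t w + (1-\Delta)w = -2i\alpha_2\,\partial_t u_\alpha,
\qquad (w,\partial_t w)|_{t=0}=(0,0),
$$
so that the entire source term carries an explicit factor $\alpha_2$. The desired bound should then follow from two applications of the uniform energy estimate established in Theorem \ref{thm:ultra_uniform}.

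Concretely, I would first apply Theorem \ref{thm:ultra_uniform} to $u_\alpha$ itself in order to control the forcing,
$$
\|\partial_t u_\alpha\|_{L^\infty_T H^{\sigma-1}_x}
\lesssim_{\alpha_1,T} \|\phi_0\|_{H^\sigma}+\|\phi_1\|_{H^{\sigma-1}}+\|f\|_{L^2_T H^{\sigma-1}_x}.
$$
Then I would apply the same theorem to $w$, regarded as a solution of the $\alpha_2=0$ version of \eqref{eq:dampedwave_alpha} with zero initial data and source $F = -2i\alpha_2\,\partial_t u_\alpha$, to obtain
$$
\|(w,\partial_t w)\|_{L^\infty_T \mathcal{H}^\sigma_x}
\lesssim_{\alpha_1,T} \|F\|_{L^2_T H^{\sigma-1}_x}
\le 2|\alpha_2|\,T^{1/2}\,\|\partial_t u_\alpha\|_{L^\infty_T H^{\sigma-1}_x}.
$$
Chaining these two inequalities yields exactly the claimed bound.

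The main subtlety, and the point requiring verification, is that the equation for $w$ has purely real damping, whereas Proposition \ref{prop:base} (and hence Theorem \ref{thm:ultra_uniform} as stated) presupposes $\im(\alpha)\ne 0$. The obstacle is thus to check that the proof of Theorem \ref{thm:ultra_uniform} survives the limit $\alpha_2\to 0$. Inspection shows that it does: the treatment of the frequency regimes $(i)$ $\langle k\rangle^2<|\alpha|^2/9$ and $(ii)$ $\langle k\rangle^2>12|\alpha|^2$ invokes only Proposition \ref{prop:base}-\eqref{base:item4}, whose constants depend on $|\alpha|$ and $\re(\alpha)$ but not on $|\im(\alpha)|$, while the intermediate range $(iii)$ concerns only finitely many modes (for fixed $\alpha_1$) and is handled via the mild formula using the entire function $\sinh(\sqrt{\alpha^2-\langle k\rangle^2}\,t)/\sqrt{\alpha^2-\langle k\rangle^2}$, whose bound on $[0,T]$ degenerates neither at $\alpha_2=0$ nor at the crossover $\langle k\rangle^2=\alpha_1^2$. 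Consequently all implicit constants stay finite as $\alpha_2\to 0$, the energy estimate applies to $w$, and the conclusion follows.
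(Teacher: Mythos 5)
Your argument is correct, but it takes a different route from the paper. The paper does not form the difference equation: it differentiates in the parameter, observing that $\bar{u}_\alpha=\frac{\partial}{\partial\alpha_2}u_\alpha$ solves the \emph{complex}-damped equation with damping $2(\alpha_1+i\alpha_2)$, forcing $-2i\,\partial_t u_\alpha$ and zero data, and then uses the mean-value bound $\|(u_\alpha,\partial_t u_\alpha)-(v,\partial_t v)\|\le|\alpha_2|\sup_{\alpha_2'\in[0,\alpha_2]}\|(\bar{u}_{\alpha_1+i\alpha_2'},\partial_t\bar{u}_{\alpha_1+i\alpha_2'})\|$, applying Theorem \ref{thm:ultra_uniform} twice (to $\bar{u}$ and to $u_\alpha$), exactly as you apply it twice to $w$ and to $u_\alpha$. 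Your version, with $w=u_\alpha-v$ solving the real-damped equation with source $-2i\alpha_2\,\partial_t u_\alpha$, is more elementary in that it avoids justifying differentiability in $\alpha_2$ and the supremum over intermediate parameters; its only extra cost is the point you yourself flag, namely that the energy estimate must be available at the endpoint $\alpha_2=0$, and your verification of this is sound: Proposition \ref{prop L2EST} only needs $\re(\alpha)>0$, the regimes (i)--(ii) of the proof of Theorem \ref{thm:ultra_uniform} rest on Proposition \ref{prop:base}-\eqref{base:item4}, whose constants involve only $|\alpha|$ and $\re(\alpha)$, and the intermediate regime (iii) is handled through the entire function $\sinh(\sqrt{\alpha^2-\langle k\rangle^2}\,t)/\sqrt{\alpha^2-\langle k\rangle^2}$, which does not degenerate at $\alpha_2=0$ or at the critical frequency. (Even more simply, since $w$ has real, strictly dissipative damping $2\alpha_1$, the classical energy identity for $E(t)=\frac12\|\partial_t w\|_{H^{\sigma-1}}^2+\frac12\|w\|_{H^\sigma}^2$ gives $\|(w,\partial_t w)\|_{L_T^\infty\mathcal{H}^\sigma}\lesssim_T\|F\|_{L_T^2H^{\sigma-1}}$ directly, so no extension of Theorem \ref{thm:ultra_uniform} is strictly needed for that step.) The paper's derivative formulation has the minor advantage of staying entirely within the family of equations covered verbatim by Theorem \ref{thm:ultra_uniform} and of giving a uniform bound on $\partial_{\alpha_2}u_\alpha$ itself; both routes yield the stated estimate with constants depending only on $\alpha_1$ and $T$.
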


\begin{proof}
Since $\lambda_\alpha^\pm(\nabla)$ is smooth in $\alpha_2$, we can see that the derivative $\bar{u}_\alpha=\frac{\partial}{\partial\alpha_2}u_\alpha$ solves the equation
\begin{align*}
\left\{
\begin{aligned}
&\partial_t^2 \bar{u}_{\alpha} +2 (\alpha_1+i\alpha_2) \partial_t \bar{u}_{\alpha} +(1-\Delta) \bar{u}_{\alpha}=-2i\partial_tu_\alpha,\\
&(\bar{u}_{\alpha}, \partial_t \bar{u}_{\alpha})|_{t=0}=(0,0).
\end{aligned}
\right.
\end{align*}
Then by the uniform estimate in Theorem \ref{thm:ultra_uniform},
\begin{align*}
\|(u_\alpha, \partial_t u_{\alpha}) -(v, \partial_t v) \|_{L_T^\infty \mathcal{H}_x^{\sigma}}
&\le|\alpha_2|\sup_{\alpha_2'\in[0,\alpha_2]}\|(\bar{u}_{\alpha_1+\alpha_2'i},\partial_t\bar{u}_{\alpha_1+\alpha_2'i})\|_{L_T^\infty \mathcal{H}_x^\sigma}\\
&\lesssim_{\alpha_1,T} |\alpha_2|\|\partial_tu_\alpha\|_{L_T^\infty H_x^{\sigma-1}}\\
&\lesssim_{\alpha_1,T} |\alpha_2|\big(\|\phi_0\|_{H^{\sigma}}+\|\phi_1\|_{H^{\sigma-1}}
+\|f\|_{L_T^2H_x^{\sigma-1}}\big).
\end{align*}
\end{proof}

\section{uniform Strichartz estimates} \label{section7} 

This section is devoted to the proof of Theorem \ref{mainthm}. 
Similarly to Section \ref{section:NRL}, we sometimes omit the label $\alpha$ from underlined objects.
First we remark that we can replace $1-\Delta$ with $\Delta$.
Let $u_\varepsilon= u_{\varepsilon, \alpha}$ be 
the solution of (\ref{eq:dampedwave_f}). Then, by the transform 
\begin{equation} \label{trans}
v_{\varepsilon}(t)= e^{\beta t} u_{\varepsilon}(t), \quad \beta=\frac{\alpha-\sqrt{\alpha^2-\varepsilon^2}}{\varepsilon^2},
\end{equation}  
we see that $v_{\varepsilon}$ satisfies 
\begin{align}\label{eq:massless}
\left\{
\begin{aligned}
&\varepsilon^2\partial_t^2v_\varepsilon+2\sqrt{\alpha^2 -\varepsilon^2}\partial_t v_\varepsilon -\Delta v_\varepsilon=e^{\beta t}f, &t>0,~ x\in \mathbb{T}^d,\\
&(v_\varepsilon,\varepsilon\partial_t v_\varepsilon)|_{t=0}=(\phi_0, \varepsilon \beta \phi_0+\phi_1), &x\in\mathbb{T}^d.
\end{aligned}
\right.
\end{align}
Note that by Proposition \ref{prop:base}, setting $\gamma:=\sqrt{\alpha^2-\varepsilon^2}$, we have  
$\re \gamma >0$, $\im \gamma \ne 0$, $|\varepsilon \beta| \le \frac{\varepsilon}{\re \alpha}$,
and
$$
\re\beta\ge \frac{C_{\alpha,\varepsilon}\varepsilon^2}{\varepsilon^2}=C_{\alpha,\varepsilon}\ge C_{\alpha,1},
$$
where the constants $C_{\alpha,\varepsilon}$, 
$C_{\alpha,1}$ are defined in (\ref{base:item3}) of Proposition \ref{prop:base}. 
Remark that this transform 
has been used already in several studies (for example, see \cite{BRS}).
\vspace{3mm}
   
We can write the solution of (\ref{eq:massless}) in the mild form.
\begin{align} \label{v:mild}
v_{\varepsilon}(t)=e^{t\lambda_{\varepsilon}^+(\nabla)}\phi_
{\varepsilon}^+
+e^{t\lambda_{\varepsilon}^-(\nabla)}\phi_{\varepsilon}^-
+\int_0^t\left(e^{(t-t')\lambda_{\varepsilon}^+(\nabla)}f_{\varepsilon}^+(t')
+e^{(t-t')\lambda_{\varepsilon}^-(\nabla)}f_{\varepsilon}^-(t')\right)dt',
\end{align}
where
$$
\lambda_{\varepsilon}^\pm(\nabla)=\frac{-\gamma\pm\sqrt{{\gamma}^2-\varepsilon^2 |\nabla|^2}}{\varepsilon^2},
$$
and 
$$
\phi_{\varepsilon}^\pm
=\frac{\mp\varepsilon^2\lambda_{\varepsilon}^\mp(\nabla)\phi_0 \pm \varepsilon(\varepsilon \beta \phi_0+\phi_1)}{2\sqrt{\gamma^2-\varepsilon^2 |\nabla|^2}},\quad
f_{\varepsilon}^\pm(t)=\pm\frac{e^{\beta t}f(t)}{2\sqrt{\gamma^2-\varepsilon^2 |\nabla|^2}}. 
$$  
\vspace{3mm}  

In the following Sections \ref{sec:L} and \ref{sec:H}, 
we decompose the operator $e^{t\lambda_\varepsilon^\pm(\nabla)}$ into
$$
L_\varepsilon^\pm(t)=e^{t\lambda_\varepsilon^\pm(\nabla)}I(\varepsilon\nabla),\quad
H_\varepsilon^\pm(t)=e^{t\lambda_\varepsilon^\pm(\nabla)}(1-I(\varepsilon\nabla)),
$$
where  $I$ is a radial smooth cut-off function $I:\mathbb{R}^d\to[0,1]$ defined by (\ref{def:cutoff}).
We will establish the uniform Schauder/Strichartz estimate for $L_\varepsilon^\pm$ and $H_\varepsilon^\pm$, i.e. 
Theorems \ref{thm:L} and \ref{thm:H} below. We will then obtain   
the Strichartz estimate for $u_{\varepsilon}$ in Theorem \ref{mainthm} via the transform (\ref{trans}). 
Note that we consider in what follows $d \ge 1$ if nothing is mentioned.

\subsection{Low frequency part}\label{sec:L}

\begin{lem}\label{kernel low}
There exist a constant $c>0$ such that, for any $k\in\mathbb{N}^d$,
\begin{align}
\label{kernel low +}&|\partial_\xi^k (e^{t\lambda_\varepsilon^+(\xi)}I(\varepsilon\xi))|
\lesssim e^{-ct|\xi|^2}|\xi|^{-|k|},\\
\label{kernel low -}&|\partial_\xi^k (e^{t\lambda_\varepsilon^-(\xi)}I(\varepsilon\xi))|
\lesssim \varepsilon^{|k|}e^{-ct/\varepsilon^2}.
\end{align}
\end{lem}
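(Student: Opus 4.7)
The plan is to reduce both estimates to scaled, smooth model problems by exploiting the natural dilation $\eta=\varepsilon\xi$. On the support of $I(\varepsilon\xi)$ we have $|\eta|\le 2$; since $\im(\gamma^2)=2\re(\alpha)\im(\alpha)\neq 0$, the quantity $\gamma^2-|\eta|^2$ stays away from the nonpositive real axis, so $\sqrt{\gamma^2-|\eta|^2}$ is a smooth function of $\eta$ on $\{|\eta|\le 2\}$ with all constants locally bounded in $\alpha$. Moreover $\gamma\in\mathbb{C}_+\setminus(0,\infty)$, so Proposition \ref{prop:base} is available with $\gamma$ in place of $\alpha$.

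For \eqref{kernel low +}, I first rationalize
$$
\lambda_\varepsilon^+(\xi) \;=\; -|\xi|^2\,\psi_+(\varepsilon\xi), \qquad \psi_+(\eta):=\frac{1}{\gamma+\sqrt{\gamma^2-|\eta|^2}},
$$
with $\psi_+$ smooth on $\{|\eta|\le 2\}$. Proposition \ref{prop:base}-\eqref{base:item3} combined with $\varepsilon^2|\xi|^2\le 4$ on the cutoff support gives $\re\lambda_\varepsilon^+(\xi)\le-c|\xi|^2$, hence $|e^{t\lambda_\varepsilon^+(\xi)}|\le e^{-ct|\xi|^2}$. A Leibniz expansion of $|\xi|^2\psi_+(\varepsilon\xi)$, together with $\varepsilon\le 2/|\xi|$ on the support, yields $|\partial_\xi^k\lambda_\varepsilon^+(\xi)|\lesssim |\xi|^{2-|k|}$ for every multi-index $k$. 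Plugging this into the Fa\`a di Bruno formula,
$$
|\partial_\xi^k e^{t\lambda_\varepsilon^+(\xi)}| \;\lesssim\; \sum_{\ell=1}^{|k|} t^\ell |\xi|^{2\ell-|k|}\, e^{-ct|\xi|^2} \;\lesssim\; |\xi|^{-|k|}\, e^{-c't|\xi|^2},
$$
via the elementary inequality $x^\ell e^{-cx}\lesssim_\ell e^{-c'x}$ with $x=t|\xi|^2$. A final Leibniz split between $e^{t\lambda_\varepsilon^+(\xi)}$ and $I(\varepsilon\xi)$ closes \eqref{kernel low +}: if $|m|\ge 1$ derivatives hit $I$, then $\varepsilon^{|m|}(\partial^m I)(\varepsilon\xi)$ is supported on $|\xi|\sim 1/\varepsilon$, so $\varepsilon^{|m|}\lesssim|\xi|^{-|m|}$, and all cross-terms combine to the desired $|\xi|^{-|k|}e^{-c't|\xi|^2}$.

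For \eqref{kernel low -} I use the bare scaling $\lambda_\varepsilon^-(\xi)=\varepsilon^{-2}F_-(\varepsilon\xi)$ with $F_-(\eta):=-\gamma-\sqrt{\gamma^2-|\eta|^2}$, smooth on $\{|\eta|\le 2\}$ and satisfying $\re F_-(\eta)\le-\re\gamma<0$ by Proposition \ref{prop:base}(1). This immediately gives $|e^{t\lambda_\varepsilon^-(\xi)}|\le e^{-ct/\varepsilon^2}$ and, for $|k|\ge 1$, $|\partial_\xi^k\lambda_\varepsilon^-(\xi)|=\varepsilon^{|k|-2}|(\partial^kF_-)(\varepsilon\xi)|\lesssim \varepsilon^{|k|-2}$. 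Fa\`a di Bruno then produces
$$
|\partial_\xi^k e^{t\lambda_\varepsilon^-(\xi)}| \;\lesssim\; \varepsilon^{|k|}\sum_{\ell=1}^{|k|}(t/\varepsilon^2)^\ell e^{-ct/\varepsilon^2} \;\lesssim\; \varepsilon^{|k|}\,e^{-c't/\varepsilon^2},
$$
and a final Leibniz split with $I(\varepsilon\xi)$ yields \eqref{kernel low -}. The only delicate step is the first estimate: one needs the factorization $\lambda_\varepsilon^+(\xi)=-|\xi|^2\psi_+(\varepsilon\xi)$ to expose the quadratic decay uniformly in $\varepsilon$, together with the observation that $e^{-ct|\xi|^2}$ absorbs any polynomial prefactor in $t|\xi|^2$; everything else is a routine Fa\`a di Bruno/Leibniz computation with constants depending only on $\alpha$.
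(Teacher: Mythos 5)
Your proof is correct and follows essentially the same route as the paper: the lower bound $\re\lambda_\varepsilon^+\le -c|\xi|^2$ from Proposition \ref{prop:base}-\eqref{base:item3}, the symbol bounds $|\partial^k\lambda_\varepsilon^+|\lesssim|\xi|^{2-|k|}$ and $\re\lambda_\varepsilon^-\le-\re(\gamma)/\varepsilon^2$, and Fa\`a di Bruno plus a Leibniz split with the cutoff. The only difference is bookkeeping: the paper first rescales via $e^{t\lambda_\varepsilon^\pm(\xi)}=e^{(t/\varepsilon^2)\lambda_1^\pm(\varepsilon\xi)}$ and works at $\varepsilon=1$ on $|\xi|\le 2$, whereas you keep $\varepsilon$ explicit and absorb it using $\varepsilon|\xi|\le 2$ on the support, which is equivalent.
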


Note that the right hand side of \eqref{kernel low -} is bounded by that of \eqref{kernel low +} in the region $|\varepsilon\xi|\le2$.

\begin{proof}
Since $e^{t\lambda_\varepsilon^\pm(\xi)}=e^{(t/\varepsilon^2)\lambda_1^\pm(\varepsilon\xi)}$, it is sufficient to consider the case $\varepsilon=1$.
Since $\re\lambda_1^-(\xi)\le -\re(\gamma)$ and since all derivatives $\partial^k\lambda_1^-$ are bounded on $|\xi|\le2$,
 by Lemma \ref{Faadi Bruno},
$$
|\partial^ke^{t\lambda_1^-(\xi)}|
\lesssim e^{t\re \lambda_1^-(\xi)}\sum_{\sum_\ell m_\ell\ell=k}
\prod_\ell|t\partial^\ell \lambda_1^-(\xi)|^{m_\ell}
\lesssim e^{-ct}
$$
for some constant $c<\re(\gamma)$. Hence we have \eqref{kernel low -}.
On the other hand, by Proposition \ref{prop:base} (\ref{base:item3}) 
there exists a constant $C_{\gamma}>0$ such that
$$
\re \lambda_1^+(\xi)=-\re\frac{|\xi|^2}{\gamma+\sqrt{\gamma^2-|\xi|^2}}\le -C_{\gamma}|\xi|^2,
$$
and moreover,
$$
|\partial_\xi^k\lambda_1^+(\xi)|\lesssim|\xi|^{2-|k|}.
$$
Hence by Lemma \ref{Faadi Bruno},
\begin{align*}
|\partial^ke^{t\lambda_1^+(\xi)}|
&\lesssim e^{t\re \lambda_1^+(\xi)}\sum_{\sum_\ell m_\ell\ell=k}
\prod_\ell|t\partial^\ell \lambda_1^+(\xi)|^{m_\ell}\\
&\lesssim 
e^{-C_{\gamma} t|\xi|^2}\sum (t|\xi|^2)^{\sum_\ell m_\ell}|\xi|^{-|k|}
\lesssim e^{-ct|\xi|^2}|\xi|^{-|k|}
\end{align*}
for some constant $c<C_{\gamma}$. Hence we have \eqref{kernel low +}.
\end{proof}

By Lemma \ref{kernel low}, Lemma \ref{HMmultiplier} and Lemma \ref{lem:Bernstein}, we obtain

\begin{thm}\label{thm:L}
For any $s\ge0$, $1 \le r \le \infty$, 
\begin{align*}
\|L_\varepsilon^\pm(t)\Delta_jf\|_{L^r} \lesssim_{\gamma} t^{-s/2} 2^{-js} \|\Delta_j f\|_{L^r}.
\end{align*}
\end{thm}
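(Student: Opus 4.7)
My plan is to view $L_\varepsilon^\pm(t)\Delta_j$ as a compactly supported Fourier multiplier on the dyadic annulus at scale $2^j$, obtain scaled derivative bounds on the symbol from Lemma \ref{kernel low}, invoke Lemma \ref{HMmultiplier} to get the $L^r\to L^r$ operator norm, and finally convert the resulting exponential prefactor into the polynomial weight $t^{-s/2}2^{-js}$ via the elementary inequality $e^{-cx}\lesssim_s x^{-s/2}$ valid for $x>0$ and any $s\ge 0$.

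\medskip

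\noindent\textbf{Step 1: Support reduction.} Because $I(\varepsilon\xi)$ is supported in $|\xi|\le 2/\varepsilon$ and $\chi_j$ lives on $|\xi|\sim 2^j$ (for $j\ge 0$), the operator $L_\varepsilon^\pm(t)\Delta_j$ vanishes unless $\varepsilon 2^j\lesssim 1$, and I will work in that regime. Writing $L_\varepsilon^\pm(t)\Delta_j f = m_j^\pm(\nabla)\tilde\chi_j(\nabla)\Delta_j f$ with symbol $m_j^\pm(\xi)=e^{t\lambda_\varepsilon^\pm(\xi)}I(\varepsilon\xi)\tilde\chi_j(\xi)$, the multiplier is supported on the annulus $|\xi|\sim 2^j$.

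\medskip

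\noindent\textbf{Step 2: Symbol bounds.} By Leibniz applied to $m_j^\pm = [e^{t\lambda_\varepsilon^\pm(\cdot)}I(\varepsilon\cdot)]\cdot\tilde\chi_j$, using $|\partial^k\tilde\chi_j|\lesssim 2^{-j|k|}$ together with Lemma \ref{kernel low}, I obtain
\begin{align*}
|\partial_\xi^k m_j^+(\xi)|&\lesssim e^{-ct 2^{2j}}\,2^{-j|k|},\\
|\partial_\xi^k m_j^-(\xi)|&\lesssim \varepsilon^{|k|}e^{-ct/\varepsilon^2}\lesssim e^{-ct/\varepsilon^2}\,2^{-j|k|},
\end{align*}
where in the last step I have used $\varepsilon\lesssim 2^{-j}$, which is precisely the support condition from Step 1. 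Thus both $m_j^\pm$ satisfy the scaled Hörmander--Mikhlin hypothesis at frequency $2^j$ with amplitudes $A^+=e^{-ct 2^{2j}}$ and $A^-=e^{-ct/\varepsilon^2}$ respectively.

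\medskip

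\noindent\textbf{Step 3: Multiplier bound and polynomial conversion.} Applying Lemma \ref{HMmultiplier} (with Lemma \ref{lem:Bernstein} handling the Littlewood--Paley projection and the $L^r$-estimate for multipliers localized on an annulus) yields
\begin{align*}
\|L_\varepsilon^+(t)\Delta_j f\|_{L^r}\lesssim_\gamma e^{-ct 2^{2j}}\|\Delta_j f\|_{L^r},\qquad
\|L_\varepsilon^-(t)\Delta_j f\|_{L^r}\lesssim_\gamma e^{-ct/\varepsilon^2}\|\Delta_j f\|_{L^r}.
\end{align*}
Finally I use $e^{-cx}\lesssim_s x^{-s/2}$ for $x>0$: with $x=t 2^{2j}$,
$$
e^{-ct 2^{2j}}\lesssim_s t^{-s/2}\,2^{-js},
$$
and with $x=ct/\varepsilon^2$, together with $\varepsilon^s\lesssim 2^{-js}$,
$$
e^{-ct/\varepsilon^2}\lesssim_s (t/\varepsilon^2)^{-s/2}=t^{-s/2}\varepsilon^s\lesssim t^{-s/2}\,2^{-js}.
$$
Combining the two yields the claimed estimate.

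\medskip

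\noindent The only genuinely delicate point is the minus-sign case: the bound $e^{-ct/\varepsilon^2}$ alone contains no $j$-dependence, so the necessary $2^{-js}$ decay must come from the $\varepsilon^s$-factor produced by the polynomial conversion and the support restriction $\varepsilon 2^j\lesssim 1$ coming from $I(\varepsilon\nabla)$. The cutoff $I$ is therefore not merely cosmetic: it is what allows the symbol $m_j^-$ to satisfy scaled derivative bounds with loss $2^{-j|k|}$ rather than $\varepsilon^{|k|}$, and it is what converts the time decay into a frequency decay in the final step. Everything else reduces to the standard Hörmander--Mikhlin machinery on a dyadic annulus.
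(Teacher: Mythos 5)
Your argument is correct and is exactly the route the paper takes: its one-line proof simply cites Lemma \ref{kernel low}, Lemma \ref{HMmultiplier} and Lemma \ref{lem:Bernstein}, and your Steps 1--3 fill in precisely those details, including the right observation that for the minus sign the $2^{-js}$ decay must be bought from the support restriction $\varepsilon 2^j\lesssim 1$ enforced by $I(\varepsilon\nabla)$ together with $e^{-ct/\varepsilon^2}\lesssim_s (t/\varepsilon^2)^{-s/2}$. Whether one feeds the amplitude $e^{-ct|\xi|^2}$ into Lemma \ref{HMmultiplier} with exponent $-s$ directly (so $C_\sigma\lesssim t^{-s/2}$) or, as you do, with exponent $0$ and converts $e^{-ct2^{2j}}\lesssim_s t^{-s/2}2^{-js}$ afterwards is only a cosmetic difference.
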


\subsection{High frequency part}\label{sec:H}

In this section, we prove the following theorem.

\begin{thm}\label{thm:H}
Let $\frac{d-1}2<m\le\infty$ and
let $(q_k,r_k)\in[2,\infty]^2$ ($k=1,2$) be $m$-admissible pairs, that is,
\begin{align}\label{eq:admissible}
\frac1{mq_k}+\frac1{r_k}=\frac12.
\end{align}
Define
$$
s_k=s(r_k)=\frac{d+1}2\left(\frac12-\frac1{r_k}\right),\quad
\delta_k=\delta(q_k,r_k)=\frac2{q_k}-\frac{d-1}2\left(\frac12-\frac1{r_k}\right).
$$
Then for any $j\ge-1$, we have
\begin{align}
\label{Strichartz 1}
\|H_\varepsilon^\pm(t)\Delta_ju\|_{L_T^{q_1}L_x^{r_1}}
&\lesssim_T \varepsilon^{\delta_1}2^{js_1}\|\Delta_ju\|_{L^2},\\
\label{Strichartz 2}
\left\|\int_0^tH_\varepsilon^\pm(t-t')\Delta_jf(t')dt'\right\|_{L_T^{q_1}L_x^{r_1}}
&\lesssim_T \varepsilon^{\delta_1+\delta_2}2^{j(s_1+s_2)}\|\Delta_jf\|_{L_T^{q_2'}L_x^{r_2'}}.
\end{align}
\end{thm}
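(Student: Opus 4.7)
The plan is to derive Theorem \ref{thm:H} from a pointwise dispersive estimate for $H_\varepsilon^\pm(t)\Delta_j$ combined with the standard $TT^\ast$ Strichartz machinery, while carefully tracking all powers of $\varepsilon$. On the support of $1-I(\varepsilon\nabla)$ one has $\varepsilon|\xi|\ge 1$, and I split this region into the wave-like regime $\varepsilon|\xi|>\sqrt{2}|\gamma|$ and the bounded intermediate annulus $1\le\varepsilon|\xi|\le\sqrt{2}|\gamma|$. The latter involves a smooth symbol on a compact annulus and yields the exponential damping $e^{-C_\gamma t/\varepsilon^{2}}$ from Proposition \ref{prop:base}-\eqref{base:item3}, so it is handled directly by energy/symbolic estimates. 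In the former, Proposition \ref{prop:base}-\eqref{base:item4} gives $\sqrt{\gamma^{2}-\varepsilon^{2}|\xi|^{2}}=i\varepsilon|\xi|+g(\varepsilon^{2}|\xi|^{2},\gamma)$ with $|g|\le 6|\gamma|^{2}/(\varepsilon|\xi|)$, leading to the factorization
$$
e^{t\lambda_\varepsilon^\pm(\xi)}=e^{-\gamma t/\varepsilon^{2}}\,e^{\pm it|\xi|/\varepsilon}\,m_\varepsilon^\pm(t,\xi),\qquad m_\varepsilon^\pm(t,\xi):=e^{\pm tg(\varepsilon^{2}|\xi|^{2},\gamma)/\varepsilon^{2}}.
$$
A direct computation shows $|\partial_\xi^{\beta}m_\varepsilon^\pm(t,\xi)|\lesssim_{T,\gamma,\beta}|\xi|^{-|\beta|}$ uniformly in $\varepsilon\in(0,1]$ and $t\in[0,T]$, so $m_\varepsilon^\pm(t,\nabla)$ is a H\"ormander--Mikhlin multiplier, bounded on $L^{r}(\mathbb{T}^{d})$ for $1<r<\infty$ uniformly.

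Next I establish the dispersive bound for the rescaled half-wave. Regarding functions on $\mathbb{T}^{d}$ as periodic functions on $\mathbb{R}^{d}$, the kernel of $e^{\pm it|\nabla|/\varepsilon}\chi_j(\nabla)$ is an oscillatory integral to which the classical stationary-phase estimate for the half-wave propagator applies. For $t\in[0,T]$ with $T$ fixed, only $O_{T}(1)$ lattice copies of the $\mathbb{R}^{d}$ kernel need to be summed, giving
$$
\|e^{\pm it|\nabla|/\varepsilon}\chi_j(\nabla)\|_{L^{1}\to L^{\infty}}\lesssim_T 2^{jd}\bigl(1+t2^{j}/\varepsilon\bigr)^{-(d-1)/2}.
$$
Multiplying by the damping $e^{-C_\gamma t/\varepsilon^{2}}$, using the $L^{r}$-boundedness of $m_\varepsilon^\pm(t,\nabla)$, and interpolating with the trivial $L^{2}\to L^{2}$ bound of $e^{t\lambda_\varepsilon^\pm(\nabla)}$ by Riesz--Thorin yields, for $1\le r'\le 2\le r\le\infty$,
$$
\|H_\varepsilon^\pm(t)\Delta_j u\|_{L^{r}_{x}}\lesssim_T e^{-C_\gamma t/\varepsilon^{2}}\bigl(1+t2^{j}/\varepsilon\bigr)^{-\frac{d-1}{2}\left(1-\frac{2}{r}\right)}2^{jd\left(1-\frac{2}{r}\right)}\|u\|_{L^{r'}_{x}}.
$$

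The Strichartz estimates \eqref{Strichartz 1}--\eqref{Strichartz 2} then follow by the standard bilinear convolution argument. For \eqref{Strichartz 2}, Minkowski brings the $L^{r_1}_{x}$ norm inside the time integral, reducing matters to bounding the convolution $(K*\|F\|_{L^{r_2'}_x})(t)$ in $L^{q_1}_{t}$, where $K(\tau)$ is the $L^{r_2'}\to L^{r_1}$ operator norm supplied by the dispersive step (after a further Riesz--Thorin step). Young's inequality in time with $1/p=1/q_1+1/q_2$ reduces the problem to estimating $\|K\|_{L^{p}_{T}}$, and balancing the exponential damping against the algebraic dispersive decay gives precisely the factor $\varepsilon^{\delta_1+\delta_2}2^{j(s_1+s_2)}$: the $L^{q_k}_{T}$ integration of the pure damping contributes $\varepsilon^{2/q_k}$, while trading part of this gain for extra dispersive decay in the large-$t$ zone produces the subtracted term $\tfrac{d-1}{2}(\tfrac12-\tfrac1{r_k})$ in each $\delta_k$. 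The homogeneous estimate \eqref{Strichartz 1} is then recovered by the Christ--Kiselev lemma applied to \eqref{Strichartz 2}, or equivalently by a direct $TT^\ast$ duality adapted to the non-self-adjoint semigroup; the additional non-convolution factor $e^{-\re\gamma(s+t)/\varepsilon^{2}}$ appearing in $H_\varepsilon^\pm(t)H_\varepsilon^\pm(s)^\ast$ only strengthens the bound.

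The principal anticipated obstacle is the careful accounting of the $\varepsilon$-powers. Three distinct sources of $\varepsilon$-scaling (the time rescaling implicit in the wave speed $1/\varepsilon$, the time decay produced by the damping $e^{-C_\gamma t/\varepsilon^{2}}$, and the frequency-weighted dispersive decay $(1+t2^{j}/\varepsilon)^{-(d-1)/2}$) must conspire to give exactly $\varepsilon^{\delta_k}$ for an arbitrary $\sigma$-admissible pair, including the non-standard range $\sigma>(d-1)/2$ beyond the classical wave endpoint $\sigma=(d-1)/2$. A secondary point of care is the transfer of the $\mathbb{R}^{d}$ stationary-phase estimate to $\mathbb{T}^{d}$: one must verify that the periodic lift contributes only an $O_{T}(1)$ correction rather than a constant growing with $T/\varepsilon$, which is exactly where the local-in-time nature of the estimate (and the constant $\lesssim_T$ in the statement) becomes essential.
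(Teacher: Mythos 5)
Your overall architecture (factor out a rescaled damped half--wave, prove a dispersive bound, run $TT^\ast$) is the same as the paper's, but the step you dismiss as a ``secondary point of care'' is precisely where your argument breaks. The claim that on $\mathbb{T}^d$ ``only $O_T(1)$ lattice copies of the $\mathbb{R}^d$ kernel need to be summed'', and hence that $\|e^{\pm it|\nabla|/\varepsilon}\chi_j(\nabla)\|_{L^1\to L^\infty}\lesssim_T 2^{jd}(1+t2^j/\varepsilon)^{-(d-1)/2}$ holds on the torus, is false in the regime that matters. The $\mathbb{R}^d$ kernel concentrates on the light cone $|x|\simeq t/\varepsilon$, whose radius is as large as $T/\varepsilon$, so the periodized kernel at a point of $\mathbb{T}^d$ collects roughly $(t/\varepsilon)^{d-1}$ lattice translates each of size $\simeq 2^{jd}(2^jt/\varepsilon)^{-(d-1)/2}$; the naive sum then \emph{grows} like $(t/\varepsilon)^{(d-1)/2}$ rather than decays, consistent with the fact that no uniform-in-time dispersive decay can hold for the free half--wave on a compact manifold. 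Multiplying by the damping \emph{after} invoking this (false) undamped torus estimate does not repair the proof: the damping has to enter the kernel/periodization estimate itself. This is exactly what the paper does in Lemma \ref{lem:dispersive estimate of e^{it|nabla|}}, where a factor $e^{-t/\varepsilon}$ is inserted into the kernel so that Lemma \ref{lem:young} (with $a=t/\varepsilon$) makes the lattice sum uniformly summable, and the factor $e^{+t/\varepsilon}$ is then repaid by half of the available damping $e^{-ct/\varepsilon^2}$. Without this (or an equivalent) mechanism your dispersive step, and hence \eqref{Strichartz 1}--\eqref{Strichartz 2}, is not established.

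A second, repairable, error: the asserted uniform Mikhlin bounds $|\partial_\xi^\beta m_\varepsilon^\pm(t,\xi)|\lesssim_{T,\gamma,\beta}|\xi|^{-|\beta|}$ for $m_\varepsilon^\pm(t,\xi)=e^{\pm tg(\varepsilon^2|\xi|^2,\gamma)/\varepsilon^2}$ alone are false. Since $\re g=\re\sqrt{\gamma^2-\varepsilon^2|\xi|^2}>0$ (Proposition \ref{prop:base} (1)), already $|m_\varepsilon^+(t,\xi)|=e^{t\re\sqrt{\gamma^2-\varepsilon^2|\xi|^2}/\varepsilon^2}$ can be of size $e^{ct/\varepsilon^2}$ near the inner edge of the high-frequency region, and each $\xi$-derivative produces additional factors of $t/\varepsilon^2$. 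One must keep a slice of the damping $e^{-\gamma t/\varepsilon^2}$, together with the spectral gap $\sup_{\varepsilon|\xi|\ge1}\re\sqrt{\gamma^2-\varepsilon^2|\xi|^2}<\re(\gamma)-c$, attached to the correction symbol; this is exactly how the paper defines $\tilde H_\varepsilon^\pm$ and proves Lemma \ref{lem:Htilde}. Once both points are fixed you are led to essentially the paper's proof: interpolation with the trivial $L^2$ bound, the bilinear $TT^\ast$ estimate, and Young in time using $\sigma>\frac{d-1}2$ together with $\|e^{-ct/\varepsilon^2}t^{-\frac{d-1}2(1-\frac2r)}\|_{L^{q/2}}=\varepsilon^{\frac4q-(d-1)(1-\frac2r)}\|e^{-ct}t^{-\frac{d-1}2(1-\frac2r)}\|_{L^{q/2}}$, which is where the powers $\varepsilon^{\delta_k}2^{js_k}$ come from. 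For off-diagonal pairs $(q_1,r_1)\neq(q_2,r_2)$, note also that a pointwise-in-time $L^{r_2'}\to L^{r_1}$ bound does not follow from Riesz--Thorin between the $L^1\to L^\infty$ and $L^2\to L^2$ endpoints (those only give dual pairs); the clean route is interpolation of the bilinear form against the $(\infty,2)$ case as in Lemma \ref{B lem TT* bilinear}.
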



Before proving Theorem \ref{thm:H}, remark
here that there exists a constant $c>0$ such that
\begin{align*}
\sup_{\xi\notin \text{supp}(I)}\re\sqrt{\gamma^2-|\xi|^2}<\re(\gamma)-c.
\end{align*}
Instead of computing $H_\varepsilon^\pm$ directly, we consider a simpler operator
$$
W_\varepsilon^\pm(t)=e^{-ct/2\varepsilon^2}e^{\pm it|\nabla|/\varepsilon}.
$$
Then we can decompose $H_\varepsilon^\pm(t)=\tilde{H}_\varepsilon^\pm(t)W_\varepsilon^\pm(t)$, where
\begin{align*}
\tilde{H}_\varepsilon^\pm (t) &=e^{-ct/2\varepsilon^2} e^{-(\gamma-c)t/\varepsilon^2}
e^{\pm t(\sqrt{\gamma^2-\varepsilon^2|\nabla|^2}-i\varepsilon|\nabla|)/\varepsilon^2}(1-I(\varepsilon\nabla)).
\end{align*}
Then Theorem \ref{thm:H} follows from Theorem \ref{thm:W} in Appendix \ref{app:W}
once we prove

\begin{lem}  \label{lem:Htilde}
For any $p\in[1,\infty]$, there exists a constant $C$ such that we have
$$
\|\tilde H_\varepsilon^\pm(t)\Delta_jf\|_{L^p}\le C e^{-ct/2\varepsilon^2} \|\Delta_jf\|_{L^p}
$$
for any $j\ge0$, $t\ge0$, and $\varepsilon\in(0,1]$.
\end{lem}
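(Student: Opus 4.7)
My plan is to bound the $L^1$ norm of the convolution kernel of $e^{ct/(2\varepsilon^2)}\tilde H_\varepsilon^\pm(t)$ post-composed with a slightly fattened Littlewood-Paley projector $\tilde\Delta_j$ (chosen so that $\tilde\chi_j\chi_j=\chi_j$), uniformly in $\varepsilon\in(0,1]$, $t\ge 0$ and $j\ge 0$; the claim then follows from Young's inequality applied to $\tilde H_\varepsilon^\pm\Delta_j=\tilde H_\varepsilon^\pm\tilde\Delta_j\Delta_j$. Writing
$$
e^{ct/(2\varepsilon^2)}\tilde H_\varepsilon^\pm(t)\tilde\Delta_jf=K^\pm_{\varepsilon,t,j}*f,\qquad K^\pm_{\varepsilon,t,j}:=\mathcal{F}^{-1}\bigl[N_\varepsilon^\pm(t,\cdot)\tilde\chi_j(\cdot)\bigr],
$$
with
$$
N_\varepsilon^\pm(t,\xi)=e^{-(\gamma-c)t/\varepsilon^2}\,e^{\pm t(\sqrt{\gamma^2-\varepsilon^2|\xi|^2}-i\varepsilon|\xi|)/\varepsilon^2}\bigl(1-I(\varepsilon\xi)\bigr),
$$
the whole task reduces to the Mikhlin-type symbol estimate
$$
|\partial_\xi^\alpha N_\varepsilon^\pm(t,\xi)|\lesssim_\gamma|\xi|^{-|\alpha|}\quad\text{on}\ \{|\xi|\sim 2^j\}\cap\{\varepsilon|\xi|\ge 1\}
$$
for every fixed multi-index $\alpha$. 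Indeed, rescaling $\xi=2^j\eta$ shows that such a bound forces $\|K^\pm_{\varepsilon,t,j}\|_{L^1}\lesssim 1$ uniformly in $j$; and if $2^j\varepsilon\lesssim 1$ then $N_\varepsilon^\pm\tilde\chi_j\equiv 0$ and the estimate is trivial.

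To prove the symbol bound, write $N_\varepsilon^\pm=e^{\psi^\pm}(1-I(\varepsilon\xi))$ with
$$
\psi^\pm(t,\xi)=-(\gamma-c)\frac{t}{\varepsilon^2}\pm\frac{t}{\varepsilon^2}G(\xi),\qquad G(\xi):=\sqrt{\gamma^2-\varepsilon^2|\xi|^2}-i\varepsilon|\xi|.
$$
By the very choice of $c$ together with Proposition \ref{prop:base}(\ref{base:item3}), one has $\re\psi^\pm\le-\delta\,t/\varepsilon^2$ on $\varepsilon|\xi|\ge 1$ for some $\delta=\delta(\gamma)>0$. The analysis of $\partial^\alpha G$ then splits into two regimes. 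In the \emph{transition regime} $1\le\varepsilon|\xi|\le 2|\gamma|$, the rescaled function $\eta\mapsto\sqrt{\gamma^2-|\eta|^2}-i|\eta|$ is smooth with bounded derivatives there (since $\gamma^2-|\eta|^2$ stays away from $0$ by Proposition \ref{prop:base}(\ref{base:item2})), which gives $|\partial_\xi^\alpha G|\lesssim_\gamma\varepsilon^{|\alpha|}\sim|\xi|^{-|\alpha|}$. In the \emph{high-frequency regime} $\varepsilon|\xi|\ge 2|\gamma|$, Proposition \ref{prop:base}(\ref{base:item4}) provides the crucial cancellation $G(\xi)=g(\varepsilon^2|\xi|^2,\gamma)$ with $|g(r,\gamma)|\le 6|\gamma|^2/\sqrt{r}$, and a Faà di Bruno expansion of this composition (together with the elementary estimate $|g^{(k)}(r,\gamma)|\lesssim_\gamma r^{-1/2-k}$) yields $|\partial^\alpha G|\lesssim_\gamma\varepsilon^{-1}|\xi|^{-|\alpha|-1}$.

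The last step is a second application of Faà di Bruno (Lemma \ref{Faadi Bruno}) to $\partial^\alpha e^{\psi^\pm}$. Setting $u:=t/\varepsilon^2$, each $k$-fold product of derivatives of $\psi^\pm$ contributes in the worst case a factor of the form $u^k(\varepsilon|\xi|)^{-k}|\xi|^{-|\alpha|}$ in the high-frequency regime, or $u^k|\xi|^{-|\alpha|}$ in the transition regime; pairing this with $e^{\re\psi^\pm}\le e^{-\delta u}$ and invoking $u^ke^{-\delta u}\lesssim_k 1$ absorbs all of the $t/\varepsilon^2$ powers and produces $|\partial^\alpha e^{\psi^\pm}|\lesssim_\gamma|\xi|^{-|\alpha|}$ uniformly. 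A final Leibniz expansion combines this with $|\partial^\beta(1-I(\varepsilon\xi))|\lesssim\varepsilon^{|\beta|}\sim|\xi|^{-|\beta|}$ on the transition shell $\varepsilon|\xi|\sim 1$, completing the proof. The main technical obstacle — and the entire reason for the decomposition $H_\varepsilon^\pm=\tilde H_\varepsilon^\pm W_\varepsilon^\pm$ in the first place — is precisely this cancellation in the high-frequency regime: without first subtracting the half-wave propagator, $\partial_\xi\psi^\pm$ would carry a full $t/\varepsilon$ factor growing linearly in $t$, which the exponential decay $e^{-\delta t/\varepsilon^2}$ is not strong enough to absorb uniformly in $|\xi|$.
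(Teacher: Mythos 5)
Your proposal is correct and follows essentially the paper's own route: both reduce the lemma to a Mikhlin--H\"ormander symbol bound $|\partial_\xi^k(\text{symbol})|\lesssim_\gamma|\xi|^{-|k|}$ on the support of $1-I(\varepsilon\cdot)$, obtained via Fa\`a di Bruno with the damping $e^{-\delta t/\varepsilon^2}$ absorbing the powers of $t/\varepsilon^2$, and then conclude by an $L^1$ kernel bound plus Young (which is exactly the content of Lemma \ref{HMmultiplier} that the paper cites). Your two-regime analysis of $G$ is simply a detailed verification of the derivative estimate $|\partial^k\varphi(\xi)|\lesssim|\xi|^{-|k|}$, $\varphi(\xi)=\sqrt{\gamma^2-|\xi|^2}-i|\xi|$, which the paper asserts in one line after rescaling to $\varepsilon=1$.
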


\begin{proof}
Set $\varphi(\xi)=\sqrt{\gamma^2-|\xi|^2}-i|\xi|$. Since $|\partial^k\varphi(\xi)|\lesssim|\xi|^{-|k|}$ ($k \in \N^d$), we have
$$
|\partial^ke^{-(\gamma-c) t\pm \varphi(\xi)t}|\lesssim e^{-\re(\gamma-c) t\pm \re\varphi(\xi)t}t^{|k|}|\xi|^{-|k|}
\lesssim |\xi|^{-|k|}
$$
on the complement of the support of $I$. Consequently,
$$
|\partial^ke^{-(\gamma-c)t/\varepsilon^2\pm\varphi(\varepsilon\xi)t/\varepsilon^2}
(1-I(\varepsilon\xi))|
\lesssim|\xi|^{-|k|}.
$$
By Lemma \ref{HMmultiplier}, we obtain the assertion.
\end{proof}

\subsection{Proof of Theorem \ref{mainthm}}

Since $u_\varepsilon(t)=e^{-\beta t}v_\varepsilon(t)$ with $\re\beta\ge0$, it is sufficient to show \eqref{thm:u<} and \eqref{thm:u>} for $v_\varepsilon^<=I(\varepsilon\nabla)v_\varepsilon$ and $v_\varepsilon^>=v_\varepsilon-v_\varepsilon^<$, respectively, if $T$ is fixed. However, we go back to $u_\varepsilon^<$ and $u_\varepsilon^>$ to clarify whether the proportional constants depend on $T$. 
We will see that the estimate \eqref{thm:u<} is independent of $T$.

Since $v_{\varepsilon, \alpha}$ satisfies (\ref{v:mild}), we have 
\begin{align*}
u_{\varepsilon, \alpha}^{<}(t)&= e^{-\beta t}L_\varepsilon^+(t)\phi_\varepsilon^+
+ e^{-\beta t}L_\varepsilon^-(t)\phi_\varepsilon^-
+\int_0^t e^{-\beta t} \left(L_\varepsilon^+(t-t')f_\varepsilon^+(t')
+L_\varepsilon^-(t-t')f_\varepsilon^-(t')\right)dt',\\
u_{\varepsilon, \alpha}^{>}(t)&= e^{-\beta t} H_\varepsilon^+(t)\phi_\varepsilon^+
+e^{-\beta t} H_\varepsilon^-(t)\phi_\varepsilon^-
+\int_0^t e^{-\beta t} \left(H_\varepsilon^+(t-t')f_\varepsilon^+(t')
+H_\varepsilon^-(t-t')f_\varepsilon^-(t')\right)dt',
\end{align*}
Then Lemma \ref{HMmultiplier} implies
\begin{lem}
For any $r\in[1,\infty]$, $j \ge -1$ and $s\in[0,1]$,
\begin{align*}
\|\Delta_j\phi_\varepsilon^\pm\|_{L^r}&\lesssim_{\gamma} \|\Delta_j\phi_0\|_{L^r}+2^{-j}\|\Delta_j (\varepsilon \beta \phi_0 +\phi_1)\|_{L^r}
\lesssim_\alpha \|\Delta_j\phi_0\|_{L^r}+2^{-j}\|\Delta_j\phi_1\|_{L^r},\\
\|\Delta_jf_\varepsilon^\pm\|_{L^r}&\lesssim_{\gamma} \varepsilon^{-s}2^{-js} e^{(\re\beta) t} \|\Delta_jf\|_{L^r}.
\end{align*}
\end{lem}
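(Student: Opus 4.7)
The plan is to rewrite $\phi_\varepsilon^\pm$ and $f_\varepsilon^\pm$ explicitly as superpositions of simple Fourier multipliers applied to the data, and then cite Lemma \ref{HMmultiplier} to bound each piece on the dyadic block $\Delta_j$. Using the identities $\mp\varepsilon^2\lambda_\varepsilon^\mp(\nabla)=\pm\gamma+\sqrt{\gamma^2-\varepsilon^2|\nabla|^2}$, a direct computation gives
\begin{align*}
\phi_\varepsilon^\pm
=\tfrac12\phi_0
\pm\tfrac{\gamma}{2}\,m_1(\nabla)\phi_0
\pm\tfrac12\,m_2(\nabla)(\varepsilon\beta\phi_0+\phi_1),
\qquad
f_\varepsilon^\pm(t)=\pm\tfrac{e^{\beta t}}{2}\,m_1(\nabla)f(t),
\end{align*}
where
$m_1(\xi)=(\gamma^2-\varepsilon^2|\xi|^2)^{-1/2}$ and $m_2(\xi)=\varepsilon(\gamma^2-\varepsilon^2|\xi|^2)^{-1/2}$.
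Thus the whole lemma reduces to proving, uniformly in $\varepsilon\in(0,1]$ and $j\ge-1$,
\[
\|m_1(\nabla)\Delta_j g\|_{L^r}\lesssim_\gamma \|\Delta_j g\|_{L^r},\qquad
\|m_2(\nabla)\Delta_j g\|_{L^r}\lesssim_\gamma 2^{-j}\|\Delta_j g\|_{L^r},
\]
together with the interpolated bound $\|m_1(\nabla)\Delta_j g\|_{L^r}\lesssim_\gamma\varepsilon^{-\theta}2^{-j\theta}\|\Delta_j g\|_{L^r}$ for $\theta\in[0,1]$.

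To feed these into Lemma \ref{HMmultiplier}, I would first rescale by writing $m_1(\xi)=\tilde m(\varepsilon\xi)$ and $m_2(\xi)=\varepsilon\tilde m(\varepsilon\xi)$ with $\tilde m(\eta)=(\gamma^2-|\eta|^2)^{-1/2}$. The key point is that $\tilde m$ is an $\varepsilon$-independent smooth function on $\mathbb{R}^d$: since $\gamma^2=\alpha^2-\varepsilon^2$ has nonvanishing imaginary part $2\,\re(\alpha)\im(\alpha)\neq0$, the quantity $\gamma^2-|\eta|^2$ never touches the branch cut. A short computation then yields $|\partial_\eta^k\tilde m(\eta)|\lesssim_\gamma(1+|\eta|)^{-1-|k|}$, which transfers to
$|\partial_\xi^k m_1(\xi)|\lesssim_\gamma\varepsilon^{|k|}(1+\varepsilon|\xi|)^{-1-|k|}$ and $|\partial_\xi^k m_2(\xi)|\lesssim_\gamma\varepsilon^{1+|k|}(1+\varepsilon|\xi|)^{-1-|k|}$. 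A case split according as $\varepsilon\,2^j\le 1$ or $\varepsilon\,2^j>1$ on $\mathrm{supp}\,\tilde\chi_j$ then converts these into the Mikhlin-type bounds $|\partial_\xi^k m_1(\xi)|\lesssim_\gamma 2^{-j|k|}$ and $|\partial_\xi^k m_2(\xi)|\lesssim_\gamma 2^{-j(1+|k|)}$; for $k=0$ the same bounds are also immediate from Proposition \ref{prop:base}\eqref{base:item2} and \eqref{base:item2'} respectively. Applying Lemma \ref{HMmultiplier} then gives the two displayed estimates, and plugging back into the decomposition of $\phi_\varepsilon^\pm$ produces the first assertion of the lemma.

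For the bound on $f_\varepsilon^\pm$, the $\theta=0$ endpoint follows from $\|m_1(\nabla)\Delta_j g\|_{L^r}\lesssim_\gamma\|\Delta_j g\|_{L^r}$ and the $\theta=1$ endpoint from the identity $m_1=\varepsilon^{-1}m_2$. For $\theta\in(0,1)$ I would interpolate the pointwise bounds: Proposition \ref{prop:base}\eqref{base:item2} gives $|m_1(\xi)|\lesssim_\gamma 1$, and \eqref{base:item2'} rewritten for $m_1$ gives $|m_1(\xi)|\lesssim_\gamma(\varepsilon|\xi|)^{-1}$, hence $|m_1(\xi)|\lesssim_\gamma(\varepsilon|\xi|)^{-\theta}\lesssim_\gamma\varepsilon^{-\theta}2^{-j\theta}$ on $\mathrm{supp}\,\tilde\chi_j$; the same interpolation works on the derivatives, and Lemma \ref{HMmultiplier} then delivers the desired inequality after factoring out the harmless scalar $e^{\beta t}$. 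The only non-routine step is verifying the Mikhlin derivative conditions uniformly in $\varepsilon$, which is exactly what the scaling $\eta=\varepsilon\xi$ and the fact that $\im(\gamma^2)\neq0$ are designed to handle.
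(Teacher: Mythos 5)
Your proposal is correct and follows essentially the same route as the paper, whose proof of this lemma consists solely of invoking Lemma \ref{HMmultiplier}: you decompose $\phi_\varepsilon^\pm$ and $f_\varepsilon^\pm$ into the multipliers $(\gamma^2-\varepsilon^2|\nabla|^2)^{-1/2}$ and $\varepsilon(\gamma^2-\varepsilon^2|\nabla|^2)^{-1/2}$ acting on $\phi_0$, $\varepsilon\beta\phi_0+\phi_1$, $f$, and verify the Mikhlin hypotheses (with $s=0$, $-1$, $-\theta$) uniformly in $\varepsilon$ via the rescaling $\eta=\varepsilon\xi$ and Proposition \ref{prop:base} applied to $\gamma$, which is exactly the verification the paper leaves implicit. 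The only cosmetic gap is that the $j=-1$ block should be handled by \eqref{HM-1} using the $k=0$ bounds you already state, which is immediate.
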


Combining this with Theorems \ref{thm:L} and \ref{thm:H}, we can prove Theorem \ref{mainthm}.

\begin{proof}[{\bfseries Proof of Theorem \ref{mainthm}}]
First we consider the low frequency part. 
By Theorem \ref{thm:L}, and since $\mathrm{Re} \beta \ge 0$, for any $t\ge 0$
\begin{align*}
\|\Delta_ju_{\varepsilon, \alpha}^{<}(t)\|_{L^r}
&\lesssim\sum_{\circ=+,-}  e^{-(\re \beta) t} \|\Delta_j\phi_\varepsilon^\circ\|_{L^r} 
+\sum_{\circ=+,-}e^{-(\re\beta)t}2^{-js}\int_0^t (t-t')^{-s/2} 
\|\Delta_jf_\varepsilon^\circ(t')\|_{L^r}dt'  \\
%
&\lesssim e^{-(\re\beta)t}\|\Delta_j\phi_0\|_{L^r}+e^{-(\re\beta)t}2^{-j}\|\Delta_j\phi_1\|_{L^r} 
\\
&\hspace{+5mm}
+2^{-js}\int_0^t(t-t')^{-s/2} 
e^{-(\re\beta)(t-t')}\|\Delta_jf(t')\|_{L^r}
dt',
%
\end{align*}
thus by $\re\beta\ge C_{\alpha,1}$ and by Minkowski's inequality,
\begin{align*}
\|u_{\varepsilon, \alpha}^{<}(t)\|_{B_{r,2}^{\sigma+s}}
&\lesssim e^{-C_{\alpha,1}t} \|\phi_0\|_{B_{r,2}^{\sigma+s}} + e^{-C_{\alpha,1}t} \|\phi_1\|_{B_{r,2}^{\sigma+s-1}} \\
&\hspace{+5mm} + \int_0^t (t-t')^{-s/2} e^{-C_{\alpha,1}(t-t')}  \|f(t')\|_{B_{r,2}^\sigma}dt'.
\end{align*}
Since the functions $e^{-C_{\alpha,1}t}$ and $t^{-s/2}e^{-C_{\alpha,1}t}$ are integrable over $[0,\infty)$,
\eqref{thm:u<} is obtained by Young's inequality if $s<2(1+\frac1{q_1}-\frac1{q_2})$, and by Hardy-Littlewood-Sobolev's inequality if $s=2(1+\frac1{q_1}-\frac1{q_2})$ and $s<2$.

Next we consider the high frequency part. By Theorem \ref{thm:H}, and since $\mathrm{Re}\beta \ge 0,$ we have,  
\begin{align*}
\|\Delta_ju_{\varepsilon, \alpha}^{>}(t)\|_{L_T^{q_1}L_x^{r_1}}
&\lesssim_T \varepsilon^{\delta_1} 2^{js_1}\sum_{\circ=+,-}\|\Delta_j\phi_\varepsilon^\circ\|_{L^2} \\
& \hspace{5mm}+\sum_{\circ=+,-} \left\|\int_0^t e^{-\beta(t-t')} H^{\circ}_{\varepsilon}(t-t') 
\frac{\Delta_j f(t')}{2\sqrt{\gamma^2-\varepsilon^2|\nabla|^2}} dt'\right\|_{L^{q_1}_T L^{r_1}_x} \\
&\lesssim_T \varepsilon^{\delta_1} 2^{js_1} (\|\Delta_j\phi_0\|_{L^2}+2^{-j}\|\Delta_j(\phi_1+\varepsilon \beta \phi_0)\|_{L^2}) \\
&\hspace{5mm}+\varepsilon^{\delta_1+\delta_2} 2^{j(s_1+s_2)} \Big\|\frac{\Delta_jf}{2\sqrt{\gamma^2-\varepsilon^2|\nabla|^2}}\Big\|_{L_T^{q_2'}L_x^{r_2'}}\\
&\lesssim \varepsilon^{\delta_1} 2^{js_1}\|\Delta_j\phi_0\|_{L^2}
+\varepsilon^{\delta_1}2^{j(s_1-1)}\|\Delta_j\phi_1\|_{L^2}\\
&\quad+\varepsilon^{\delta_1+\delta_1-s} 2^{j(s_1+s_2-s)} \|\Delta_jf\|_{L_T^{q_2'}L_x^{r_2'}}.
\end{align*}
Hence by Minkowski's inequality we obtain \eqref{thm:u>}.
\end{proof}

\appendix   
   
\section{Elementary results}\label{app:A}

\begin{lem}\label{Faadi Bruno}
Let $n\in\mathbb{N}$ and $k\in \N^d$.
For a smooth function $h:\mathbb{R}^d\to\mathbb{C}$, one has
$$
|\partial^ke^{h(x)}|
\lesssim
e^{\re h(x)}
\sum_{m=(m_\ell)_{1\le|\ell|\le|k|}\subset\mathbb{N},\, \sum_\ell m_\ell \ell=k}
\prod_{\ell}|\partial^\ell h(x)|^{m_\ell}.
$$
\end{lem}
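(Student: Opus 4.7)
The statement is a straightforward consequence of the multivariate Faà di Bruno formula applied to the composition $\exp \circ h$, combined with the elementary identity $|e^{h(x)}| = e^{\re h(x)}$. There are no serious obstacles; the whole point is to organize the combinatorics.

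The plan is as follows. First I would recall (or prove quickly by induction on $|k|$) the multivariate Faà di Bruno formula: for a smooth scalar function $f:\mathbb{C}\to\mathbb{C}$ and a smooth $h:\mathbb{R}^d\to\mathbb{C}$,
\begin{equation*}
\partial^{k}(f\circ h)(x)
=\sum_{\sum_{\ell}m_{\ell}\ell=k}
\frac{k!}{\prod_{\ell}m_{\ell}!(\ell!)^{m_{\ell}}}\,
f^{(|m|)}(h(x))\prod_{\ell}\bigl(\partial^{\ell}h(x)\bigr)^{m_{\ell}},
\end{equation*}
where $|m|=\sum_\ell m_\ell$ and the sum runs over finite collections of nonnegative integers $(m_\ell)_{1\le|\ell|\le|k|}$ satisfying the constraint $\sum_\ell m_\ell \ell = k$.

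Next I would specialize to $f(z)=e^{z}$. Since $f^{(n)}=f$ for every $n\ge 0$, the factor $f^{(|m|)}(h(x))$ reduces to $e^{h(x)}$ and can be pulled out of the sum. Taking absolute values, using $|e^{h(x)}|=e^{\re h(x)}$ and the triangle inequality, yields
\begin{equation*}
|\partial^{k}e^{h(x)}|
\le e^{\re h(x)}
\sum_{\sum_{\ell}m_{\ell}\ell=k}
\frac{k!}{\prod_{\ell}m_{\ell}!(\ell!)^{m_{\ell}}}
\prod_{\ell}|\partial^{\ell}h(x)|^{m_{\ell}}.
\end{equation*}

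Finally, for fixed $k$ the index set $\{(m_\ell):\sum_\ell m_\ell\ell=k\}$ is finite and the combinatorial coefficients are bounded, so they can be absorbed into the implicit constant of $\lesssim$. This yields the claimed inequality. The only step that requires any care is the bookkeeping in Faà di Bruno, and even that can be avoided entirely by a direct induction on $|k|$: the base case $|k|=0$ is trivial, and the inductive step follows by differentiating once more, applying the product rule, and reindexing $(m_\ell)$, which preserves both the structural form of the sum and the factor $e^{\re h(x)}$.
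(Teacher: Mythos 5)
Your proposal is correct and follows essentially the same route as the paper: the paper also invokes the multivariate Fa\`a di Bruno formula (with unspecified absolute constants in place of your explicit combinatorial coefficients), specializes to $F=\exp$ so that all derivatives of the outer function equal $e^{h(x)}$, and absorbs the finitely many coefficients into the implicit constant. No further comment is needed.
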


\begin{proof}
By the one-component Fa\`{a}di Bruno's formula,
\begin{align*}
\partial^k(F\circ h)(x)
=\sum_{m=(m_\ell)_{1\le|\ell|\le|k|}\subset\mathbb{N},\, \sum_\ell m_\ell \ell=k}
C_{\ell}(\partial^{\sum_\ell m_\ell}F)(h(x))
\prod_\ell(\partial^\ell h(x))^{m_\ell}
\end{align*}
for any holomorphic function $F$ and absolute constants $C_\ell$.
\end{proof}

We regard a smooth function $f\in\mathcal{D}(\mathbb{T}^d)$ as a periodic function on $\mathbb{R}^d$.
For a periodic smooth function $f$ on $\mathbb{R}^d$ and a Schwartz function $g$ on $\mathbb{R}^d$, we define the convolution by
$$
(g*f)(x):=\int_{\mathbb{R}^d} g(x-y)f(y)dy=\int_{\mathbb{R}^d}g(y)f(x-y)dy,\quad x\in\mathbb{R}^d.
$$

\begin{lem}\label{lem:poisson sum}
Let $\varphi$ be a Schwartz function on $\mathbb{R}^d$ and let $\eta$ be its Fourier inverse transform.
For any smooth function $f\in\mathcal{D}(\mathbb{T}^d)$,
$$
\varphi(\nabla)f=\eta*f.
$$
\end{lem}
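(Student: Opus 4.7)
The plan is to reduce both sides to the same absolutely convergent Fourier series on the torus. First I would expand $f$ via Fourier series: since $f$ is smooth, the coefficients $\hat{f}(k) = \langle f, e_k \rangle$ have rapid decay, so $f(y) = \sum_{k \in \mathbb{Z}^d} \hat{f}(k) e_k(y)$ converges in every $C^N(\mathbb{T}^d)$. By the very definition of the Fourier multiplier, this yields
$$
\varphi(\nabla) f = \sum_{k \in \mathbb{Z}^d} \varphi(k) \hat{f}(k) e_k,
$$
and this series converges absolutely because $\varphi \in \mathcal{S}(\mathbb{R}^d)$ (so $\varphi(k)$ is bounded) and $\hat{f}(k)$ decays rapidly.

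For the right-hand side, I would view $f$ as a periodic smooth function on $\mathbb{R}^d$ and substitute its Fourier series into the convolution integral. Since $\eta \in \mathcal{S}(\mathbb{R}^d)$ and the partial sums of the Fourier series of $f$ are uniformly bounded, dominated convergence allows exchanging sum and integral:
$$
(\eta * f)(x) = \sum_{k \in \mathbb{Z}^d} \hat{f}(k) (\eta * e_k)(x).
$$
The key identity is then that the exponentials $e_k$ are eigenfunctions of convolution with $\eta$: a change of variables $z = x - y$ and the definition of $\eta$ as the Fourier inverse transform of $\varphi$ on $\mathbb{R}^d$ give
$$
(\eta * e_k)(x) = e_k(x) \int_{\mathbb{R}^d} \eta(z) e^{-ik \cdot z} dz = \varphi(k) e_k(x).
$$
Combining the two displays yields $\eta * f = \sum_k \varphi(k) \hat{f}(k) e_k = \varphi(\nabla) f$.

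The only genuine issue — and the main (very mild) obstacle — is matching normalization constants: the paper's Fourier conventions on $\mathbb{T}^d$ put the factor $(2\pi)^{-d/2}$ into $e_k$, and one must check that the convention for the Fourier inverse transform on $\mathbb{R}^d$ used to define $\eta$ is chosen consistently so that the integral $\int \eta(z) e^{-ik \cdot z} dz$ equals exactly $\varphi(k)$ without a stray $(2\pi)^{\pm d/2}$. Once the conventions are pinned down (as they implicitly must be for the statement to hold as written), the argument reduces to the short eigenfunction computation above.
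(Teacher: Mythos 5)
Your argument is correct. The paper in fact states this lemma without proof (it is treated as standard in Appendix A), so there is nothing to match step by step; but the label ``poisson sum'' and the proof of Lemma \ref{lem:young}, which works with the periodization $\tilde\eta=\sum_{k\in\mathbb{Z}^d}\eta(\cdot+2\pi k)$, indicate that the intended route is to fold the $\mathbb{R}^d$-convolution onto the torus and identify the Fourier coefficients of $\tilde\eta$ with $\varphi(k)$ via Poisson summation. Your route is the direct dual of this: expand $f$ in its (rapidly convergent) Fourier series, interchange sum and integral (justified, as you say, by $\eta\in L^1(\mathbb{R}^d)$ and absolute convergence of $\sum_k|\hat f(k)|$), and use the eigenfunction identity $(\eta*e_k)(x)=e_k(x)\int_{\mathbb{R}^d}\eta(z)e^{-ik\cdot z}dz=\varphi(k)e_k(x)$, which matches the definition $\varphi(\nabla)f=\sum_k\varphi(k)\hat f(k)e_k$ term by term. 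Both arguments are elementary and of comparable length; yours avoids invoking Poisson summation explicitly, while the periodization argument has the side benefit of producing the kernel $\tilde\eta$ that the paper reuses in Lemma \ref{lem:young}. Your closing caveat about normalizations is exactly the right one and is resolved as you suggest: for the identity to hold with no stray constant, $\eta$ must be the inverse transform in the convention $\eta(y)=(2\pi)^{-d}\int_{\mathbb{R}^d}\varphi(\xi)e^{iy\cdot\xi}d\xi$, so that $\int_{\mathbb{R}^d}\eta(z)e^{-ik\cdot z}dz=\varphi(k)$ by Fourier inversion on $\mathbb{R}^d$; this is consistent with how the kernels $K_j$ are used in the proof of Lemma \ref{HMmultiplier}.
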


\begin{lem}\label{lem:minkowski}
For any $p\in[1,\infty]$, Schwartz function $\eta$, and periodic smooth function $f\in\mathcal{D}(\mathbb{T}^d)$,
$$
\|\eta*f\|_{L^p(\mathbb{T}^d)}\le\|\eta\|_{L^1(\mathbb{R}^d)}\|f\|_{L^p(\mathbb{T}^d)}.
$$
\end{lem}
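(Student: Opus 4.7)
The plan is to prove this by the standard Minkowski integral inequality combined with the translation invariance of the $L^p(\mathbb{T}^d)$ norm, the latter being where the periodicity of $f$ enters. First I would write out the convolution pointwise as
$$
(\eta*f)(x)=\int_{\mathbb{R}^d}\eta(y)f(x-y)\,dy,\qquad x\in\mathbb{R}^d,
$$
using the second representation given just before the statement. Because $f$ is periodic, the function $x\mapsto(\eta*f)(x)$ is also periodic, so its $L^p(\mathbb{T}^d)$ norm is well defined.

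Next, for $1\le p<\infty$, I would apply Minkowski's integral inequality (for the measure space $\mathbb{R}^d\times\mathbb{T}^d$) to exchange the $L^p_x(\mathbb{T}^d)$ norm with the $L^1_y(\mathbb{R}^d)$ integration, obtaining
$$
\|\eta*f\|_{L^p(\mathbb{T}^d)}
\le\int_{\mathbb{R}^d}|\eta(y)|\,\|f(\cdot-y)\|_{L^p(\mathbb{T}^d)}\,dy.
$$
For $p=\infty$ the same estimate is immediate from the triangle inequality inside the integral. Then, using that $f$ is periodic on $\mathbb{R}^d$ (so that the torus integral is invariant under any translation $x\mapsto x-y$ with $y\in\mathbb{R}^d$, not just $y\in\mathbb{T}^d$), I get $\|f(\cdot-y)\|_{L^p(\mathbb{T}^d)}=\|f\|_{L^p(\mathbb{T}^d)}$ for every $y\in\mathbb{R}^d$, and pulling this constant factor out of the integral yields the claim.

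The only mild subtlety, and the step I would double-check, is the translation invariance of $\|\cdot\|_{L^p(\mathbb{T}^d)}$ under arbitrary real translations, as $y$ ranges over all of $\mathbb{R}^d$ rather than a fundamental domain. This follows from writing $\mathbb{T}^d=(\mathbb{R}/2\pi\mathbb{Z})^d$ and decomposing $y=y_0+2\pi n$ with $y_0\in[0,2\pi)^d$ and $n\in\mathbb{Z}^d$: the shift by $2\pi n$ is trivial by periodicity of $f$, and the shift by $y_0$ preserves the Lebesgue measure on the fundamental domain. Otherwise the argument is entirely routine, and no additional hypotheses beyond $\eta\in L^1(\mathbb{R}^d)$ and $f\in L^p(\mathbb{T}^d)$ are really needed, which is consistent with the statement.
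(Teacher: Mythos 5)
Your argument is correct: Minkowski's integral inequality (or, for $p=\infty$, the triangle inequality under the integral) reduces the claim to the identity $\|f(\cdot-y)\|_{L^p(\mathbb{T}^d)}=\|f\|_{L^p(\mathbb{T}^d)}$ for all $y\in\mathbb{R}^d$, which holds because $f$ is $2\pi\mathbb{Z}^d$-periodic and the torus norm is invariant under translation on the quotient group (for the $y_0$-shift one shifts the fundamental domain and folds it back using periodicity, which is the same as invariance of Haar measure on $\mathbb{T}^d$). The paper states this lemma without proof, treating it as standard, and your argument is precisely the expected one; no gap.
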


\begin{lem}\label{lem:young}
Let $a>0$ and $N_0>0$, and let $\eta:\mathbb{R}^d\to\mathbb{R}$ be a function such that $|\eta(x)|\lesssim_N1/(a+|x|)^N$ for any $N\ge N_0$. Then
$$
\|\eta*f\|_{L^\infty(\mathbb{T}^d)}
\lesssim(1+a^{-N_0})\|f\|_{L^1(\mathbb{T}^d)}.
$$
\end{lem}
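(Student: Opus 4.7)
The plan is to reduce the bound on the convolution of $\eta$ with a $2\pi$-periodic function $f$ to a pointwise bound on the periodization of $\eta$. Writing $\mathbb{R}^d = \bigcup_{k\in\mathbb{Z}^d}(2\pi k + [0,2\pi)^d)$ and using the periodicity of $f$,
\begin{align*}
(\eta*f)(x) &= \int_{\mathbb{R}^d}\eta(x-y)f(y)\,dy
= \int_{[0,2\pi]^d}\Big(\sum_{k\in\mathbb{Z}^d}\eta(x-y-2\pi k)\Big)f(y)\,dy,
\end{align*}
so it suffices to show that the periodization $K(z):=\sum_{k\in\mathbb{Z}^d}\eta(z-2\pi k)$ satisfies $\|K\|_{L^\infty(\mathbb{T}^d)}\lesssim 1+a^{-N_0}$. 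Indeed, once this is established, $|(\eta*f)(x)|\le\|K\|_{L^\infty}\int_{[0,2\pi]^d}|f(y)|\,dy = \|K\|_{L^\infty}\|f\|_{L^1(\mathbb{T}^d)}$.

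To estimate $K(z)$ uniformly in $z\in\mathbb{T}^d$, I would split the lattice sum according to proximity. Pick $k_0=k_0(z)\in\mathbb{Z}^d$ minimizing $|z-2\pi k|$; only boundedly many (at most one in the generic case) indices $k$ satisfy $|z-2\pi k|\le 1$. For these indices, use the hypothesis with exponent $N=N_0$: $|\eta(z-2\pi k)|\lesssim (a+|z-2\pi k|)^{-N_0}\le a^{-N_0}$. For the remaining indices, where $|z-2\pi k|>1$, apply the hypothesis with a fixed $N>\max(N_0,d)$ and use $|z-2\pi k|\gtrsim |k-k_0|$ to obtain
$$
\sum_{k:|z-2\pi k|>1}|\eta(z-2\pi k)| \lesssim \sum_{k\ne 0}\frac{1}{|k|^N}\lesssim 1,
$$
the implicit constant being independent of $z$ and $a$.

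Combining the two contributions yields $K(z)\lesssim 1+a^{-N_0}$ uniformly in $z$, completing the argument. There is essentially no obstacle here; the only point to keep in mind is that one must invoke the hypothesis with two different values of $N$: the given threshold $N_0$ near the lattice point (to track the $a$-dependence), and any $N>d$ far from it (to ensure summability of the lattice tail). The passage from $\int_{\mathbb{R}^d}$ to $\int_{\mathbb{T}^d}$ is a direct consequence of Fubini once absolute convergence is verified, which follows from the same pointwise bound on the periodization.
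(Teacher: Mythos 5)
Your proposal is correct and follows essentially the same route as the paper: periodize $\eta$ over the lattice $2\pi\mathbb{Z}^d$, bound the periodization in $L^\infty$ by using the hypothesis with $N=N_0$ for the nearby lattice point (giving the $a^{-N_0}$ term) and with some $N>d$ for the remaining terms (giving a summable, $a$-independent tail), and then conclude by H\"older on the fundamental domain. The paper's proof is the same argument, stated for $|x|\le\pi$ with the $k=0$ and $k\neq0$ split.
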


\begin{proof}
Denote by $\tilde\eta=\sum_{k\in\mathbb{Z}^d}\eta(\cdot+2\pi k)$. Then it is sufficient to show that
$$
\|\tilde\eta\|_{L^\infty(\mathbb{T}^d)}\lesssim1+a^{-N_0}.
$$
If $|x|\le\pi$, then by taking some $N>d$,
\begin{align*}
|\tilde\eta(x)|
&\le|\eta(x)|+\sum_{k\neq0}|\eta(x+2\pi k)|\\
&\lesssim a^{-N_0}+\sum_{k\neq0}|k|^{-N}\lesssim1+a^{-N_0}.
\end{align*}
\end{proof}

\begin{lem}[{\cite[Lemma 2.2]{BCD}}]\label{HMmultiplier} 
Let $s\in\mathbb{R}$, $p\in[1,\infty]$, and let $N=2[1+\frac{d}2]$. Let $\sigma:\mathbb{R}^d\to\mathbb{R}$ be a function such that
$$
C_\sigma:=\sup_{k\in\mathbb{N}^d,\,|k|\le N}\sup_{\xi\neq0}\left(|\xi|^{|k|-s}|\partial^k\sigma(\xi)|\right)
<\infty.
$$
Then for any $u\in\mathcal{D}'(\mathbb{T}^d)$ and $j\ge0$,
\begin{align}\label{HM>0}
\|\sigma(\nabla)\Delta_ju\|_{L^p}\lesssim C_\sigma2^{js}\|\Delta_ju\|_{L^p}.
\end{align}
Moreover, if $\sigma$ is bounded,
\begin{align}\label{HM-1}
\|\sigma(\nabla)\Delta_{-1}u\|_{L^p}\lesssim \|\sigma\|_{L^\infty(\text{\rm supp}(\chi_{-1}))}\|\Delta_{-1}u\|_{L^p}.
\end{align}
\end{lem}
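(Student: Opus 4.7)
\textbf{Plan of proof for Lemma \ref{HMmultiplier}.}
The strategy is to reduce the Fourier multiplier estimate to a convolution estimate on $\mathbb{T}^d$ against a suitable kernel, then control the $L^1(\mathbb{R}^d)$ norm of that kernel via scaling and integration by parts. Because $\Delta_j u$ has frequency support inside the annulus where $\chi_j$ lives, and since $\tilde\chi_j \chi_j = \chi_j$ by construction, we have
$$
\sigma(\nabla)\Delta_j u = \bigl(\sigma\,\tilde\chi_j\bigr)(\nabla)\Delta_j u.
$$
Lemma \ref{lem:poisson sum} then expresses the right-hand side as $\eta_j * \Delta_j u$, where $\eta_j := \mathcal{F}^{-1}(\sigma\,\tilde\chi_j)$ is a Schwartz function on $\mathbb{R}^d$, and Lemma \ref{lem:minkowski} reduces \eqref{HM>0} to proving the kernel estimate $\|\eta_j\|_{L^1(\mathbb{R}^d)} \lesssim C_\sigma 2^{js}$.

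For the kernel estimate, I would rescale: writing $\tilde\chi_j(\xi) = \tilde\chi(2^{-j}\xi)$ and changing variables $\xi = 2^j\zeta$, we obtain
$$
\eta_j(x) = 2^{jd}\,\zeta_j(2^j x),\qquad
\zeta_j(y) := (2\pi)^{-d}\int_{\mathbb{R}^d} e^{i y\cdot\zeta}\,\sigma(2^j\zeta)\,\tilde\chi(\zeta)\,d\zeta.
$$
On $\supp\tilde\chi$, $|\zeta|$ is bounded above and below, so the hypothesis on $\sigma$ gives
$|\partial^k[\sigma(2^j\cdot)\tilde\chi]| \lesssim C_\sigma 2^{js}$ uniformly for $|k|\le N = 2[1+d/2]$. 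Integrating by parts $N$ times against the factor $(1+|y|^2)^{-N/2}$ yields the pointwise bound
$$
|\zeta_j(y)| \lesssim C_\sigma\, 2^{js}\,(1+|y|)^{-N}.
$$
Since $N>d$, unrescaling produces $|\eta_j(x)| \lesssim C_\sigma\, 2^{js}\,2^{jd}(1+2^j|x|)^{-N}$, which integrates to $\|\eta_j\|_{L^1(\mathbb{R}^d)}\lesssim C_\sigma 2^{js}$. Combining with Lemma \ref{lem:minkowski} gives \eqref{HM>0}. (Alternatively, the same decay together with Lemma \ref{lem:young}, applied with $a = 2^{-j}$, recovers an $L^\infty\leftarrow L^1$ endpoint, and interpolation with the trivial $L^\infty\to L^\infty$ bound from the pointwise bound on $\|\eta_j\|_{L^1}$ covers all $p\in[1,\infty]$.)

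For the low-frequency piece \eqref{HM-1}, smoothness of $\sigma$ is not assumed, so the integration-by-parts argument is unavailable. Instead I would use that $\chi_{-1}$ has compact Fourier support, hence only finitely many modes $k\in\mathbb{Z}^d$ contribute to $\Delta_{-1} u$; all $L^p(\mathbb{T}^d)$ norms of $\Delta_{-1}u$ are equivalent on this finite-dimensional trigonometric subspace. An elementary Plancherel bound gives $\|\sigma(\nabla)\Delta_{-1}u\|_{L^2}\le \|\sigma\|_{L^\infty(\supp\chi_{-1})}\|\Delta_{-1}u\|_{L^2}$, and the norm equivalence transfers this to $L^p$ for any $p\in[1,\infty]$.

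The main technical point is the kernel decay estimate in the $j\ge0$ case: one must carefully track how the $N$ derivatives distribute across the factors $\sigma(2^j\zeta)$ and $\tilde\chi(\zeta)$ after rescaling, and verify that each power of $2^j$ from differentiating $\sigma$ is compensated by a power of $|\zeta|^{-1}$, leaving only the single factor $2^{js}$. This is routine once the rescaling is set up, but it is the one place where the Mikhlin hypothesis is actually used, and choosing $N=2[1+d/2]$ ensures both $N>d$ (for $L^1$ integrability) and that $N$ is even (which streamlines the use of $(1-\Delta_\zeta)^{N/2}$ as the integration-by-parts operator).
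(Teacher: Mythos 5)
Your proof is correct and is essentially the paper's own argument: the same reduction $\sigma(\nabla)\Delta_j u=(\sigma\tilde\chi_j)(\nabla)\Delta_j u$, Lemma \ref{lem:poisson sum} plus Lemma \ref{lem:minkowski} to reduce \eqref{HM>0} to an $L^1(\mathbb{R}^d)$ bound on the rescaled kernel, the $(1-\Delta_\xi)^{N/2}$ integration by parts (with $N$ even and $N>d$, each $2^{j|k'|}$ absorbed by $|\zeta|^{-|k'|}$ on the annulus) giving $\|\mathcal{F}^{-1}(\sigma\tilde\chi_j)\|_{L^1(\mathbb{R}^d)}\lesssim C_\sigma 2^{js}$, and Plancherel plus equivalence of $L^p$ norms on the frequency-localized block for \eqref{HM-1} (the paper proves this equivalence by a Bernstein-type kernel bound, you by finite-dimensionality of the block; both are fine since $\sigma(\nabla)$ preserves the frequency support). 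One minor remark: your parenthetical alternative via Lemma \ref{lem:young} and interpolation is unnecessary and, as stated, would not yield $L^p\to L^p$; the main route through the $L^1$ kernel bound and Young's inequality already covers all $p\in[1,\infty]$.
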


\begin{proof}
We fix a radial and smooth function $\tilde\chi$ supported on an annulus of $\mathbb{R}^d$, 
and such that $\tilde\chi(2^{-j}\cdot)\chi_j=\chi_j$ for any $j\ge0$. Set $\tilde\chi_j:=\tilde\chi(2^{-j}\cdot)$.
By Lemma \ref{lem:poisson sum},
$$
\sigma(\nabla)\Delta_ju=(\sigma\tilde\chi_j)(\nabla)\Delta_ju=K_j*\Delta_ju,
$$
where $K_j$ is the Fourier inverse transform (on $\mathbb{R}^d$) of $\sigma\tilde\chi_j$.
By the change of variable, $K_j=2^{jd}\tilde{K}_j(2^j\cdot)$ with $\tilde{K}_j$ the Fourier inverse transform (on $\mathbb{R}^d$) of $\sigma(2^j\cdot)\tilde{\chi}(\cdot)$. Then
\begin{align*}
\big|(1+|x|^2)^{N/2}\tilde{K}_j(x)\big|
&\le\int_{\mathbb{R}^d}\big|(1-\Delta_\xi)^{N/2}\left(\sigma(2^j\xi)\tilde{\chi}(\xi)\right)\big| d\xi\\
&\lesssim \sum_{|k|\le N}\int_{\text{\rm supp}(\tilde\chi)}2^{j|k|}\big|(\partial^k\sigma)(2^j\xi)\big|d\xi\\
&\lesssim C_\sigma2^{js}\sum_{|k|\le N}\int_{\text{\rm supp}(\tilde\chi)}|\xi|^{s-|k|}d\xi\lesssim C_\sigma2^{js}.
\end{align*}
Hence $\|K_j\|_{L^1}=\|\tilde{K}_j\|_{L^1}\lesssim C_\sigma2^{js}$. Then Lemma \ref{lem:minkowski} leads us to the estimate \eqref{HM>0}.

The estimate \eqref{HM-1} for $p=2$ is an immediate consequence of Plancherel's formula.
The general case follows from the equivalence of norms
$$
\|\Delta_{-1}f\|_{L^p}\asymp\|\Delta_{-1}f\|_{L^2}.
$$
Indeed, by taking a smooth and compactly supported function $\tilde{\chi}_{-1}$ such that $\tilde{\chi}_{-1}=1$ on $\text{supp}(\chi_{-1})$, for any $p\ge q$ we have
$$
\|\Delta_{-1}f\|_{L^p}=\|\tilde{\chi}_{-1}(\nabla)\Delta_{-1}f\|_{L^p}
\lesssim\|\Delta_{-1}f\|_{L^q}
$$
since $\mathcal{F}^{-1}\tilde{\chi}_{-1}$ belongs to $L^r$ for any $r$. The reverse inequality is obvious because $\mathbb{T}^d$ is a finite measure space.
\end{proof}

\begin{lem}[{\cite[Lemma 2.1]{BCD}}]\label{lem:Bernstein}
For any $(p,q)\in[1,\infty]^2$ with $1\le p\le q$, we have
$$
\|\Delta_ju\|_{L^q}\lesssim2^{jd(1/p-1/q)}\|\Delta_ju\|_{L^p}.
$$
\end{lem}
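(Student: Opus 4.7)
The plan is a scaling + Young's convolution argument on the torus. First I would use the auxiliary cutoff $\tilde\chi_j$ of the preliminaries (satisfying $\tilde\chi_j\chi_j=\chi_j$) to write, via Lemma \ref{lem:poisson sum},
$$
\Delta_j u = \tilde\chi_j(\nabla)\Delta_j u = K_j * \Delta_j u,\qquad K_j := \mathcal{F}^{-1}_{\mathbb{R}^d}\tilde\chi_j.
$$
A rescaling gives $K_j(x) = 2^{jd}K(2^jx)$ with $K := \mathcal{F}^{-1}_{\mathbb{R}^d}\tilde\chi$ a fixed Schwartz function. Since $\Delta_j u$ is periodic, the $\mathbb{R}^d$-convolution equals the $\mathbb{T}^d$-convolution against the periodization $\tilde K_j := \sum_{n\in\mathbb{Z}^d}K_j(\cdot+2\pi n)$.

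I would then invoke Young's convolution inequality on $\mathbb{T}^d$:
$$
\|\Delta_j u\|_{L^q(\mathbb{T}^d)} \leq \|\tilde K_j\|_{L^r(\mathbb{T}^d)}\,\|\Delta_j u\|_{L^p(\mathbb{T}^d)},\qquad 1+\tfrac{1}{q} = \tfrac{1}{r}+\tfrac{1}{p},
$$
which is legal since $p\le q$ forces $r\in[1,\infty]$; note $1-\tfrac{1}{r}=\tfrac{1}{p}-\tfrac{1}{q}$. The problem is therefore reduced to the kernel bound $\|\tilde K_j\|_{L^r(\mathbb{T}^d)} \lesssim 2^{jd(1-1/r)}$.

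For $j\ge 0$, I would use the Schwartz decay $|K(y)|\lesssim_N(1+|y|)^{-N}$ to estimate $\tilde K_j$ on the fundamental domain $[-\pi,\pi]^d$: the $n=0$ term yields $|2^{jd}K(2^jx)|\lesssim 2^{jd}(1+2^j|x|)^{-N}$, while the tail $\sum_{n\neq 0}2^{jd}|K(2^j(x+2\pi n))|$ is bounded, using $|x+2\pi n|\gtrsim |n|$, by $2^{j(d-N)}\sum_{n\neq 0}|n|^{-N}\lesssim 2^{j(d-N)}$ for any $N>d$, hence negligible. The change of variables $y = 2^j x$ in the resulting $L^r$ integral produces the desired factor $2^{jd(1-1/r)}$, provided $N$ is chosen with $Nr>d$. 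The case $j=-1$ is immediate: $\chi_{-1}$ has compact support, so convolution is against a fixed smooth kernel and one obtains $\|\Delta_{-1}u\|_{L^q}\lesssim\|\Delta_{-1}u\|_{L^p}$, compatible with the dyadic factor $2^{-d(1/p-1/q)}=O(1)$.

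The main technical point is the periodization step: on $\mathbb{T}^d$ one cannot directly invoke Young's inequality on $\mathbb{R}^d$ for the kernel $K_j$, and one must instead control $\tilde K_j$ in $L^r(\mathbb{T}^d)$. Fortunately the Schwartz decay of $K$ makes the off-center terms harmless and the dyadic scaling of the kernel is preserved.
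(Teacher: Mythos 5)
Your argument is correct. Note that the paper itself gives no proof of this lemma: it is quoted directly from \cite[Lemma 2.1]{BCD} (where it is proved on $\mathbb{R}^d$ by exactly the scaling-plus-Young argument you use), so there is no internal proof to compare against; what you have written is the natural adaptation to the periodic setting, and it is consistent with the machinery the paper does set up in Appendix A. Concretely, your reduction $\Delta_j u=\tilde\chi_j(\nabla)\Delta_j u=K_j*\Delta_j u$ via Lemma \ref{lem:poisson sum}, the replacement of the $\mathbb{R}^d$-convolution by convolution with the periodization $\tilde K_j$, and the pointwise control of $\tilde K_j$ (the $n=0$ term giving the scale $2^{jd}(1+2^j|x|)^{-N}$ and the lattice tail being $O(2^{j(d-N)})$ thanks to $|x+2\pi n|\gtrsim|n|$) mirror the proofs of Lemmas \ref{lem:minkowski}, \ref{lem:young} and \ref{HMmultiplier}; the exponent bookkeeping $1-\tfrac1r=\tfrac1p-\tfrac1q$ and the bound $\|\tilde K_j\|_{L^r(\mathbb{T}^d)}\lesssim 2^{jd(1-1/r)}$ (for $Nr>d$) are right, and your separate treatment of $j=-1$ via a fixed compactly supported multiplier is the same device used at the end of the proof of Lemma \ref{HMmultiplier}. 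The only difference from the paper's Appendix A toolkit is that you need an $L^r(\mathbb{T}^d)$ bound on the periodized kernel rather than the $L^1(\mathbb{R}^d)$ bound of Lemma \ref{lem:minkowski}, and your pointwise estimate supplies exactly that; no gap.
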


\section{Strichartz estimate for the homogeneous case}\label{app:W}

In this section, we prove Strichartz estimate for the homogeneous operator
$$
W_\varepsilon^\pm(t)=e^{-ct/\varepsilon^2}e^{\pm it|\nabla|/\varepsilon}.
$$
Since there is no significant difference between $W_\varepsilon^+$ and $W_\varepsilon^-$, we consider only the former one. We write $W_\varepsilon=W_\varepsilon^+$.

\begin{thm}\label{thm:W}
The same estimates as Theorem \ref{thm:H} holds with $H_\varepsilon^\pm$ replaced by $W_\varepsilon^{\pm}$ holds true.
\end{thm}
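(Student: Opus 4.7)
The plan is to adapt the classical proof of Strichartz estimates for the half-wave equation to the torus (viewing functions as periodic on $\mathbb{R}^d$), tracking $\varepsilon$-dependence throughout, and then to upgrade from sharp wave-admissible to super-admissible pairs (those with $\sigma>(d-1)/2$) using the damping factor $e^{-ct/\varepsilon^2}$ via H\"older in time.

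First I would establish the $\varepsilon$-scaled dispersive estimate on $\mathbb{T}^d$. On $\mathbb{R}^d$, stationary phase applied to the oscillatory integral $\int e^{it|\xi|/\varepsilon+ix\cdot\xi}\chi_j(\xi)\,d\xi$ gives the pointwise kernel bound $|K^{\mathbb{R}^d}(t,x)|\lesssim 2^{j(d+1)/2}(t/\varepsilon)^{-(d-1)/2}$ together with rapid decay off the light cone $|x|\sim t/\varepsilon$ obtained by repeated integration by parts in the phase. Transferring to the torus via Poisson summation (Lemma~\ref{lem:poisson sum}) and Lemma~\ref{lem:young} yields
\begin{align*}
\|e^{it|\nabla|/\varepsilon}\Delta_j f\|_{L^\infty(\mathbb{T}^d)}\lesssim_T 2^{j(d+1)/2}(t/\varepsilon)^{-(d-1)/2}\|\Delta_j f\|_{L^1(\mathbb{T}^d)}.
\end{align*}
Combined with the trivial $L^2\to L^2$ bound and the Keel--Tao abstract Strichartz theorem (equivalently $TT^*$ + Riesz--Thorin + Hardy--Littlewood--Sobolev), this yields the sharp wave-admissible Strichartz for the unitary group $U(t)=e^{it|\nabla|/\varepsilon}$ on $\mathbb{T}^d$, namely $\|U(\cdot)\Delta_j f\|_{L^{q_0}_T L^{r_0}}\lesssim_T \varepsilon^{1/q_0}2^{js(r_0)}\|\Delta_j f\|_{L^2}$ for every pair $(q_0,r_0)$ with $1/q_0=\frac{d-1}{2}(\frac12-\frac{1}{r_0})$, together with the matching inhomogeneous Duhamel estimate (the latter being clean since $U(t)U(s)^*=e^{i(t-s)|\nabla|/\varepsilon}$ is exactly the propagator, so bilinear $TT^*$ + Christ--Kiselev applies directly).

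For super-admissible $(q_k,r_k)$ with $\sigma>(d-1)/2$, I would bring in the damping factor via H\"older in time. For the homogeneous estimate, pick $(q_{0,1},r_1)$ sharp wave-admissible and write $1/q_1=1/q_{0,1}+1/q_2$ (with $q_2\in[2,\infty]$, well-defined precisely because $\sigma>(d-1)/2$ forces $q_1<q_{0,1}$). Then
\begin{align*}
\|W_\varepsilon(t)\Delta_j f\|_{L^{q_1}_T L^{r_1}}\le\|e^{-c\cdot/\varepsilon^2}\|_{L^{q_2}_T}\|e^{it|\nabla|/\varepsilon}\Delta_j f\|_{L^{q_{0,1}}_T L^{r_1}}\lesssim \varepsilon^{2/q_2+1/q_{0,1}}2^{js_1}\|\Delta_j f\|_{L^2},
\end{align*}
and the algebraic identity $2/q_2+1/q_{0,1}=2/q_1-1/q_{0,1}=\delta_1$ matches the exponent required in \eqref{Strichartz 1}. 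For the inhomogeneous estimate \eqref{Strichartz 2}, since the damping now sits inside the Duhamel integral, I would proceed by first interpolating the damped pointwise kernel against the trivial $L^2$-bound to obtain the mixed dispersive $\|W_\varepsilon^\pm(\tau)\Delta_j g\|_{L^{r}}\lesssim e^{-c\tau/\varepsilon^2}(\tau/\varepsilon)^{-(d-1)(1/2-1/r)}2^{j(d+1)(1/2-1/r)}\|\Delta_j g\|_{L^{r'}}$ and then applying operator-valued Young convolution in $t$, exploiting the $L^\beta_T$-integrability of the damped power $e^{-c\tau/\varepsilon^2}(\tau/\varepsilon)^{-\sigma}$, which contributes $\varepsilon^{2/\beta-2\sigma}$; summing the two admissibility exponents produces the combined gain $\varepsilon^{\delta_1+\delta_2}$ together with the factor $2^{j(s_1+s_2)}$.

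The main obstacle is the first step, the torus dispersive bound: the Euclidean half-wave kernel is concentrated along the light cone $|x|\sim t/\varepsilon$ rather than being pointwise $L^1$-summable under periodization, so Lemma~\ref{lem:young} does not apply to it directly. The remedy is to exploit the fast stationary-phase decay off the cone (via repeated integration by parts in the phase) together with the $O((t/\varepsilon)^d)$ upper bound on the number of lattice translates of the cone meeting a fundamental domain of $\mathbb{T}^d$ at any given time; this produces only a $T$-dependent multiplicative constant, consistent with the $\lesssim_T$ notation in the theorem.
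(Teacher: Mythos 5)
You have correctly identified the crux of the problem (the Euclidean half-wave kernel concentrates on the light cone and is not absolutely summable under periodization), but your proposed remedy does not work, and the intermediate estimate you want to prove is in fact false. The relevant radius of the cone is $t/\varepsilon$, not $t$: on $[0,T]$ the rescaled time ranges over $[0,T/\varepsilon]$, so the number of lattice translates of the sphere $\{|x|\simeq t/\varepsilon\}$ meeting a fundamental domain is of order a power of $t/\varepsilon\le T/\varepsilon$, i.e.\ your ``$T$-dependent multiplicative constant'' is really a negative power of $\varepsilon$. This is fatal, since the whole content of \eqref{Strichartz 1}--\eqref{Strichartz 2} is the $\varepsilon$-uniform (indeed $\varepsilon^{\delta_1}$, $\varepsilon^{\delta_1+\delta_2}$) gain. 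Worse, the undamped scaled estimate you take as a stepping stone, $\|e^{it|\nabla|/\varepsilon}\Delta_jf\|_{L^{q_0}_TL^{r_0}}\lesssim_T\varepsilon^{1/q_0}2^{js(r_0)}\|\Delta_jf\|_{L^2}$ on $\mathbb{T}^d$, is false in general: after the change of variables $s=t/\varepsilon$ it amounts to a Strichartz estimate for the free half-wave group on the torus over the long interval $[0,T/\varepsilon]$ with the Euclidean exponent, and a single plane wave $e_k$ (for which $|e^{is|\nabla|}e_k|$ is constant in $x$ and $s$) shows the left-hand side grows like $(T/\varepsilon)^{1/q_0}$, exactly cancelling the $\varepsilon^{1/q_0}$ you claim. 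Consequently the H\"older-in-time step that reinstates the damping \emph{after} the Strichartz estimate cannot be carried out: the damping must be used \emph{before} periodization.

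This is precisely what the paper does: in Lemma \ref{lem:dispersive estimate of e^{it|nabla|}} it writes $W_\varepsilon(t)\varphi(2^{-j}\nabla)f=e^{-ct/\varepsilon^2+t/\varepsilon}\,K^{t,j}_{\varphi,\varepsilon}*f$ with the damped kernel $K^{t,j}_{\varphi,\varepsilon}=e^{-t/\varepsilon}\mathcal{F}^{-1}\bigl(e^{it|\cdot|/\varepsilon}\varphi(2^{-j}\cdot)\bigr)$; the inserted factor $e^{-t/\varepsilon}$ converts the $(t/\varepsilon)^{-(d-1)/2}$ bound \emph{on} the cone into rapid decay in $t/\varepsilon+|x|$, so that the periodization Lemma \ref{lem:young} (with $a=t/\varepsilon$) applies and one recovers a dispersive bound that still carries a factor $e^{-c't/\varepsilon^2}$. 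Morally, the damping restricts to times $t\lesssim\varepsilon^2$, i.e.\ rescaled times $t/\varepsilon\lesssim\varepsilon$, during which the cone does not wrap around the torus; this is the mechanism your outline is missing. Your treatment of the inhomogeneous estimate (interpolated damped dispersive bound plus integrability in time of $e^{-c\tau/\varepsilon^2}\tau^{-(d-1)(1/2-1/r)}$, using $\sigma>\frac{d-1}2$ to produce the extra powers of $\varepsilon$) is essentially the paper's $TT^*$ bilinear argument in Lemma \ref{B lem TT* bilinear} and would be fine once the damped dispersive estimate is available, and the endpoint machinery (Keel--Tao, Christ--Kiselev) is then unnecessary; but as written the first, decisive step of your proof fails.
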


\subsection{Oscillatory integral}

For any radial smooth function $\varphi\in C_0^\infty(\mathbb{R}^d)$ supported on an annulus, consider the integral
$$
K_\varphi(t,x)
=\mathcal{F}^{-1}(e^{it|\cdot|}\varphi)
=\frac1{(2\pi)^d}\int_{\mathbb{R}^d}e^{it|\xi|+ix\cdot\xi}\varphi(\xi)d\xi
$$
for $(t,x)\in(0,\infty)\times\mathbb{R}^d$.

\begin{lem}[{\cite[Theorem 8.8]{BCD}}]\label{lem:osc int t^-N}
Consider a smooth function $\Phi$ on $\mathbb{R}^d$ such that, for some compact set $K$ we have
$$
c:=\inf_{\xi\in K}|\nabla\Phi(\xi)|>0.
$$
Then, for any $N>0$ and any smooth function $\psi$ supported in $K$, there exists a constant $C=C(N,c,\Phi)$ such that
$$
\left|\int_{\mathbb{R}^d}e^{i\tau\Phi(\xi)}\psi(\xi)d\xi\right|\le C\|\psi\|_{C^N(K)}\tau^{-N}
$$
for any $\tau>0$.
\end{lem}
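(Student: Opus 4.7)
This is a standard non-stationary phase estimate, and my plan is to prove it by iterated integration by parts against the phase. Since $|\nabla\Phi(\xi)|\ge c>0$ on the support of $\psi$, one can locally invert multiplication by $e^{i\tau\Phi}$ via a first-order differential operator.

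Concretely, I would introduce
$$
L=\frac{1}{i\tau|\nabla\Phi|^2}\,\nabla\Phi\cdot\nabla,
$$
so that by direct computation $L(e^{i\tau\Phi(\xi)})=e^{i\tau\Phi(\xi)}$ pointwise on $K$. Its formal adjoint (with respect to the Lebesgue inner product on $\mathbb{R}^d$) is
$$
L^\ast\psi=-\frac{1}{i\tau}\,\mathrm{div}\!\left(\frac{\nabla\Phi}{|\nabla\Phi|^2}\,\psi\right),
$$
and since $\psi$ is compactly supported in $K$, no boundary terms appear. Applying the identity $\int e^{i\tau\Phi}\psi\,d\xi=\int L(e^{i\tau\Phi})\psi\,d\xi=\int e^{i\tau\Phi}L^\ast\psi\,d\xi$ iteratively $N$ times yields
$$
\int_{\mathbb{R}^d}e^{i\tau\Phi(\xi)}\psi(\xi)\,d\xi=\int_{\mathbb{R}^d}e^{i\tau\Phi(\xi)}(L^\ast)^N\psi(\xi)\,d\xi,
$$
and the trivial bound $|e^{i\tau\Phi}|\le 1$ together with the compact support of $(L^\ast)^N\psi$ in $K$ reduces everything to an $L^\infty$-estimate of $(L^\ast)^N\psi$ on $K$.

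The key step is then to show
$$
\|(L^\ast)^N\psi\|_{L^\infty(K)}\le C(N,c,\Phi)\,\tau^{-N}\,\|\psi\|_{C^N(K)}.
$$
Each application of $L^\ast$ produces a factor $\tau^{-1}$ and, by the Leibniz and quotient rules, a sum of terms in which one extra derivative hits either $\psi$ or one of the smooth factors $\partial^\alpha\Phi$ and $|\nabla\Phi|^{-2}$. After $N$ iterations one obtains a finite linear combination of products of derivatives of $\psi$ (of total order at most $N$), derivatives of $\Phi$ (of order at most $N+1$), and powers of $|\nabla\Phi|^{-2}$ (up to power $2N+1$). Here the hypothesis $|\nabla\Phi|\ge c$ on $K$ keeps all the reciprocal factors bounded by a power of $c^{-1}$, and the smoothness of $\Phi$ bounds its derivatives on the compact set $K$; the $\psi$-derivatives are absorbed into $\|\psi\|_{C^N(K)}$.

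The only real bookkeeping obstacle is organizing this combinatorial expansion cleanly; one can either induct on $N$ directly (assuming a bound of the form $|(L^\ast)^N\psi|\le \tau^{-N}P_N(\psi,\Phi,c^{-1})$ and applying $L^\ast$ once more) or invoke the Faà di Bruno–style formula already recorded as Lemma \ref{Faadi Bruno} to collect the terms. Combining the resulting pointwise bound with $\mathrm{vol}(K)<\infty$ gives the claimed inequality, with the constant $C(N,c,\Phi)$ depending only on $N$, on $c^{-1}$, on $\sup_K|\partial^\alpha\Phi|$ for $|\alpha|\le N+1$, and on $\mathrm{vol}(K)$.
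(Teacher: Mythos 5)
Your argument is correct: the paper itself gives no proof of this lemma---it is quoted verbatim from \cite[Theorem 8.8]{BCD}---and your repeated non-stationary-phase integration by parts with $L=\frac{1}{i\tau|\nabla\Phi|^{2}}\nabla\Phi\cdot\nabla$, $L e^{i\tau\Phi}=e^{i\tau\Phi}$ on $K$, is exactly the standard proof of that cited result. The remaining bookkeeping you defer (the bound $\|(L^{\ast})^{N}\psi\|_{L^{\infty}(K)}\lesssim \tau^{-N}\|\psi\|_{C^{N}(K)}$ by Leibniz/induction, with constants controlled by $c^{-1}$, $\sup_{K}|\partial^{\alpha}\Phi|$ for $|\alpha|\le N+1$, and $\mathrm{vol}(K)$) is routine and does not hide any gap.
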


Let $\sigma_{S^{d-1}}$ be the surface measure of the unit sphere in $\mathbb{R}^d$ and let
$$
\widehat{\sigma_{S^{d-1}}}(x)=\int_{S^{d-1}}e^{ix\cdot\eta}\sigma_{S^{d-1}}(d\eta).
$$

\begin{lem}[{\cite[Corollary 2.37]{NS}}] Let $d \ge 2.$ 
One has the representation
$$
\widehat{\sigma_{S^{d-1}}}(x)=e^{i|x|}\omega_+(|x|)+e^{-i|x|}\omega_-(|x|),
$$
where $\omega_\pm$ are smooth and satisfy
$$
|\omega_\pm(r)|\lesssim r^{-(d-1)/2}.
$$
\end{lem}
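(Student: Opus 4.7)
The plan is to exploit the rotational invariance of $\sigma_{S^{d-1}}$ and apply the method of stationary phase. By invariance under the orthogonal group, $\widehat{\sigma_{S^{d-1}}}(x)$ depends only on $r:=|x|$; fixing a unit vector $e_d$, set $F(r):=\int_{S^{d-1}}e^{ir\eta_d}\,\sigma_{S^{d-1}}(d\eta)$. I would choose a smooth partition of unity $\chi_++\chi_-+\chi_0\equiv 1$ on $S^{d-1}$, with $\chi_\pm$ supported in a small geodesic cap around $\pm e_d$ (equal to $1$ in a strictly smaller cap) and $\chi_0$ supported in the equatorial belt $\{|\eta_d|\le 3/4\}$, so that $F=J_++J_-+J_0$ with $J_\bullet(r):=\int\chi_\bullet(\eta)e^{ir\eta_d}\sigma_{S^{d-1}}(d\eta)$.

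Near each pole I would parametrize the sphere by $\eta=(\eta',\pm\sqrt{1-|\eta'|^2})$, $\eta'\in\mathbb{R}^{d-1}$, so that
$$J_\pm(r)=e^{\pm ir}\int_{\mathbb{R}^{d-1}}\tilde\chi_\pm(\eta')\,e^{\pm ir(\sqrt{1-|\eta'|^2}-1)}\,\frac{d\eta'}{\sqrt{1-|\eta'|^2}},$$
with $\tilde\chi_\pm\in C_c^\infty(\{|\eta'|<1/2\})$, so the amplitude is smooth. The phase $\psi_\pm(\eta')=\pm(\sqrt{1-|\eta'|^2}-1)=\mp|\eta'|^2/2+O(|\eta'|^4)$ has a unique, nondegenerate critical point at the origin with Hessian $\mp I_{d-1}$, and the classical stationary phase expansion (Morse lemma followed by Fresnel asymptotics) yields
$$J_\pm(r)=e^{\pm ir}\,r^{-(d-1)/2}\,a_\pm(r),$$
where $a_\pm$ is smooth on $(0,\infty)$ and admits a full asymptotic expansion $a_\pm(r)\sim\sum_{j\ge 0}c_j^\pm r^{-j}$ as $r\to\infty$; in particular $|a_\pm(r)|\lesssim 1$ for $r\ge 1$. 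For the equatorial piece $J_0$, the amplitude is supported where the surface gradient of $\eta\mapsto\eta_d$ is bounded away from zero, so Lemma \ref{lem:osc int t^-N} applied in local coordinates gives $|J_0(r)|\lesssim_N r^{-N}$ for every $N$.

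To assemble the final representation I would pick a cutoff $\rho\in C^\infty([0,\infty))$ with $\rho(r)=0$ for $r\le 1$ and $\rho(r)=1$ for $r\ge 2$. For $r\ge 1$ the above gives $F(r)=e^{ir}\omega_+^\infty(r)+e^{-ir}\omega_-^\infty(r)$ with $\omega_\pm^\infty$ smooth and of size $O(r^{-(d-1)/2})$, after distributing the rapidly decaying equatorial piece $J_0$ symmetrically between the two oscillatory terms via $J_0(r)=\tfrac12 e^{ir}(e^{-ir}J_0(r))+\tfrac12 e^{-ir}(e^{ir}J_0(r))$. The smooth, compactly supported remainder $(1-\rho)F$, which is real-analytic since $F$ is the restriction of the Fourier transform of a compactly supported distribution, is split between $\omega_+$ and $\omega_-$ by the same symmetric trick. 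The main obstacle will be the stationary phase step itself: one must \emph{first} extract $e^{\pm ir}$ from the oscillatory integral and \emph{then} expand asymptotically in powers of $r^{-1}$ in order to obtain an amplitude $a_\pm(r)$ that is smooth in $r$ down to $r>0$, rather than merely an asymptotic identity for $r\to\infty$. This is the content of the standard stationary phase lemma with smooth remainder, and once in hand both the smoothness of $\omega_\pm$ on $(0,\infty)$ and the required decay $|\omega_\pm(r)|\lesssim r^{-(d-1)/2}$ follow immediately.
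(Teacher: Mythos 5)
The paper does not prove this lemma at all: it is imported verbatim from \cite[Corollary 2.37]{NS}, so there is no in-paper argument to compare against, and a self-contained proof like yours is a genuine addition rather than a rederivation. Your route is the standard stationary-phase derivation and it is essentially correct: by rotational invariance reduce to $F(r)=\int_{S^{d-1}}e^{ir\eta_d}\,\sigma_{S^{d-1}}(d\eta)$, split into two polar caps and an equatorial belt, pull the factor $e^{\pm ir}$ out of the cap integrals \emph{before} expanding (so that $\omega_\pm$ is literally a smooth function of $r$, being an integral of a compactly supported smooth amplitude against $e^{\pm ir(\sqrt{1-|\eta'|^2}-1)}$, with the $r^{-(d-1)/2}$ decay supplied by stationary phase for $r\ge1$), kill the belt contribution by non-stationary phase, and absorb both the rapidly decaying belt term and the compactly supported small-$r$ piece into $\omega_\pm$ by the symmetric trick; the bound for small $r$ is then trivial since $r^{-(d-1)/2}\gtrsim1$ there. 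Two remarks. First, the numerical choices in your partition of unity do not cover the sphere: $\tilde\chi_\pm$ supported in $\{|\eta'|<1/2\}$ corresponds to caps $\{\pm\eta_d>\sqrt{3}/2\}$, which leave a gap against the belt $\{|\eta_d|\le 3/4\}$; widening one of the supports fixes this cosmetic slip. Second, the cited source (equivalently the Bessel route $\widehat{\sigma_{S^{d-1}}}(x)=c_d|x|^{-(d-2)/2}J_{(d-2)/2}(|x|)$ together with the asymptotic expansion of $J_\nu$) yields symbol-type bounds $|\partial_r^k\omega_\pm(r)|\lesssim r^{-(d-1)/2-k}$, not just the sup bound; those derivative bounds are what the paper implicitly relies on later when it feeds $\omega_\pm(r|x|)\bar\varphi(r)$ into the non-stationary phase lemma of Appendix A, which requires a $C^N$ norm of the amplitude. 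Your construction delivers them as well—differentiate under the integral sign and rerun the same stationary/non-stationary phase estimates—so it is worth stating that stronger conclusion explicitly.
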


We show the dispersive estimate of the function $K_\varphi(t,x)$.

\begin{lem}\label{lem:dispersive estimate of K^pm} Let $d\ge 2$. 
There exists a constant $C>0$ such that
\begin{align*}
|K_\varphi(t,x)|\le Ct^{-(d-1)/2}
\end{align*}
if $t\simeq|x|$.
For any $N\ge\frac{d-1}2$, there exists a constant $C_N$ such that
\begin{align*}
|K_\varphi(t,x)|\le C_N(t+|x|)^{-N}.
\end{align*}
if $t\not\simeq|x|$.
\end{lem}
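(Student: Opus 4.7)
The plan is to treat the two regimes separately, using polar coordinates in the first case and a direct stationary-phase analysis on the Cartesian integral in the second.

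First I would exploit the radiality of $\varphi$ to write $\varphi(\xi)=\tilde\varphi(|\xi|)$ for some $\tilde\varphi\in C_c^\infty((0,\infty))$ supported on a fixed annulus, and pass to polar coordinates:
$$
K_\varphi(t,x)=\frac{1}{(2\pi)^d}\int_0^\infty e^{itr}\tilde\varphi(r)\,r^{d-1}\,\widehat{\sigma_{S^{d-1}}}(rx)\,dr.
$$
Inserting the representation of $\widehat{\sigma_{S^{d-1}}}$ recalled just before the statement splits this into $K_\varphi=I_++I_-$, where
$$
I_\pm(t,x):=\frac{1}{(2\pi)^d}\int_0^\infty e^{ir(t\pm|x|)}\tilde\varphi(r)\,r^{d-1}\,\omega_\pm(r|x|)\,dr.
$$

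For the first regime ($t\simeq|x|$) my plan is to use only the pointwise amplitude bound $|\omega_\pm(\rho)|\lesssim\rho^{-(d-1)/2}$, ignoring any oscillatory cancellation. Since $r$ is confined to a fixed bounded annulus, when $|x|\gtrsim 1$ (equivalently $t\gtrsim 1$) one obtains
$$
|I_\pm(t,x)|\lesssim|x|^{-(d-1)/2}\int |\tilde\varphi(r)|\,r^{(d-1)/2}\,dr\lesssim t^{-(d-1)/2}.
$$
The complementary range $t\lesssim 1$ is immediate from the trivial estimate $\|K_\varphi\|_{L^\infty}\lesssim\|\varphi\|_{L^1}$, which already dominates $t^{-(d-1)/2}$ on any bounded set.

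For the second regime ($t\not\simeq|x|$) I would go back to the Cartesian integral and invoke Lemma \ref{lem:osc int t^-N} applied to the phase $\Phi(\xi)=t|\xi|+x\cdot\xi$ on the compact support $K$ of $\varphi$. Since $\nabla\Phi(\xi)=t\xi/|\xi|+x$, the reverse triangle inequality gives $|\nabla\Phi(\xi)|\ge\bigl||x|-t\bigr|$ for all $\xi$, and the assumption $t\not\simeq|x|$ forces $\bigl|t-|x|\bigr|\gtrsim t+|x|$. I would then set $\tau:=t+|x|$ and $\tilde\Phi:=\Phi/\tau$, so that $|\nabla\tilde\Phi|\gtrsim 1$ uniformly on $K$, while all higher derivatives of $\tilde\Phi$ are controlled uniformly in $t,x$ because the coefficients $t/\tau$ and $x_i/\tau$ lie in $[-1,1]$. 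Lemma \ref{lem:osc int t^-N} then delivers $|K_\varphi(t,x)|\lesssim_N\tau^{-N}=(t+|x|)^{-N}$ for every $N$. The only delicate point I anticipate is precisely this uniformity of the constant from Lemma \ref{lem:osc int t^-N} in the parameters $t,x$, which is ensured by the uniform bounds on the derivatives of $\tilde\Phi$ and the fact that $\varphi$ itself is fixed; apart from this, the argument is a direct application of the stationary/non-stationary phase dichotomy.
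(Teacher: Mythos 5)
Your argument is correct. For the regime $t\simeq|x|$ you follow essentially the paper's route: polar coordinates, the decomposition $\widehat{\sigma_{S^{d-1}}}(x)=e^{i|x|}\omega_+(|x|)+e^{-i|x|}\omega_-(|x|)$, and the pointwise bound $|\omega_\pm(\rho)|\lesssim\rho^{-(d-1)/2}$; the only (harmless) deviations are that you use the amplitude bound for both $I_+$ and $I_-$ and dispose of $t\lesssim1$ by the trivial $L^1$ bound, whereas the paper instead bounds $I_+$ by the non-stationary phase estimate $(t+|x|)^{-N}$ (taking $N=\frac{d-1}2$ there). For the regime $t\not\simeq|x|$ you genuinely diverge: the paper stays in the radial variable and applies Lemma \ref{lem:osc int t^-N} to the one-dimensional phases $r(t\pm|x|)$ with amplitudes $\omega_\pm(r|x|)\bar\varphi(r)$, while you apply it in $\xi$ to the Cartesian phase $\Phi(\xi)=t|\xi|+x\cdot\xi$, normalized by $\tau=t+|x|$ and using $|\nabla\Phi(\xi)|\ge\bigl||x|-t\bigr|\gtrsim t+|x|$ on the annulus. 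Each route has a uniformity point to check: the paper's application needs the $C^N$ norm of $r\mapsto\omega_\pm(r|x|)\bar\varphi(r)$ to be controlled uniformly in $|x|$, i.e.\ symbol-type derivative bounds on $\omega_\pm$ that are not literally contained in the quoted corollary of \cite{NS}; your application needs the constant in Lemma \ref{lem:osc int t^-N} to be uniform over the family $\tilde\Phi_{t,x}$, which is not literally stated (the constant is $C(N,c,\Phi)$) but, as you observe, follows from the standard integration-by-parts proof since on the fixed annulus your phases have a uniform gradient lower bound and uniformly bounded derivatives (the coefficients $t/\tau$ and $x/\tau$ being bounded by $1$). In this sense your Cartesian treatment of the non-resonant regime is slightly more economical with respect to what is assumed about $\omega_\pm$, at the price of that (routine) uniform-constant remark; one small wording slip is that the trivial bound $\|K_\varphi\|_{L^\infty}\lesssim\|\varphi\|_{L^1}$ is of course \emph{dominated by} $Ct^{-(d-1)/2}$ for $t\lesssim1$, not the other way around, which is what your argument actually uses.
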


\begin{proof}
Introducing polar coordinates $\xi=r\eta$ ($r\in(0,\infty)$, $\eta\in S^{d-1}$), we have
\begin{align*}
K_\varphi(t,x)
&=\int_{(0,\infty)\times S^{d-1}}r^{d-1}dr\sigma_{S^{d-1}}(d\eta) e^{irx\cdot\eta+irt}\varphi(r\eta) \\
&=\int_0^\infty \widehat{\sigma_{S^{d-1}}}(rx)e^{irt}\bar\varphi(r)dr\\
&=\int_0^\infty e^{ir(t+|x|)}\omega_+(r|x|)\bar\varphi(r)dr
+\int_0^\infty e^{ir(t-|x|)}\omega_-(r|x|)\bar\varphi(r)dr,
\end{align*}
where $\bar\varphi$ is a smooth function on $\mathbb{R}$ such that $\bar\varphi(r)=\varphi(r\eta)r^{d-1}$.

By Lemma \ref{lem:osc int t^-N},
$$
\left|\int_0^\infty e^{ir(t+|x|)}\omega_+(r|x|)\bar\varphi(r)dr\right|\lesssim_N(t+|x|)^{-N}
$$
for any $N>0$. For the second integral, if $t\not\simeq|x|$, by Lemma \ref{lem:osc int t^-N},
$$
\left|\int_0^\infty e^{ir(t-|x|)}\omega_-(r|x|)\bar\varphi(r)dr\right|\lesssim_N(t+|x|)^{-N}
$$
for any $N>0$. If $t\simeq|x|$,
\begin{align*}
\left|\int_0^\infty e^{ir(t-|x|)}\omega_-(r|x|)\bar\varphi(r)dr\right|
\lesssim\int_0^\infty(r|x|)^{-(d-1)/2}\bar\varphi(r)dr
\lesssim|x|^{-(d-1)/2}\lesssim t^{-(d-1)/2}.
\end{align*}
\end{proof}

\subsection{Proof of Theorem \ref{thm:W}}

By using above theorem, we prove the Strichartz estimate for $W_\varepsilon$.

\begin{lem}\label{lem:dispersive estimate of e^{it|nabla|}}
Let $d \geq 1$. 
For any radial smooth function $\varphi\in C_0^\infty(\mathbb{R}^d)$ supported in an annulus, there exists a constant $c'>0$ such that for any $j\ge0$,
$$
\|W_\varepsilon(t)\varphi(2^{-j}\nabla)f\|_{L^\infty}
\lesssim e^{-c't/\varepsilon^2}2^{j(d+1)/2}(t/\varepsilon)^{-(d-1)/2}\|f\|_{L^1}.
$$
\end{lem}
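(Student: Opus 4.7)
The plan is to identify the convolution kernel of $W_\varepsilon(t)\varphi(2^{-j}\nabla)$ as a function on $\mathbb{R}^d$, invoke the dispersive estimates from Lemma \ref{lem:dispersive estimate of K^pm}, and then periodize to obtain the corresponding bound on $\mathbb{T}^d$. By Lemma \ref{lem:poisson sum}, the action of $W_\varepsilon(t)\varphi(2^{-j}\nabla)$ on $f\in\mathcal{D}(\mathbb{T}^d)$ is convolution with
$$
K_\varepsilon^j(t,x):=e^{-ct/\varepsilon^2}\mathcal{F}_{\mathbb{R}^d}^{-1}\bigl(e^{it|\xi|/\varepsilon}\varphi(2^{-j}\xi)\bigr)(x)=e^{-ct/\varepsilon^2}2^{jd}K_\varphi(2^jt/\varepsilon,\,2^jx)
$$
after the rescaling $\xi\mapsto 2^j\xi$. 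Lemma \ref{lem:dispersive estimate of K^pm} (for $d\ge 2$) then yields the pointwise bound $|K_\varepsilon^j(t,x)|\lesssim e^{-ct/\varepsilon^2}2^{j(d+1)/2}(t/\varepsilon)^{-(d-1)/2}$ on the cone $|x|\simeq t/\varepsilon$, and the rapid decay $|K_\varepsilon^j(t,x)|\lesssim_N e^{-ct/\varepsilon^2}2^{j(d-N)}(t/\varepsilon+|x|)^{-N}$ off the cone, for any $N$. In $d=1$, $K_\varphi$ is itself a sum of translates of Schwartz functions, so only the off-cone estimate is needed and the argument simplifies.

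Next I would bound $\|W_\varepsilon(t)\varphi(2^{-j}\nabla)f\|_{L^\infty(\mathbb{T}^d)}\le\|\tilde K_\varepsilon^j(t,\cdot)\|_{L^\infty(\mathbb{T}^d)}\|f\|_{L^1(\mathbb{T}^d)}$ via Young's inequality, where $\tilde K_\varepsilon^j(t,z):=\sum_{k\in\mathbb{Z}^d}K_\varepsilon^j(t,z+2\pi k)$ is the periodization, and split this sum into an on-cone contribution $S_1$ (over $k$ with $|z+2\pi k|\simeq t/\varepsilon$) and an off-cone contribution $S_2$. The number of lattice points relevant for $S_1$ is $\lesssim 1+(t/\varepsilon)^d$, giving
$$
|S_1|\lesssim\bigl(1+(t/\varepsilon)^d\bigr)e^{-ct/\varepsilon^2}2^{j(d+1)/2}(t/\varepsilon)^{-(d-1)/2}.
$$
For $S_2$, writing $(t/\varepsilon+|z+2\pi k|)^{-N}\le(t/\varepsilon)^{-(d-1)/2}(1+|z+2\pi k|)^{-N+(d-1)/2}$ and taking $N\ge(3d+1)/2$ makes the lattice sum absolutely convergent in $k$ uniformly in $z\in\mathbb{T}^d$, and extracts a gain $2^{-j(d+1)/2}$ which is swamped by the allowed $2^{j(d+1)/2}$.

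The main obstacle is absorbing the polynomial factor $(t/\varepsilon)^d$ appearing in $S_1$. The key observation is that one may sacrifice a sliver of exponential decay: write $e^{-ct/\varepsilon^2}=e^{-(c-c')t/\varepsilon^2}e^{-c't/\varepsilon^2}$ with $0<c'<c$, and exploit $\varepsilon\in(0,1]$ together with the identity $t/\varepsilon=\varepsilon\cdot(t/\varepsilon^2)$ to estimate
$$
(t/\varepsilon)^d\,e^{-(c-c')t/\varepsilon^2}=\varepsilon^d(t/\varepsilon^2)^d e^{-(c-c')t/\varepsilon^2}\le\varepsilon^d\sup_{s\ge 0}s^d e^{-(c-c')s}\lesssim 1.
$$
This yields $|S_1|\lesssim e^{-c't/\varepsilon^2}2^{j(d+1)/2}(t/\varepsilon)^{-(d-1)/2}$, which combined with the $S_2$ bound closes the estimate. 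The point is that the damping lives on the scale $\varepsilon^{-2}$ while the dispersive correction lives on the coarser scale $\varepsilon^{-1}$, so the exponential has enough margin to beat any lattice-counting polynomial; this is what makes the bound uniform in $\varepsilon\in(0,1]$.
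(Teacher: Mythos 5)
Your overall strategy is sound and runs parallel to the paper's: both proofs identify the kernel via Lemma \ref{lem:poisson sum}, rescale to $2^{jd}K_\varphi(2^jt/\varepsilon,2^jx)$, invoke Lemma \ref{lem:dispersive estimate of K^pm}, and exploit that the damping $e^{-ct/\varepsilon^2}$ (scale $\varepsilon^{-2}$) absorbs any polynomial loss in $t/\varepsilon$ (scale $\varepsilon^{-1}$). Your treatment of $S_1$ — counting $\lesssim 1+(t/\varepsilon)^d$ lattice points on the cone shell and paying for them with a sliver $e^{-(c-c')t/\varepsilon^2}$ — is correct and is morally the same absorption the paper performs, though the paper implements it differently: it inserts a factor $e^{-t/\varepsilon}$ into the kernel so that the on-cone bound $(t/\varepsilon)^{-(d-1)/2}$ is upgraded to $(t/\varepsilon+|x|)^{-N}$ for all $N\ge\frac{d-1}{2}$, and then quotes the ready-made periodization Lemma \ref{lem:young} with $a=t/\varepsilon$ instead of splitting the lattice sum by hand. (For $d=1$ the paper avoids kernels altogether and uses Lemma \ref{lem:Bernstein} plus the trivial $L^2$ bound; your observation that in $d=1$ the kernel is a sum of two translated Schwartz bumps also works, and is needed since Lemma \ref{lem:dispersive estimate of K^pm} is only stated for $d\ge2$.)

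There is, however, one step that fails as written: in $S_2$ you use the pointwise inequality
$$
\bigl(t/\varepsilon+|z+2\pi k|\bigr)^{-N}\le(t/\varepsilon)^{-(d-1)/2}\bigl(1+|z+2\pi k|\bigr)^{-N+(d-1)/2},
$$
which is false when both $t/\varepsilon$ and $|z+2\pi k|$ are small (take $t/\varepsilon\to0$ and $k$ the lattice point nearest to $z$: the left side blows up like $(t/\varepsilon)^{-N}$ while the right side is only $(t/\varepsilon)^{-(d-1)/2}$). The correct factorization is $(a+y)^{-N}\le a^{-(d-1)/2}(a+y)^{-N+(d-1)/2}$, and replacing $(a+y)$ by $(1+y)$ is legitimate only for the lattice points with $|z+2\pi k|\gtrsim1$, i.e.\ all but $O(1)$ of them. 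The offending near points at small times must be handled separately, and the repair is easy: for all $(T,X)$ one has the uniform bound $|K_\varphi(T,X)|\lesssim T^{-(d-1)/2}$ (combine Lemma \ref{lem:dispersive estimate of K^pm} with the trivial bound $|K_\varphi|\lesssim\|\varphi\|_{L^1}$ for $T\lesssim1$), so each of the $O(1)$ near points contributes at most $e^{-ct/\varepsilon^2}2^{jd}(2^jt/\varepsilon)^{-(d-1)/2}=e^{-ct/\varepsilon^2}2^{j(d+1)/2}(t/\varepsilon)^{-(d-1)/2}$, exactly the target; alternatively, split $t/\varepsilon\lessgtr2^{-j}$ and use the trivial bound $2^{jd}\le2^{j(d+1)/2}(t/\varepsilon)^{-(d-1)/2}$ in the first regime and the $(t/\varepsilon)^{-N}$ decay in the second. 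With that patch your argument closes; as submitted, the displayed inequality is a genuine (if localized) error.
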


\begin{proof}
First, we consider the case $d=1$. By Lemma \ref{lem:Bernstein}, we obtain
\begin{align*}
	\|W_\varepsilon(t)\varphi(2^{-j}\nabla)f\|_{L^\infty}
	&\lesssim 2^{j/2}\|W_\varepsilon(t)\varphi(2^{-j}\nabla)f\|_{L^2}
	\\&
	\lesssim  2^{j/2}e^{-ct/\varepsilon^2}\|\varphi(2^{-j}\nabla)f\|_{L^2}
	\lesssim 2^{j}e^{-ct/\varepsilon^2}\|\varphi(2^{-j}\nabla)f\|_{L^1},
\end{align*}
which is the desired estimate.
Next, we consider the higher dimensional cases.
By Lemma \ref{lem:poisson sum},
$$
W_\varepsilon(t)\varphi(2^{-j}\nabla)f=e^{-ct/\varepsilon^2+t/\varepsilon}K_{\varphi,\varepsilon}^{t,j}*f,
$$
where $K_{\varphi,\varepsilon}^{t,j}=e^{-t/\varepsilon}\mathcal{F}^{-1}\left(e^{it|\cdot|/\varepsilon}\varphi(2^{-j}\cdot)\right)$. By the change of variables and by Lemma \ref{lem:dispersive estimate of K^pm},
we have that
\begin{align*}
|K_{\varphi,\varepsilon}^{t,j}(x)|
=e^{-t/\varepsilon}2^{jd}|K_{\varphi}(2^jt/\varepsilon,2^jx)|
\lesssim2^{j(d-N)}\left(t/\varepsilon+|x|\right)^{-N}.
\end{align*}
for any $N\ge\frac{d-1}2$, if $t/\varepsilon\not\simeq|x|$.
On the other hand, if $t/\varepsilon\simeq|x|$,
\begin{align*}
|K_{\varphi,\varepsilon}^{t,j}(x)|
=e^{-t/\varepsilon}2^{jd}|K_{\varphi}(2^jt/\varepsilon,2^jx)|
&\lesssim2^{j(d+1)/2}e^{-t/\varepsilon}(t/\varepsilon)^{-(d-1)/2}\\
&\lesssim2^{j(d+1)/2}(t/\varepsilon)^{-N}
\end{align*}
for any $N\ge\frac{d-1}2$.
By Lemma \ref{lem:young},
\begin{align*}
e^{ct/2\varepsilon^2}\|W_\varepsilon(t)\varphi(2^{-j}\nabla)f\|_{L^\infty}
&\lesssim e^{-ct/2\varepsilon^2+t/\varepsilon}2^{j(d+1)/2}(1+(t/\varepsilon)^{-(d-1)/2})\|f\|_{L^1}\\
&\lesssim 2^{j(d+1)/2}e^{-t/\varepsilon}\|f\|_{L^1}+2^{j(d+1)/2}(t/\varepsilon)^{-(d-1)/2}\|f\|_{L^1}\\
&\lesssim 2^{j(d+1)/2}(t/\varepsilon)^{-(d-1)/2}\|f\|_{L^1}.
\end{align*}
\end{proof}



\begin{lem}\label{B lem TT* bilinear}
Let $\frac{d-1}2<m\le\infty$ and
let $(q_k,r_k)\in[2,\infty]^2$ ($k=1,2$) be $\sigma$-admissible pairs, see \eqref{eq:admissible}.
Define
$$
s_k=s(r_k)=\frac{d+1}2\left(\frac12-\frac1{r_k}\right),\quad
\delta_k=\delta(q_k,r_k)=\frac2{q_k}-\frac{d-1}2\left(\frac12-\frac1{r_k}\right).
$$
Let $\varphi$ be a smooth function as in Lemma \ref{lem:dispersive estimate of e^{it|nabla|}}.
For any $f_k\in L_T^{q_k}L_x^{r_k}$, we write
$$
T_j(f_1,f_2):=\int_{[0,T]^2}\mathbf{1}_{\{t'<t\}}\langle W_\varepsilon(t-t')\varphi(2^{-j}\nabla)f_1(t'),f_2(t)\rangle_{L_x^2}dtdt'.
$$
Then we have
\begin{align}\label{key estimate of Strichartz}
|T_j(f_1,f_2)|\lesssim \varepsilon^{\delta_1+\delta_2} 2^{j(s_1+s_2)}\|f_1\|_{L_T^{q_1'}L_x^{r_1'}}\|f_2\|_{L_T^{q_2'}L_x^{r_2'}}.
\end{align}
\end{lem}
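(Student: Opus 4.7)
The plan is to apply the Keel--Tao Strichartz machinery to the semigroup $W_\varepsilon(\tau)=e^{-c\tau/\varepsilon^2}e^{i\tau|\nabla|/\varepsilon}$, using as input the dispersive bound of Lemma~\ref{lem:dispersive estimate of e^{it|nabla|}} together with the trivial $L^2$ energy bound $\|W_\varepsilon(\tau)g\|_{L^2}\lesssim e^{-c\tau/\varepsilon^2}\|g\|_{L^2}$ (immediate since $e^{i\tau|\nabla|/\varepsilon}$ is unitary on $L^2$). First I would interpolate these by Riesz--Thorin to produce, for every $r\in[2,\infty]$, the pointwise-in-time bound
\[
\|W_\varepsilon(\tau)\varphi(2^{-j}\nabla)g\|_{L^r_x}\lesssim e^{-\tilde c\tau/\varepsilon^2}\,2^{2js(r)}\,(\tau/\varepsilon)^{-(d-1)(1/2-1/r)}\,\|g\|_{L^{r'}_x}.
\]
This interpolated kernel bound drives the entire argument and already accounts for the $2^{js(r)}$ frequency factor as well as the $\varepsilon^{(d-1)(1/2-1/r)}$ piece of the eventual $\varepsilon$-gain.

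I would next handle the diagonal case $(q_1,r_1)=(q_2,r_2)=(q,r)$ via H\"older in $x$ and Young's convolution in $t$. H\"older with exponent pair $(r,r')$ reduces $T_j(f_1,f_2)$ to a time-convolution integral with kernel $K(\tau):=e^{-\tilde c\tau/\varepsilon^2}\tau^{-(d-1)(1/2-1/r)}$. The admissibility $\sigma>\tfrac{d-1}{2}$ forces $\delta(q,r)>0$, which ensures $K\in L^{q/2}_\tau$ with $\|K\|_{L^{q/2}}\sim\varepsilon^{4/q-(d-1)(1/2-1/r)}$ via the rescaling $\tau=\varepsilon^2 s$. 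Combining this with the prefactor $\varepsilon^{(d-1)(1/2-1/r)}\,2^{2js(r)}$ from the pointwise bound, applying Young's convolution inequality in $t$ (with convolution exponent $q/2$) and then H\"older in $t$ against $f_2$ yields
\[
|T_j(f_1,f_2)|\lesssim \varepsilon^{2\delta(q,r)}\,2^{2js(r)}\,\|f_1\|_{L_T^{q'}L_x^{r'}}\|f_2\|_{L_T^{q'}L_x^{r'}},
\]
so that the diagonal exponents already match those claimed in the statement.

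For the off-diagonal case $(q_1,r_1)\ne(q_2,r_2)$, I would recast the inequality as a linear Strichartz bound. Setting $Af_1(t):=\int_0^t W_\varepsilon(t-t')\varphi(2^{-j}\nabla)f_1(t')\,dt'$, H\"older gives $|T_j(f_1,f_2)|\le\|Af_1\|_{L_T^{q_2}L_x^{r_2}}\|f_2\|_{L_T^{q_2'}L_x^{r_2'}}$, so it suffices to show that $A:L_T^{q_1'}L_x^{r_1'}\to L_T^{q_2}L_x^{r_2}$ has operator norm $\lesssim\varepsilon^{\delta_1+\delta_2}2^{j(s_1+s_2)}$. This is precisely Keel--Tao's inhomogeneous Strichartz estimate applied to the retarded kernel $W_\varepsilon(t-t')\varphi(2^{-j}\nabla)$; one dyadically decomposes $|t-t'|\sim 2^k$, on each dyadic annulus uses both the $(L^{r_1'}\!\to\! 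L^{r_1})$ and $(L^{r_2'}\!\to\! L^{r_2})$ versions of Step~1 combined with bilinear interpolation, and then sums the pieces. The main obstacle is exactly this last step: standard bilinear complex interpolation gives bounds on interpolation spaces rather than on the product $L_T^{q_1'}L_x^{r_1'}\times L_T^{q_2'}L_x^{r_2'}$ directly, so one has to follow the Keel--Tao real-method argument with a Christ--Kiselev device to accommodate the retardation $t'<t$. The non-endpoint hypothesis $\sigma>\tfrac{d-1}{2}$ simplifies the bilinear interpolation considerably, and the damping $e^{-\tilde c\tau/\varepsilon^2}$ is a bonus: it makes the dyadic sum over $k$ absolutely convergent and, by the same elementary time scaling used in the diagonal computation, produces exactly the geometric-mean $\varepsilon$-gain $\varepsilon^{\delta_1+\delta_2}$ required.
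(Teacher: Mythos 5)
Your diagonal case $(q_1,r_1)=(q_2,r_2)$ is exactly the paper's argument: interpolate the dispersive bound of Lemma \ref{lem:dispersive estimate of e^{it|nabla|}} against the trivial $L^2$ bound, then H\"older in $x$ and Young in $t$, with $\sigma>\frac{d-1}{2}$ guaranteeing integrability of the kernel near $\tau=0$ and the rescaling $\tau=\varepsilon^2 s$ producing the $\varepsilon$-gain. One small slip: the kernel norm scales as $\|K\|_{L^{q/2}}\sim\varepsilon^{\frac4q-2(d-1)(\frac12-\frac1r)}$, not $\varepsilon^{\frac4q-(d-1)(\frac12-\frac1r)}$; with that correction your bookkeeping does give $\varepsilon^{2\delta(q,r)}$ exactly as you claim, so this is only a typo, not a gap.

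For the off-diagonal case you take a genuinely different and much heavier route than the paper. You propose the Keel--Tao inhomogeneous machinery: dyadic decomposition in $|t-t'|$, bilinear interpolation, and a Christ--Kiselev device for the retardation, with the $\varepsilon^{\delta_1+\delta_2}$ gain extracted by rerunning that argument with the damping factor (the abstract theorem alone would not give it, since the damping depends on $t-t'$ and is not of the form $U(t)U(t')^*$, so the whole scheme must be redone by hand). The paper avoids all of this. It first upgrades the diagonal estimate to a retarded $L^\infty_TL^2_x$ bound via the identity $W_\varepsilon^*(t-s)W_\varepsilon(t-s')=e^{-2c(t-s)/\varepsilon^2}W_\varepsilon(s-s')$: expanding $\bigl\|\int_0^tW_\varepsilon(t-s)\varphi(2^{-j}\nabla)f(s)ds\bigr\|_{L^2}^2$ reduces to the already-proved diagonal bilinear bound, which is \eqref{dual of Strichartz 1} and yields \eqref{key estimate of Strichartz} with $(q_2,r_2)=(\infty,2)$. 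Since \eqref{key estimate of Strichartz} with $(q_2,r_2)=(q_1,r_1)$ is the diagonal case, the paper then interpolates in the $f_2$-slot only, keeping $f_1$ fixed: for fixed $f_1$ the map $f_2\mapsto T_j(f_1,f_2)$ is linear and the retardation sits harmlessly inside the kernel, so ordinary interpolation of mixed-norm spaces applies, and because $s_2=\theta s_1$, $\delta_2=\theta\delta_1$ along the admissible line, the interpolated constant is precisely the geometric mean $\varepsilon^{\delta_1+\delta_2}2^{j(s_1+s_2)}$; the case $q_1\ge q_2$ is symmetric by duality. What your route would buy is generality (it does not need both pairs to lie on the same admissibility line); what it costs is the Christ--Kiselev step and a full reworking of the dyadic/bilinear argument with the damping to track the $\varepsilon$-powers, which you only sketch. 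The paper's one-slot interpolation makes both of these unnecessary precisely because both pairs are $\sigma$-admissible for the same $\sigma>\frac{d-1}{2}$.
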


\begin{proof}
If $(q_1,r_1)=(q_2,r_2)=(q,r)$, then \eqref{key estimate of Strichartz} follows from Lemma \ref{lem:dispersive estimate of e^{it|nabla|}} and Young inequality. Indeed, we have
$$
\|W_\varepsilon(t)\varphi(2^{-j}\nabla)f\|_{L^r}\lesssim
e^{-c't/\varepsilon^2}(2^{j(d+1)/2}(t/\varepsilon)^{-(d-1)/2})^{1-2/r}
\|f\|_{L^{r'}}
$$
for any $0\le t$ and $r\in[2,\infty]$ by an interpolation between the trivial $L^2$-$L^2$ estimate of $W_\varepsilon(t)$ and Lemma \ref{lem:dispersive estimate of e^{it|nabla|}}. Hence
\begin{align*}
&|T_j(f_1,f_2)|\\
&\lesssim
\varepsilon^{\frac{d-1}2(1-\frac2r)}2^{j\frac{d+1}2(1-\frac2r)}
\int_{[0,T]^2}\mathbf{1}_{\{t'<t\}}
e^{-c'\frac{t-t'}{\varepsilon^2}}
(t-t')^{-\frac{d-1}2(1-\frac2r)}
\|f_1(t')\|_{L^{r'}}\|f_2(t)\|_{L^{r'}}dtdt'\\
&\lesssim 
\varepsilon^{\frac{d-1}2(1-\frac2r)}2^{2js(r)}
\|e^{-\frac{c't}{\varepsilon^2}}t^{-\frac{d-1}2(1-\frac2r)}\|_{L^{q/2}}
\|f_1\|_{L_T^{q'}L_x^{r'}}\|f_2\|_{L_T^{q'}L_x^{r'}}.
%
\end{align*}
Since
\begin{align*}
\|e^{-\frac{c't}{\varepsilon^2}}t^{-\frac{d-1}2(1-\frac2r)}\|_{L^{q/2}}
=\varepsilon^{\frac4q-(d-1)(1-\frac2r)}
\|e^{-c't}t^{-\frac{d-1}2(1-\frac2r)}\|_{L^{q/2}}
\end{align*}
and $\frac{d-1}2(1-\frac2r)\frac{q}2<m(1-\frac2r)\frac{q}2=1$, we have
$$
|T_j(f_1,f_2)|
\lesssim
\varepsilon^{2\delta(q,r)}2^{2js(r)}
\|f_1\|_{L_T^{q'}L_x^{r'}}\|f_2\|_{L_T^{q'}L_x^{r'}}.
$$

Next we consider the case $(q_1,r_1)\neq(q_2,r_2)$.
Note that
\begin{align*}
W_\varepsilon^*(t-s)W_\varepsilon(t-s')
&=e^{-c(t-s)/\varepsilon^2}e^{-c(t-s')/\varepsilon^2}e^{-i(t-s)|\nabla|/\varepsilon}e^{i(t-s')|\nabla|/\varepsilon}\\
&=e^{-2c(t-s)/\varepsilon^2}e^{-c(s-s')/\varepsilon^2}e^{i(s-s')|\nabla|/\varepsilon}\\
&=e^{-2c(t-s)/\varepsilon^2}W_\varepsilon(s-s').
\end{align*}
Then the above estimate yields that, for any $m$-admissible $(q,r)$,
\begin{align}\label{dual of Strichartz 1}
\begin{aligned}
&\left\|\int_0^tW_\varepsilon(t-s)\varphi(2^{-j}\nabla)f(s)ds\right\|_{L^2}^2\\
&=\int_{[0,t]^2}\langle W_\varepsilon(t-s)\varphi(2^{-j}\nabla)f(s), W_\varepsilon(t-s')\varphi(2^{-j}\nabla)f(s') \rangle dsds'\\
&=\int_{[0,t]^2, s<s'}
e^{-2c(t-s')/\varepsilon^2}\langle W_\varepsilon(s'-s)\varphi^2(2^{-j}\nabla)f(s), f(s')\rangle dsds'\\
&\quad+\int_{[0,t]^2, s'\le s}
e^{-2c(t-s)/\varepsilon^2}\langle f(s), W_\varepsilon(s-s')\varphi^2(2^{-j}\nabla)f(s')\rangle dtdt'\\
&\lesssim \varepsilon^{2\delta(q,r)}2^{2js(r)}\|f\|_{L_T^{q'}L_x^{r'}}^2.
\end{aligned}
\end{align}
Let $q_1\le q_2$. Then we have
\begin{align*}
|T_j(f_1,f_2)|
&=\left|\int_0^T
\left\langle
\int_0^t
W_\varepsilon(t-t')\varphi(2^{-j}\nabla)f_1(t')dt',f_2(t)
\right\rangle_{L_x^2}dt
\right|\\
&\lesssim\sup_{t\in[0,T]}
\left\|\int_0^t W_\varepsilon(t-t')\varphi(2^{-j}\nabla)f_1(t')dt'\right\|_{L_x^2}\|f_2\|_{L_T^1L_x^2}\\
&\lesssim 
\varepsilon^{\delta_1}2^{js_1}
\|f_1\|_{L_T^{q_1'}L_x^{r_1'}}\|f_2\|_{L_T^1L_x^2}.
\end{align*}
This means that \eqref{key estimate of Strichartz} holds for $(q_2,r_2)=(\infty,2)$.
Since we already have \eqref{key estimate of Strichartz} with $(q_2,r_2)=(q_1,r_1)$,
by an interpolation we obtain \eqref{key estimate of Strichartz} since
\begin{align*}
\frac1{r_2}=\frac{1-\theta}2+\frac{\theta}{r_1},\quad \theta\in[0,1]
\quad\Rightarrow\quad
\frac1{q_2}=\frac{1-\theta}\infty+\frac{\theta}{q_1}.
\end{align*}
On the other hand, if $q_1\ge q_2$, then by a similar argument as \eqref{dual of Strichartz 1} we have
\begin{align*}
|T_j(f_1,f_2)|
&=\left|\int_0^T
\left\langle
f_1(t'),
\int_{t'}^T
W_\varepsilon^*(t-t')\varphi(2^{-j}\nabla)f_2(t)dt
\right\rangle_{L_x^2}dt'
\right|\\
&\lesssim\|f_1\|_{L_T^1L_x^2}
\sup_{t'\in[0,T]}
\left\|\int_{t'}^T W_\varepsilon^*(t-t')\varphi(2^{-j}\nabla)f_2(t)dt\right\|_{L_x^2}\\
&\lesssim 
\varepsilon^{\delta_2}2^{js_2}\|f_1\|_{L_T^1L_x^2}\|f_2\|_{L_T^{q_2'}L_x^{r_2'}},
\end{align*}
Hence we have \eqref{key estimate of Strichartz} with $(q_1,r_1)=(\infty,2)$.
Thus by a similar interpolation argument, we obtain \eqref{key estimate of Strichartz} with $q_1\ge q_2$.
\end{proof}

\begin{lem}\label{B lem TT*int}
Let $(q,r)\in[2,\infty]^2$ be an $m$-admissible pair.
Let $\varphi$ be a smooth function as in Lemma \ref{lem:dispersive estimate of e^{it|nabla|}}.
Then we have
$$
\left\|\int_0^tW_\varepsilon(s)\varphi(2^{-j}\nabla)f(s)ds\right\|_{L_T^\infty L_x^2}
\lesssim\varepsilon^{\delta(q,r)}2^{js(r)}\|f\|_{L_T^{q'}L_x^{r'}}.
$$
\end{lem}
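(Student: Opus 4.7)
The plan is to deduce this estimate from the bound \eqref{dual of Strichartz 1}, which is already established inside the proof of Lemma~\ref{B lem TT* bilinear}, by a simple change of variables in time. Fix $t\in[0,T]$ and substitute $s=t-\tau$ in the integral:
$$
\int_0^tW_\varepsilon(s)\varphi(2^{-j}\nabla)f(s)\,ds=\int_0^tW_\varepsilon(t-\tau)\varphi(2^{-j}\nabla)f_t(\tau)\,d\tau,
$$
where $f_t(\tau):=f(t-\tau)\mathbf{1}_{[0,t]}(\tau)$ is regarded as a function on $[0,T]$. Reflection of the time variable preserves the $L^{q'}$-norm, so
$$
\|f_t\|_{L^{q'}_TL^{r'}_x}=\|f\mathbf{1}_{[0,t]}\|_{L^{q'}_TL^{r'}_x}\le\|f\|_{L^{q'}_TL^{r'}_x}.
$$

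Applying \eqref{dual of Strichartz 1} with $f$ replaced by $f_t$ yields, uniformly in $t\in[0,T]$,
$$
\left\|\int_0^tW_\varepsilon(s)\varphi(2^{-j}\nabla)f(s)\,ds\right\|_{L^2_x}
\lesssim\varepsilon^{\delta(q,r)}2^{js(r)}\|f_t\|_{L^{q'}_TL^{r'}_x}
\le\varepsilon^{\delta(q,r)}2^{js(r)}\|f\|_{L^{q'}_TL^{r'}_x}.
$$
Taking the supremum over $t\in[0,T]$ delivers the claimed inequality.

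There is no significant obstacle: the lemma is essentially a corollary of \eqref{dual of Strichartz 1} once one observes that time reflection does not enlarge the $L^{q'}_TL^{r'}_x$ norm. As a self-contained alternative, the direct $TT^*$ expansion of $\|\int_0^tW_\varepsilon(s)\varphi f(s)\,ds\|_{L^2_x}^2$, using $W_\varepsilon^*(s)W_\varepsilon(s')=e^{-2cs/\varepsilon^2}W_\varepsilon(s'-s)$ for $s<s'$, reduces matters to a bilinear form of the type $T_j$ treated in Lemma~\ref{B lem TT* bilinear} with one factor of $f$ replaced by the dominated profile $\tilde f(s):=e^{-2cs/\varepsilon^2}f(s)$, producing the same conclusion.
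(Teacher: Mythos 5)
Your argument is correct. The main route you take — rewriting $\int_0^t W_\varepsilon(s)\varphi(2^{-j}\nabla)f(s)\,ds$ via the substitution $s=t-\tau$ as $\int_0^t W_\varepsilon(t-\tau)\varphi(2^{-j}\nabla)f_t(\tau)\,d\tau$ with the reflected, truncated profile $f_t$, and then invoking the bound \eqref{dual of Strichartz 1} (whose constant is uniform in $t\in[0,T]$ and whose right-hand side only involves $\|f_t\|_{L_T^{q'}L_x^{r'}}\le\|f\|_{L_T^{q'}L_x^{r'}}$) — is a legitimate shortcut that the paper does not use. The paper instead redoes the $TT^*$ expansion from scratch for the operator $f\mapsto\int_0^tW_\varepsilon(s)\varphi(2^{-j}\nabla)f(s)\,ds$: it records the identity $W_\varepsilon^*(s)W_\varepsilon(s')=e^{-2cs'/\varepsilon^2}W_\varepsilon^*(s-s')$ (for $s'\le s$, and the mirror identity on $s<s'$), expands the squared $L^2_x$ norm into the two time-ordered double integrals, drops the exponential weights (which are at most $1$), and bounds the resulting bilinear forms exactly as in the diagonal case of Lemma \ref{B lem TT* bilinear}; this is precisely the "self-contained alternative" you sketch at the end, up to cosmetic points (the bump becomes $\varphi^2$, still supported in an annulus, and the weight can simply be discarded rather than absorbed into a dominated profile). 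What your reflection argument buys is economy: it reuses \eqref{dual of Strichartz 1} verbatim instead of repeating the adjoint computation; what the paper's route buys is that the lemma stands on the explicit semigroup-type identity for $W_\varepsilon^*W_\varepsilon$ without leaning on an intermediate display buried in another proof. Both are sound and yield the same $\varepsilon^{\delta(q,r)}2^{js(r)}$ gain.
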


\begin{proof}
Note that
\begin{align*}
W_\varepsilon^*(s)W_\varepsilon(s')
&=e^{-cs/\varepsilon^2}e^{-cs'/\varepsilon^2}e^{-is|\nabla|/\varepsilon}e^{is'|\nabla|/\varepsilon}\\
&=e^{-2cs'/\varepsilon^2}e^{-c(s-s')/\varepsilon^2}e^{-i(s-s')|\nabla|/\varepsilon}\\
&=e^{-2cs'/\varepsilon^2}W_\varepsilon^*(s-s').
\end{align*}
Hence by a similar argument to \eqref{dual of Strichartz 1},
\begin{align*}
\begin{aligned}
&\left\|\int_0^tW_\varepsilon(s)\varphi(2^{-j}\nabla)f(s)ds\right\|_{L^2}^2\\
&=\int_{[0,t]^2}\langle W_\varepsilon(s)\varphi(2^{-j}\nabla)f(s), W_\varepsilon(s')\varphi(2^{-j}\nabla)f(s') \rangle dsds'\\
&=\int_{[0,t]^2, s<s'}
e^{-2cs/\varepsilon^2}\langle W_\varepsilon^*(s'-s)\varphi^2(2^{-j}\nabla)f(s), f(s')\rangle dsds'\\
&\quad+\int_{[0,t]^2, s'\le s}
e^{-2cs'/\varepsilon^2}\langle f(s), W_\varepsilon(s-s')\varphi^2(2^{-j}\nabla)f(s')\rangle dtdt'\\
&\lesssim\varepsilon^{2\delta(q,r)}2^{2js(r)}\|f\|_{L_T^{q'}L_x^{r'}}^2.
\end{aligned}
\end{align*}
\end{proof}

\begin{proof}[{\bfseries Proof of Theorem \ref{thm:W}}]
We prove \eqref{Strichartz 1} and \eqref{Strichartz 2} for $W_\varepsilon$.
Taking the same function $\tilde\chi$ as in the proof of Lemma \ref{HMmultiplier}, we have
\begin{align*}
W_\varepsilon(t)\Delta_ju&=W_\varepsilon(t)\tilde\chi(2^{-j}\nabla)\Delta_ju.
\end{align*}
As for \eqref{Strichartz 1}, by the duality argument and Lemma \ref{B lem TT*int},
\begin{align*}
\|W_\varepsilon(t) \Delta_ju\|_{L_T^{q_1}L_x^{r_1}}
&=\sup_{\varphi;\ \|\varphi\|_{L_T^{q_1'}L_x^{r_1'}}\le1}
\left|\int_0^T\langle W_\varepsilon(t)\tilde{\chi}(2^{-j}\nabla)\Delta_ju,\varphi(t) \rangle_{L_x^2} dt\right|\\
&=\sup_{\varphi;\ \|\varphi\|_{L_T^{q_1'}L_x^{r_1'}}\le1}
\left|\int_0^T\langle \Delta_ju,W_\varepsilon^*(t)\tilde{\chi}(2^{-j}\nabla)\varphi(t) \rangle_{L_x^2} dt\right|\\
&\le \|\Delta_ju\|_{L^2}
\sup_{\varphi;\ \|\varphi\|_{L_T^{q_1'}L_x^{r_1'}}\le1}
\left\|\int_0^TW_\varepsilon^*(t)\tilde{\chi}(2^{-j}\nabla)\varphi(t)dt\right\|_{L_x^2}\\
&\lesssim \varepsilon^{\delta_1}2^{js_1}\|\Delta_ju\|_{L^2}.
\end{align*}
As for \eqref{Strichartz 2}, by the duality argument and Lemma \ref{B lem TT* bilinear},
\begin{align*}
&\left\|\int_0^t W_\varepsilon(t-t')\Delta_jf(t') dt'\right\|_{L_T^{q_1}L_x^{r_1}}\\
&=\sup_{\varphi;\ \|\varphi\|_{L_T^{q_1'}L_x^{r_1'}}\le1}
\left|\int_{[0,T]^2}\mathbf{1}_{\{t'<t\}}\langle W_\varepsilon(t-t')\tilde{\chi}(2^{-j}\nabla) \Delta_jf(t'), \varphi(t)\rangle_{L_x^2}dtdt'\right|\\
&\lesssim \varepsilon^{\delta_1+\delta_2}2^{j(s_1+s_2)}\|f\|_{L_T^{q_2'}L_x^{r_2'}}.
\end{align*}
\end{proof}

\section{Review on the complex Wick products}\label{app:Wick}

We recall from \cite{Ito} a few results about the complex Hermite polynomials, 
and our definition of complex Wick products will be based on these results.
Remark that other constructions of the complex Wick products can be found in \cite{OT, M,Tr}. 
For $m,n\in\mathbb{N}$, $z\in\mathbb{C}$, and $\sigma>0$, the polynomial $H_{m,n}(z;\sigma)$ are determined by the generating function
\begin{align}\label{eq:gf of CHermite}
\exp(t\bar{z}+\bar{t}z-\sigma t\bar{t}\,)=\sum_{m,n=0}^\infty \frac{\bar{t}^mt^n}{m!n!}H_{m,n}(z;\sigma),\qquad t\in\mathbb{C}.
\end{align}
It follows from the definition that $\overline{H_{m,n}(z;\sigma)}=H_{n,m}(z;\sigma)$. Here are some examples:
\begin{align*}
&H_{0,0}(z;\sigma)=1,\qquad H_{1,0}(z;\sigma)=z,\\
&H_{2,0}(z;\sigma)=z^2,\qquad H_{1,1}(z;\sigma)=z\bar{z}-\sigma,\\
&H_{3,0}(z;\sigma)=z^3,\qquad H_{2,1}(z;\sigma)=z^2\bar{z}-2\sigma z.
\end{align*}
We identify an element $z=x+iy\in\mathbb{C}$ with $(x,y)\in\mathbb{R}^2$ and consider the differential operators
$$
\partial_z:=\frac12(\partial_x-i\partial_y),\qquad
\partial_{\bar z}:=\frac12(\partial_x+i\partial_y).
$$ 

\begin{prop}\label{app eq:Hermite}
For any $m,n\in\mathbb{N}$, $z\in\mathbb{C}$, and $\sigma>0$, we have
\begin{align}
\label{eq:Hermite1}
H_{m,n}(z+w;\sigma)
&=\sum_{m'=0}^m\sum_{n'=0}^n
\binom{m}{m'}\binom{n}{n'}w^{m-m'}\bar{w}^{n-n'}H_{m',n'}(z;\sigma),\qquad
(w\in\mathbb{C})\\
\label{eq:Hermite2}
\partial_{\bar z}H_{m,n}(z;\sigma)&=H_{m,n-1}(z;\sigma).
\end{align}
\end{prop}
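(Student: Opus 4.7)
My plan is to derive both identities directly from the defining generating function~(\ref{eq:gf of CHermite}). Setting $G(t,z):=\exp(t\bar z+\bar t z-\sigma t\bar t)$ for brevity, I will in each case produce an identity for $G$ and then match coefficients of $\bar t^m t^n/(m!n!)$ on the two sides.

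For the addition formula (\ref{eq:Hermite1}), I would exploit the factorization $G(t,z+w)=G(t,z)\cdot e^{t\bar w}\cdot e^{\bar t w}$, which follows immediately by splitting the exponent. The left-hand side equals $\sum_{m,n}\frac{\bar t^m t^n}{m!n!}H_{m,n}(z+w;\sigma)$, while multiplying the series for $G(t,z)$ by the ordinary Taylor series $e^{t\bar w}=\sum_a \frac{t^a\bar w^a}{a!}$ and $e^{\bar t w}=\sum_b \frac{\bar t^b w^b}{b!}$, and collecting the coefficient of $\bar t^m t^n$ under the constraints $m'+b=m$ and $n'+a=n$, produces exactly the right-hand side of (\ref{eq:Hermite1}) once the resulting factorials are regrouped as $\binom{m}{m'}\binom{n}{n'}$.

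For (\ref{eq:Hermite2}), my plan is to apply $\partial_{\bar z}$ to both sides of~(\ref{eq:gf of CHermite}). Since $\partial_{\bar z}(t\bar z+\bar t z-\sigma t\bar t)=t$, we have $\partial_{\bar z}G(t,z)=tG(t,z)$, so
\[
\sum_{m,n}\frac{\bar t^m t^n}{m!n!}\partial_{\bar z}H_{m,n}(z;\sigma)=\sum_{m,n}\frac{\bar t^m t^{n+1}}{m!n!}H_{m,n}(z;\sigma),
\]
and a re-indexing $n\mapsto n-1$ on the right, followed by coefficient matching, yields the claimed recursion.

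Both arguments are purely formal coefficient-matching manipulations, and there is no genuine analytic obstacle: the generating series is entire in $t\in\mathbb{C}$ for each fixed $z$, which legitimizes termwise differentiation in $\bar z$ and termwise multiplication of the three series in the first step. The only real task is careful bookkeeping of the double (and in the first case quadruple) summation indices and of the multinomial factors.
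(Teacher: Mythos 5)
Your treatment of \eqref{eq:Hermite1} is correct and is exactly the paper's argument: factor the generating function as $e^{t(\bar z+\bar w)+\bar t(z+w)-\sigma t\bar t}=e^{t\bar w}\,e^{\bar t w}\,e^{t\bar z+\bar t z-\sigma t\bar t}$ and match the coefficient of $\bar t^m t^n$; the index bookkeeping you describe does reproduce the binomial factors.

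For \eqref{eq:Hermite2} your method (applying $\partial_{\bar z}$ to \eqref{eq:gf of CHermite}, again the paper's route) is the right one, but the final matching step does not give what you assert. From $\partial_{\bar z}G=tG$, the coefficient of $\bar t^m t^n$ on the left is $\partial_{\bar z}H_{m,n}(z;\sigma)/(m!\,n!)$, while on the right, after the shift $n\mapsto n-1$, it is $H_{m,n-1}(z;\sigma)/(m!\,(n-1)!)$. Equating the two yields
\[
\partial_{\bar z}H_{m,n}(z;\sigma)=n\,H_{m,n-1}(z;\sigma),
\]
so a factor $n$ appears which your phrase ``yields the claimed recursion'' silently drops. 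In fact the identity as printed in the proposition is itself missing this factor: for $H_{2,2}(z;\sigma)=z^2\bar z^2-4\sigma z\bar z+2\sigma^2$ one has $\partial_{\bar z}H_{2,2}=2H_{2,1}$, not $H_{2,1}$; the examples listed in the appendix all have $n\le1$ and cannot detect the discrepancy, and the later use of the formula (where $\frac1{2n+2}\partial_{\overline{\hat\psi(k)}}\int_{\mathbb{T}^2} H_{n+1,n+1}(\Pi_N\psi;C_N)\,dx$ produces the prefactor $\frac12$ in front of $\int_{\mathbb{T}^2} H_{n+1,n}(\Pi_N\psi;C_N)e_{-k}\,dx$) is consistent only with the version carrying the factor $n+1$. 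So your derivation is essentially the correct one, but you should carry the factor $n$ explicitly and state the conclusion as $\partial_{\bar z}H_{m,n}=nH_{m,n-1}$, rather than claiming the computation delivers \eqref{eq:Hermite2} as written.
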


\begin{proof}
\eqref{eq:Hermite1} follows from the comparison between
$$
e^{t(\bar{z}+\bar{w})+\bar{t}(z+w)-\sigma t\bar{t}}
=\sum_{m,n=0}^\infty \frac{\bar{t}^mt^n}{m!n!}H_{m,n}(z+w;\sigma)
$$
and
$$
e^{t\bar{w}}e^{\bar{t}w}e^{t\bar{z}+\bar{t}z-\sigma t\bar{t}}
=\sum_{k,\ell,m,n=0}^\infty \frac{\bar{t}^{k+m}t^{\ell+n}}{k!m!\ell!n!}w^k\bar{w}^\ell H_{m,n}(z+w;\sigma).
$$
\eqref{eq:Hermite2} follows from the $\bar{z}$-differentials of both sides of \eqref{eq:gf of CHermite}.
\end{proof}

\begin{prop}
Let $X$ and $Y$ be complex Gaussian random variables with zero mean and such that
$$
\mathbb{E}[X^2]=\mathbb{E}[Y^2]=\mathbb{E}[XY]=0,\qquad
\mathbb{E}[X\overline{X}]=\sigma_X,\qquad \mathbb{E}[Y\overline{Y}]=\sigma_Y.
$$
Then we have
$$
\mathbb{E}[H_{m,n}(X;\sigma_X)H_{k,\ell}(Y;\sigma_Y)]
=\mathbf{1}_{m=\ell,\, n=k}m!n!(\mathbb{E}[\overline{X}Y])^m(\mathbb{E}[X\overline{Y}])^n.
$$
\end{prop}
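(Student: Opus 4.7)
The plan is to compare the joint generating function of $(H_{m,n}(X;\sigma_X))_{m,n}$ and $(H_{k,\ell}(Y;\sigma_Y))_{k,\ell}$ on both sides. Using the defining identity \eqref{eq:gf of CHermite} with two auxiliary parameters $t, s \in \mathbb{C}$, one has
$$
\mathbb{E}\bigl[e^{t\bar X + \bar t X - \sigma_X t\bar t}\, e^{s\bar Y + \bar s Y - \sigma_Y s\bar s}\bigr]
=\sum_{m,n,k,\ell \ge 0}\frac{\bar t^m t^n \bar s^k s^\ell}{m!n!k!\ell!}\,\mathbb{E}\bigl[H_{m,n}(X;\sigma_X)H_{k,\ell}(Y;\sigma_Y)\bigr],
$$
so it suffices to evaluate the left-hand side and match coefficients. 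I would show that it equals $\exp(t\bar s\,\mathbb{E}[\bar X Y] + \bar t s\,\mathbb{E}[X\bar Y])$.

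First I would observe that $W := t\bar X + \bar t X + s\bar Y + \bar s Y = 2\re(t\bar X) + 2\re(s\bar Y)$ is a real, centered Gaussian random variable, so $\mathbb{E}[e^W] = \exp(\tfrac12 \mathbb{E}[W^2])$. Next, expanding $W^2$ and invoking the hypotheses $\mathbb{E}[X^2] = \mathbb{E}[Y^2] = \mathbb{E}[XY] = 0$ (together with their complex conjugates), every ``non-mixed'' product vanishes, leaving only the four cross contributions:
$$
\tfrac12\mathbb{E}[W^2] = \sigma_X t\bar t + \sigma_Y s\bar s + t\bar s\,\mathbb{E}[\bar X Y] + \bar t s\,\mathbb{E}[X\bar Y].
$$
The first two terms exactly cancel the normalizing factors $e^{-\sigma_X t\bar t}e^{-\sigma_Y s\bar s}$, which yields the claimed generating-function identity.

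To conclude, I would Taylor-expand $\exp(t\bar s A + \bar t s B)$ with $A = \mathbb{E}[\bar X Y]$ and $B = \mathbb{E}[X\bar Y]$: only monomials of the form $t^a \bar s^a \bar t^b s^b$ appear, which, upon matching to the generating-function side, forces $n = k$ and $m = \ell$ and produces the factor $m!\,n!$ together with the announced powers of $\mathbb{E}[\bar X Y]$ and $\mathbb{E}[X\bar Y]$. The whole argument is elementary; the only step requiring some attention is the bookkeeping between barred and unbarred exponents when identifying $(m,n,k,\ell)$ with the Taylor indices $(a,b)$. Beyond this, the only ingredients needed are \eqref{eq:gf of CHermite} and the standard real-Gaussian moment generating function.
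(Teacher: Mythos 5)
Your proof is correct and is essentially the paper's own argument: the paper likewise evaluates $\mathbb{E}\bigl[e^{t\overline{X}+\bar tX-\sigma_Xt\bar t}\,e^{s\overline{Y}+\bar sY-\sigma_Ys\bar s}\bigr]=e^{t\bar s\,\mathbb{E}[\overline{X}Y]+\bar ts\,\mathbb{E}[X\overline{Y}]}$ via the real Gaussian moment generating function of $t\overline{X}+\bar tX+s\overline{Y}+\bar sY$ and then compares $\bar t^mt^n\bar s^ks^\ell$-coefficients, exactly as you do. The only point worth noting is that the exponent bookkeeping you flag in fact yields $m!\,n!\,(\mathbb{E}[\overline{X}Y])^{n}(\mathbb{E}[X\overline{Y}])^{m}$ on the event $m=\ell$, $n=k$, so the powers $m$ and $n$ in the displayed identity appear interchanged — a typo-level issue in the statement (present in the paper as well), not a flaw in your argument.
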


\begin{proof}
For any $t,s\in\mathbb{C}$, we have
\begin{align*}
\mathbb{E}[e^{t\overline{X}+\bar{t}X-\sigma_Xt\bar{t}}
e^{s\overline{Y}+\bar{s}Y-\sigma_Ys\bar{s}}]
&=e^{\frac12\mathbb{E}[(t\overline{X}+\bar{t}X+s\overline{Y}+\bar{s}Y)^2]-\sigma_Xt\bar{t}-\sigma_Ys\bar{s}}\\
&=e^{t\bar{s}\overline{X}Y+\bar{t}sX\overline{Y}}.
\end{align*}
The required identity follows from the comparison of $\bar{t}^mt^n\bar{s}^ks^\ell$-coefficients.
\end{proof}

We define the Gaussian measure on $\mathcal{S}'(\mathbb{T}^2)$ by
$$
\mu_0(dz)=\frac1{\Gamma'}\exp\left(-\frac12\int_{\mathbb{T}^2}(|z(x)|^2+|\nabla z(x)|^2)dx\right)dz,
$$
with a normalizing constant $\Gamma'$. Precisely, by identifying an element $z\in\mathcal{S}'(\mathbb{T}^2)$ 
with a sequence $(\hat{z}(k))_{k\in\mathbb{Z}^2}$, the measure $\mu_0$ is defined by the product
$$
\mu_0=\bigotimes_{k\in\mathbb{Z}^2}\mathcal{N}_c(0,2(1+|k|^2)^{-1}),
$$
where $\mathcal{N}_c(0,r)$ denotes the probability law of $X+iY$, 
where $X$ and $Y$ are real-valued independent Gaussian random variables with mean zero and covariance $r/2$. The density function of $\mathcal{N}_c(0,r)$ is given by
$$
\frac1{\pi r}e^{-\frac1r|z|^2},\qquad z\in\mathbb{C}.
$$
It is straightforward to see that $\mu_0$ is supported in $H^{-\kappa}$ for any $\kappa>0$.
Denoting by $\Pi_N:z\mapsto \sum_{|k|\le N}\hat{z}(k)e_k$ the standard projection, we define the Wick product by the limit (if exists)
$$
:z^m\bar{z}^n:\ :=\lim_{N\to\infty}H_{m,n}(\Pi_Nz;C_N),
$$
where
$$
C_N:=\int_{H^{-\kappa}}|\Pi_Nz(x)|^2\mu_0(dz)=\sum_{|k|\le N}\frac2{1+|k|^2}.
$$
We have the following convergence result by a similar way to \cite[Proposition 2.1]{GKO}.

\begin{prop}
For any $m,n\in\mathbb{N}$, the sequence $\{H_{m,n}(\Pi_Nz;C_N)\}_{N\in\mathbb{N}}$ is a Cauchy sequence in $L^p(\mu_0;W^{-\delta,\infty}(\mathbb{T}^2))$ 
for any $p\in[1,\infty)$ and $\delta>0$.
\end{prop}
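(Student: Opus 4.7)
The plan is to reduce the problem to an $L^2$ bound by exploiting that $H_{m,n}(\Pi_N z; C_N)(x)$ belongs, for every fixed $x$, to the inhomogeneous Wiener chaos of order $\le m+n$ associated with $\mu_0$. By Nelson's hypercontractivity, all $L^p(\mu_0)$ moments on a fixed chaos are equivalent to the $L^2$ moment, so once I establish Cauchyness in $L^2(\mu_0; H^{-\delta/2}(\mathbb{T}^2))$ I can upgrade it to Cauchyness in $L^p(\mu_0; W^{-\delta,q}(\mathbb{T}^2))$ for any $p,q \in [1,\infty)$, and then Sobolev embedding will give $W^{-\delta,\infty}$ for $q$ large enough (since on $\mathbb{T}^2$ we have $W^{-\delta/2,q} \hookrightarrow W^{-\delta,\infty}$ whenever $\delta/2 > 2/q$).

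The key computation is the two-point function. For $N > M$, writing $\mathcal{Z}_N(x) = H_{m,n}(\Pi_N z; C_N)(x)$, the moment formula in the preceding proposition gives
\begin{align*}
\mathbb{E}_{\mu_0}\!\left[\mathcal{Z}_N(x)\overline{\mathcal{Z}_N(y)}\right]
= m!\,n!\,\mathbb{E}_{\mu_0}[\overline{\Pi_N z(x)}\,\Pi_N z(y)]^m\,\mathbb{E}_{\mu_0}[\Pi_N z(x)\,\overline{\Pi_N z(y)}]^n,
\end{align*}
and analogous expressions hold for the mixed terms. Since the underlying covariance is diagonal in Fourier modes,
$\mathbb{E}_{\mu_0}[\Pi_N z(x)\,\overline{\Pi_N z(y)}] = \sum_{|k|\le N} \tfrac{2}{1+|k|^2} e_k(x-y)$,
I can telescope $\mathcal{Z}_N - \mathcal{Z}_M$ as in the proof of Proposition \ref{prop:probability part of NRL}: the difference of products becomes a finite sum of terms in which at least one factor carries a cutoff $\mathbf{1}_{|k|>M}$. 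Integrating against two copies of the Bessel kernel $\langle\nabla\rangle^{-\delta}(x,\cdot)$ and summing the Fourier series,
\begin{align*}
\mathbb{E}_{\mu_0}\!\left[|\langle\nabla\rangle^{-\delta}(\mathcal{Z}_N - \mathcal{Z}_M)(x)|^2\right]
\lesssim \sum_{k_1,\dots,k_{m+n}} \frac{\mathbf{1}_{\{\max_i|k_i|>M\}}}{\langle k_1+\cdots+k_{m+n}\rangle^{2\delta}}\prod_{i=1}^{m+n}\frac{1}{\langle k_i\rangle^2},
\end{align*}
which tends to $0$ as $M \to \infty$ uniformly in $x$. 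Integrating in $x$ and using Fubini yields Cauchyness in $L^2(\mu_0; H^{-\delta/2}(\mathbb{T}^2))$.

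With this in hand the proof concludes as follows. By Wiener chaos equivalence I promote this to $\mathbb{E}_{\mu_0}[\|\mathcal{Z}_N - \mathcal{Z}_M\|_{H^{-\delta/2}}^p] \to 0$ for every $p < \infty$, and by interchanging the space and probability integrals (Minkowski), also $\mathbb{E}_{\mu_0}[\|\mathcal{Z}_N - \mathcal{Z}_M\|_{W^{-\delta/2, q}}^p] \to 0$ for every $q < \infty$. Taking $q$ large enough that $\delta/2 - 2/q > -\delta$, Sobolev embedding gives the required Cauchy property in $L^p(\mu_0; W^{-\delta,\infty})$.

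The only delicate point is the combinatorial bookkeeping in the telescoping step: when expanding the differences of the two moment formulas, one must check that every resulting term indeed forces at least one Fourier index $k_i$ to exceed $M$ (so that $\mathbf{1}_{\max|k_i|>M}\to 0$), and that the remaining sum is absolutely convergent for $\delta > 0$ in two dimensions. This amounts to verifying that $\sum_{k_1,\ldots,k_{m+n}} \langle k_1+\cdots+k_{m+n}\rangle^{-2\delta}\prod_i \langle k_i\rangle^{-2}$ is finite on $\mathbb{T}^2$, which follows from iterated convolution estimates exactly as in \cite[Proposition 2.1]{GKO}; no new idea is needed beyond those already displayed in the proof of Proposition \ref{prop:probability part of NRL}.
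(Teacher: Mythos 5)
Your argument is correct and is essentially the paper's own route: the paper proves this proposition by invoking \cite[Proposition 2.1]{GKO}, and the computation you write out (orthogonality of complex Hermite polynomials giving the two-point function, telescoping $\mathcal{Z}_N-\mathcal{Z}_M$ so that one covariance factor carries the cutoff $\mathbf{1}_{\{|k|>M\}}$, integration against Bessel kernels, convergence of $\sum\langle k_1+\cdots+k_{m+n}\rangle^{-2\delta}\prod_i\langle k_i\rangle^{-2}$, then Nelson hypercontractivity and Sobolev embedding into $W^{-\delta,\infty}$) is exactly the argument of that reference and of the paper's own Proposition \ref{prop:probability part of NRL} in Section 6.2. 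The only blemishes are cosmetic: your display uses $\langle\nabla\rangle^{-\delta}$ while the text claims $H^{-\delta/2}$ (just run the estimate with $\delta/2$ in place of $\delta$), and the embedding condition should read $\delta/2>2/q$ as you stated earlier, not ``$\delta/2-2/q>-\delta$''.
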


The following complex valued It\^o formula is easily obtained from the real-valued version.
For a complex-valued continuous semimartingale $Z=X+iY$, we define
\begin{align*}
dZdZ&=dXdX-dYdY+2idXdY,\\
dZd\overline{Z}&=dXdX+dYdY,\\
d\overline{Z}d\overline{Z}&=dXdX-dYdY-2idXdY.
\end{align*}

\begin{prop}\label{app:complexIto}
Let $Z$ be a complex-valued continuous semimartingale. For any $C_b^2$ function $f$ on $\mathbb{R}^2$ identified with $\mathbb{C}$, we have
\begin{align*}
f(Z_t)
&=f(Z_0)+\int_0^t\partial_zf(Z_s)dZ_s+\int_0^t\partial_{\bar z}f(Z_s)d\overline{Z}_s\\
&\quad+\frac12\int_0^t\partial_{z}^2f(Z_s)dZ_sdZ_s+\int_0^t\partial_{z\bar{z}}f(Z_s)dZ_sd\overline{Z}_s
+\frac12\int_0^t\partial_{\bar{z}}^2f(Z_s)d\overline{Z}_sd\overline{Z}_s.
\end{align*}
\end{prop}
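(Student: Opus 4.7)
The plan is to reduce Proposition \ref{app:complexIto} to the classical real-valued It\^o formula via the identification $\mathbb{C}\cong\mathbb{R}^2$ together with the Wirtinger calculus. I would write $Z_t = X_t + i Y_t$ with $X,Y$ real continuous semimartingales and regard $f$ as a map $F\colon\mathbb{R}^2\to\mathbb{C}$, $F(x,y):=f(x+iy)$. Since $f\in C_b^2(\mathbb{C})$, its real and imaginary parts are $C_b^2$ functions on $\mathbb{R}^2$, so applying the standard real It\^o formula to each component yields
\begin{align*}
F(X_t,Y_t) &= F(X_0,Y_0) + \int_0^t \partial_x F\,dX_s + \int_0^t \partial_y F\,dY_s \\
&\quad + \tfrac12 \int_0^t \partial_x^2 F\,d[X,X]_s + \int_0^t \partial_{xy} F\,d[X,Y]_s + \tfrac12 \int_0^t \partial_y^2 F\,d[Y,Y]_s.
\end{align*}

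Next I would translate everything into Wirtinger coordinates using
$$
\partial_x = \partial_z + \partial_{\bar z}, \qquad \partial_y = i(\partial_z - \partial_{\bar z}),
$$
together with $dX_s = \tfrac12(dZ_s + d\overline{Z}_s)$ and $dY_s = \tfrac1{2i}(dZ_s - d\overline{Z}_s)$. A direct expansion of the first-order terms collapses, after cancellation of the cross contributions, to
$\partial_z f(Z_s)\,dZ_s + \partial_{\bar z} f(Z_s)\,d\overline{Z}_s$, which matches the first-order part of the claim.

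For the second-order terms I would plug in the quadratic identities
$$
dX\,dX = \tfrac14(dZ\,dZ + 2\,dZ\,d\overline{Z} + d\overline{Z}\,d\overline{Z}), \quad dY\,dY = -\tfrac14(dZ\,dZ - 2\,dZ\,d\overline{Z} + d\overline{Z}\,d\overline{Z}),
$$
$$
dX\,dY = \tfrac1{4i}(dZ\,dZ - d\overline{Z}\,d\overline{Z}),
$$
which follow immediately from the bilinearity of the bracket together with the definitions of $dZ\,dZ$, $dZ\,d\overline{Z}$, $d\overline{Z}\,d\overline{Z}$ supplied in the statement. Combined with the identities $\partial_x^2 = \partial_z^2 + 2\partial_{z\bar z} + \partial_{\bar z}^2$, $\partial_y^2 = -\partial_z^2 + 2\partial_{z\bar z} - \partial_{\bar z}^2$, and $\partial_{xy} = i(\partial_z^2 - \partial_{\bar z}^2)$, regrouping by the Wirtinger second derivatives produces exactly the coefficients $\tfrac12\,dZ\,dZ$ in front of $\partial_z^2 f$, $dZ\,d\overline{Z}$ in front of $\partial_{z\bar z}f$, and $\tfrac12\,d\overline{Z}\,d\overline{Z}$ in front of $\partial_{\bar z}^2 f$.

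The argument is essentially algebraic once the real It\^o formula is applied componentwise; the only step that requires care is the bookkeeping of signs and factors of $i$ when passing between $(\partial_x,\partial_y)$ and $(\partial_z,\partial_{\bar z})$. No analytic subtlety enters beyond the $C_b^2$ hypothesis on $f$, which guarantees that the real-variable It\^o formula applies to $\re F$ and $\im F$.
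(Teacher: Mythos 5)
Your proposal is correct and follows exactly the route the paper intends: the paper gives no written proof beyond remarking that the formula "is easily obtained from the real-valued version" via the identification $\mathbb{C}\cong\mathbb{R}^2$ and the Wirtinger operators $\partial_z,\partial_{\bar z}$, which is precisely your componentwise application of the real It\^o formula followed by the change of variables $dX=\tfrac12(dZ+d\overline{Z})$, $dY=\tfrac1{2i}(dZ-d\overline{Z})$. Your quadratic-variation identities and the regrouping of second derivatives are consistent with the definitions of $dZ\,dZ$, $dZ\,d\overline{Z}$, $d\overline{Z}\,d\overline{Z}$ given in the statement, so the bookkeeping checks out.
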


\noindent
{\bf Acknowledgement} The authors are grateful for the discussion 
with Michikazu Kobayashi and Toshiyuki Sugawa. 
This work was supported by JSPS KAKENHI 19KK0066, 19K14556, 18K13444. 
This work was partly supported by Osaka City University Advanced Mathematical Institute
(MEXT Joint Usage/Research Center on Mathematics and Theoretical Physics JPMXP0619217849).

\articleend

\title[Erratum]{Corrigendum:``Non relativistic and ultra relativistic limits in 2d stochastic nonlinear damped Klein-Gordon equation" 
(2022 Nonlinearity \bf{35} 2878)}
\author[Reika Fukuizumi]{Reika Fukuizumi$^{\scriptsize 1}$}
\author[Masato Hoshino]{Masato Hoshino$^{\scriptsize 2}$}
\author[Takahisa Inui]{Takahisa Inui$^{\scriptsize 3}$}

\maketitle

\begin{center} 
$^1$ Research Center for Pure and Applied Mathematics, \\
Graduate School of Information Sciences, Tohoku University,\\
Sendai 980-8579, Japan; \\
\email{fukuizumi@math.is.tohoku.ac.jp}
\end{center}

\begin{center} 
$^2$ Graduate School of Engineering Science, Osaka University,\\
Toyonaka, Osaka 560-8531,Japan;\\
\email{hoshino@sigmath.es.osaka-u.ac.jp}
\end{center}

\begin{center}
$^3$ Department of Mathematics, Graduate School of Science, \\
Osaka University, Toyonaka, Osaka 560-0043, Japan; \\
\email{inui@math.sci.osaka-u.ac.jp}
\end{center}

\bigskip


This is a corrigendum for the paper ``Non relativistic and ultra relativistic limits in 2d stochastic nonlinear damped Klein-Gordon equation" \cite{FHI}. 
We proved the global existence of the solution $\Psi$ in Section 5 of \cite{FHI}, however, the proof of Theorem 4 of \cite{FHI} contains an error. We used the statement that
$$
T^*<\infty\quad \Rightarrow\quad \lim_{t\to T^*}\sup_{0\le s< t}\|(\Psi(s),\Phi(s))\|_{\mathcal{H}^{-\delta}}=\infty
$$
in the proof, but this is false because $(\Psi(t),\Phi(t))$ is not locally well-posed for \emph{all} initial values $(\psi,\phi)\in\mathcal{H}^{-\delta}$.
We give a correct proof in this erratum. All notations come from Section 5 of \cite{FHI}.

First we replace Lemma 5.2 of \cite{FHI} with the following one.

\begin{lem}
Let any $T>0$ be fixed, and let $\sigma<1$ be sufficiently close to $1$ and $\delta=1-\sigma$ as in Corollary 2.1 of \cite{FHI}. There exists a constant $C_T$ independent of $N$ such that 
\begin{align*} 
\sup_{N\in\mathbb{N}}\int_{\mathcal{H}^{-\delta}}
\mathbb{E}\left[\sup_{0\le t\le T}\|(U^N(t),\varepsilon\partial_tU^N(t))\|_{\mathcal{H}^\sigma}\right]\rho_N(\dd\psi \dd\phi) \le C_T.
\end{align*}
\end{lem}

\begin{proof}
By definition, $U^N$ satisfies the equation
$$
\left\{
\begin{aligned}
&\varepsilon^2\partial_t^2U^N+2\alpha\partial_tU^N+(1-\Delta)U^N+\Pi_NH_{n+1,n}(\Pi_N\Psi^N;C_N)=0,\\
&(U^N,\varepsilon\partial_tU^N)|_{t=0}=(0,0).
\end{aligned}
\right.
$$
Hence, by Theorem 1 of \cite{FHI}, it is sufficient to show the bound
$$
\sup_{N\in\mathbb{N}}\int_{\mathcal{H}^{-\delta}}
\mathbb{E}\left[\|\Pi_NH_{n+1,n}(\Pi_N\Psi^N;C_N)\|_{L_T^2H^{\sigma-1}}^2\right]\rho_N(\dd\psi \dd\phi) \le C_T.
$$
By using Proposition 7 of \cite{FHI} (the invariance of the law of $\Psi^N(t)$ under $\rho_N$) and Lemma 5.1 of \cite{FHI} 
($N$-uniform $L^{2}(\dd\mu)$-bounds of $\dd\rho_N/\dd\mu$), we have
\begin{align*}
&\sup_{N\in\mathbb{N}}\int_{\mathcal{H}^{-\delta}}
\mathbb{E}\left[\|\Pi_NH_{n+1,n}(\Pi_N\Psi^N;C_N)\|_{L_T^2H^{\sigma-1}}^2\right]\rho_N(\dd\psi \dd\phi)\\
&=T\sup_{N\in\mathbb{N}}\int_{\mathcal{H}^{-\delta}}
\mathbb{E}\left[\|\Pi_NH_{n+1,n}(\Pi_N\psi;C_N)\|_{H^{\sigma-1}}^2\right]\rho_N(\dd\psi \dd\phi)\\
&\le T\sup_{N\in\mathbb{N}}
\sqrt{\int_{\mathcal{H}^{-\delta}}\mathbb{E}\left[\|\Pi_NH_{n+1,n}(\Pi_N\psi;C_N)\|_{H^{\sigma-1}}^2\right]^2\mu(\dd\psi \dd\phi)}
\sqrt{\int_{\mathcal{H}^{-\delta}}\frac{\dd\rho_N}{\dd\mu}(\psi)^2\mu(\dd\psi \dd\phi)}\\
&\lesssim T\sup_{N\in\N}
\sqrt{\int_{H^{-\delta}}\mathbb{E}\left[\|H_{n+1,n}(\Pi_N\psi;C_N)\|_{H^{-\delta}}^4\right]\mu_0(\dd\psi)}.
\end{align*}
The last quantity is finite because of Proposition 11 of \cite{FHI}.
\end{proof}

We replace Theorem 4 of \cite{FHI} with the following. 

\begin{thm} Fix any $T>0$. Then there exists a constant $C_T>0$ such that  
\begin{align*}
\int_{\mathcal{H}^{-\delta}}
\mathbb{E}\left[\sup_{0\le t< T\wedge T^*}\|(U(t),\varepsilon \partial_tU(t))\|_{\mathcal{H}^\sigma}\right]\rho(\dd\psi \dd\phi) \le C_T.
\end{align*}
Hence $T^*=\infty$ a.s. for $\rho$-a.e. $(\psi,\phi)\in\mathcal{H}^{-\delta}$. 
\end{thm}

\begin{proof}
The above estimate follows from the similar argument to the former part in the proof of Theorem 4 of \cite{FHI} -- the convergence $(U^N,\varepsilon\partial_tU^N)\to(U,\varepsilon\partial_tU)$ in $C([0,T^*);\mathcal{H}^{\sigma})$ in probability and Fatou's lemma.
This estimate implies that there exists a $\rho$-measurable set $\mathcal{M}_T$ such that $\rho(\mathcal{M}_T)=1$ and that for $(\psi, \phi) \in \mathcal{M}_T$, the solution $U$ exists up to time $T\wedge T^*$ and
$$
\sup_{0\le t< T\wedge T^*}\|(U(t),\varepsilon\partial_tU(t))\|_{\mathcal{H}^\sigma}<\infty
$$
almost surely.
Since $(U,\varepsilon\partial_tU)$ is locally well-posed for \emph{all} initial values $(u,v)\in\mathcal{H}^\sigma$ (Corollary 2.1 of \cite{FHI}), we have
$$
T^*<T\quad \Rightarrow\quad \lim_{t\to T^*}\sup_{0\le s< t}\|(U(s),\varepsilon\partial_sU(s))\|_{\mathcal{H}^\sigma}=\infty.
$$
If not, $(U,\varepsilon\partial_tU)$ would be extended beyond $T^*$. Hence $T^*\ge T$ a.s. for $\rho$-a.e. $(\psi,\phi)\in\mathcal{H}^{-\delta}$.
\end{proof}

Corollary 5.3 of \cite{FHI} (the invariance of the law of $(\Psi(t),\Phi(t))$ under $\rho$) still holds.

\bigskip

\noindent
{\bf Acknowledgement}. The authors thank Hirotatsu Nagoji for pointing out the mistake on the original paper.


\begin{thebibliography}{99}

\bibitem{AHR} {\sc S. Albeverio, Z. Haba, and F. Russo}, 
{\it Trivial solutions for a non-linear two-space-dimensional wave equation
perturbed by space-time white noise,} 
Stochastics Stochastics Rep. {\bf 56} (1996), no. 12, pp. 127--160.

\bibitem{aa} {\sc E. Altman and A. Auerbach}, 
{\it Oscillating superfluidity of Bosons in optical lattices,} 
Phys. Rev. Lett.  {\bf  89} 250404 (2002) 

\bibitem{BCD}
{\sc H. Bahouri, J.-Y. Chemin, and R. Danchin},
{\it Fourier Analysis and Nonlinear Partial Differential Equations},
Springer, 2011.

\bibitem{BRS} {\sc N. Burq, G. Raugel and W. Schlag,} 
{\it Long time dynamics for damped Klein-Gordon equations,}
Ann. Scient. \'{E}c. Norm. Sup. 4e s\'{e}rie, {\bf 50}  (2017), pp. 1447--1498.

\bibitem{CX}  {\sc S. Cerrai and G. Xi,}
{\it A Smoluchowski-Kramers approximation for an infinite
dimensional system with state-dependent damping,}
arXiv:2011.14236v1

\bibitem{DPD} {\sc G. Da Prato and A. Debussche,} 
{\it Strong solutions to the stochastic quantization equations}, 
Ann. Probab. {\bf 31} (2003), pp. 1900--1916.



\bibitem{FT} {\sc J. Forlano and L. Tolomeo},
{\it On the unique ergodicity for a class of 2 dimensional stochastic wave equations},
arXiv:2102.09075.

\bibitem{FC} {\sc M. Freidlin and S. Cerrai},  
{\it On the Smoluchowski-Kramers approximation for a system with an infinite number of degrees of freedom}, 
Probability Theory and Related Fields. {\bf 135} (2006), pp. 363--394

\bibitem{griffin} {\sc A. Griffin}, 
{\it Excitations in a Bose-condensed liquid,}   
Cambridge University Press, Cambridge, 1993. 

\bibitem{GKO} {\sc M. Gubinelli, H. Koch and T. Oh},
{\it Renormalization of the two-dimensional stochastic nonlinear wave equations},
Trans. Amer. Math. Soc. {\bf 370} (2018), pp. 7335--7359.

\bibitem{GKOT} {\sc M. Gubinelli, H. Koch, T. Oh, and L. Tolomeo},
{\it Global Dynamics for the Two-dimensional Stochastic Nonlinear Wave Equations},
International Mathematics Research Notices, rnab084, https://doi.org/10.1093/imrn/rnab084. 

\bibitem{HRW} {\sc M. Hairer, M D. Ryser, and H. Weber},
{\it Triviality of the 2D stochastic Allen-Cahn equation}, 
Electron. J. Probab. 17 (2012), no. 39, 14 pp. 

\bibitem{H1} {\sc M. Hoshino}, 
{\it Paracontrolled calculus and Funaki-Quastel approximation for the KPZ equation}, 
 Stochastic Processes and their Applications. {\bf 128} Issue 4, (2018)  pp. 1238--1293. 

\bibitem{H2} {\sc M. Hoshino}, 
{\it Global well-posedness of complex Ginzburg-Landau equation with a space-time white noise},
Ann. Inst. Henri Poincar\'{e} Probab. Stat. {\bf 54} (2018), no. 4, pp. 1969--2001.

\bibitem{HIN} {\sc  M. Hoshino, Y. Inahama and  N. Naganuma}, 
{\it Stochastic complex Ginzburg-Landau equation with space-time white noise}, 
Electron. J. Probab. {\bf 22} (2017), paper no. 104, 68 pp.  

\bibitem{I}
{\sc T. Inui,}
{\it The Strichartz estimates for the damped wave equation and the behavior of solutions for the energy critical nonlinear equation}, 
NoDEA. {\bf 26} no.6. (2019) paper no. 50, 30 pp.    
 
\bibitem{IM} 
{\sc T. Inui, and S. Machihara,}  
{\it  Non-delay limit in the energy space from the nonlinear damped wave equation to the nonlinear heat equation}, 
arXiv:2106.03030.

\bibitem{Ito}
{\sc K. Ito},
{\it Complex Multiple Wiener Integral},
Japanese journal of mathematics :transactions and abstracts {\bf 22} (1952), pp. 63--86.


\bibitem{CK} 
{\sc M. Kobayashi and  L. Cugliandolo},
\newblock
{\it Quench dynamics of the three-dimensional U(1) complex field theory: Geometric and scaling
characterizations of the vortex tangle},
\newblock
{Phys. Rev. E}
{\bf 94} 062146 (2016). 

\bibitem{MACHI} 
{\sc S. Machihara}, 
\newblock
{\it The nonrelativistic limit of the nonlinear Klein-Gordon equation}, 
\newblock
Funkcial. Ekvac. {\bf  44}  no.2 (2001) pp. 243--252.

\bibitem{MNO}
{\sc S. Machihara, K. Nakanishi and T. Ozawa,}
\newblock 
{\it Small global solutions and the nonrelativistic limit for the nonlinear Dirac equation,} 
\newblock
Rev. Mat. Iberoamericana. {\bf 19} no.1. (2003) pp. 179--194.  

\bibitem{MNO1}
{\sc S. Machihara, K. Nakanishi, and T. Ozawa,} 
\newblock
{\it Nonrelativistic limit in the energy space for nonlinear Klein-Gordon equations,} 
\newblock
Math. Ann. {\bf 322} no.3 (2002) pp.  603--621.

\bibitem{M} {\sc T. Matsuda},
{\it Global well-posedness of the two-dimensional stochastic 
complex Ginzburg-Landau equation with cubic
nonlinearity}, 
arXiv:2003.01569.

\bibitem{N} 
{\sc B. Najman}, 
\newblock 
{\it Time singular limit of semilinear wave equations with damping,} 
\newblock
{J. Math. Anal. Appl.} {\bf 174} (1993) no.1. pp. 95--117.

\bibitem{N1}
{\sc B. Najman}, 
\newblock
{\it The nonrelativistic limit of the nonlinear Klein-Gordon equation,}
\newblock
{Nonlinear Analysis, Theory, Methods, Application}  {\bf 15} 3 (1990) pp.  217--228.

\bibitem{NS}
{\sc K. Nakanishi and W. Schlag},
{\it Invariant Manifolds and Dispersive Hamiltonian Evolution Equations},
European Mathematical Society, 2011.

\bibitem{OOR}
{\sc T. Oh, M. Okamoto and T. Robert}, 
{\it A remark on triviality for the two-dimensional stochastic nonlinear wave equation},
Stochastic Process. Appl. 130 (2020), no. 9, 5838--5864. 

\bibitem{OPT}
{\sc T. Oh, O. Pocovnicu and N. Tzvetkov},
{\it Probabilistic local well-posedness of the cubic nonlinear wave equation in negative Sobolev spaces},
arXiv:1904.06792.

\bibitem{ORT} 
{\sc T. Oh, T. Robert and N. Tzvetkov},
{\it Stochastic nonlinear wave dynamics on compact surfaces},
arXiv:1904.05277.

\bibitem{OT}
{\sc T. Oh and L. Thomann},
{\it pedestrian approach to the invariant Gibbs measures for the 2-$d$ defocusing nonlinear Schr\"{o}dinger equations},
Stoch. Partial Differ. Equ. Anal. Comput. 6 (2018), no. 3, 397--445. 

\bibitem{T} {\sc L. Tolomeo},
{\it Unique ergodicity for a class of stochastic hyperbolic equations with additive space-time white noise,}
Commun. Math. Phys. {\bf 377} (2020), pp. 1311--1347.

\bibitem{Tsu} 
{\sc M. Tsutsumi},
{\it Nonrelativistic approximation of nonlinear Klein-Gordon equations in two space dimensions,}
Nonlinear Anal. {\bf 8} (1984), no. 6, 637--643.

\bibitem{Tr} {\sc W J. Trenberth,} 
{\it Global well-posedness for the two-dimensional stochastic complex Ginzburg-Landau equation}, 
arXiv:1911.09246.




\end{thebibliography}

\begin{thebibliography}{99}

\bibitem{FHI} {\sc R. Fukuizumi, M. Hoshino, and T. Inui},
{\it Non relativistic and ultra relativistic limits in 2d stochastic nonlinear damped Klein-Gordon equation},
Nonlinearity {\bf 35} (2022) 2878--2919.

\end{thebibliography}
\end{document}